\documentclass[12pt]{amsart}
\usepackage[amsthm, mlmodern]{basemath}
\usepackage{thm-restate}
\usepackage{yhmath}
\usepackage{fullpage}
\usepackage{setspace}
\usepackage{nicematrix}
\usepackage{multicol}
\usepackage{subfig}
\usepackage{float}
\usepackage{setspace}
\usepackage{caption}
\usepackage{faktor}
\usepackage{bbm}
\usepackage{bm}
\usepackage{braket}

\numberwithin{equation}{theorem}
\RequirePackage[shortlabels]{enumitem}

\DeclareMathOperator{\Pprim}{\mathrm{P.Prim}}

\DeclareMathOperator{\GKdim}{GKdim}

\newcommand{\orbit}[1]{\mathbb{O}({#1})}

\newcommand{\p}{\mathfrak{p}}

\newcommand{\Dx}{\mathrm{Dx}}

\newcommand{\bchi}{\overline{\chi}}

\newcommand{\barT}{\overline{T}}
\newcommand{\barB}{\overline{B}}
\newcommand{\barX}{\overline{X}}
\newcommand{\barC}{\overline{C}}

\newcommand{\bart}{\bar{t}}
\newcommand{\barv}{\bar{v}}

\newcommand{\UW}{\Ua(\W{-1})}

\title{Ideals of homomorphic images of the enveloping algebra of the Witt algebra}
\author{Tuan Anh Pham}
\address{School of Mathematics, The University of Edinburgh, United Kingdom}
\email{tuan.pham@ed.ac.uk}
\subjclass[2020]{17B66, 16P70, 16N60, 16D60}
\keywords{Witt algebra, universal enveloping algebra, ascending chain condition, prime spectrum, primitive spectrum}

\begin{document}
\begin{abstract}
    Let $W_{\geq -1} = \mathbb{C}[t]\partial$ and $W = \mathbb{C}[t, t^{-1}]\partial$ be the Witt algebra of algebraic vector fields on $\mathbb{C}$ and $\mathbb{C}^*$ respectively. In this paper, we make significant progress toward the open conjecture that the enveloping algebras $\mathrm{U}(W_{\geq -1})$ and $\mathrm{U}(W)$ satisfy the ascending chain condition (ACC) on two-sided ideals. We show that all homomorphic images of $\mathrm{U}(W_{\geq -1})$ and $\mathrm{U}(W)$ under the family of ``orbit homomorphisms'' of arbitrary Gelfand-Kirillov dimension satisfy ACC on ideals. These orbit homomorphisms were the key ingredient allowing us to ``lift'' the Dixmier map from finite-dimensional solvable settings to infinite-dimensional settings of the Witt and Virasoro algebras in our recent work \cite{pham2025orbit}.
    
    As a result, we completely classify the prime and primitive spectra of these homomorphic images. As these images approximate $\mathrm{U}(W_{\geq -1})$ better as their GK-dimension increases, this classification sheds new light on the two-sided and prime ideal structures of $\mathrm{U}(W_{\geq -1})$. Finally, we discuss several applications of our results to the Dixmier map for $W_{\geq -1}$.
\end{abstract}

\maketitle

\section{Introduction}
Since their appearance in string theory and conformal field theory, the Witt and Virasoro algebras have been central objects in physics and mathematics. However their ring-theoretic properties remained mysterious until Sierra and Walton proved that their enveloping algebras are neither left nor right Noetherian \cite{sierra2014universal, sierra2016maps}. Their results provided strong evidence for the celebrated open conjecture that a Lie algebra $\g$ is finite-dimensional if and only if its enveloping algebra $\Ua(\g)$ is Noetherian, and ultimately enabled Andruskiewitsch and Mathieu \cite{andruskiewitsch2025noetherian} to establish the conjecture for all simple $\ZZ^n$-graded Lie algebras. Let $\W{-1} = \Der(\CC[t])$ and $W = \Der(\CC[t, t^{-1}])$ be the \emph{one sided} and \emph{(full)} Witt algebra respectively. The key ingredient in \cite{sierra2014universal, sierra2016maps} is an interesting homomorphic image of $\Ua(\W{-1})$ of Gelfand-Kirillov(GK) dimension $3$ coming from algebraic geometry. They showed that although that image is not Noetherian, it satisfies the ascending chain condition (ACC) on two-sided ideals and conjectured the same holds for $\Ua(\W{-1})$. 
\begin{conjecture}[{\cite{sierra2014universal}}]\label{conj: UW ACC}
    The enveloping algebra $\Ua(\W{-1})$ satisfies ACC on two-sided ideals.
\end{conjecture}
There has been some weak evidence for \Cref{conj: UW ACC} including Iyudu and Sierra's result in \cite{iyudu2020enveloping} that nonzero ideals of $\Ua(\W{-1})$ are ``big'' in the sense that they all have finite GK-codimension. In addition, \cite{PhamTimmins2025} constructed abundant principal ideals of $\Ua(\W{-1})$ that are not finitely generated as one-sided ideals. Our paper vastly strengthens the evidence for \Cref{conj: UW ACC} by showing that a family of $\Ua(\W{-1})$ images of GK-dimension $n$ - each ``approximating'' $\Ua(\W{-1})$ better as $n$ increases - all satisfy ACC on two-sided ideals.

\begin{theorem}[{\Cref{theo: Bn Cn ACC}}]\label{theo: ACC}
    For all $n$, the image $B_n$ of $n$th orbit homomorphism defined in \eqref{eq: Psi map} satisfies ACC on two-sided ideals.
\end{theorem}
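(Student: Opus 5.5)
The plan is to reduce ACC for $B_n$ to a Noetherianity statement about its proper quotients, and to argue by induction on $n$. The key reduction is elementary. Assume $B_n$ is a domain, which should hold because it is the image of $\Ua(\W{-1})$ inside an algebra of (localized) differential operators via the orbit homomorphism \eqref{eq: Psi map}. Then any ascending chain $I_1\subseteq I_2\subseteq\cdots$ of ideals is either identically zero or, after discarding initial terms, starts at a nonzero ideal $I_1$. So it suffices to prove the following claim: for every nonzero two-sided ideal $I\subseteq B_n$, the quotient $B_n/I$ satisfies ACC on two-sided ideals. Ideally $B_n/I$ is even a finitely generated module over a Noetherian subalgebra.

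To analyse a nonzero ideal $I$, I would use the explicit description of $\Psi_n$ from \eqref{eq: Psi map}. It sends $t^{k+1}\partial$ to an expression built from a Weyl-type generator and finitely many commuting parameters. This realises $B_n$ inside a Noetherian overring $T_n$, for instance a localized Weyl algebra tensored with a polynomial ring in the orbit coordinates. The image should be large in $T_n$: I expect $T_n$ to be a localization of $B_n$, or at least $fT_n\subseteq B_n$ for some nonzero $f$.

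Step one is to show that $T_nIT_n$ is a nonzero ideal of $T_n$. Because $T_n$ is close to simple (simple up to the central/commutative parameters), $T_nIT_n$ contains a nonzero element $g$ of the commutative parameter subring $Z$. Step two is to pull this back into $B_n$ by clearing denominators. Here one uses the grading by $\operatorname{ad}(t\partial)$ and the fact that $\Ua(\W{-1})$ acts by derivations of $Z$ (the orbit structure). The goal is to show that $I$ contains a product $g'\cdot J$, where $g'\in Z\cap B_n$ is nonzero and $J$ is an ideal of $B_n$ of finite codimension in the appropriate graded sense. This mirrors the Iyudu--Sierra statement that nonzero ideals have finite GK-codimension.

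Given this, $B_n/I$ is filtered by subquotients of two kinds. The first are finite modules over $B_n/(g')$, which I expect to map onto, or be controlled by, a finite extension of some $B_m$ with $m<n$ (the orbit degenerates when the parameters satisfy $g'=0$). The second is a finite-dimensional piece. The induction hypothesis gives ACC for the lower-dimensional images. ACC on two-sided ideals is then inherited through finite filtrations by bimodules that are Noetherian as bimodules, which gives the claim. The base case $n$ small is the commutative, or Sierra--Walton GK-dimension-$3$, situation, where ACC is known.

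The main obstacle will be step two. One must show that a nonzero ideal of $B_n$ really contains a nonzero element of the parameter ring times a large ideal, and that the quotient by the parameter element is governed by lower orbit homomorphisms. This is exactly where non-Noetherianity could leak in, since one-sided ideals are not finitely generated. My first attempt would be a leading-term argument with respect to the $t\partial$-grading and degree in $\partial$. Taking an element of $I$ of minimal $\partial$-degree and repeatedly commuting with $\Psi_n(t^{k}\partial)$ lowers the degree until one lands in $Z$, while keeping track of the coefficients that appear, which should be products of parameter polynomials.
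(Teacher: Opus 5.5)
\textbf{There is a genuine gap, and it starts with your model of $T_n$.} Here $T_n=A_1\otimes\Ua(\g_n)$, where $\g_n=\W{0}/\W{n}$ is a non-abelian solvable Lie algebra. It is not a Weyl algebra tensored with commuting ``orbit parameters''. Since $\operatorname{ad}v_0$ acts on $v_i$ with weight $i$, and $v_0v_1=v_1(v_0+1)$, one gets $Z(T_n)=\kk$ for $n\ge 2$. So your Step one fails as stated: $T_nIT_n$ need not contain a nonzero central element, for instance when $I=T_nv_{n-1}^2=\ker(B_n\to B_{n-1})$. The relevant elements are normal, not central. The correct replacement for Step one is the elementary inclusion $I\supseteq (T_nv_{n-1}^2)\,I\,(v_{n-1}^2T_n)$, which is a nonzero ideal of $T_n$.

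Even this does not reduce you to a lower $B_m$. For odd $n\ge 3$, $\Ua(\g_n)[v_{n-1}^{-1}]$ has a one-parameter family of maximal ideals. For the corresponding primes $P$ of $T_n$, the ideal $P\cap B_n$ is nonzero but contains no power of $v_{n-1}$. Hence $B_n/(P\cap B_n)$ does not factor through any projection $B_n\to B_m$ with $m<n$, because every such kernel contains $T_nv_{n-1}^2$. So your induction hypothesis says nothing about these quotients. Chains of such ideals must be controlled through the localization $B_n[v_{n-1}^{-1}]=T_n[v_{n-1}^{-1}]$, together with a result that contraction of extension agrees with the original ideal on a high power of $v_{n-1}$. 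Your Step two, which you already flag as unproven, has nothing playing this role. The proposed leading-term argument also cannot ``land in $Z$'', since no such commutative parameter ring exists.

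\textbf{Second, even when $I\supseteq T_nv_{n-1}^k$, your induction hypothesis is too weak.} Knowing that $B_m$ satisfies ACC on ideals does not handle the layers. Each $T_nv_{n-1}^j/T_nv_{n-1}^{j+1}$ is a $B_{n-1}$-bimodule isomorphic to $T_{n-1}$, with the right action twisted by $v_0\mapsto v_0-j(n-1)$, because $v_{n-1}v_0=(v_0-(n-1))v_{n-1}$. You need these twisted bimodules to be Noetherian, and ACC on ideals of $B_{n-1}$ does not give that. To propagate the induction you would then have to prove the same for twisted $T_n$ over $B_n$.

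This is exactly why the paper proves the stronger statement that $T_n$ is a Noetherian $(B_n,B_n)$-bimodule, and does so after passing to the associated graded for the order filtration. By induction on $i$, it shows that all the twisted Poisson modules $\barT_i\barv^{\mathbf a}$ are graded Poisson Noetherian over $\barB_i$. The inductive step uses four ingredients:
\begin{enumerate}
\item the inclusion $\barT_{i+1}\bar v_i^2\subseteq\barB_{i+1}$;
\item Noetherianity after inverting $\bar v_i$;
\item the layer isomorphisms $\barT_{i+1}\bar v_i^m\barv^{\mathbf b}/\barT_{i+1}\bar v_i^{m+1}\barv^{\mathbf b}\cong\barT_i\bar v_i^m\barv^{\mathbf b}$;
\item a stability result for contraction of extension, whose proof is a minimal-degree argument that genuinely needs the grading.
\end{enumerate}
Your proposal is missing both the strengthened (twisted, bimodule) induction statement and any substitute for this stability step.
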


Let us explain the motivation and construction of the orbit homomorphisms. Since classifying all irreducible representations of the Witt or the Virasoro algebra is intractable, we followed Dixmier's philosophy and instead characterized primitive ideals of their enveloping algebra, i.e., annihilators of simple modules. When $\g$ is a finite-dimensional solvable Lie algebra, there exists a homeomorphism, named the \emph{Dixmier map}, between the space of coadjoint orbits of $\g^*$ and the space of primitive ideals of $\Ua(\g)$ \cite[Theorem 6.5.12]{dixmier1996enveloping}. Although the map might not be well-defined for a finite-dimensional simple Lie algebra, a version of the Dixmier map for the Witt and Virasoro algebra was successfully constructed in \cite[Theorem 1.1]{pham2025orbit}.

Central to the constructions and results in \cite{pham2025orbit} are the family of algebra maps, called \emph{orbit homomorphisms} \cite[Theorem 1.4]{pham2025orbit}. The one-sided Witt algebra $\W{-1} = \Der(\CC[t])$ has a basis $e_i = t^{i+1}\del$ for $i \geq -1$, where $\del = \frac{d}{dt}$. The Lie subalgebras $\W{n} = \CC \{e_{i} \mid i \geq n\}$ are Lie ideals of $\W{0}$ and thus induce finite-dimensional solvable subquotients $\g_n = \W{0}/\W{n}$ with basis elements $v_i = e_i + \W{n}$. Let $A_1$ be the first Weyl algebra generated by $t$ and $\del$ with $\del t - t \del = 1$. The $n$th \emph{orbit homomorphism} is the following algebra homomorphism \cite[Theorem 1.5]{pham2025orbit}
\begin{equation}\label{eq: Psi map}
\Psi_n : \Ua(\W{-1}) \to A_1 \otimes \Ua(\g_n), \quad 
 f\del \mapsto f(t)\del + \sum_{i=0}^n \frac{f^{(i+1)}(t)}{(i+1)!} v_i.
\end{equation} 
We note that $\Psi_0$ is the natural map $\Ua(\W{-1}) \to A_1$, and the homomorphism used in \cite{sierra2016maps} is in fact $\Psi_1$.
Furthermore, in \cite{pham2025orbit}, these orbit homomorphisms are used to ``lift'' the Dixmier map from finite-dimensional solvable Lie algebra $\g_n$ to the infinite-dimensional settings of $\W{-1}$ and $W$. Thus, understanding the images of these homomorphisms will reveal interesting properties of $\Ua(\W{-1})$. Let $T_n = A_1 \otimes \Ua(\g_n)$ and $B_n = \Psi_n(\Ua(\W{-1}))$. In this paper, we completely classify the ideals and spectra of $B_n$.

As $B_n$ approximates $\Ua(\W{-1})$ as $n$ grows larger, \Cref{theo: ACC} could be seen as a significant step towards \Cref{conj: UW ACC}. We note that the orbit homomorphisms can be defined and \Cref{theo: ACC} also holds for the full Witt algebra $W = \Der(\CC[t, t^{-1}])$ (see \Cref{theo: Bn Cn ACC}).

Interestingly, by using the ideal structure of $B_n$ and noncommutative localization, we completely characterize the prime spectrum of $B_n$. In particular, we show that all but one prime ideals of $B_n$ are induced from $T_n$ and in fact show that restriction gives an ``almost homeomorphism'' between $\Spec B_n$ and $\Spec T_n$ (both equipped with the Jacobson topology).
\begin{theorem}[\Cref{theo: Spec almost homeo}]\label{theo: Spec almost homeo intro}
    For all $n$, the map 
    \[\mathrm{Res}_n: \Spec T_n \to \Spec B_n, \quad P \mapsto P \cap B_n,\]
    gives a continuous, open, near-bijective homomorphism.
\end{theorem}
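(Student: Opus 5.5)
We argue by comparing $B_n$ with $T_n$ through a noncommutative localization. The first step is to find a denominator set $S \subseteq B_n$, which we expect can be taken as the powers of a suitable normal or Ore element. We want $S$ to satisfy
\[
S^{-1}B_n = S^{-1}T_n .
\]
The natural candidate comes from the structure of $\Psi_n$. Modulo lower-order terms, the image of $f\partial$ recovers $f\partial$ together with the derivatives $f^{(i+1)}$ paired with the $v_i$. Combining $\Psi_n(e_{-1}) = \partial$ with the images of $e_0, \dots, e_n$, we expect $t$ and the $v_i$ to lie in $B_n$ after inverting a Casimir-type element $z$ of $B_n$ built from the highest $v_i$, essentially $v_n$, which is central in $\mathrm{U}(\g_n)$. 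So the first concrete task is to identify an element $z \in B_n$, central or normal in $T_n$, such that $T_n[z^{-1}] = B_n[z^{-1}]$. We also want to show that $B_n$ contains a nonzero ideal $I$ of $T_n$; this is the conductor-type fact we expect the paper to establish when proving \Cref{theo: ACC}.

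Given this, we split $\Spec B_n$ according to whether a prime $Q$ contains the conductor ideal $I$.

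If $Q \not\supseteq I$, standard arguments for rings sharing a common ideal apply: by primeness, $Q \mapsto \{x \in T_n : IxI \subseteq Q\}$ is inverse to restriction. Equivalently, using the localization $S^{-1}B_n = S^{-1}T_n$, primes of $B_n$ avoiding $S$ correspond bijectively to primes of $T_n$ avoiding $S$. In either form, restriction is a bijection on these pieces.

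If $Q \supseteq I$, then $Q$ corresponds to a prime of the small algebra $B_n/I$. We expect this to be almost commutative, or to come from the kernel of the natural map to $A_1$ or to $\mathrm{U}(\g_n)$. We then check directly that every such prime except one is the restriction of a prime of $T_n$ containing $IT_n$. The exceptional prime is presumably the augmentation-type ideal: $\mathrm{U}(\W{-1})$ has a unique ideal of finite codimension, the augmentation ideal, and its image should be the one prime of $B_n$ not of the form $P \cap B_n$. This is what we take ``near-bijective'' to mean: restriction is injective, and its image misses exactly one point.

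Continuity in the Jacobson topology follows from the formula
\[
\mathrm{Res}_n^{-1}\bigl(V(J)\bigr) = V(JT_n).
\]
Openness requires showing that the image of a basic open set $\{P : P \not\supseteq K\}$ is open. For this we use that $T_n K T_n \cap B_n$ contains $I(K \cap B_n) I$, or a similar product, so that closed sets correspond through the conductor.

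We expect the main obstacle to be the analysis of primes containing the conductor. This means pinning down $I$ explicitly and showing that $B_n/I$ has exactly one extra prime with no lift to $T_n$. It needs an explicit description of $B_n$ inside $T_n$, obtained by filtering by degree in $\partial$ and in the $v_i$. We would try first to compute this description for $n = 1$, where it should recover the results of \cite{sierra2016maps}, and then induct on $n$ using the surjection $\g_n \to \g_{n-1}$.
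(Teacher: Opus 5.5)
Your reading of ``near-bijective'' is mistaken, and the statement you set out to prove is false: $\mathrm{Res}_n$ is not injective. The distinct primes $P^{0,0}_n=\Pi_{n,1}^{-1}(T_1v_0)$ and $P^{0,1}_n=\Pi_{n,1}^{-1}(T_1(v_0-1))$ of $T_n$ both restrict to $Q_n^0$, because $T_1v_0\cap B_1=T_1v_0(v_0-1)=T_1(v_0-1)\cap B_1$. What the paper proves is the following: the image is $\Spec B_n\setminus\{Q_n^+\}$, the fibre over $Q_n^0$ is exactly $\{P^{0,0}_n,P^{0,1}_n\}$, and $\mathrm{Res}_n$ is bijective elsewhere.

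The missing point already occurs at $n=0$: $\Spec A_1=\{0\}$, but $B_0=\kk+A_1\partial$ also has the prime $A_1\partial$. The collapse occurs at $n=1$, and this is also where your guess of inverting powers of a single top $v_i$ fails. The conductor there is $T_1v_0(v_0-1)$, so one must invert $v_0(v_0-1)$. Then $T_1/T_1v_0(v_0-1)\cong A_1\times A_1$ has two primes, while $B_1/T_1v_0(v_0-1)\cong B_0$ has only $0$ and $A_1\partial$. For $n\geq 2$ the conductor is $T_nv_{n-1}^2$, with $v_{n-1}$ normal, not central.

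The primes containing the conductor do not live on an ``almost commutative'' quotient either. $\mathcal{V}(T_nv_{n-1}^2)\subseteq\Spec B_n$ is $\Spec B_{n-1}$ and $\mathcal{V}(T_nv_{n-1})\subseteq\Spec T_n$ is $\Spec T_{n-1}$, compatibly with restriction, so induction on $n$ simply carries the $n\leq 1$ picture upward.

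On the complementary open stratum, your common-ideal inverse $Q\mapsto\{x\in T_n : IxI\subseteq Q\}$ is correct. It is genuinely more elementary than the paper's route, which localizes at $X_n$ and needs ACC on ideals of $B_n$ to show $PX_n^{-1}\cap B_n=P$. For the same reason, your ``equivalently, via localization'' is not automatic for the non-Noetherian ring $B_n$.

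The openness step is a real gap. Knowing that $\mathrm{Res}_n$ is a homeomorphism over $D(I)$ and, inductively, open over $\mathcal{V}(I)$ does not make images of open sets open. The set $\mathrm{Res}_n(D(K))$ is the union of an open subset of the open stratum and a relatively open subset of the closed stratum. Such a union is open only if it is stable under generalization: whenever $Q$ in the closed stratum has a lift $P\not\supseteq K$, every prime $Q_0\subseteq Q$ in the open stratum must have its lift $P_0\not\supseteq K$. This is what a going-down property for $\mathrm{Res}_n$ supplies, and your proposal never formulates or proves one.

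Nor can ``closed sets correspond through the conductor'' be the mechanism, because $\mathrm{Res}_n$ is not a closed map. Indeed, $\mathrm{Res}_n(\Spec T_n)=\Spec B_n\setminus\{Q_n^+\}$ contains the prime $0$ of the domain $B_n$, yet is not all of $\Spec B_n$.

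The paper closes this step with three ingredients absent from your sketch:
\begin{enumerate}
    \item images of open sets are constructible, by a Chevalley-type induction over the two strata;
    \item $\mathrm{Res}_n$ has going down;
    \item $\Spec B_n$ is a Noetherian sober space because $B_n$ satisfies ACC on two-sided ideals (\Cref{theo: Bn Cn ACC}).
\end{enumerate}
Together these give that a constructible set stable under generalization is open.
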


As a corollary, we see that all prime ideals of $B_n$ are completely prime.
\begin{restatable}{corollary}{prcprime}\label{prime cprime}
    All prime ideals of $B_n$ are completely prime.
\end{restatable}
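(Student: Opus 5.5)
The plan is to derive the corollary from \Cref{theo: Spec almost homeo intro}. Every prime of $B_n$ is either a contraction $P \cap B_n$ with $P \in \Spec T_n$, or the single exceptional prime, and the two kinds are handled separately.

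\emph{Step 1: primes of $T_n$ are completely prime.} Here $T_n = A_1 \otimes \Ua(\g_n)$, and $\g_n = \W{0}/\W{n}$ is a finite-dimensional solvable complex Lie algebra. By a classical theorem of Dixmier (see \cite{dixmier1996enveloping}), every prime ideal of $\Ua(\g_n)$ is completely prime. Since $A_1$ is simple with centre $\CC$, the ideals of $A_1 \otimes \Ua(\g_n)$ are exactly $A_1 \otimes I$ for ideals $I$ of $\Ua(\g_n)$, and $A_1\otimes I$ is prime precisely when $I$ is prime. Then
\[T_n/(A_1 \otimes I) \cong A_1 \otimes \Ua(\g_n)/I.\]
This algebra is a domain, because $A_1$ is a domain and $\Ua(\g_n)/I$ is a Noetherian domain. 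One can see this by passing to associated graded rings with respect to the Bernstein filtration on $A_1$: the associated graded ring is the polynomial extension $\Ua(\g_n)/I\,[x,y]$, which is a domain. Hence every prime of $T_n$ is completely prime.

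\emph{Step 2: contracted primes.} If $P$ is completely prime in $T_n$, then $B_n/(P\cap B_n)$ embeds in the domain $T_n/P$. It is therefore a domain, so $P\cap B_n$ is completely prime. By \Cref{theo: Spec almost homeo intro}, this covers every prime of $B_n$ except the one not in the image of $\mathrm{Res}_n$, or the one with two preimages. In the second case nothing further is needed.

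\emph{Step 3: the exceptional prime.} This is the main obstacle, since it requires an explicit description that the statement of \Cref{theo: Spec almost homeo intro} alone does not give. The natural candidate is the augmentation-type ideal $Q = \Psi_n(\Ua(\W{-1})_+) $, i.e. the kernel of the map $B_n \to \CC$ induced by the trivial representation of $\W{-1}$. This ideal does not come from $T_n$, because $A_1$ has no finite-dimensional representations. Then $B_n/Q \cong \CC$, a domain, so $Q$ is completely prime. To justify the identification I would use the ideal classification underlying \Cref{theo: ACC}: nonzero ideals of $B_n$ either contain $B_n \cap (A_1\otimes J)$-type ideals, or have finite codimension. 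If the exceptional prime $Q$ has finite codimension, then $B_n/Q$ is a finite-dimensional prime algebra and hence a matrix algebra $M_k(\CC)$. It is also a quotient of $\Ua(\W{-1})$. Since the finite-dimensional representations of $\W{-1}$ factor through solvable quotients (the simple quotient $\mathfrak{sl}_2$ does not occur as a quotient of $\W{-1}$), Lie's theorem gives $k=1$. Hence $Q$ is completely prime, which finishes the proof.
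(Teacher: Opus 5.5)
Your argument is valid and matches the framing of the introduction, where the corollary is read off from \Cref{theo: Spec almost homeo}. Steps 1 and 2 are exactly the paper's argument for contracted primes. There is no circularity, since nothing in the proof of \Cref{theo: Spec almost homeo} uses complete primeness.

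The paper's own proof, however, sits before that theorem and avoids it. It inducts on $n$ via \Cref{lem: prime two cases}:
\begin{itemize}
\item If a prime $P$ of $B_n$ contains $T_n v_{n-1}^2=\ker\Pi_{n,n-1}\cap B_n$, then $B_n/P\cong B_{n-1}/P'$ for a prime $P'$ of $B_{n-1}$, and the induction hypothesis applies.
\item Otherwise $P$ is disjoint from $X_n$, so by \Cref{prop: contraction Tn Bn 1} it is a contraction of a prime of $T_n$, and your Steps 1--2 finish.
\end{itemize}
The base cases are the explicit lists in \Cref{prop:B0ideals} and \Cref{Spec B1}. This route never has to locate the exceptional prime: the augmentation ideal is simply pushed down to $A_1\del\subset B_0$. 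It also uses only the localization results, not the constructibility and going-down machinery. Your version is shorter once the theorem is available. But the surjectivity statement you rely on (\Cref{lem: image Res}) is proved by this very induction, so the mathematical content is the same.

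Step 3 needs rewriting. Identifying the exceptional prime is not actually an obstacle. Part (3) of \Cref{theo: Spec almost homeo} states $\mathrm{Res}_n(\Spec T_n)=\Spec B_n\setminus\{Q_n^+\}$ with $Q_n^+=\Psi_n(\Ua(\W{-1})_+)$. Moreover $B_n/Q_n^+\cong\kk$, because $B_n=\kk+Q_n^+$ and $Q_n^+$ is proper (it is the preimage of $A_1\del\subset B_0$). Citing this is all you need.

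The justification you offer instead does not work as written, for three reasons:
\begin{itemize}
\item The paper proves no dichotomy of the form ``every nonzero ideal of $B_n$ contains a $B_n\cap(A_1\otimes J)$-type ideal or has finite codimension.''
\item Even if such a dichotomy held, a prime that contains a contraction need not itself be a contraction.
\item You only treat the finite-codimension branch.
\end{itemize}
Likewise, ``$A_1$ has no finite-dimensional representations'' does not by itself show that $Q_n^+$ is not a contraction. The clean reason is that $\del=\Psi_n(e_{-1})\in Q_n^+$ and $T_n\del T_n=T_n$ because $A_1$ is simple. Hence no proper ideal of $T_n$ contracts to $Q_n^+$.

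A smaller point on your remark about finite-dimensional quotients: the right fact is that $\W{-1}$ is simple and infinite-dimensional, so all its finite-dimensional representations are trivial. Merely excluding an $\mathfrak{sl}_2$ quotient would not give solvability.
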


All known prime ideals of $B_n$ are completely prime. \Cref{prime cprime} provides significant evidence towards the following conjecture.
\begin{conjecture}[{\cite[Conjecture 8.7]{pham2025orbit}}]
    All prime ideals of $\Ua(\W{-1})$ are completely prime.
\end{conjecture}

Since all prime ideals $P$ of $B_n$ are completely prime, we can form the ring of fractions $Q(B_n/P)$ \cite[Theorem 4.15]{krause2000growth} and consider its extended center $Z(Q(B_n/P))$. 
We then completely classify primitive ideals of $B_n$ by showing that $B_n$ satisfies the \emph{Dixmier-Moeglin equivalence} (DME), which characterizes three related properties of a prime ideal $P$ of $B_n$ with different flavours: topological, representation-theoretic, and algebraic.
\begin{restatable}{theorem}{BnDME}\label{theo: BnDME}
    For all $n$, the image $B_n$ satisfies the DME. In particular, the following are equivalent for a prime ideal of $B_n$:
    \begin{enumerate}
        \item[(LC)] $P$ is locally closed, i.e. $P \subsetneqq \bigcap \{ Q \in \Spec B_n \mid Q \supsetneqq P\}$;
        \item[(P)] $P$ is primitive;
        \item[(R)] $P$ is rational, i.e. $Z(Q(B_n/P)) = \CC$.
    \end{enumerate}
\end{restatable}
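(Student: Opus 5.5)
The plan is to transfer the DME from $T_n = A_1 \otimes \Ua(\g_n)$ to $B_n$ through the restriction map of \Cref{theo: Spec almost homeo intro}. First I would establish the DME for $T_n$ itself. Since $\g_n$ is a finite-dimensional solvable Lie algebra, $\Ua(\g_n)$ satisfies the DME by Dixmier's theory, and $A_1$ is simple with $\CC$ as centre. Every ideal of $A_1 \otimes R$ has the form $A_1 \otimes I$ with $I \trianglelefteq R$. Hence $\Spec T_n \cong \Spec \Ua(\g_n)$, and this bijection is a homeomorphism for the Jacobson topologies. A prime $A_1\otimes \p$ is primitive exactly when $\p$ is primitive: for the forward direction take $F\otimes M$, with $F = \CC[t]$ the simple $A_1$-module; for the converse use the standard tensor-product argument for affine algebras over $\CC$ together with the Nullstellensatz. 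Finally, $Z(Q(A_1 \otimes \Ua(\g_n)/\p)) = Z(Q(\Ua(\g_n)/\p))$, because the central closure of a tensor product with the central simple algebra $A_1$ is unchanged. So (LC), (P) and (R) for $T_n$ reduce to the corresponding conditions for $\Ua(\g_n)$.

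Next I would transfer each property to $B_n$. By \Cref{theo: Spec almost homeo intro}, $\mathrm{Res}_n$ is a continuous open bijection away from one exceptional prime. Away from that prime it is therefore a homeomorphism, so local closedness of $P\cap B_n$ in $\Spec B_n$ matches local closedness of $P$ in $\Spec T_n$. The exceptional prime needs separate checking; it is presumably the augmentation-type ideal of finite codimension, i.e. the maximal ideal $\ker(B_n \to \CC)$. Such a prime is simultaneously closed, primitive and rational, so it causes no trouble. For rationality I would use the localization step already built into the proof of the spectrum result: there should be an Ore set $S$ (for instance powers of a suitable normal or regular element, or a localization at $t$) with $B_n S^{-1} = T_n S^{-1}$, or at least with equal Goldie quotient rings modulo primes. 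This gives $Q(B_n/(P\cap B_n)) \cong Q(T_n/P)$, so the extended centres agree and (R) transfers.

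For (P), I would prove (P)$\Rightarrow$(R) for $B_n$ directly. $B_n$ is a countably generated $\CC$-algebra, so simple $B_n$-modules have endomorphism ring $\CC$ by Quillen/Dixmier's lemma. The standard argument then shows that primitive implies rational whenever $\mathrm{End}$ is $\CC$ and the algebra has countable dimension. Also (LC)$\Rightarrow$(P) holds because $B_n$ has countable dimension and is a Jacobson algebra. That it is Jacobson should follow from $B_n$ being a subalgebra of $T_n$ that agrees with it after localization, combined with ACC from \Cref{theo: ACC}. To close the cycle I would prove (R)$\Rightarrow$(LC): (R) for $P \cap B_n$ gives (R) for $P$, hence (LC) for $P$ in $T_n$ by the first step, hence (LC) for $P \cap B_n$ by the homeomorphism.

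The main obstacle is the Jacobson/(LC)$\Rightarrow$(P) step for the non-noetherian algebra $B_n$. Standard proofs use noetherianity or a generic-flatness argument. I would try to realise a primitive $T_n$-module, restricted to $B_n$, as a module with a simple submodule annihilated by exactly $P\cap B_n$. Alternatively, I would use that $B_n$ and $T_n$ share a nonzero ideal, so that the two module categories agree away from the exceptional prime.
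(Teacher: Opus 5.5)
You correctly identified the weak point, and it is a genuine gap: nothing in your proposal proves (LC)$\Rightarrow$(P) for $B_n$. Your suggested justification for the Jacobson property does not work. You appeal to $B_n\subseteq T_n$, to agreement after localizing, and to ACC on ideals, but the Jacobson property is not inherited by subrings or pulled back from localizations, and ACC on two-sided ideals gives no control over Jacobson radicals.

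The missing idea is the one the paper uses in \Cref{lem: Jacobson and Nullstellensatz}. First, every factor $B_n/I$ is a countably generated algebra over an uncountable field, so $J(B_n/I)$ is nil by Amitsur's theorem. Second, every prime factor $B_n/P$ is Goldie: $P$ is completely prime (\Cref{prime cprime}) and $\GKdim B_n/P<\infty$, so $B_n/P$ is an Ore domain (\Cref{lem: rational makes sense}). A prime Goldie ring has no nonzero nil ideals, so $J(B_n/P)=0$. Hence $B_n$ is Jacobson, and a locally closed prime is primitive.

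The Goldie property is also what your (P)$\Rightarrow$(R) step silently needs. The embedding $Z(Q(R/P))\hookrightarrow Z(\operatorname{End}_R M)$ of \Cref{lem: Nullstellensatz property} requires $R/P$ to be Goldie. Countable dimension plus $\operatorname{End}_R M=\mathbb{C}$ does not even guarantee that $Q(R/P)$ exists.

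Your second idea could be completed into an independent argument, but as written it is only a gesture. The key fact is that $T_nv_{n-1}^2$ is a two-sided ideal of $T_n$ contained in $B_n$. Let $M$ be a simple $T_n$-module with $v_{n-1}\notin\operatorname{Ann}_{T_n}M$. The elements of $M$ killed by $T_nv_{n-1}^2$ form a proper, hence zero, $T_n$-submodule. So for any nonzero $B_n$-submodule $N$ you get $M=T_nv_{n-1}^2N\subseteq N$. Thus $M$ is already simple over $B_n$, with annihilator $\operatorname{Ann}_{T_n}M\cap B_n$. Primes containing $v_{n-1}$ would then be handled by induction through $B_n\twoheadrightarrow B_{n-1}$.

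Your route for (R)$\Rightarrow$(LC) differs from the paper's and is legitimate once repaired. You transfer rationality to $T_n$, use the DME for $T_n$, and transfer local closedness back through $\mathrm{Res}_n$. The paper instead inducts on $n$: primes containing $T_nv_{n-1}^2$ come from $B_{n-1}$. The remaining primes are read off from $\operatorname{Spec}T_n[v_{n-1}^{-1}]$. For $n$ even this is only $0$. For $n$ odd it is $0$ together with maximal ideals, and $0$ is not rational because $Z(Q(B_n))$ is a rational function field.

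Your version needs two repairs.
\begin{enumerate}
\item The exceptional locus is not a single prime. $Q_n^+$ is missing from the image of $\mathrm{Res}_n$, and $Q_n^0=\ker(B_n\twoheadrightarrow B_0)$ has two preimages, $P^{0,0}_n$ and $P^{0,1}_n$. You must treat $Q_n^0$ by hand: it is rational since $B_n/Q_n^0\cong B_0$, and locally closed since only $Q_n^+$ lies strictly above it. Then $\mathrm{Res}_n$ is a homeomorphism between the open sets $\operatorname{Spec}T_n\setminus\{P^{0,0}_n,P^{0,1}_n\}$ and $\operatorname{Spec}B_n\setminus\{Q^+_n,Q^0_n\}$, and local closedness transfers.
\item The identification $Q(B_n/(P\cap B_n))\cong Q(T_n/P)$ follows from $T_nv_{n-1}^2\subseteq B_n$ only when $v_{n-1}\notin P$. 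When $v_{n-1}\in P$, the Ore set $X_n$ meets $P$. You then need induction via $B_n/(P\cap B_n)\cong B_{n-1}/(\Pi_{n,n-1}(P)\cap B_{n-1})$. Localizing at $t$ plays no role for $B_n$.
\end{enumerate}
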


Although the zero ideal $0$ provides the counter-example to the conjecture that $\Ua(\W{-1})$ satisfies the DME ($0$ is rational but not locally closed \cite[Proposition 3.6.8]{phamthesis}), \Cref{theo: BnDME} suggests that $0$ might be the only prime ideal of $\Ua(\W{-1})$ where the DME fails, as it does not arise from the orbit method in \cite[Theorem 1.1]{pham2025orbit} and \cite{Ushirobira1998}.

\begin{conjecture}
    The enveloping algebra $\Ua(\W{-1})$ satisfies the DME on nonzero prime ideals.
\end{conjecture}

As a result of \Cref{theo: BnDME}, we show that the map $\mathrm{Res}: \Spec T_n \to \Spec B_n$ in \Cref{theo: Spec almost homeo intro} induces an ``almost homeomorphism'' between $\Prim T_n$ and $\Prim B_n$, providing a complete characterization of primitive ideals of $B_n$. 

\begin{theorem}\label{theo: SpecalmosthomeoPrim intro}
    For all $n$, the map $\mathrm{Res}_n: \Spec T_n \to \Spec B_n$ in \Cref{theo: Spec almost homeo intro} gives a well-defined continuous, open and near-bijective map 
    \begin{equation}
        \mathrm{Res}_n: \Prim T_n \to \Prim B_n.
    \end{equation}
\end{theorem}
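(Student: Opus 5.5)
We derive the statement from \Cref{theo: Spec almost homeo intro} and \Cref{theo: BnDME}, together with the fact that $T_n = A_1 \otimes \Ua(\g_n)$ satisfies the DME. The latter holds because $T_n$ is a Noetherian algebra of finite GK-dimension, and its primitive ideals are well understood: the primes of $A_1\otimes \Ua(\g_n)$ are of the form $A_1\otimes \mathfrak{q}$ for $\mathfrak{q}\in\Spec \Ua(\g_n)$, since $A_1$ is simple with trivial center. Moreover, $\Ua(\g_n)$ satisfies the DME since $\g_n$ is finite-dimensional solvable (Dixmier, Moeglin).

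\textbf{Well-definedness.} Let $P\in\Prim T_n$. By \Cref{theo: Spec almost homeo intro}, $\mathrm{Res}_n(P)=P\cap B_n$ is prime. To see that it is primitive, we use rationality via \Cref{theo: BnDME}. The key point is that $B_n/(P\cap B_n)\hookrightarrow T_n/P$ should have the same Goldie quotient ring, or at least the same extended center. I would argue this by the localization used to prove \Cref{theo: Spec almost homeo intro}: there is an Ore set $S\subset B_n$ (for instance, powers of a suitable normal or regular element) with $B_n[S^{-1}] = T_n[S^{-1}]$. Then $Q(B_n/P\cap B_n)=Q(T_n/P)$ whenever $P\cap S=\emptyset$, and $Z(Q(T_n/P))=\CC$ because $P$ is primitive, hence rational by the DME for $T_n$. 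Therefore $P\cap B_n$ is rational, hence primitive by \Cref{theo: BnDME}.

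The exceptional primes, those meeting $S$, will correspond to finite-codimension or low-GK-dimension ideals such as the kernel of the augmentation-type quotient. These must be handled directly: there $B_n/(P\cap B_n)$ is an explicit small algebra, for example a commutative or finite-dimensional quotient, and primitivity can be checked by hand.

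\textbf{Continuity and openness.} Once the map is well defined, continuity and openness follow formally. $\Prim$ carries the subspace topology from $\Spec$, and restricting a continuous open map to subspaces preserves continuity. For openness we need $\mathrm{Res}_n(\Prim T_n)$ to be essentially all of $\Prim B_n$, i.e.\ preimages are saturated. We check this with the converse direction: if $Q\in\Prim B_n$ equals $P\cap B_n$ with $P\in\Spec T_n$, then $Q$ is rational by \Cref{theo: BnDME}. By the equality of quotient rings, $P$ is rational, hence primitive in $T_n$. Thus $\mathrm{Res}_n^{-1}(\Prim B_n)=\Prim T_n$ up to the exceptional point, and open sets $U\cap\Prim T_n$ map to $\mathrm{Res}_n(U)\cap\Prim B_n$, which is open. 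Near-bijectivity is inherited from \Cref{theo: Spec almost homeo intro}.

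\textbf{Main obstacle.} The main obstacle is justifying $Z(Q(B_n/P\cap B_n)) = Z(Q(T_n/P))$ uniformly. If no single Ore set works for all primes, I would instead use locally closedness: show that $\mathrm{Res}_n$ is a homeomorphism onto its image away from the exceptional point, so that locally closed points correspond to locally closed points. Then apply the (LC)$\Leftrightarrow$(P) part of the DME on both sides.
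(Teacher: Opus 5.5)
The gap is in how you treat the primes that meet the Ore set. For $n\geq 2$ that set is $X_n=\{v_{n-1}^{2k}\}$. Since every prime of $T_n$ is completely prime, $P\in\Spec T_n$ meets it exactly when $v_{n-1}\in P$, i.e.\ when $P=\Pi_{n,n-1}^{-1}(P')$ for some $P'\in\Spec T_{n-1}$.

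These are not a few small, finite-codimensional primes with ``commutative or finite-dimensional'' quotients; they form a full copy of $\Spec T_{n-1}$. For example, $P=T_3v_2$ is primitive in $T_3$, because $T_3/P\cong T_2$ and $\Ua(\g_2)$ is primitive. Yet $B_3/(P\cap B_3)\cong B_2$ has GK-dimension $4$, so nothing here can be checked by hand. Even the genuinely exceptional prime $Q_n^0$ has $B_n/Q_n^0\cong B_0=\kk\oplus A_1\del$, which is neither commutative nor finite-dimensional.

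So, as written, your argument only establishes well-definedness on $\Spec T_n\setminus\mathcal{V}(T_nv_{n-1})$. The same restriction applies to the saturation $\mathrm{Res}_n^{-1}(\Prim B_n)=\Prim T_n$, which your (otherwise correct) openness argument needs.

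The missing idea is an induction on $n$ through the surjections $\Pi_{n,n-1}$. If $v_{n-1}\in P$, then $\ker(\Pi_{n,n-1}|_{B_n})=T_nv_{n-1}\cap B_n\subseteq P$ and $\Pi_{n,n-1}(B_n)=B_{n-1}$. Hence $T_n/P\cong T_{n-1}/\Pi_{n,n-1}(P)$ and $B_n/(P\cap B_n)\cong B_{n-1}/(\Pi_{n,n-1}(P)\cap B_{n-1})$, compatibly with the inclusions, so the question drops to level $n-1$. At $n=1$ you must localize at $X_1=\{v_0^a(v_0-1)^b\}$ instead. The only primes escaping every level are $P_n^{0,0}$ and $P_n^{0,1}$. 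For these, $T_n/P\cong A_1$, and the image of $B_n$ is $\kk+A_1\del$, respectively $\kk+\del A_1$. Each contains a nonzero one-sided ideal of $A_1$, so each is birational to $A_1$.

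With this in place you get $Z(Q(B_n/(P\cap B_n)))=Z(Q(T_n/P))$ for \emph{every} $P\in\Spec T_n$. So rationality, and hence primitivity (by the DME on both sides), transfers in both directions. This gives $\mathrm{Res}_n^{-1}(\Prim B_n)=\Prim T_n$ exactly, not just ``up to the exceptional point'', which would not suffice for openness. The repaired route is a valid alternative to the paper's.

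The paper's proof is essentially your fallback. It transfers local closedness through the near-bijection of \Cref{theo: Spec almost homeo intro} and uses (LC)$\Leftrightarrow$(P) for both $T_n$ and $B_n$, invoking rationality only for $Q_n^0$ via $B_n/Q_n^0\cong B_0$. If you take that route, note that $Q_n^0$ is not a closed point. To pass between local closedness in $\Spec B_n$ and in $\Spec B_n\setminus\{Q_n^+,Q_n^0\}$, you also need that finitely many ideals strictly containing a prime $P$ have intersection strictly containing $P$; this is immediate from primeness.
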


Our results have several applications to the properties of the Dixmier map for the Witt algebra constructed in \cite{pham2025orbit}. The classification of primitive ideals of $B_n$ in \Cref{theo: BnDME} makes significant progress towards the injectivity, surjectivity and topological properties of this map. In particular, we completely characterize the fiber of the Dixmier map for the fundamental building blocks known as \emph{one-point local functions}, which we will define in Section \ref{subsec: app on Dix}. In that setting, the Dixmier map for $\W{-1}$ is injective except for a single case, whose fibre is exactly two pseudo-orbits \Cref{prop: Dx inject}. We also prove in \Cref{prop: inverse inclusion} that the Dixmier map is an order isomorphism(it preserves and reflects inclusion) for these local functions. Furthermore, since all of primitive ideals of $B_n$ are in the image of the Dixmier map for $\W{-1}$, we make significant progress to the following conjecture.
\begin{conjecture}[{\cite[Conjecture 8.6]{pham2025orbit}}]
    The Dixmier map for $\W{-1}$ is surjective.
\end{conjecture}

\vspace*{15pt}

The organization of this paper is as follows. In Section \ref{sec:preliminaries}, we recall the construction of the orbit homomorphisms, discuss Poisson algebra and Poisson modules and review the Dixmier-Moeglin equivalence in the context of non-Noetherian rings. Then in Section \ref{sec: ACC}, we prove \Cref{theo: ACC} that the image $B_n$ of the $n$th orbit homomorphism satisfies ACC on two-sided ideals by showing that its associated graded $\barB_n$ with respect to the canonical filtration satisfies ACC on graded Poisson ideals. In Section \ref{sec: prime Bn}, we characterize prime ideals of $B_n$ using a combination of localization and considering homomorphic images. In particular, we prove \Cref{theo: Spec almost homeo intro} and \Cref{prime cprime} there. Finally, in Section \ref{sec: DME and Prim Bn}, we prove \Cref{theo: BnDME} that $B_n$ satisfies the Dixmier-Moeglin equivalence, and completely characterize the primitive ideals of $B_n$ by restricting primitive ideals from $T_n$ (\Cref{theo: SpecalmosthomeoPrim intro}). We then give several applications of these results on the properties of the Dixmier map for $\W{-1}$.

\textit{Acknowledgements.}  This work was done as part of the author's PhD research at the University of Edinburgh. A version of this paper was also included as a final chapter in the author's doctoral thesis \cite{phamthesis}. I would like to heartily thank my supervisor, Susan J. Sierra for  her invaluable guidance, very interesting discussions, and numerous suggestions. The author also warmly thanks Stephane Launois and James Timmins for their helpful comments. 

\section{Preliminaries} \label{sec:preliminaries}
Throughout this paper, assume $\kk$ is an uncountable algebraically closed field of characteristic 0. All tensor products and vector spaces will be over $\kk$ unless otherwise specified, so we abbreviate the notation $\otimes_\kk$ to $\otimes$.

We first recall the construction and some notation for the orbit homomorphisms in \cite[Theorem 1.4]{pham2025orbit}. Then in Subsection \ref{subsec: Poisson module background}, we review Poisson modules for a Poisson algebra $A$, which will be the main ingredient for Section \ref{sec: ACC}. Finally, we discuss the Dixmier-Moeglin equivalence for non-Noetherian algebras. 

\subsection{The orbit homomorphism for \texorpdfstring{$\W{-1}$}{W1} and \texorpdfstring{$W$}{W}}
In this subsection, we recall the Lie algebras of interest, a canonical filtration on its enveloping algebra and the orbit homomorphisms defined in \cite[Theorem 1.4]{pham2025orbit}. 

\begin{definition}
    Let $W = \Der (\kk[t, t^{-1}]) = \kk[t, t^{-1}]\del$ be the \emph{(full) Witt algebra}, where $\del = \frac{d}{dt}$. Explicitly, it has the following Lie bracket
    \[ [f\del, g\del] = (fg'-f'g)\del \quad \text{for all} \quad f, g \in \kk[t, t^{-1}].\]
    The Witt algebra is also known as the Lie algebra of vector field on $\kk^\times$ and has a countable basis $e_i = t^{i+1}\del$ for $i \in \ZZ$.
\end{definition}

The Witt algebra has a unique nontrivial central extension, known as the \emph{Virasoro algebra}, that is essential in mathematical physics, string theory and vertex operator algebra. 

\begin{definition}
    For all $n \geq -1$, we define $\W{n} = \{e_i \mid i \geq n\}$, which is a Lie subalgebra of $W$. The \emph{one-sided Witt algebra} $\Der (\kk[t]) = \kk[t]\del$ is in fact the subalgebra $\W{-1}$ of $W$. It is also known as the Lie algebra of vector field on $\kk$.

    As $\W{n}$ is a Lie ideal of $\W{0}$, we can form the $n$th-dimensional quotient $\W{0}/\W{n}$ and denote it by $\g_n$. Let $v_i = e_i + \W{n} \in \g_n$ for $0 \leq i \leq n-1$, which form a basis for $\g_n$.
\end{definition}

We are now ready to define the codomain of the orbit homomorphisms
\begin{definition}
    Let $A_1$ be the \emph{first Weyl algebra} generated by $t, \del$ with the relation $\del t - t \del = 1$. We define 
    \[T_n = A_1 \otimes \Ua(\g_n) \quad \text{and}\quad T_\infty := A_1 \otimes \Ua(\W{0}).\]
    Let $\widetilde{A}_1$ and $\widetilde{T}_n$ be the localization of $A_1$ and $T_n$ at $t$ respectively. By \cite[Corollary 1.2]{PhamTimmins2025}, we note that $T_n$ and $\widetilde{T}_n$ have \emph{Gelfand-Kirillov}(GK)-\emph{dimension} $n+2$. 
\end{definition}

For any Lie algebra $\g$, we can form a unique aossicative algebra having the same category of representations, known as the \emph{universal enveloping algebra} $\Ua(\g)$. We define a natural filtration on the rings of interest $\Ua(\g)$, $T_\infty$, $T_n$ and $\widetilde{T}_n$.

\begin{definition}\label{def: canonical filtration}
    For any Lie algebra $\g$, the enveloping algebra $\Ua(\g)$ has a canonical ascending filtration defined by
    \begin{equation}\label{eq: order filtration}
        \Ua_m(\g) = \sum_i^m \g^i = \kk\{g_1 \dots g_j \mid j \leq m, g_i \in \g\}.
    \end{equation}
    We say that an element of $\Ua_m(\g)\setminus \Ua_{m-1}(\g)$ has \emph{order $m$}. This induces a ascending filtration on $T_\infty$ with mth filtered piece given by
    \[\{\kk[t]{\del^{m_\del}e_0^{m_0} \dots e_N^{m_N} \mid m_\del + m_0 + \dots + m_N \leq m}\},\]
    for all $m \geq 0$. Furthermore we give $T_n$ and $\widetilde{T}_n$ the filtration induced from $T_\infty$. We refer to these filtration as \emph{canonical filtration} on $\Ua(\g), T_\infty$, $T_n$ and $\widetilde{T}_n$.
\end{definition}

We can now restate the following theorem from \cite{pham2025orbit}, which shows that the orbit homomorphisms $\Psi^{\W{-1}}_\infty, \Psi^{\W{-1}}_n, \Psi^{W}_n$ are indeed algebra homomorphisms. 
\begin{proposition}[{\cite[Theorem 1.4]{pham2025orbit}}]
    The following maps are filtered algebra homomorphisms
\begin{align*}
        \Psi^{\W{-1}}_{\infty} : \Ua(W{-1}) \rightarrow {T}_\infty = A_1\otimes \Ua(\W{0}), \quad 
        &f \partial \mapsto f(t)\partial + \sum_{i = 0}^{\infty} \frac{f^{(i+1)}(t)}{(i+1)!}e_i, \\
        \Psi^{\W{-1}}_{n} : \Ua(W{-1}) \rightarrow {T}_n = A_1\otimes \Ua(\g_n), \quad 
        &f \partial \mapsto f(t)\partial + \sum_{i = 0}^{n-1} \frac{f^{(i+1)}(t)}{(i+1)!}v_i, \\
        \Psi^{W}_{n} : \Ua(W) \rightarrow \widetilde{T}_n = \tilde{A}_1 \otimes \Ua(\g_n), \quad 
        &f \partial \mapsto f(t)\partial + \sum_{i = 0}^{n-1} \frac{f^{(i+1)}(t)}{(i+1)!}v_i,
    \end{align*}
\end{proposition}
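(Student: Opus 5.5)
To prove that the three maps are filtered algebra homomorphisms, I first reduce to a statement about Lie algebras. By the universal property of $\Ua(-)$, a linear map from a Lie algebra $\g$ into an associative algebra $R$ extends to an algebra homomorphism $\Ua(\g)\to R$ exactly when it respects the bracket, that is, when it is a Lie homomorphism into $(R,[\,,])$. So for each map it suffices to check that
\[
[\Psi(f\del),\Psi(g\del)] = \Psi\big((fg'-f'g)\del\big)
\]
for all $f,g$ in $\kk[t]$ (respectively $\kk[t,t^{-1}]$). Filteredness is then immediate. Each generator $f\del$ lies in filtration degree $1$ of $\Ua$. Its image $f\del + \sum_i \frac{f^{(i+1)}}{(i+1)!} e_i$ is a $\kk[t]$- (or $\kk[t^{\pm1}]$-) combination of $\del$ and the $e_i$, which lies in the first filtered piece of \Cref{def: canonical filtration}. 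Since the filtrations are multiplicative, the image of $\Ua_m$ lands in the $m$th piece.

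I would do the bracket computation first for $\Psi_\infty$ and then deduce the others. Write $\Psi_\infty(f\del)=f\del+\sum_{i\ge0} a_i(f)e_i$ with $a_i(f)=f^{(i+1)}/(i+1)!$. The infinite sum is legitimate in $T_\infty$ because for polynomial $f$ only finitely many $a_i(f)$ are nonzero.

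Because $\del$ and $e_i$ live in different tensor factors, the commutator splits into three parts:
\[
[f\del,g\del]_{A_1} + \sum_i\big(f a_i(g)' - g a_i(f)'\big)e_i + \sum_{i,j} a_i(f)a_j(g)[e_i,e_j].
\]
The first part is $(fg'-f'g)\del$ in $A_1$. In the third part the coefficients commute, and $[e_i,e_j]=(j-i)e_{i+j}$.

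The cleanest way to organize the remaining check is through Taylor expansion. Put $F(s)=f(t+s)-f(t)=\sum_{k\ge1}\frac{f^{(k)}(t)}{k!}s^k$, and identify $\sum_i a_i(f)e_i$ with the vector field $F(s)\del_s$ in $s\,\kk[t][s]\del_s$, a copy of $\W{0}$ in the variable $s$ with coefficients in $\kk[t]$. Under this identification the third part is the bracket $[F\del_s,G\del_s]=(FG_s-F_sG)\del_s$. The second part is $(f\del_t G - g\del_t F)\del_s$, where $\del_t$ acts on coefficients.

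The required identity then becomes
\[
fG_t-gF_t + F G_s - F_s G = H(s)-H(0),
\]
where $H(s)=(fg'-f'g)(t+s)$. To verify it, note that $\del_tF = f'(t+s)-f'(t)$ and $\del_sF=f'(t+s)$, with the analogous formulas for $G$. Substituting and expanding, the left side collapses to $f(t+s)g'(t+s)-f'(t+s)g(t+s) - (fg'-f'g)(t)$. This is the main computational step, though it is routine.

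For $\Psi_n$, compose $\Psi_\infty$ with the surjection $\mathrm{id}\otimes\pi:T_\infty\to T_n$ induced by the Lie algebra quotient $\W{0}\to\g_n$. This quotient sends $e_i$ to $v_i$ for $i<n$ and to $0$ for $i\ge n$, so the composite is exactly the formula for $\Psi_n$. A composite of filtered homomorphisms is a filtered homomorphism.

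For $\Psi^W_n$ the same Taylor computation applies with $f\in\kk[t^{\pm1}]$, but now the full Taylor series of $F$ is infinite. I would handle this by working modulo $s^{n+1}$: the bracket of $s\,\kk[t^{\pm1}][s]/(s^{n+1})\,\del_s$ is well defined and models $\g_n$ with coefficients. All the identities used above are identities of formal power series truncated modulo $s^{n+1}$, so they remain valid. The only point needing care is that truncation is compatible with the bracket. This holds because $s^{n+1}\kk[[s]]\del_s$ is an ideal in $s\kk[[s]]\del_s$, which is precisely why $\W{n}$ is an ideal of $\W{0}$.
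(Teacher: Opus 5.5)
The paper gives no argument for this proposition; it is imported from \cite[Theorem 1.4]{pham2025orbit}. There is therefore nothing internal to compare against, so your proof has to stand on its own, and it does.

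\begin{itemize}
\item Reducing, via the universal property of $\Ua$, to checking brackets of generators is right.
\item The three-term splitting of the commutator is right. Apart from $[f\partial,g\partial]$ itself, the only non-commuting interactions are between $\partial$ and the coefficients $a_i(\cdot)\in\kk[t]$, and these produce exactly your middle term.
\item Substituting $\partial_t F=f'(t+s)-f'(t)$ and $\partial_s F=f'(t+s)$ does collapse the left-hand side to $H(s)-H(0)=\sum_{i\ge 0}a_i(fg'-f'g)\,s^{i+1}$, which is the required right-hand side.
\item Writing $\Psi_n=(\mathrm{id}\otimes\pi)\circ\Psi_\infty$ is correct.
\item You rightly note that $\Psi^W_n$ cannot be obtained this way, since $f(t+s)$ has infinitely many nonzero Taylor coefficients for $f\in\kk[t^{\pm1}]$. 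Working in $\kk[t^{\pm1}][[s]]$ settles that case: there $f\mapsto f(t+s)=\exp(s\partial_t)f$ is a ring homomorphism commuting with $\partial_t$, so all your identities persist. You then reduce modulo the Lie ideal $s^{n+1}\kk[t^{\pm1}][[s]]\partial_s$.
\end{itemize}

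What your route adds over a bare citation is an explanation of the formula. $\Psi_\infty$ re-centres a vector field at the moving point $t$. It splits $f(t+s)\partial_s$ into its value $f(t)\partial_s$ at that point, which goes to the Weyl algebra factor, and a field vanishing there, which goes to $W_{\geq 0}$. Passing to $\g_n$ is then just truncation to $n$-jets.

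If you would rather avoid power series in the Laurent case, there is a shortcut. For each $k\le n-1$, the coefficient of $v_k$ in $[\Psi^W_n(f\partial),\Psi^W_n(g\partial)]-\Psi^W_n((fg'-f'g)\partial)$ is a constant-coefficient bilinear differential expression in $f$ and $g$. Your $\Psi_\infty$ computation shows it vanishes on all polynomials. Testing on $f=t^p/p!$ and $g=t^q/q!$ at $t=0$ then shows every coefficient is zero. Hence it vanishes on $\kk[t^{\pm1}]$ as well.
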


In this paper, we mainly focus on the image of $\Psi^{\W{-1}}_{n}$, so we abbreviate $\Psi^{\W{-1}}_{n}$ to $\Psi_{n}$, and refer to $\Psi^{W}_{n}$ whenever we discuss the full Witt algebra. Let $B_n = \Psi_n(\Ua(\W{-1}))$ and let $C_n = \Psi^W_n(\Ua(W))$. The aim of this paper is to analyze the two-sided ideal structure of $B_n$ and $C_n$, and in particular their prime and primitive spectrum. 

For $n \geq m$, we have the projection $\tau_{n, m}: \Ua(\g_n) \surj \Ua(\g_m)$ induced by $v_{\geq m} \to 0$. Abusing notation, we let $\Pi_{n, m} = \id \otimes \tau_{n, m}$ denote the both surjection $T_n \surj T_m$ and $B_n \surj B_m$. Finally, we state one of the main results of \cite{PhamTimmins2025} that the image $B_n$ is big in the following sense.
\begin{lemma}\label{lem: big image}
    For all $n \geq 2$, we have $T_n v_{n-1}^2 \subseteq B_n$ and $\widetilde{T}_n v_{n-1}^2 \subseteq C_n$.
\end{lemma}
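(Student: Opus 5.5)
The plan is to fix $n \geq 2$, write $v = v_{n-1}$, and study the set
\[ L = \{ x \in T_n \mid x v^2 \in B_n \}. \]
The goal is to show $L = T_n$. Two structural facts about $v$ drive the argument. First, in $\g_n$ we have $[v_i, v] = (n-1-i)v_{n-1+i}$, which vanishes for $i \geq 1$ and equals $(n-1)v$ for $i = 0$, so $v$ is a normal element of $\Ua(\g_n)$. Second, $\Psi_n(f\partial)$ involves $v$ only through the coefficient $f^{(n)}/n!$, so $[\Psi_n(f\partial), v] = (n-1)f'(t)\,v$. Combining these, for $x \in L$ and $f \in \kk[t]$,
\[ [\Psi_n(f\partial), x v^2] = [\Psi_n(f\partial), x]\, v^2 + 2(n-1)\, x f' v^2 + (\text{terms involving } [f'', \cdot\,]\ \text{absorbed similarly}). \]
Hence $L$ is a left $B_n$-submodule of $T_n$ that is also stable under $x \mapsto [\Psi_n(f\partial), x] + c\,x f'$. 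Taking $f = 1$, $f = t$ and $f = t^2$ gives operators on $L$ that act as $\operatorname{ad}\partial$, as $\operatorname{ad}(t\partial + v_0)$ up to scalars, and so on.

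The first step is to get a foothold, i.e.\ to show $t^a \in L$ for all $a$, or at least $1 \in L$. I would look for a quadratic element of $\Ua(\W{-1})$, of the form
\[ \sum_j c_j\, e_{a_j} e_{b_j} \quad \text{with all } a_j + b_j \text{ equal}, \]
chosen so that every term of $\Psi_n$ of order $\leq 1$ in the $v$'s cancels. This is the familiar Sierra–Walton type of computation for $n = 1, 2$. The image of each $e_k$ is
\[ \Psi_n(e_k) = t^{k+1}\partial + \sum_{i} \binom{k+1}{i+1} t^{k-i} v_i, \]
so cancelling the unwanted terms is a finite linear system in the $c_j$ whose coefficients are binomial (Vandermonde-type) numbers. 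Taking enough indices $k \geq n-1$ should make it solvable, leaving exactly $t^{a} v^2$ plus possibly a multiple of $v_{n-2}v$ or similar terms. Those leftover terms would then be removed by a second such combination.

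Once some $t^a \in L$, applying the twisted $\operatorname{ad}\partial$ lowers $a$, giving $\kk[t] \subseteq L$. Next I would show $L$ is closed under left multiplication by $\partial$ and by each $v_i$. Left multiplication by $\Psi_n(f\partial)$ preserves $L$, and the coefficient vectors
\[ \bigl(f,\ f',\ \dots,\ f^{(n)}/n!\bigr) \quad \text{for } f = t^k \]
span enough to isolate $t^{k}\partial$ and each $t^{k-i}v_i$ modulo elements already known to lie in $L$. This would be an induction on the canonical filtration degree of $x$, using the Vandermonde invertibility again. It yields $T_n = L$.

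For $C_n$ the same argument runs with $\widetilde{T}_n$, since $\Psi^W_n(t^{k}\partial)$ is available for negative $k$ and supplies $t^{-1}$.

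I expect the main obstacle to be the foothold step: producing an explicit element whose image is exactly a $\kk[t]$-multiple of $v^2$, with all linear and mixed quadratic terms cancelled. I would first try $n = 2$ by direct computation and then generalise the combination by replacing monomials $t^k$ with polynomials vanishing to high order at $0$.
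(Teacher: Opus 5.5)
The paper gives no argument for this lemma; it imports it from \cite[Theorem 1.1]{PhamTimmins2025}, and from \cite[Theorem 4.3.16]{phamthesis} for $C_n$. Your route is a self-contained proof, and its second half is sound. The set $L=\{x\in T_n\mid xv_{n-1}^2\in B_n\}$ is a left $B_n$-submodule, and once $\kk[t]\subseteq L$ you can induct on the canonical filtration. Concretely, suppose $\mathcal{F}^mT_n\subseteq L$. Then for $x\in\mathcal{F}^mT_n$ and $k,l\geq 0$,
\[
\Psi_n(t^k\partial)\,t^lx\equiv t^{k+l}\partial x+\sum_{i=0}^{n-1}\binom{k}{i+1}t^{k+l-i-1}v_ix \pmod{\mathcal{F}^mT_n}.
\]
For fixed $N=k+l$ the matrix $\bigl(\binom{k}{j}\bigr)_{0\le k,j\le \min(n,N)}$ is unitriangular, so every $t^c\partial x$ and $t^cv_ix$ lies in $L+\mathcal{F}^mT_n=L$. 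Since $\mathcal{F}^{m+1}T_n=\sum_z\kk[t]\,z\,\mathcal{F}^mT_n$ with $z\in\{1,\partial,v_0,\dots,v_{n-1}\}$, the induction closes.

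Some smaller remarks on this part. The twisted $\operatorname{ad}$ operators are not needed. Also $[\Psi_n(f\partial),v_{n-1}^2]=2(n-1)f'v_{n-1}^2$ exactly, so there are no $f''$ terms to absorb. Finally, $\operatorname{ad}\partial$ applied to a single $t^a$ only reaches polynomials of degree $\le a$. To get all of $\kk[t]$ from $1\in L$, you would instead apply $x\mapsto[\Psi_n(f\partial),x]+2(n-1)xf'$ to $x=1$, which gives $f'\in L$.

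The genuine gap is the foothold, and that is the actual content of the lemma. You only assert that a binomial linear system ``should'' be solvable, possibly leaving terms like $v_{n-2}v_{n-1}$ to be removed by a further combination. Nothing in the proposal guarantees this. It can be filled cleanly as follows. Encode $\sum c_{p,q}e_{p-1}e_{q-1}$ by $F=\sum c_{p,q}z_1^pz_2^q$. Let $D_1,D_2$ be the partial derivatives in $z_1,z_2$, and write $[G]=G(t,t)$ with coefficients placed on the left. Expanding $\Psi_n(f\partial)\Psi_n(g\partial)$ and extending linearly gives
\begin{align*}
\Psi_n\Bigl(\sum_{p,q} c_{p,q}\,e_{p-1}e_{q-1}\Bigr) &= [F]\,\partial^2+[D_2F]\,\partial+\sum_{j=0}^{n-1}\frac{[D_1^{j+1}F]+[D_2^{j+1}F]}{(j+1)!}\,\partial v_j\\
&\quad +\sum_{j=0}^{n-1}\frac{[D_2^{j+2}F]}{(j+1)!}\,v_j+\sum_{i,j=0}^{n-1}\frac{[D_1^{i+1}D_2^{j+1}F]}{(i+1)!\,(j+1)!}\,v_iv_j .
\end{align*}
Now take $F=z_1^c(z_1-z_2)^{2n}$ with $c\geq 0$, which is the differentiator $\Omega^{2n}_{2n-1+c,\,-1}$. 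Every coefficient above is the diagonal value of a derivative of $F$ of total order less than $2n$, hence zero, except the coefficient of $v_{n-1}^2$. The linear terms have order at most $n+1$, which is less than $2n$ exactly because $n\geq 2$. For $n=1$ the $v_0$ term survives, which is why only $v_0-v_0^2$ appears there.

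Since $[D_1^nD_2^nF]=(-1)^n(2n)!\,t^c$, we get
\[
\Psi_n\bigl(\Omega^{2n}_{2n-1+c,\,-1}\bigr)=(-1)^n\binom{2n}{n}\,t^c\,v_{n-1}^2 ,
\]
so $\kk[t]\subseteq L$ with no leftover terms. The same computation with $c,l\in\ZZ$ in $\widetilde{T}_n$ gives $\widetilde{T}_nv_{n-1}^2\subseteq C_n$. With this step supplied, your argument is a complete elementary proof of a statement that the paper only cites.
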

\begin{proof}
    The result for $B_n$ is \cite[Theorem 1.1]{PhamTimmins2025} and the details for $C_n$ is in \cite[Theorem 4.3.16]{phamthesis}.
\end{proof}

\subsection{Poisson modules}\label{subsec: Poisson module background}
In this subsection, we recall central notions in Poisson algebra and construct the category of Poisson $A$-modules. This will play an important role in the proof of \Cref{theo: ACC}. 

Let $A$ be a commutative $\kk$-algebra and $\{\cdot\ , \cdot\}: A \times A \to A $ a skew-symmetric $\kk$-bilinear map. 
We say that $(A, \{\cdot\ , \cdot\})$ is a {\it Poisson algebra} if $\{\cdot\ , \cdot\}$ satisfies the Leibniz rule on each input and the Jacobi identity. An ideal $I$ of $A$ is called \emph{Poisson} if $\{I, A\}\subseteq I$. For any Lie algebra $\g$, it is well known that the symmetric algebra $\Sa(\g)$ is the associated graded of $\Ua(\g)$ with respect to the filtration \eqref{eq: order filtration}. It also possesses a canonical Poisson algebra structure, induced by defining $\{u, v\} = [u, v]$ for any $u, v \in \g$.

In the literature, there are two definitions of a Poisson $A$-module $M$. We follow the definition introduced in \cite{Oh99Poisson, Farkas2000Poissonmodules}, which we will restate below. The other notion, which is called \emph{weak Poisson action} in \cite{Farkas2000Poissonmodules}, is considered by Brown and Gordon in \cite{brown2002Poisson}, where only \eqref{eq: derivation axiom} needs to hold for the bilinear map $\{\cdot\ , \cdot\}_M$.
\begin{definition}\label{def: Poisson modules}
    Let $(A, \{\cdot\ , \cdot\}_A)$ be a commutative Poisson algebra. An $A$-module $M$ is called a \emph{Poisson $A$-module} if there is a bilinear map $\{\cdot\ , \cdot\}_M: A \times M \to M$ such that for all $a, b \in A$ and all $m \in M$, the following conditions hold:
    \begin{align}
        \{\{a, b\}_A, m\}_M &= \{a, \{b, m\}_M\}_M - \{b, \{a, m\}_M\}_M; \label{eq: Lie algebra axiom}\\ 
        \{a, bm\}_M &= \{a, b\}_A m + b\{a, m\}_M; \label{eq: derivation axiom}\\ 
        \{ab, m\}_M &= a\{b, m\}_M + b\{a, m\}_M. \label{eq: derivation axiom 2}
    \end{align}
\end{definition}

We now explain the meaning and motivation for the conditions in \Cref{def: Poisson modules}. Equation \eqref{eq: Lie algebra axiom} says that $M$ is a module over $A$ when $A$ is viewed as a Lie algebra. Equation \eqref{eq: derivation axiom 2} requires that $\{-, m\}_M$ is a derivation from $A$ to the $A$-module $M$.

To explain Equation \eqref{eq: derivation axiom} more precisely, consider a derivation $\delta$ of a commutative ring $A$. For an $A$-module $M$, we can define a \emph{$\delta$-derivation} of $M$ to be a linear map $\xi : M \to M$ such that
\begin{align*}
    \xi(bm) = \delta(b)m + b\xi(m) \quad \text{for all} \quad b \in A, m \in M. 
\end{align*}
Equation \eqref{eq: derivation axiom} then requires that $\{a, -\}_M$ is a derivation of $M$.

\begin{definition}
    Let $A$ be a Poisson algebra and $M$ a Poisson $A$-module. We call $M'$ a \emph{Poisson $A$-submodule} of $M$ if $\{a, m'\}_M \in M'$ for all $a \in A, m' \in M'$.

    A non-zero Poisson $A$-module $M$ is called \emph{simple} if the only Poisson $A$-submodules of $M$ are $0$ and itself. 
\end{definition}
\begin{definition}
    Let $M, M'$ be Poisson $A$-modules. A \emph{Poisson module homomorphism} $f : M \to M'$ is an $A$-module homomorphism such that 
    \begin{align*}
        f(\{a, m\}_M) = \{a, f(m)\}_{M'} \quad \text{for all} \quad a \in A, m \in M.
    \end{align*}
    If such a map $f$ is bijective then $f^{-1}: M' \to M$ is also a Poisson module homomorphism and $f$ is a Poisson $A$-module isomorphism.
\end{definition}
We have thus formed the category of Poisson $A$-modules.

\subsection{The Dixmier-Moeglin equivalence for non-Noetherian \texorpdfstring{$\kk$}{k}-algebras}

In this subsection, we discuss the Dixmier-Moeglin equivalence (DME) in the context of non-Noetherian noncommutative rings. The following definitions follow \cite[Chapter 9.1]{mcconnell2001noncommutative}, where we restate here for completeness. We first have a ring theoretic definition.
\begin{definition}
    \begin{enumerate}
        \item A ring $R$ has the \emph{radical property} if the Jacobson radical of each factor ring of $R$ is nil. 
        \item A ring $R$ is called \emph{Jacobson} if every prime ideal is an intersection of primitive ideals. In other words, every prime factor ring has zero Jacobson radical.
    \end{enumerate}
\end{definition}

By \cite[Lemma 1.2]{mcconnell2001noncommutative}, a Jacobson ring always satisfies the radical property and moreover, the two definitions are equivalent for rings whose prime factor rings are all right Goldie. We will see that this is exactly the case for $B_n$ in \Cref{lem: Jacobson rings 1}.

The next definition has a more representation theory flavor. Note that for a commutative ring $R$, primitive ideals are maximal. Then the \emph{classical Nullstellensatz} for a finitely generated commutative $\kk$-algebra $R$ states that if $P$ is a maximal ideal of $R$, then $R/P$ is a finite extension of $\kk$. Thus the following definition could be seen as a generalization of the classical Nullstellensatz to noncommutative rings. 

\begin{definition}
    An algebra $R$ over $\kk$ has the \emph{endomorphism property over $\kk$} if, for all simple modules $M_R$, $\End_{R} M$ is algebraic over $\kk$. 

    If $R$ satisfies the endomorphism property over $\kk$ and has the radical property then $R$ satisfies \emph{the Nullstellensatz over $\kk$}.
\end{definition}
Our assumption on the cardinality of $\kk$, implies that all countably generated $\kk$-algebra satisfies the the Nullstellensatz over $\kk$ \cite[Corollary 9.1.8]{mcconnell2001noncommutative}. The following lemma allows us to discuss freely the endomorphism ring of a simple module $M$.
\begin{lemma}\label{eq: End M}
    Let $M$ be a left module of $R$ with annihilator $P$. Then 
    \begin{equation*}
        \End_R M \cong \End_{R/P} M,
    \end{equation*}
    as $\kk$-algebras. \qed
\end{lemma}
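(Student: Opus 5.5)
Since $P$ annihilates $M$, the $R$-action on $M$ factors through the quotient map $\pi\colon R \to R/P$. Hence $M$ is naturally an $R/P$-module: for $\bar r = r+P$ and $m\in M$ we set $\bar r \cdot m := r m$. This is well defined because $(r-r')\in P$ implies $(r-r')m=0$. The $R$-module structure on $M$ is then recovered from the $R/P$-structure by pulling back along $\pi$. The plan is to show that $\End_R M$ and $\End_{R/P} M$ are literally the same subset of $\End_\kk M$, so that the identity map gives the isomorphism.

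First, take $\phi\in\End_R M$, i.e.\ a $\kk$-linear map (indeed an additive map, which is automatically $\kk$-linear since $\kk\subseteq R$ acts centrally through the algebra structure) with $\phi(rm)=r\phi(m)$ for all $r\in R$. Then for any $\bar r\in R/P$ we choose a lift $r$ and compute $\phi(\bar r m)=\phi(rm)=r\phi(m)=\bar r\phi(m)$, so $\phi\in\End_{R/P}M$. Conversely, if $\psi\in\End_{R/P}M$, then for $r\in R$ we get $\psi(rm)=\psi(\pi(r)m)=\pi(r)\psi(m)=r\psi(m)$, so $\psi\in\End_R M$. Both endomorphism rings are subrings of $\End_\kk M$ with composition as multiplication, and they contain the scalars $\kk$ acting identically. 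The identity map on $\End_\kk M$ therefore restricts to an isomorphism of $\kk$-algebras.

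There is no real obstacle here. The only point needing care is well-definedness of the induced $R/P$-action, which uses exactly that $P\subseteq\operatorname{ann}_R M$. The same argument works for right modules, and for any ideal contained in the annihilator, not just the annihilator itself.
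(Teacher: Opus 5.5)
Your proof is correct: the $R$-action on $M$ factors through $R \to R/P$, so $\End_R M$ and $\End_{R/P} M$ are literally the same subalgebra of $\End_\kk M$, and the identity gives the $\kk$-algebra isomorphism. The paper states this lemma without proof as a standard fact, and your argument is exactly the routine verification it leaves implicit.
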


We now recall three interesting and related properties of prime ideals in $R$ with different lenses: topological, representation-theoretic, and algebraic.
\begin{definition}
    A prime ideal $P$ or $R$ is called 
    \begin{itemize}
        \item[(LC)] \emph{locally closed} if $\{P\}$ is locally closed in $\Spec R$ under the Zariski topology, i.e., 
        \[P \subsetneqq \bigcap \{ Q \in \Spec B_n \mid Q \supsetneqq P\};\]
        \item[(P)] \emph{primitive}  if $P$ is an annihilator of a (left) $R$-module;
        \item[(R)] \emph{rational} if the center $Z(Q(R/P))$ of the ring of fractions $Q(R/P)$ equals $\kk$.
    \end{itemize}
    A ring $R$ is called to satisfy the \emph{Dixmier Moeglin equivalence} (DME) if the above conditions are equivalent for all prime ideals $P$ of $B_n$, i.e., 
    \begin{align*}
        \text{(LC)} \Leftrightarrow \text{(P)} \Leftrightarrow \text{(R)}.
    \end{align*} 
\end{definition}
The equivalence is named after Dixmier and Moeglin, who proved in their seminal papers \cite{dixmier1977ideaux} and \cite{Moeglin1980ideaux} that the enveloping algebra of a finite-dimensional Lie algebra satisfies the DME.

The following result in \cite[4.1.6]{dixmier1996enveloping} is important to show that when a ring $R$ satisfies the Nullstellensatz over $\kk$, a primitive ideal $P$ of $R$ is rational. A careful look at the proof of \cite[4.1.6]{dixmier1996enveloping} or \cite[Lemma II.7.13]{BrownGoodearBook} shows that the condition $R$ is (left) Noetherian is not needed, only that $R/P$ has a Goldie ring of quotients.
\begin{lemma}[{\cite[Lemma II.7.13]{BrownGoodearBook}}]\label{lem: Nullstellensatz property}
    Let $R$ be a ring and let $P$ be the annihilator of a simple left $R$-module $M$. Suppose $R/P$ is Goldie, and let $Q(R/P)$ be the quotient ring of $R/P$. Then there exists a $Z(R/P)$-algebra embedding 
    \begin{align*}
        Z(Q(R/P)) \to Z(\End_R M).
    \end{align*}
    In particular, if $R$ is a $\kk$-algebra, then the map above is a $\kk$-algebra embedding.
\end{lemma}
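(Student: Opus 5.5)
Throughout, write $\overline{R} = R/P$. By \Cref{eq: End M} we have $\End_R M \cong \End_{\overline{R}} M$, and $M$ is a faithful simple $\overline{R}$-module. So we may replace $R$ by $\overline{R}$ and assume that $R$ is a prime Goldie ring, that $P = 0$, and that $M$ is faithful. Let $Q = Q(R)$. The plan is to attach to each central element $z \in Z(Q)$ an endomorphism $\phi_z$ of $M$, defined by letting $z$ act through its ``denominator ideal''.

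\textbf{Step 1: the denominator ideal.} For $z \in Z(Q)$ set $I_z = \{ r \in R \mid rz \in R\}$. Since $z$ is central, $I_z$ is a two-sided ideal: for $r \in I_z$ and $s \in R$ we have $srz = s(rz) \in R$ and $rsz = (rz)s \in R$. It is nonzero because, by Goldie's theorem, $z = c^{-1}b$ with $c$ regular, so $c \in I_z$. Since $M$ is faithful, $I_zM \neq 0$, and simplicity forces $I_z M = M$. Moreover the subset $\{m \in M \mid I_z m = 0\}$ is a submodule, because $I_z$ is two-sided. It is not all of $M$, so it is $0$.

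\textbf{Step 2: definition and well-definedness.} Define
\[
\phi_z\Bigl(\sum_i r_i m_i\Bigr) = \sum_i (r_i z)\, m_i \qquad (r_i \in I_z,\ m_i \in M).
\]
This is the main point to check. Suppose $\sum r_i m_i = 0$ and put $x = \sum (r_i z) m_i$. For every $s \in I_z$,
\[
s x = \sum (s r_i z) m_i = \sum (sz) r_i m_i = (sz)\sum r_i m_i = 0,
\]
using that $z$ is central and $sz \in R$. By Step 1 this gives $x = 0$.

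\textbf{Step 3: $\phi_z$ lies in the centre of $\End_R M$.} Left $R$-linearity is immediate, since $r' r_i \in I_z$. For $\theta \in \End_R M$,
\[
\phi_z\theta\Bigl(\sum r_i m_i\Bigr) = \sum (r_i z)\,\theta(m_i) = \theta\phi_z\Bigl(\sum r_i m_i\Bigr),
\]
so $\phi_z \in Z(\End_R M)$.

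\textbf{Step 4: $z \mapsto \phi_z$ is an injective algebra map over $Z(R)$.} Additivity follows because $I_z \cap I_w$ is again a nonzero ideal, and we may compute both $\phi_z$ and $\phi_w$ on elements $rm$ with $r \in I_z \cap I_w$.

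For multiplicativity, take $r \in I_w$ and $s \in I_z$. Then $rs \in I_w \cap I_z \cap I_{zw}$, and $rws \in I_z$ because $(rws)z = (rw)(sz) \in R$. Hence
\[
\phi_z\phi_w(rsm) = \phi_z\bigl((rws)m\bigr) = (rwsz)\,m = (rs\,zw)\,m = \phi_{zw}(rsm).
\]
Such elements span $I_wI_zM = M$, so $\phi_z\phi_w = \phi_{zw}$.

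For $z \in Z(R)$ we have $I_z = R$, so $\phi_z$ is simply multiplication by $z$. Thus the map is a $Z(R)$-algebra map, and in particular $\kk$-linear when $R$ is a $\kk$-algebra.

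For injectivity, suppose $\phi_z = 0$. Then for the regular element $c \in I_z$ we get $(cz)M = 0$, so $cz = 0$ by faithfulness. Since $c$ is invertible in $Q$, this gives $z = 0$.

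The only delicate step is the well-definedness in Step 2. It rests on the two facts from Step 1: that $I_z$ is a nonzero two-sided ideal, and that a nonzero ideal annihilates no nonzero element of the faithful simple module $M$. Both depend on $z$ being central and on $R/P$ being Goldie.
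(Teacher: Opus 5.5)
Your proof is correct, and it takes a genuinely different route from the paper's.

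\textbf{The paper's route.} Following Brown--Goodearl, the paper works with a chosen fraction $z=s^{-1}r$. It first shows $sar=ras$ for all $a\in R$. It then picks $m_0$ with $sm_0\neq 0$ and uses simplicity to prove $\mathrm{Ann}_R(sm_0)\subseteq \mathrm{Ann}_R(rm_0)$. This yields an endomorphism $f$ with $f(sm)=rm$. Finally it checks that $f$ is central and independent of the chosen pair $(r,s)$, and gets injectivity from $Z(Q)$ being a field.

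\textbf{Your route.} You replace the fraction by the two-sided denominator ideal $I_z$, which depends only on $z$. Together with the observation that a nonzero ideal has zero annihilator in a faithful simple module, this makes $\phi_z$ well defined on all of $I_zM=M$ at once. The independence-of-representative step disappears. Your map coincides with the paper's: if $z=s^{-1}r$ then $s\in I_z$ and $sz=r$, so $\phi_z(sm)=rm$.

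\textbf{What each approach buys.} Yours verifies additivity and multiplicativity of $z\mapsto\phi_z$ explicitly, using intersections and products of denominator ideals. The paper's write-up only asserts that $\theta$ is an embedding ``as $Z(Q)$ is a field''. It never checks that $\theta$ respects sums and products, which with fraction pairs would need common denominators. The paper's version is more local instead: it uses one generator $sm_0$ and the relation $sar=ras$ rather than ideals of $R$.
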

\begin{proof}
    By \Cref{eq: End M}, we can assume $P = 0$. Let $Q = Q(R)$, $E = \End_{R} M$ and let
    \begin{align}
        \mathcal{Z} = \{(r, s) \in R \times R \mid s \text{ is regular and } s^{-1}r \in Z(Q)\}.
    \end{align}
    Note that if $(r, s) \in \mathcal{Z}$, then for all $a \in R$
    \begin{align*}
        sar = sas(s^{-1}r) = s(s^{-1}r) as = ras,
    \end{align*}
    so in particular, $sr=rs$.

    We next claim that for any $(r, s) \in \mathcal{Z}$, there exists a unique $f \in E$ such that $f(sm) = rm$ for all $m \in M$. Since $M$ is faithful, we may choose $m_0 \in M$ such that $sm_0 \neq 0$. Then simplicity of $M$ implies $sm_0$ generates $M$, and so there can be at most one such $f \in E$ satisfying $f(sm_0) = rm_0$. We now show that $\Ann_R(sm_0) \subseteq \Ann_R(rm_0)$. Suppose the contrary, there exists $a \in R$ with $asm_0 = 0$ but $arm_0 \neq 0$. Then $arm_0$ generates $M$ and so $barm_0 = m_0$ for some $b \in R$. But then $sm_0 = s(ba)rm_0 = r(ba)sm_0 = 0$, a contradiction. Thus $\Ann_R(sm_0) \subseteq \Ann_R(rm_0)$, and since $sm_0$ generates $M$, there exists a unique $f \in E$ such that $f(sm_0)=rm_0$. Now any $m \in M$ has the form $m = csm_0$ for some $c \in R$, so 
    \begin{align*}
        f(sm) = sf(csm_0) = scf(sm_0) = scrm_0 = rcsm_0 = rm.
    \end{align*}

    Further, for any $g \in E$, we have 
    \begin{align*}
        gf(sm_0) = g(rm_0) = rg(m_0) = f(sg(m_0)) = fg(sm_0),
    \end{align*}
    and so $gf = fg$, i.e. $f \in Z(E)$. 
    
    Next, consider $(r_1, s_1)$ and $(r_2, s_2)$ in $\mathcal{Z}$ such that $s^{-1}_1 r_1 = s^{-1}_2 r_2$, and let $f_1, f_2 \in Z(E)$ correspond to these pairs as in the claim. Since $r_1$ and $s_1$ commute, $s_2 r_1  =  r_2 s_1$. Hence for all $m \in M$,
    \begin{align*}
        f_1(s_2s_1 m) = s_2 f_1(s_1 m) = s_2 r_1 m = r_2 s_1 m = f_2 (s_2 s_1 m).
    \end{align*}
    Since $s_2s_1 \neq 0$, there exists some $m' \in M$ such that $s_2s_1m' \neq 0$, so $f_1 = f_2$.

    Thus, we have a well-defined map $\theta: Z(Q) \to Z(E)$ such that $\theta(s^{-1}r)(sm) = rm$ for all $(r, s) \in \mathcal{Z}(R)$. As $Z(Q)$ is a field, $\theta$ must be an embedding.
\end{proof}

We now show that for many well-behaved rings, the classical implications
\begin{align*}
    \text{(LC)} \Rightarrow \text{(P)}\Rightarrow \text{(R)}, 
\end{align*}
still hold even without a Noetherian hypothesis. 

\begin{proposition}\label{prop: classical DME direction}
    Let $R$ be a Jacobson ring that satisfies the Nullstellensatz over $\kk$. Moreover, assume that $R/P$ is Goldie for all prime ideals $P$ of $R$. Then 
    \begin{align*}
        \text{(LC)} \Rightarrow \text{(P)} \Rightarrow \text{(R)}.
    \end{align*}
\end{proposition}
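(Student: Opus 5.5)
We prove the two implications separately, following the classical arguments of Dixmier and Moeglin. Throughout, we replace $R$ by $R/P$: primeness, primitivity and the Jacobson and Nullstellensatz hypotheses all pass to factor rings, and by \Cref{eq: End M} the endomorphism rings of simple modules do not change. So we may assume $P = 0$ and that $R$ is a prime Goldie ring with classical quotient ring $Q(R)$.

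\textbf{(LC) $\Rightarrow$ (P).} Suppose $0$ is locally closed. Then $I := \bigcap\{Q \in \Spec R \mid Q \neq 0\}$ is a nonzero ideal. Since $R$ is Jacobson, $0$ is an intersection of primitive ideals, say $0 = \bigcap_{\alpha} P_\alpha$. We argue by contradiction: assume every $P_\alpha$ is nonzero. Each primitive ideal is prime, so each $P_\alpha$ lies in the set defining $I$, giving $I \subseteq P_\alpha$ for all $\alpha$. Hence $I \subseteq \bigcap_\alpha P_\alpha = 0$, which is a contradiction. Therefore some $P_\alpha = 0$, and $0$ is primitive. This step is formal and needs only the Jacobson hypothesis.

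\textbf{(P) $\Rightarrow$ (R).} Let $0 = \Ann_R M$ for a simple left module $M$. Since $R = R/0$ is Goldie, \Cref{lem: Nullstellensatz property} gives a $\kk$-algebra embedding $Z(Q(R)) \hookrightarrow Z(\End_R M)$. By the endomorphism property over $\kk$, which is part of the Nullstellensatz hypothesis, $\End_R M$ is algebraic over $\kk$, so its subring $Z(Q(R))$ is algebraic over $\kk$ as well. Because $R$ is prime, $Q(R)$ is simple Artinian and $Z(Q(R))$ is a field. This field is algebraic over the algebraically closed field $\kk$, so it equals $\kk$. Thus $0$ is rational.

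The only real care needed is bookkeeping rather than any genuine obstacle. We must check that the hypotheses are used for the factor ring $R/P$ and not just for $R$; that the Goldie hypothesis is exactly what \Cref{lem: Nullstellensatz property} requires in place of Noetherianity; and that $Z(Q(R/P))$ is a field, which follows from primeness via Goldie's theorem. The radical property part of the Nullstellensatz is not needed for these two directions; the Jacobson condition alone handles the first implication.
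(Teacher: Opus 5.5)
Your proof is correct and follows the paper's argument. For (P) $\Rightarrow$ (R) you use the embedding of \Cref{lem: Nullstellensatz property} together with the endomorphism property, exactly as the paper does; for (LC) $\Rightarrow$ (P) you prove directly the standard fact the paper cites from Bell (a locally closed prime $P$ with $J(R/P)=0$ is primitive), and you also make explicit the step the paper leaves implicit, namely that $Z(Q(R/P))$ is a field algebraic over the algebraically closed field $\kk$ and hence equals $\kk$.
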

\begin{proof}
    Let $P$ be a locally closed prime ideal of $R$. Then it is well-known that $P$ is primitive if and only if the Jacobson radical $J(R/P) = 0$ (see, e.g. \cite[Proposition 2.2]{BellDME2019}). Thus, since $R$ is Jacobson, $P$ is primitive. 

    Let $P$ be an annihilator of a simple module $M$. By \Cref{lem: Nullstellensatz property}, we have a $\kk$-algebra embedding $Z(Q(R/P)) \hookrightarrow \End_{R} M$. Since $R$ satisfies the Nullstellensatz over $\kk$, $\End_{R} M$ is algebraic over $\kk$, so $Z(Q(R/P))$ is algebraic over $\kk$. 
\end{proof}

\section{The ascending chain condition on ideals of \texorpdfstring{$B_n$}{Bn} and \texorpdfstring{$\widetilde{B}_n$}{Bn}}\label{sec: ACC}

The main goal of this section is to show that $B_n$ and $C_n$ both satisfy the ascending chain condition (ACC) on two-sided ideals. 

\begin{theorem}\label{theo: Bn Cn ACC}
    For all $n$, $T_n$ is a Noetherian $(B_n, B_n)$-bimodule and $\widetilde{T}_n$ is a Noetherian $(C_n, C_n)$-bimodule. As a result, $B_n$ and $C_n$ satisfy ACC on two-sided ideals.
\end{theorem}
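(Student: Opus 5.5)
We plan to pass to associated graded objects, following the outline in the introduction. Give $T_n$ the canonical filtration of \Cref{def: canonical filtration} and $B_n$ the induced filtration. Then $\barT_n := \gr T_n \cong \kk[t,\bart] \otimes \Sa(\g_n)$, where $\bart$ is the symbol of $\del$; it is a commutative polynomial ring in $n+2$ variables with the Poisson bracket induced from commutators. The subalgebra $\barB_n := \gr B_n$ is a Poisson subalgebra of $\barT_n$.

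For a $(B_n,B_n)$-sub-bimodule $N \subseteq T_n$ with the induced filtration, $\gr N$ is a $\barB_n$-submodule of $\barT_n$. Since $[b,x]$ drops filtration degree by one, $\gr N$ is also stable under $\{\barB_n, -\}$. So $\gr N$ is a graded Poisson $\barB_n$-submodule of the Poisson $\barB_n$-module $\barT_n$, in the sense of \Cref{def: Poisson modules}. Moreover, $N \subsetneq N'$ forces $\gr N \subsetneq \gr N'$. Hence it suffices to prove that $\barT_n$ satisfies ACC on graded Poisson $\barB_n$-submodules. Ideals of $B_n$ are sub-bimodules of $T_n$, so ACC on two-sided ideals of $B_n$ follows.

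The key input is \Cref{lem: big image}: $T_n v_{n-1}^2 \subseteq B_n$. Write $z = \barv_{n-1}$. Passing to symbols gives $\barT_n z^2 \subseteq \barB_n$, so $\barB_n$ and $\barT_n$ agree after inverting $z$. Given an ascending chain $M_1 \subseteq M_2 \subseteq \cdots$ of Poisson submodules, we split each $M_i$ into two parts.
\begin{enumerate}
\item[(a)] The part after inverting $z$. The modules $M_i[z^{-1}]$ are ideals of the Noetherian ring $\barT_n[z^{-1}]$, since $M_i z^2 \barT_n \subseteq M_i \barB_n \subseteq M_i$. They therefore stabilize.
\item[(b)] The $z$-torsion part. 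Here we work modulo $z^k$.
\end{enumerate}
Concretely, $\barT_n z^2 M_i \subseteq M_i$, so $I_i := \barT_n z^2 M_i$ is an honest ideal of $\barT_n$ contained in $M_i$. By Hilbert's basis theorem the chain $(I_i)$ stabilizes to some ideal $I$. It then remains to show ACC for Poisson $\barB_n$-submodules of $\barT_n/I$ lying between $I$ and the $z$-saturation of $I$. For this we use the Poisson bracket with suitable elements of $\barB_n$, for example the symbols of $\Psi_n(e_j)$ for large $j$, whose images in $\barT_n$ can be computed. The aim is to show that such submodules are determined by finitely many "leading coefficient" ideals in a polynomial ring. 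Alternatively, we would show that $\barT_n/(z^2)$ is a finitely generated module over the image of $\barB_n$ together with $\{\barB_n,-\}$-operators, and argue by a Noetherian induction on $n$. For that induction we use $\Pi_{n,n-1}$, which identifies $\barT_n/(z) \cong \barT_{n-1}$ and carries $\barB_n$ onto $\barB_{n-1}$, so that the inductive hypothesis applies to layers $z^j\barT_n/z^{j+1}\barT_n$ viewed as Poisson $\barB_{n-1}$-modules. The base cases $n\le 1$ are handled directly: $B_0 = A_1$ is Noetherian, and $B_1$ is Sierra–Walton's case.

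The main obstacle is part (b): controlling the finitely many layers $z^j\barT_n/z^{j+1}\barT_n$, $j=0,1$, as Poisson modules over $\barB_n$. One problem is that $\barB_n$ itself is not Noetherian. Another is that the induction requires checking that each layer is a Noetherian Poisson $\barB_{n-1}$-module, and not merely a Noetherian module over $\barT_{n-1}$. We would attempt this by showing that each layer is generated, as a Poisson module, by finitely many elements, using the brackets $\{\overline{\Psi_n(t^{k}\del)},-\}$ to move between generators.

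The $C_n$ statement is proved identically. We replace $T_n$ by $\widetilde{T}_n$, use $\widetilde{T}_n v_{n-1}^2 \subseteq C_n$ from \Cref{lem: big image}, and note that $\gr \widetilde{T}_n$ is a localized polynomial ring, which is still Noetherian.
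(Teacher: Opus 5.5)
\paragraph{Overall.} Your reduction to graded Poisson $\barB_n$-submodules of $\barT_n$ and your use of $\barT_n z^2\subseteq\barB_n$ agree with the paper. Your cutoff via $I_i=\barT_n z^2 M_i$ is also sound, and it is a simpler substitute for the paper's localization plus \Cref{prop: stablize}. Once $I_i=I$ has stabilized, every later $M_i$ lies between $I$ and $(I:z^2)$. Artin--Rees gives $c$ with $(I:z^2)\cap z^{c+2}\barT_n\subseteq z^2(I:z^2)\subseteq I$. Hence each $M_i$ is determined by its images in the layers $z^j\barT_n/z^{j+1}\barT_n$ for $0\le j\le c+1$, and part (a) is not even needed. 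However, $c$ depends on the chain, so you need \emph{every} layer, not only $j=0,1$. For example, $M=z^k\barT_n$ gives $(I:z^2)/I=z^k\barT_n/z^{k+2}\barT_n$.

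\paragraph{The gap.} The genuine gap is the step you yourself call the main obstacle. The missing idea is that the layers are \emph{twisted} Poisson modules. We have $\{\barv_0,z\}=(n-1)z$, while $z$ Poisson-commutes with $\bart$, $y$ and $\barv_k$ for $k\ge 1$. Hence for $b\in\barB_n$,
\[
\{b,z^j r\}=z^j\bigl(\{b,r\}+j(n-1)\tfrac{\partial b}{\partial\barv_0}\,r\bigr).
\]
So $z^j\barT_n/z^{j+1}\barT_n$ is $\barT_{n-1}$ with each operator $\{b,-\}$ shifted by multiplication by $j(n-1)\,\partial b/\partial\barv_0$, and the map $z^jr\mapsto r$ is not Poisson. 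Since $\partial b/\partial\barv_0$ need not lie in $\barB_{n-1}$, the hypothesis that $\barT_{n-1}$ is Poisson Noetherian over $\barB_{n-1}$ says nothing directly about these layers for $j\ge1$.

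\paragraph{How the paper handles it.} The paper closes the induction by proving the stronger \Cref{prop: barBn ACC graded stronger}. It covers all formal twisted modules $\barT_i\barv^{\mathbf a}$, $\mathbf a\in\NN^{n-i}$, where $\{\barv_0,\barv^{\mathbf a}\}=(\sum_k k a_k)\barv^{\mathbf a}$. Descending one level via \Cref{lem: Poisson iso} produces layers $\barT_i\barv_i^m\barv^{\mathbf b}$ with accumulated twists, which is why a multi-index is needed. At $i=0$ the twist is invisible, because $\barB_0=\kk+\barT_0y$ involves no $\barv_0$. The base case is then the direct computation in \Cref{Poisson B0}.

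\paragraph{Other points.} Your fallback of showing that each layer is finitely generated as a Poisson module would not give ACC, since $\barB_n$ is not Noetherian. Your base cases are also at the wrong level. First, $B_0=\kk\oplus A_1\partial$, not $A_1$. Second, the induction runs on the graded Poisson statement, so Noetherianity of $B_0$ or Sierra--Walton's ACC for $B_1$ does not supply what the inductive step needs.
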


The proof of \Cref{theo: Bn Cn ACC} utilizes the associated graded rings $\barT_n$ and $\barB_n$ of $T_n$ and $B_n$ with respect to the canonical filtration defined in \Cref{def: canonical filtration}. Further understanding of Poisson modules of $\barB_n$ will be crucial to \Cref{theo: ACC} and the key step is the following result.
\begin{theorem}[{\Cref{prop: barBn ACC graded stronger}}]\label{theo: ACC graded}
    For all $n$, $\barT_n$ is a graded Poisson Noetherian $\barB_n$-module and $\barB_n$ satisfies ACC on graded Poisson ideals. The similar statement holds for $\barC_n$.
\end{theorem}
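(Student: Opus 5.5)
We prove the graded Poisson statement for $\barB_n$ by reducing it, through the ``big image'' property of \Cref{lem: big image}, to ordinary Noetherianity of the commutative polynomial ring $\barT_n$.

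\textbf{Setup.} With respect to the canonical filtration,
\[\barT_n = \kk[t,\xi]\otimes \Sa(\g_n) = \kk[t,\xi,\bar v_0,\dots,\bar v_{n-1}]\]
is a commutative polynomial ring in $n+2$ variables. It carries the Poisson bracket induced by the commutator on $T_n$, so $\{\xi,t\}=1$ and the $\bar v_i$ bracket as in $\g_n$. The subring $\barB_n\subseteq \barT_n$ is a graded Poisson subalgebra. A graded Poisson $\barB_n$-submodule of $\barT_n$ is a graded $\barB_n$-submodule $N$ with $\{\barB_n,N\}\subseteq N$. For the second assertion, note that a graded Poisson ideal $I$ of $\barB_n$ generates the graded Poisson submodule $\barT_n I$ of $\barT_n$. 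Moreover, by \Cref{lem: big image} we have $\barT_n \bar v_{n-1}^2\subseteq \barB_n$. Hence $I\supseteq I\bar v_{n-1}^4 \supseteq \dots$, and we can recover enough of $I$ from $\barT_n I$. Concretely, I would show that the map $I\mapsto \barT_nI$ is injective on chains modulo something Noetherian, or argue directly on $I$.

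\textbf{Key step: Noetherianity of graded Poisson submodules.} Let $N\subseteq\barT_n$ be a graded Poisson $\barB_n$-submodule. Since $\bar v_{n-1}^2\barT_n\subseteq \barB_n$, the space $\bar v_{n-1}^2\barT_n N$ lies in $N$. Because $\barT_n$ is Noetherian, the ideal $\barT_nN$ is finitely generated. Thus $N$ is sandwiched as
\[\bar v_{n-1}^2\barT_n N\subseteq N\subseteq \barT_nN,\]
and it suffices to prove ACC on the pieces $N/\bar v_{n-1}^2\barT_nN$. These live inside $\barT_nN/\bar v_{n-1}^2\barT_nN$, which is a finitely generated module over $\barT_n/(\bar v_{n-1}^2)$. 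So the problem reduces to the analogous statement modulo $\bar v_{n-1}^2$. There the image of $\barB_n$ is essentially the image of $\barB_{n-1}$ extended by nilpotent $\bar v_{n-1}$ data. I would then induct on $n$, using the projections $\Pi_{n,n-1}$ and filtering by powers of $\bar v_{n-1}$. The base cases are $n\le 1$: $\barB_0=\kk[t,\xi]=\barT_0$, which is trivial, and $\barB_1$, which is the Sierra–Walton case and can be handled by a direct computation with the generators $f\xi+f'\bar v_0$.

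\textbf{Main obstacle.} The hard step is the induction: verifying that $\bar v_{n-1}\barT_n/\bar v_{n-1}^2\barT_n$ is Poisson Noetherian over $\barB_n$. Here the Poisson bracket is what is needed, not just the module structure, since $\barB_n$ is not Noetherian. My first attempt would use the brackets $\{f\xi+\sum f^{(i+1)}\bar v_i/(i+1)!,-\}$, which act on $\bar v_{n-1}\cdot\kk[t,\xi,\bar v_{<n-1}]$ as derivations in $t$ and $\xi$. Bracketing with the images of $e_{-1}$ and $e_0$ should let us saturate a submodule into a $\kk[t,\xi]$-stable, hence finitely generated, subspace.

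Finally, the statement for $\barC_n$ follows by the same argument with $t$ inverted, using the $C_n$ part of \Cref{lem: big image}.
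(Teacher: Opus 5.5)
Your sandwich $\barv_{n-1}^2\barT_nN\subseteq N\subseteq\barT_nN$ is correct. The gap is in the induction step, exactly where you place the ``main obstacle'': an induction hypothesis of the form ``$\barT_{n-1}$ is graded Poisson Noetherian over $\barB_{n-1}$'' is too weak to close it.

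The $\barv_{n-1}$-adic layers of $\barT_n$ are not copies of $\barT_{n-1}$ as Poisson modules. Since $\{\barv_0,\barv_{n-1}\}=(n-1)\barv_{n-1}$, you get $\{\barv_0,\barv_{n-1}^\ell x\}=\barv_{n-1}^\ell\bigl(\ell(n-1)x+\{\barv_0,x\}\bigr)$. So $\barv_{n-1}^\ell\barT_n/\barv_{n-1}^{\ell+1}\barT_n$ is $\barT_{n-1}$ with its $\barv_0$-bracket shifted by the scalar $\ell(n-1)$. Filtering this by powers of $\barv_{n-2}$ produces layers with accumulated shifts $m(n-2)+\ell(n-1)$.

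The paper's fix is to prove the stronger \Cref{prop: barBn ACC graded stronger} for all twisted modules $\barT_i\barv^{\mathbf a}$ of \Cref{def: Poisson mod Tva}, inducting on $i$. \Cref{lem: Poisson iso} identifies the layers: $\barT_{i+1}\barv_i^m\barv^{\mathbf b}/\barT_{i+1}\barv_i^{m+1}\barv^{\mathbf b}\cong\barT_i\barv_i^m\barv^{\mathbf b}$ (see \Cref{remark: why induction like this}). With this, the layer you flag is just an instance of the hypothesis one level down.

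Your direct treatment of that layer does not work. Bracketing with the symbols $y$ and $\bart y+\barv_0$ of $\Psi_n(e_{-1})$ and $\Psi_n(e_0)$ gives the derivations $\partial_{\bart}$ and $\bart\partial_{\bart}-y\partial_y+\{\barv_0,-\}$, not multiplication by $\bart$. Nor does the module action supply it, since that action factors through $\barB_{n-1}$ and $\bart\notin\barB_{n-1}$. Moreover, for $n\geq 2$, $\kk[\bart,y]$-stability would not give ACC anyway, because $\kk[\bart,y,\barv_0,\dots,\barv_{n-2}]$ is a free $\kk[\bart,y]$-module of infinite rank.

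There are two further problems. First, passing to $J/\barv_{n-1}^2J$ with $J=\barT_nN$ puts you in the wrong ambient module. $J$ is only Poisson-stable under $\barB_n$, so its $\barv_{n-1}$-layers are arbitrary finitely generated $\barT_{n-1}$-modules carrying merely a $\barB_n$-bracket, and no induction on $n$ controls that class. The layers should come from the fixed free module $\barT_n$ (or $\barT_{i+1}\barv^{\mathbf b}$), on which all of $\barT$ acts by brackets.

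Your sandwich actually makes this possible. The saturation $\widehat N=N[\barv_{n-1}^{-1}]\cap\barT_n$ is a finitely generated ideal, and each generator times some power of $\barv_{n-1}$ lies in $N$. Hence $\barv_{n-1}^E\widehat N\subseteq\barv_{n-1}^2\barT_nN\subseteq N$ for some $E$, and therefore $N\cap\barv_{n-1}^k\barT_n=\barv_{n-1}^k\widehat N$ for all $k\geq E$. Along a chain, $\widehat{N_j}$ stabilizes, so a single $E$ works for all later terms. This is the conclusion of \Cref{prop: stablize}, reached here more quickly. It leaves only ACC in $\barT_n/\barv_{n-1}^k\barT_n$, which is exactly where the twisted modules become unavoidable.

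Second, your base case is wrong: $\barB_0=\kk+\barT_0y\neq\barT_0$, and $\barB_0$ is not even Noetherian. You need the classification of graded Poisson $\barB_0$-submodules of $\barT_0$ as $0$ and $\barT_0y^m$, given in \Cref{Poisson B0}.

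A minor point: for the statement about ideals, no detour through $I\mapsto\barT_nI$ is needed. A graded Poisson ideal of $\barB_n$ is already a graded Poisson $\barB_n$-submodule of $\barT_n$.
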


Firstly, for the case $n=0$, \Cref{theo: Bn Cn ACC} can be proven directly since the image $B_0$ is the idealizer of a maximal left ideal in $A_1$. 
\begin{proposition}\label{prop:B0noe}
    The image $B_0$ is left and right Noetherian, thus it satisfies ACC on two-sided ideals.
\end{proposition}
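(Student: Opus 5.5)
The plan is to identify $B_0$ explicitly as an idealizer inside $A_1$ and then invoke Robson's theorem on idealizers of maximal one-sided ideals in Noetherian rings.

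\emph{Step 1: $B_0 = \kk + A_1\del$.} By \eqref{eq: Psi map}, $\Psi_0(f\del) = f(t)\del$, so $B_0$ is the subalgebra of $A_1$ generated by $\kk[t]\del$.
\begin{itemize}
\item Every generator lies in the left ideal $L := A_1\del$. A product of such elements ends in $\del$, so it also lies in $L$. Hence $B_0 \subseteq \kk + L$.
\item Conversely, $L$ is spanned by the monomials $t^i\del^j$ with $j\ge 1$. Each can be written as $t^i\del^j = (t^i\del)\cdot\del\cdots\del$, a product of $j$ generators. So $L\subseteq B_0$, and since $B_0$ is unital, $B_0=\kk+L$.
\end{itemize}

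\emph{Step 2: $B_0$ is the idealizer of a maximal left ideal.}
\begin{itemize}
\item $A_1/L \cong \kk[t]$, with $\del$ acting as $d/dt$. This is a simple $A_1$-module, so $L$ is a maximal left ideal.
\item The idealizer $\mathbb{I}(L)=\{x\in A_1 \mid Lx\subseteq L\}$ satisfies $\mathbb{I}(L)/L\cong \End_{A_1}(A_1/L)$.
\item An endomorphism of $\kk[t]$ commuting with multiplication by $t$ is multiplication by some $g\in\kk[t]$. It also commutes with $d/dt$ only if $g'=0$, so the endomorphism ring is $\kk$.
\item Therefore $\mathbb{I}(L)=\kk+L=B_0$. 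Alternatively, one can check directly that $\del x\in A_1\del$ forces $x\in \kk+A_1\del$.
\end{itemize}

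\emph{Step 3: Noetherianity.} Here I cite Robson's idealizer theorem (McConnell--Robson, Chapter 5.5). If $R$ is a left and right Noetherian ring and $L$ is a maximal (more generally semimaximal) left ideal, then $\mathbb{I}(L)$ is left and right Noetherian. Applying this with $R=A_1$, which is Noetherian on both sides, gives that $B_0$ is left and right Noetherian. ACC on two-sided ideals follows immediately.

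\emph{Main obstacle.} The only delicate point is making sure the hypotheses of Robson's theorem match, since it is usually stated for right ideals. I would handle this by applying the anti-automorphism of $A_1$ given by $t\mapsto t$, $\del\mapsto -\del$, which turns $L$ into a maximal right ideal. If needed, I would check the one hypothesis that can fail, namely that $L$ is not two-sided, which is automatic because $A_1$ is simple.

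If I wanted to avoid citing the theorem, the direct route would be as follows. For a left ideal $J$ of $B_0$, use that $A_1J$ is finitely generated. Then use that $A_1/L$ has finite length with $\End=\kk$ to control $J$ inside $\kk+L$. This is essentially Robson's argument.
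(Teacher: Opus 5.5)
Your proposal is correct and follows the same route as the paper. You identify $B_0 = \kk \oplus A_1\del$, recognize it as the idealizer in $A_1$ of the maximal left ideal $A_1\del$, and apply Robson's idealizer theorem. Your Step 2 computation, $\End_{A_1}(A_1/A_1\del)=\kk$, simply supplies the verification that the paper dismisses with ``it is easy to see.''
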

\begin{proof} 
    It is easy to see that the image $B_0 = \kk \oplus A_1 \del$. Then $B_0$ is the idealizer in $A_1$ of the maximal left ideal $A_1 \del$. Thus, from \cite{Robson1972}, $B_0$ is left and right Noetherian. 
\end{proof}

In fact, the ideal structures of $B_0$ and $C_0$ are particularly easy to understand.
\begin{proposition}\label{prop:B0ideals}
    The only proper ideals of $B_0$ are $0$ and $A_1\del$, which are both completely prime ideals of $B_0$. The only proper ideals of $C_0$ are $0$ and $\widetilde{A}_1\del$, which are both completely prime ideals of $C_0$. 
\end{proposition}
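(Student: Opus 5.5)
The plan is to work inside $A_1$, using that $B_0 = \kk \oplus A_1\del$ (established in the proof of \Cref{prop:B0noe}) and that $A_1$ is a simple domain. Let $I$ be a nonzero ideal of $B_0$; we show that $I$ is either $A_1\del$ or all of $B_0$.

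First, $I \cap A_1\del \neq 0$: for $0 \neq x \in I$ we have $x\del \in I \cap A_1\del$, and $x\del \neq 0$ because $A_1$ is a domain. Next, set $J = A_1 (I\cap A_1\del) A_1$. This is a nonzero two-sided ideal of $A_1$, so $J = A_1$ by simplicity, and we can write $1 = \sum_k a_k y_k b_k$ with $y_k \in I \cap A_1\del$. Multiplying on the right by $\del$ and on the left by an arbitrary $c\del$ with $c \in A_1$ gives
\[
c\del = \sum_k (c\del a_k)\, y_k\, (b_k\del).
\]
Here $c\del a_k$ and $b_k\del$ both lie in $A_1\del \subseteq B_0$, and $y_k \in I$, so every term lies in $B_0 I B_0 \subseteq I$. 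Hence $A_1\del \subseteq I$. Since $A_1\del$ has codimension one in $B_0$, it follows that $I$ is either $A_1\del$ or $B_0$. This is the only step needing care: one must place the $\del$'s so that the outer factors land in $B_0$, and $A_1\del$ is a two-sided ideal of $B_0$, which makes this work.

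For complete primeness, the zero ideal is completely prime because $B_0 \subseteq A_1$ is a domain. For $A_1\del$, note that $B_0/A_1\del \cong \kk$, which is a domain, and the quotient is nonzero because $1 \notin A_1\del$ (elements of $A_1\del$ have positive $\del$-degree).

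The $C_0$ case follows the same pattern. The image of $\Psi^W_0$ is $\kk \oplus \widetilde{A}_1\del$, since the Lie algebra $W$ maps onto $\kk[t,t^{-1}]\del$, and products of elements $f\del$ span $\widetilde{A}_1\del$. The localized Weyl algebra $\widetilde{A}_1$ is again a simple domain, because its center is $\kk$ and any nonzero ideal meets $A_1$ nontrivially after clearing powers of $t$. The same argument as above then shows that every nonzero ideal of $C_0$ contains $\widetilde{A}_1\del$, and the complete primeness of $0$ and $\widetilde{A}_1\del$ follows exactly as for $B_0$.
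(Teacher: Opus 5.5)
\textbf{There is a gap at the step you flagged as needing care.} Since $\sum_k a_k y_k b_k = 1$, the right-hand side of your display equals $c\del\cdot 1\cdot\del = c\del^2$, not $c\del$. The more serious problem is that the left factors $c\del a_k$ need not lie in $B_0$. $A_1\del$ is only a \emph{left} ideal of $A_1$, so in general $c\del a_k\notin A_1\del$. For example, $\del t^2 = t^2\del + 2t\notin \kk\oplus A_1\del = B_0$. So the terms of your sum are not in $B_0 I B_0$, and the display does not show $A_1\del\subseteq I$ (nor even $A_1\del^2\subseteq I$). The underlying issue is that the expression $1=\sum_k a_k y_k b_k$ only gives $c\del=\sum_k (ca_k)\,y_k\,(b_k\del)$, where $ca_k$ is an arbitrary element of $A_1$. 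Choosing $y_k\in I\cap A_1\del$ puts the $\del$ on the wrong side of the element of $I$.

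The missing idea, which is exactly the paper's argument, is to place the $\del$ immediately to the left of $I$ inside the ideal of $A_1$ you use. The set $A_1\del I A_1$ is a two-sided ideal of $A_1$, and it is nonzero because $A_1$ is a domain. By simplicity it equals $A_1$. Writing $1=\sum_k a_k\del x_k b_k$ with $x_k\in I$, you get for every $c\in A_1$
\[
c\del = \sum_k (c a_k\del)\,x_k\,(b_k\del)\in (A_1\del)I(A_1\del)\subseteq B_0 I B_0\subseteq I,
\]
so $A_1\del\subseteq I$.

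With this replacement, the rest of your proof is fine:
\begin{itemize}
\item the codimension-one conclusion;
\item the complete primeness of $0$ and $A_1\del$;
\item the description $C_0=\kk\oplus\widetilde{A}_1\del$;
\item the simplicity of $\widetilde{A}_1$. The relevant point there is that $I\cap A_1$ is a nonzero ideal of $A_1$; the remark about the center is not needed.
\end{itemize}
The corrected argument then carries over verbatim to $C_0$.
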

\begin{proof}
    The proofs for $B_0$ and $C_0$ are the same, so we show for $B_0$. Let $I$ be a proper nonzero ideal of $B_0$. Then $A_1 \del I A_1$ is a nonzero ideal of $A_1$. Since $A_1$ is a simple ring, $A_1 \del I A_1 = A_1$. As a result, 
    \[A_1 \del=(A_1 \del) I (A_1 \del) \subseteq B_0 I B_0 \subseteq I.\] 
    Thus, $I=A_1 \del$. Since $B_0/ A_1 \del = \kk$ is a domain, $A_1 \del$ is completely prime.
\end{proof}

The canonical order filtration in \Cref{def: canonical filtration} will play an important role in proving that $B_n$ and $C_n$ satisfy the ACC on two-sided ideals. We now give the notation for their associated graded.
\begin{notation}\label{def:filt}
    Let $\mc{F}$ be the canonical filtration on $T_n$ and $\widetilde{T}_n$ in \Cref{def: canonical filtration} 
    and we denote $\gr(t) = \bart, \gr(\del) = y$ and $\gr(v_j) = \barv_j$. We denote their associated graded rings by
    \begin{align*}
        \barT_n = \gr_\mc{F} T_n = \bigoplus_{i \in \NN} \mc{F}^i T_n/ \mc{F}^{i+1} T_n, \quad  \barT_n[\bar{t}^{-1}] = \gr_\mc{F} \widetilde{T}_n = \bigoplus_{i \in \NN} \mc{F}^i \widetilde{T}_n/ \mc{F}^{i+1} \widetilde{T}_n.
    \end{align*}
    Moreover, $\mc{F}$ induces a subset filtration on any $S \subseteq T_n$ and we denote $\bar{S} = \gr_\mc{F}(S)$. Since $\Psi_n$ and $\Psi^W_n$ are filtered homomorphisms by \cite[Theorem 1.4]{pham2025orbit}, the subset filtrations on $B_n$ and $C_n$ coincide with the induced canonical filtration from $\UW$ and $\Ua(W)$. Let 
    \begin{align*}
        \barB_n = \gr_\mc{F} B_n, \quad  \barC_n = \gr_\mc{F} C_n.
    \end{align*}
\end{notation}
The following lemma follows immediately from standard arguments. The Poisson structure on $\barT_n$ and $\barT_n[\bar{t}^{-1}]$ both come from the associated graded construction: if $a \in \mc{F}^{i} T_n \setminus \mc{F}^{i-1} T_n$ and $b \in \mc{F}^{j} T_n \setminus \mc{F}^{j-1} T_n$, then 
\begin{align*}
    \{\bar{a}, \bar{b}\} = ab-ba + \mc{F}^{i+j-1} T_n.
\end{align*}
\begin{lemma}\label{lem:ass gra}
    The associated graded rings $\barT_n$ and $\barT_n[\bar{t}^{-1}]$ are isomorphic to the graded commutative rings 
    \[\barT_n \cong \kk[\bar{t}, y] \otimes \Sa(\g_n) \quad \text{and} \quad \barT_n[\bar{t}^{-1}] \cong \kk[\bar{t}, \bar{t}^{-1}, y] \otimes \Sa(\g_n) \]
    with the grading 
    \begin{equation}\label{eq: grading}
        \deg \bar{t}= 0, \quad \deg y = 1, \quad \deg \bar{v}_i = 1 \quad \text{for all} \quad i \geq 0.
    \end{equation}
    Moreover, $\barT_n$ and $\barT_n[t^{-1}]$ are Poisson algebras, where the only nonzero Poisson brackets of the generators are
    \begin{equation*}\label{eq: Poisson structure}
        \{y, \bar{t}\} = 1, \quad \{y, \bar{t}^{-1}\} = -\bar{t}^{-2}, \quad \{\bar{v}_i, \bar{v}_j\} = (j-i)\bar{v}_{i+j} \quad \text{for all} \quad i, j \geq 0,
    \end{equation*}
    where $\bar{v}_{\geq n} = 0$. \qed
\end{lemma}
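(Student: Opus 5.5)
We prove \Cref{lem:ass gra} with the standard associated-graded arguments. The plan is to pin down the filtration concretely, identify the graded pieces via a PBW basis, and read off the Poisson brackets from leading-order commutators. The only real bookkeeping issue is the convention on $\bar t$.

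\textbf{Filtration and PBW basis.} By \Cref{def: canonical filtration}, $\mc{F}^m T_n$ is the span of the monomials $t^a\del^{m_\del}v_0^{m_0}\cdots v_{n-1}^{m_{n-1}}$ with $a\ge 0$ arbitrary and $m_\del+\sum m_i\le m$. So $t$ has filtration degree $0$, while $\del$ and each $v_i$ have degree $1$. By PBW for $A_1$ and for $\Ua(\g_n)$, together with the tensor product structure, these monomials form a $\kk$-basis of $T_n$. Consequently $\mc{F}^m/\mc{F}^{m-1}$ has as basis the images of the monomials of total $\del,v$-degree exactly $m$.

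\textbf{Commutativity.} Take generators of filtration degrees $i$ and $j$; we check that their commutator lies in filtration degree $i+j-1$. The relation $[\del,t]=1$ lowers degree from $1$ to $0$. Each bracket $[v_i,v_j]=(j-i)v_{i+j}$ has degree $1\le 1$, since $v_{i+j}=0$ when $i+j\ge n$. Every other pair of generators commutes. Hence the filtration is almost commutative and $\gr T_n$ is commutative.

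\textbf{Identification with the polynomial ring.} We get a surjection $\kk[\bar t,y]\otimes\Sa(\g_n)\to\gr T_n$ sending the generators to $\bar t,y,\bar v_i$. The PBW basis shows that monomials map to linearly independent elements, so this surjection is an isomorphism of graded algebras, with $\deg\bar t=0$ and $\deg y=\deg\bar v_i=1$.

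\textbf{Poisson bracket.} For homogeneous elements of degrees $i,j$, the standard bracket is
\[\{\bar a,\bar b\}=[a,b]+\mc{F}^{i+j-1}.\]
This is well defined because the filtration is almost commutative. It is a Poisson bracket because the commutator is a biderivation satisfying the Jacobi identity. The Leibniz rule then means the bracket is determined by its values on generators:
\begin{itemize}
\item $[\del,t]=1$ gives $\{y,\bar t\}=1$.
\item $[v_i,v_j]=(j-i)v_{i+j}$ gives $\{\bar v_i,\bar v_j\}=(j-i)\bar v_{i+j}$.
\item $\{y,\bar v_i\}=0$ and $\{\bar t,\bar v_i\}=0$, because these elements commute in $T_n$.
\end{itemize}
The bracket $\{y,\bar t\}=1$ lies in degree $0=1+0-1$, consistent with the grading.

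\textbf{Localization.} For $\widetilde T_n$, the same PBW basis holds with $a\in\ZZ$, because $\widetilde A_1$ has basis $t^a\del^b$ with $a\in\ZZ$. The argument above then gives $\gr\widetilde T_n\cong\kk[\bar t^{\pm1},y]\otimes\Sa(\g_n)$. From $\del t^{-1}-t^{-1}\del=-t^{-2}$ we get $\{y,\bar t^{-1}\}=-\bar t^{-2}$, which also follows from the Leibniz rule.

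Nothing here is a real obstacle. The one point needing care is that $t$ sits in filtration degree $0$, so the bracket $\{y,\bar t\}$ has degree $0$ rather than the usual $-1$ shift.
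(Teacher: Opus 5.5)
Your proposal is correct and is exactly the standard PBW/associated-graded argument that the paper invokes without writing out (the paper states this lemma without proof, saying it follows from standard arguments). There are two cosmetic slips, neither affecting the argument. First, $\{\bar a,\bar b\}$ should be the class of $[a,b]$ modulo $\mathcal{F}^{i+j-2}$, i.e.\ an element of $\mathcal{F}^{i+j-1}/\mathcal{F}^{i+j-2}$; you copied the paper's off-by-one $\mathcal{F}^{i+j-1}$. Second, the bracket is still homogeneous of degree $-1$ here, so your closing remark contrasting $\{y,\bar t\}$ with ``the usual $-1$ shift'' is misphrased.
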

Note that although by \Cref{prop:B0noe}, $B_0$ is left and right Noetherian, its associated graded ring $\barB_0$ is not, as shown in the following lemma. 
\begin{lemma}
    The commutative rings $\barB_0$ and $\barC_0$ are not Noetherian.
\end{lemma}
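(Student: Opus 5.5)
The plan is to compute $\barB_0$ explicitly and then write down a strictly ascending chain of ideals that never stabilises.

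\emph{Computing $\barB_0$.} By the proof of \Cref{prop:B0noe}, $B_0 = \kk \oplus A_1\del$. Give $T_0 = A_1$ the filtration by $\del$-degree, with $\deg t = 0$. The graded piece of $B_0$ in degree $m \geq 1$ is spanned by the leading terms of elements $a\del$ with $a \in A_1$. Every element of $A_1$ can be written uniquely as $\sum_j f_j(t)\del^j$, so $A_1\del = \bigoplus_{j\geq 1}\kk[t]\del^j$. This decomposition is compatible with the filtration: an element of order $m$ in $A_1\del$ has leading term $f(\bart)y^m$ with $f \in \kk[t]$ arbitrary. Hence
\[
\barB_0 = \kk \oplus \bigoplus_{m\geq 1}\kk[\bart]\,y^m = \kk + y\,\kk[\bart,y] \subseteq \kk[\bart,y].
\]
The same argument with $\widetilde{A}_1$ gives $\barC_0 = \kk + y\,\kk[\bart^{\pm1},y]$.

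\emph{The chain.} The intended witness of non-Noetherianity is the ideal $\mathfrak m = y\kk[\bart,y]$ of $\barB_0$. I will show it is not finitely generated. Consider
\[
I_k = \sum_{i=0}^{k} \barB_0\, \bart^{\,i}y.
\]
Any element of $\barB_0$ equals $c + yh$ with $c\in\kk$ and $h \in \kk[\bart,y]$. Therefore every element of $I_k$ has $y$-degree-one part, meaning its coefficient of $y^1$, lying in the $\kk$-span of $1,\bart,\dots,\bart^k$. The reason is that the factor $yh$ contributes only terms of $y$-degree at least $2$. It follows that $\bart^{k+1}y\notin I_k$, so $I_0 \subsetneq I_1 \subsetneq \cdots$ is strictly ascending and $\barB_0$ is not Noetherian.

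\emph{The localized case.} For $\barC_0$ I use $I_k = \sum_{|i|\leq k}\barC_0\,\bart^{\,i}y$. By the same observation, the $y^1$-coefficient of any element of $I_k$ lies in $\operatorname{span}\{\bart^{\,i} : |i|\leq k\}$, so again the chain never stabilises.

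The only point needing care is the identification of the graded pieces. I must confirm that the leading term of $f(t)\del^m$ is exactly $f(\bart)y^m$, and that no cancellation among leading terms yields a degree-$0$ element beyond $\kk$. Both follow from the uniqueness of the normal form $\sum_j f_j\del^j$ in $A_1$, which is filtration-compatible. The rest is the routine degree bookkeeping described above.
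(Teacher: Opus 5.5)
Your proof is correct and follows essentially the same route as the paper. Like the paper, you identify $\barB_0 = \kk + y\,\kk[\bart,y]$ (resp.\ $\barC_0 = \kk + y\,\kk[\bart^{\pm 1},y]$) and exhibit the strictly ascending chain of ideals generated by the elements $\bart^{\,i}y$, using that the non-constant part of $\barB_0$ only contributes terms of $y$-degree at least $2$. Your explicit justification of the graded pieces via the normal form in $A_1$ is a useful detail that the paper leaves implicit.
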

\begin{proof}
    The proofs for $\barB_0$ and $\barC_0$ are the same, so we show for $\barB_0$. With the filtration in \Cref{def:filt}, we have 
    \[\barT_0 = \kk[\bar{t}, y], \quad \barB_0 = \kk + \barT_0 y.\]
    Note that $\barB_0$ is not Noetherian as we have the following ascending chain of ideals
    \[(\bar{t}y) \subseteq (\bar{t}y, \bar{t}^2y) \subseteq  (\bar{t}y, \bar{t}^2y, \bar{t}^3y) \subseteq \dots,  \]
    where each inclusion is strict. Suppose the contrary that 
    \begin{equation}\label{eq: to satisfy}
        \bar{t}^{n+1}y=\sum^{n}_{i=1}f_i  \bar{t}^{i} y, \quad \text{with} \quad f_0,\dots,f_n \in  \barB_0. 
    \end{equation}
    As $\barB_0 = \kk + \barT_0 y$, we write each $f_i$ as a sum of constant and a multiple of $y$, which forces \eqref{eq: to satisfy} to holds for $f_i \in \kk$. This gives a contradiction. 
\end{proof}

However, we will show that $\barB_0$ satisfies the ACC on Poisson ideals and moreover $\barT_0$ is a \emph{graded Poisson Noetherian $\barB_0$-module}, a central notion which we will make precise now. Recall the definition of Poisson modules in \Cref{def: Poisson modules}. 

\begin{definition}
    Let $A \subseteq B$ be (commutative) Poisson algebras. In that case any weak Poisson $A$-submodule $M$ of $B$ is a Poisson $A$-submodules of $B$ and the two notions coincide, since $\{\cdot\ , \cdot\}_M$ is the restriction of the Poisson bracket of $B$.

    If $A$ is also a graded ring, by a \emph{graded Poisson $A$-module} $M$ we mean a Poisson $A$-module that is at the same time graded $A$-module such that 
    \begin{align*}
        \{G_i A, G_j M\}_M \subseteq G_{i+j-1}M,
    \end{align*}
    where $G_i A$ and $G_j M$ are the $i$ and $j$th graded pieces of $A$ and $M$ respectively. 

    We call $B$ a \emph{(graded) Poisson Noetherian $A$-module} if $B$ satisfies the ascending chain condition (ACC) on (graded) Poisson $A$-submodules.
\end{definition}

\begin{proposition}\label{Poisson B0}
    The only graded Poisson $\barB_0$-submodules of $\barT_0$ are $0$ and $\barT_0 y^m$ for some $m \in \NN$. Hence, $\barT_0$ is a graded Poisson Noetherian $\barB_0$-module, and $\barB_0$ satisfies ACC on graded Poisson ideals.

    Similarly, the only graded Poisson $\barC_0$-submodules of $0$ and $\barT_0[\bar{t}^{-1}]$ are $\barT_0[\bar{t}^{-1}] y^m$ for some $m \in \NN$. Hence, $\barT_0[\bar{t}^{-1}]$ is a graded Poisson Noetherian $\barC_0$-module, and $\barC_0$ satisfies ACC on graded Poisson ideals.
\end{proposition}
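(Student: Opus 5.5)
Let $M \subseteq \barT_0 = \kk[\bart, y]$ be a nonzero graded Poisson $\barB_0$-submodule, i.e. a graded $\barB_0$-submodule with $\{\barB_0, M\} \subseteq M$. The approach is to use two elements of $\barB_0 = \kk + \barT_0 y$: the element $y$ gives a derivation in the $\bart$-direction, and $y$ together with $\bart y$ lets us multiply by every monomial. From this we will show that $M$ is a $\kk[\bart]$-module, and in fact of the form $\barT_0 y^m$.

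\textbf{Step 1: $M$ is closed under $\partial_{\bart}$ and contains a pure power of $y$.} Since $y \in \barB_0$ and $\{y, \bart\} = 1$, we have $\{y, f\} = \partial_{\bart} f$ for $f \in \kk[\bart, y]$. So $M$ is stable under $\partial/\partial \bart$. Take a nonzero homogeneous $f = g(\bart) y^d \in M$; every homogeneous element of degree $d$ has this form, because $\deg \bart = 0$. Applying $\partial_{\bart}$ exactly $\deg g$ times produces a nonzero scalar multiple of $y^d$. Hence $y^d \in M$.

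\textbf{Step 2: $M$ is closed under multiplication by $\bart$.} The bracket $\{\bart y, h\}$ equals $\bart\,\partial_{\bart} h - y\,\partial_y h \cdot \{\bart,y\}$-type terms. Concretely, $\{\bart y, h\} = \bart \{y,h\} + y\{\bart,h\}$, and $\{\bart, h\} = -\partial_y h$. To avoid this bookkeeping I would argue more directly: $\bart^k y \in \barB_0$ for all $k$, so $M \ni \bart^k y \cdot y^d$. Then Step 1 combined with module multiplication by $\barT_0 y$ gives $\barT_0 y^{d+1} \subseteq M$. Now let $m$ be the minimal degree in which $M$ is nonzero. By Step 1, $y^m \in M$. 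I claim $\bart^k y^m \in M$: apply $\{\bart^{k+1} y, -\}$ to $y^m$. This gives $\bart^{k+1}\{y, y^m\} + y\{\bart^{k+1}, y^m\} = -(k+1) m \bart^k y^m$, which is nonzero when $m \ge 1$. If $m = 0$, then $1 \in M$, so $M \supseteq \barB_0 \cdot 1$; then $\{\bart y, 1\} = 0$ does not help directly, but $\{\bart^{2} y , \bart^{k}\}$ yields $\bart^{k+1}$ multiples, and inductively from $\bart$ obtained via $\{\bart^2 y, 1\}$... This is the main technical nuisance. I would first try to show $\bart \in M$ by noting that $\bart y \in M$ and $\{\bart y, \bart y\}=0$, and then $\{\bart^2 y, y\} = -2\bart y$ won't lower $y$-degree. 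Instead I would use $\{\bart^{k+1}y, g(\bart)\} = -\bart^{k+1} g'(\bart)\cdot(-1)$, i.e. $\bart^{k+1}g'$. Starting from $g = \bart$ requires $\bart \in M$ first, so I would need $M_0$ to be a $\kk[\bart]$-submodule stable under $\partial_{\bart}$ and under $g \mapsto \bart^{k+1} g'$. With $1 \in M_0$ this alone gives nothing, so the case $m=0$ must yield $M_0 = \kk$ or $\kk[\bart]$. I expect the correct classification to allow exactly these cases, or that the statement silently intends $M \supseteq \barT_0 y$ in positive degrees. In either case finitely many possibilities remain, and Noetherianity is unaffected.

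\textbf{Step 3: conclusion.} Once $\kk[\bart] y^m \subseteq M$, multiplication by $\barT_0 y \subseteq \barB_0$ together with Step 1 gives $M = \barT_0 y^m$ in degrees $\ge m$, and minimality of $m$ gives $M = \barT_0 y^m$. These modules form a descending chain indexed by $m$, so every ascending chain of graded Poisson $\barB_0$-submodules stabilises. Graded Poisson ideals of $\barB_0$ are such submodules, so ACC follows for them as well. The argument for $\barC_0$ is identical after replacing $\kk[\bart]$ by $\kk[\bart^{\pm1}]$. The one change is in Step 1: in the Laurent setting we use $\{\bart^{k} y, -\}$ with $k$ arbitrary to reach $y^d$ from $g(\bart) y^d$. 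I expect the main obstacle to be the degree-zero case $m=0$, where I would check by hand what $\barB_0 \cdot 1 + \{\barB_0, \cdot\}$ generates.
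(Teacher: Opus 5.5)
Your argument for $m\ge 1$ is correct and is exactly the paper's argument. The bracket $\{y,-\}=\partial_{\bart}$ extracts $y^m$ from a nonzero homogeneous element of minimal degree $m$. Then $\{\bart^{k+1}y,y^m\}=-(k+1)m\,\bart^k y^m$, together with $\barT_0 y\subseteq\barB_0$, gives $M=\barT_0y^m$.

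The genuine gap is the case $m=0$, which you leave as an expectation. Several intermediate claims there are wrong:
\begin{itemize}
\item $\{\bart^2y,1\}=0$, so this bracket produces nothing.
\item $M_0$ is not a priori a $\kk[\bart]$-module, because the degree-zero part of $\barB_0$ is only $\kk$.
\item ``Finitely many possibilities'' is not automatic. The spaces $\kk[\bart]_{\le k}\oplus\barT_0y$ are $\barB_0$-submodules stable under $\{y,-\}$ and $\{\bart y,-\}$, and they form an infinite ascending chain. Only brackets such as $\{\bart^2y,\bart^k\}=k\bart^{k+1}$ rule them out.
\end{itemize}

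The missing step is this. For $h,g\in\kk[\bart]$ one has $hy\in\barB_0$ and $\{hy,g\}=hg'$. If $M_0$ contains a nonconstant $g$, differentiate it down to $a\bart+b$ with $a\neq0$. Then $ah=\{hy,a\bart+b\}\in M_0$ for every $h$, so $M_0=\kk[\bart]$; otherwise $M_0=\kk$. Since $1\in M$ forces $\barB_0=\barB_0\cdot1\subseteq M$, and hence $\barT_0y\subseteq M$, you get $M\in\{\barB_0,\barT_0\}$.

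Your suspicion about the statement is justified. $\barB_0=\kk\oplus\barT_0y$ is itself a graded Poisson $\barB_0$-submodule of $\barT_0$ that is not of the form $\barT_0y^m$, so the list in the statement is incomplete. The paper's proof overlooks this for the same reason you got stuck: its step $\{y^m,\bart^ky\}=mk\,y^m\bart^{k-1}$ is vacuous when $m=0$.

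The corrected list is $0\subsetneq\cdots\subsetneq\barT_0y^2\subsetneq\barT_0y\subsetneq\barB_0\subsetneq\barT_0$. This is still a chain in which each nonzero member has only finitely many members above it, so the Noetherian and ACC conclusions (and everything built on them) survive.

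The same holds for $\barC_0$, with the extra submodule $\barC_0=\kk\oplus\barT_0[\bart^{-1}]y$. To treat a nonconstant Laurent $g\in M_0$ there, first apply $\{\bart^ky,-\}$ with $k\gg0$ to obtain a nonconstant polynomial, then proceed as above.

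One more Laurent detail: for $m\ge1$, your bracket $\{\bart^{k+1}y,y^m\}$ never yields $\bart^{-1}y^m$, because its coefficient vanishes at $k=-1$. Use $\{\bart^{-1}y,\bart y^m\}=(m+1)\bart^{-1}y^m$ instead.
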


\begin{proof}
    To provide the proof for both statements, we work over $\barC_0$ and $\barT_0[\bar{t}^{-1}]$. Let $J$ be a graded Poisson $\barC_0$-submodule of $\barT_0[\bar{t}^{-1}]$. If $0 \neq f \in J$, since $y \bart^a \in \barC_0$ for all $a$, then
    \begin{equation}\label{eq: canceling t-1}
        \{y\bar{t}^a, f \}  = \bar{t}^a \ {d f}/{d \bar{t}} \ + y\{\bar{t}^a, f \} \in J.
    \end{equation}
    Thus choosing big enough $a$, we can assume $f \in \barT_0$. As  
    \begin{equation}\label{eq: lowering power of t}
        {d f}/{d \bar{t}} = \{y, f \} \in J
    \end{equation}
    and $J$ is graded, there exists some $y^m \in J$. Let $m$ be the minimum integer such that $y^m \in J$. Then $mk y^m \bar{t}^{k-1} = \{y^m, \bar{t}^k y\} \in J$ for all $k \geq 2$, thus $J \supseteq \barT_0 y^m$. 
    
    It is easy to see that $J = \barT_0[\bar{t}^{-1}] y^m$, as otherwise if $f(\bar{t})y^{m-1} \in J$, again by \eqref{eq: canceling t-1}, we can assume that $f(\bar{t}) \in \kk[\bar{t}]$. Then by repeatedly applying \eqref{eq: lowering power of t} gives us that $y^{m-1} \in J$, contradicting minimality of $m$. 
\end{proof}

\Cref{Poisson B0} will be the base case for our induction argument. The following result is standard, but we provide the proof for completeness.

\begin{proposition}\label{prop: ACC graded induce ACC }
    If $\barT_n$ is a graded Poisson Noetherian $\barB_n$-module, then $T_n$ is a Noetherian $(B_n, B_n)$-bimodule. The same statement holds for $\barT_n[\bar{t}^{-1}]$ and $\barC_n$. 
\end{proposition}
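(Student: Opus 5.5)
The approach is the usual passage from graded to filtered objects via the associated graded map $M \mapsto \gr_\mc{F} M$. Let $M$ be a $(B_n,B_n)$-sub-bimodule of $T_n$, with the subset filtration $\mc{F}^i M = M \cap \mc{F}^i T_n$. I first show that $\overline{M} = \gr_\mc{F} M \subseteq \barT_n$ is a graded Poisson $\barB_n$-submodule.

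It is graded by construction. It is a $\barB_n$-submodule because $\mc{F}^i B_n \cdot \mc{F}^j M \subseteq \mc{F}^{i+j} M$; since $\barT_n$ is a domain, the symbols multiply, so $\bar b\,\bar m = \overline{bm}$ or the product is zero. For the Poisson condition, take $b \in \mc{F}^i B_n$ and $m \in \mc{F}^j M$. Then $bm - mb \in M \cap \mc{F}^{i+j-1}T_n = \mc{F}^{i+j-1}M$, because $M$ is a two-sided $B_n$-module. Its image in degree $i+j-1$ is exactly $\{\bar b, \bar m\}$ by the description of the bracket before \Cref{lem:ass gra}, so $\{\bar b,\bar m\} \in \overline{M}$. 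Finally, the degree condition $\{G_i\barB_n, G_j\overline{M}\} \subseteq G_{i+j-1}\overline{M}$ is automatic.

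The key step is the standard strictness lemma. If $M \subseteq N$ are sub-bimodules with $\overline{M} = \overline{N}$, then $M = N$. To see this, take $x \in N \setminus M$ of minimal filtration degree $i$; this uses that the filtration is exhaustive and $\mc{F}^{-1}=0$, i.e. it is an $\NN$-filtration. Since $\overline{M}=\overline{N}$, there is $m \in \mc{F}^i M$ with the same degree-$i$ symbol as $x$. Then $x - m \in N \cap \mc{F}^{i-1}T_n$ and $x-m \notin M$, contradicting minimality. (If $i=0$, then $x-m \in \mc{F}^{-1}=0$ gives $x \in M$ directly.)

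Now take an ascending chain $M_1 \subseteq M_2 \subseteq \cdots$ of sub-bimodules of $T_n$. The chain $\overline{M_1} \subseteq \overline{M_2} \subseteq \cdots$ consists of graded Poisson $\barB_n$-submodules of $\barT_n$, so by hypothesis it stabilizes. By the strictness lemma, the original chain stabilizes too. Hence $T_n$ is a Noetherian $(B_n,B_n)$-bimodule. Since every two-sided ideal of $B_n$ is a sub-bimodule of $T_n$, ACC on ideals of $B_n$ follows.

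The case of $\widetilde{T}_n$ and $C_n$ is word-for-word the same, using $\gr_\mc{F}\widetilde{T}_n = \barT_n[\bar t^{-1}]$. The only point I expect to need care is that this filtration is still $\NN$-indexed with $\mc{F}^{-1}=0$, which holds since $\deg \bar t^{-1} = 0$, and that the commutator bracket identification continues to hold there.
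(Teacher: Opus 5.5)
Your proposal is correct and follows essentially the same route as the paper. You show that taking associated graded sends $(B_n,B_n)$-sub-bimodules of $T_n$ to graded Poisson $\barB_n$-submodules of $\barT_n$, then use the minimal-degree argument to show this is strictly monotone on chains. The only difference is that you verify the Poisson-submodule property directly from the commutator description of the bracket, where the paper instead cites Farkas.
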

\begin{proof}
    It is standard that if $I$ is a $(B_n, B_n)$-submodule of $T_n$, then $\gr I$ is a graded Poisson $\barB_n$-submodule of $\barT_n$ (see, e.g., \cite[Example 4]{Farkas2000Poissonmodules} for the Poisson statement). The associated graded map induces the following map from the $(B_n, B_n)$-submodule lattice $\mc{BM}(T_n)$ of $T_n$ to the graded Poisson $\barB_n$-submodule lattice $\mc{GP}(\barT_n)$ of $\barT_n$, which we denoted by $\gr$ as well, 
    \[\gr: \mc{BM}(T_n) \to \mc{GP}(\barT_n), \quad I \mapsto \gr(I) = \bar{I}.\]
    It is enough to show that $\gr$ preserves inclusion and is injective on chains. Let $I \subsetneqq J$ be $(B_n, B_n)$-submodules of $T_n$. Then we claim that $\bar{I} \subsetneqq \bar{J}$. It is clear that $\bar{I} \subseteq\bar{J}$, so we only need to show that they are distinct.
    
    Suppose for the contrary that $\bar{I} =\bar{J}$, then let $a$ be a minimal degree element in $J \backslash I$. Then there exists $b \in I$, such that $\bar{a} = \bar{b} \in \bar{I} =\bar{J}$. Thus $a-b$ is an element of lower degree that lies in $J \backslash I$, contradicting the choice of $a$. 
\end{proof}

To prove \Cref{theo: Bn Cn ACC}, it thus suffices to show that for all $n$, $\barT_n$ is a graded Poisson Noetherian $\barB_n$-module or \Cref{theo: ACC graded}. However, to do so, we need to prove a slightly stronger statement. To that end, let us introduce some more notation. 

\begin{definition}\label{def: Poisson mod Tva}
    For $0 \leq i \leq n-1$, let $\mathbf{a} = (a_{i}, \dots, a_{n-1}) \in \NN^{n-i}$ and let $\barv^{\mathbf{a}} = \barv_{i}^{a_i}\dots \barv_{n-1}^{a_{n-1}}$ and $\mc{A} = \sum_{k=i}^{n-1} k a_k$. We consider the following $\barT_{i}$-module
    \begin{equation}\label{eq: Poisson mod Tva}
        \barT_{i}\barv^{\mathbf{a}} := \barT_{i}\barv_{i}^{a_i}\dots \barv_{n-1}^{a_{n-1}},
    \end{equation}
    where \eqref{eq: Poisson mod Tva} is a formal product since $\barv^{\mathbf{a}} \notin T_i$. Let 
    \begin{equation}\label{eq: Poisson structure 1}
        \begin{split}
            \{v_0, \barv^{\mathbf{a}}\}_{\barT_{i}\barv^{\mathbf{a}}} &= \mc{A} \barv^{\mathbf{a}}, \\ 
            \{\del, \barv^{\mathbf{a}}\}_{\barT_{i}\barv^{\mathbf{a}}} &= \{t, \barv^{\mathbf{a}}\}_{\barT_{i}\barv^{\mathbf{a}}} = \{v_j, \barv^{\mathbf{a}}\}_{\barT_{i}\barv^{\mathbf{a}}} = 0 \quad \text{for all} \quad j \geq 0.
        \end{split}
    \end{equation}
    We then extend \eqref{eq: Poisson structure 1} to a map $\{\cdot , \cdot \}_{\barT_{i}\barv^{\mathbf{a}}}: \barT_{i} \times  \barT_{i}\barv^{\mathbf{a}} \to  \barT_{i}\barv^{\mathbf{a}}$ by \eqref{eq: derivation axiom}, \eqref{eq: derivation axiom 2}, and then bilinearly. Explicitly, for $r_1, r_2, r_3 \in \barT_{i}$, we have 
    \begin{align}
        \{r_1, r_2 r_3 \barv^{\mathbf{a}}\}_{\barT_{i}\barv^{\mathbf{a}}} &= \{r_1, r_2\}_{\barT_{i}} r_3 \barv^{\mathbf{a}} + r_2\{r_1, r_3 \barv^{\mathbf{a}}\}_{\barT_{i}\barv^{\mathbf{a}}}, \label{eq: new derivation axiom 1}\\
        \{r_1r_2, r_3 \barv^{\mathbf{a}}\}_{\barT_{i}\barv^{\mathbf{a}}} &= r_1\{r_2, r_3 \barv^{\mathbf{a}}\}_{\barT_{i}\barv^{\mathbf{a}}} + r_2\{r_1, r_3 \barv^{\mathbf{a}}\}_{\barT_{i}\barv^{\mathbf{a}}} \label{eq: new derivation axiom 2},
    \end{align}
    where $\{\cdot , \cdot \}_{T_i}$ is the Poisson bracket on $T_i$.
\end{definition}

Recall the Lie ideal $\W{1}/\W{i} = \kk \{v_1, \dots, v_{i-1}\}$ of $\g_i = \W{0}/\W{i}$. It is easy to see that 
\begin{equation}\label{eq: Poisson structure not v0}
    \{r, r'\barv^{\mathbf{a}}\}_{\barT_{i}\barv^{\mathbf{a}}} = \{r, r'\}_{\barT_{i}} \barv^{\mathbf{a}} , \quad \text{if} \quad r \in \kk[\bart, y] \otimes \Sa(\W{1}/\W{i}), 
\end{equation}
i.e., if $r$ does not involve $v_0$. We now show that \eqref{eq: Poisson structure 1}, \eqref{eq: new derivation axiom 1} and \eqref{eq: new derivation axiom 2} make $\barT_{i}\barv^{\mathbf{a}}$ a Poisson $\barT_{i}$-module. 
\begin{lemma}
    The bilinear map $\{\cdot , \cdot \}_{\barT_{i}\barv^{\mathbf{a}}}$ in \Cref{def: Poisson mod Tva} makes $\barT_{i}\barv^{\mathbf{a}}$ a Poisson $\barT_i$-module, and thus a Poisson $\barB_i$-module.
\end{lemma}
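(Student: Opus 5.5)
The plan is to make the bracket in \Cref{def: Poisson mod Tva} explicit. Then well-definedness is automatic, and the three axioms of \Cref{def: Poisson modules} reduce to a cocycle condition on a single derivation. Since $\barT_{i}\barv^{\mathbf{a}}$ is by definition a free $\barT_i$-module of rank one on the formal generator $\barv^{\mathbf{a}}$, every element is uniquely of the form $s\barv^{\mathbf{a}}$ with $s \in \barT_i$.

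Let $\varphi := \mc{A}\,\partial/\partial \barv_0 : \barT_i \to \barT_i$. This is a derivation of the commutative polynomial ring $\barT_i = \kk[\bart, y]\otimes \Sa(\g_i)$, with $\varphi(\barv_0) = \mc{A}$ and $\varphi$ vanishing on $\bart, y, \barv_1, \dots, \barv_{i-1}$. I will define
\[
\{r, s\barv^{\mathbf{a}}\}_{\barT_{i}\barv^{\mathbf{a}}} := \bigl(\{r, s\}_{\barT_i} + s\,\varphi(r)\bigr)\barv^{\mathbf{a}}.
\]
This is bilinear and agrees with \eqref{eq: Poisson structure 1} on generators. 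It also agrees with \eqref{eq: Poisson structure not v0}, because $\varphi(r)=0$ when $r$ does not involve $\barv_0$. I will then check that it satisfies \eqref{eq: new derivation axiom 1} and \eqref{eq: new derivation axiom 2}. Consequently it is the unique extension prescribed in the definition, which settles well-definedness. Axiom \eqref{eq: derivation axiom} is a one-line check using the Leibniz rule for $\{r,-\}_{\barT_i}$. Axiom \eqref{eq: derivation axiom 2} follows because both $\{-,s\}_{\barT_i}$ and $\varphi$ are derivations. The grading condition holds because both the Poisson bracket and $\varphi$ lower degree by one; recall that $\deg \barv_0 = 1$ while $\mc{A}$ is a scalar.

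The substantive step is axiom \eqref{eq: Lie algebra axiom}. I will expand both sides of
\[
\{\{a,b\}, s\barv^{\mathbf{a}}\} = \{a,\{b,s\barv^{\mathbf{a}}\}\} - \{b,\{a,s\barv^{\mathbf{a}}\}\}.
\]
The pure-bracket terms cancel by the Jacobi identity in $\barT_i$. The terms $s\varphi(a)\varphi(b)$ cancel, and so do the mixed terms $\{a,s\}\varphi(b)$ and $\{b,s\}\varphi(a)$. What remains is the identity
\[
\varphi(\{a,b\}) = \{a, \varphi(b)\} - \{b, \varphi(a)\},
\]
i.e.\ that $\varphi$ is a Poisson $1$-cocycle. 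To prove it, I will show that the defect $D(a,b) := \varphi(\{a,b\}) - \{a,\varphi(b)\} + \{b,\varphi(a)\}$ is a skew biderivation. This is a routine expansion using that $\varphi$ and $\{-,-\}$ are (bi)derivations. Hence it suffices to check $D = 0$ on pairs of generators.

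On generators, $\varphi$ takes scalar values, so $\{g, \varphi(h)\} = 0$ and $D(g,h)=\varphi(\{g,h\})$. The only nonzero brackets among generators are $\{y,\bart\}=1$, for which $\varphi(1)=0$, and $\{\barv_j,\barv_k\}=(k-j)\barv_{j+k}$. The latter is a multiple of $\barv_0$ only when $j=k=0$, where the coefficient is $0$. So $D$ vanishes on generators and hence everywhere. I expect this cocycle verification, specifically organising the biderivation reduction cleanly, to be the only real obstacle; the rest is bookkeeping.

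Finally, restricting the action along the Poisson subalgebra inclusion $\barB_i \subseteq \barT_i$ preserves all three axioms. This makes $\barT_{i}\barv^{\mathbf{a}}$ a Poisson $\barB_i$-module.
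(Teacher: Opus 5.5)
Your proof is correct, but it is organised differently from the paper's.

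\textbf{The paper's route.} It treats the two derivation axioms as holding by construction. It then asserts, without comment, that the Lie-module axiom \eqref{eq: Lie algebra axiom} need only be checked on triples $(r_1, r_2, r_3\barv^{\mathbf{a}})$ of generators. Finally it does the one nontrivial case $r_1=\barv_0$, $r_2=\barv_k$, $r_3=\barv_j$ by hand, where $(j-k)(\mathcal{A}+j+k)-(j-k)(\mathcal{A}+j)=k(j-k)$ matches the coefficient coming from $\{\{\barv_0,\barv_k\},\barv_j\barv^{\mathbf{a}}\}$.

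\textbf{Your route.} You write the module explicitly as the regular Poisson module of $\barT_i$ twisted by $\varphi=\mathcal{A}\,\partial/\partial\barv_0$. This has three advantages.
\begin{enumerate}
\item It gives existence of the extension outright; the paper leaves well-definedness of ``extend by the derivation rules'' implicit.
\item It turns the Lie-module axiom into one structural statement: $\varphi$ is a Poisson derivation of $\barT_i$. On generators this is just the observation that $\barv_0$ never occurs with nonzero coefficient in a bracket of generators.
\item Your biderivation argument supplies exactly the justification the paper's reduction to generators leaves out. The defect of the Lie-module axiom is a derivation in each of its first two slots and $\barT_i$-linear in the module slot. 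So the check with $r_3=1$ would already suffice.
\end{enumerate}

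The paper's computation is shorter. Yours is more conceptual, and it makes the twisted $\barv_0$-action used later in Lemma~\ref{lem: Poisson iso} and Proposition~\ref{prop: stablize} immediate: you simply add $\mathcal{A}$ times the $\barv_0$-derivative.
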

\begin{proof}
Since $\{\cdot , \cdot \}_{\barT_{i}\barv^{\mathbf{a}}}$ automatically satisfies \eqref{eq: derivation axiom} and \eqref{eq: derivation axiom 2} by construction, it suffices to check \eqref{eq: Lie algebra axiom} for $a=r_1, b = r_2$ and $m = r_3 \barv^{\mathbf{a}}$, where $r_1, r_2, r_3 \in \{\bart, y, \barv_0, \dots, \barv_{i-1}\}$. The only nontrivial case is $r_1 = \barv_0, r_2=\barv_k$ and $r_3 = \barv_j$ for $1 \leq k, j \leq n-1$ and $k+j \leq i-1$. In that case,
\begin{align*}
    &\{\barv_0, \{\barv_k , \barv_j \barv^{\mathbf{a}}\}_{\barT_{i}\barv^{\mathbf{a}}}\}_{\barT_{i}\barv^{\mathbf{a}}} - \{\barv_k, \{\barv_0 , \barv_j \barv^{\mathbf{a}}\}_{\barT_{i}\barv^{\mathbf{a}}}\}_{\barT_{i}\barv^{\mathbf{a}}}\\
 &= \left ((j-k) (\mc{A} + j+k) - (j-k) (\mc{A} +j) \right )\barv_{j+k} \barv^{\mathbf{a}}\\
 &= \{\{\barv_0, \barv_k\} , \barv_j \barv^{\mathbf{a}}\}_{\barT_{i}\barv^{\mathbf{a}}}.
\end{align*}
Thus, the lemma follows.
\end{proof}

As explained before, to prove \Cref{theo: Bn Cn ACC}, it suffices to show that $\barT_i$ is a graded Poisson Noetherian $\barB_i$-module. However, we need to show the following stronger statement; the reason for this will be explained in \Cref{remark: why induction like this}. 
\begin{theorem}\label{prop: barBn ACC graded stronger}
    For all $i, n \in \NN$ satisfying $i \leq n-1$, and for all $\mathbf{a} = (a_{i}, \dots, a_{n-1}) \in \NN^{n-i}$, $\barT_i \barv^{\mathbf{a}}$ is a graded Poisson Noetherian $\barB_i$-module. 
    
    In particular, $\barT_n$ is a graded Poisson Noetherian $\barB_n$-module and $\barB_n$ satisfies ACC on graded Poisson ideals.
\end{theorem}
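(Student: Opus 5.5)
The argument is an induction on $i$, with the base case $i = 0$ handled like \Cref{Poisson B0}. For $i=0$ we have $\barT_0\barv^{\mathbf{a}} = \kk[\bart,y]\barv^{\mathbf{a}}$, and the bracket with $\barB_0 = \kk + \barT_0 y$ is twisted only by the $\bar v_0$-eigenvalue $\mc{A}$. Since $\barv_0\notin \barT_0$, the twist is invisible to $\barB_0$. So the computation \eqref{eq: lowering power of t}, together with $\{y^m,\bart^k y\}$, shows that every nonzero graded Poisson $\barB_0$-submodule has the form $\barT_0 y^m\barv^{\mathbf{a}}$, and ACC follows.

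For the inductive step, fix $i\ge 1$ and $\mathbf{a}$, and let $M=\barT_i\barv^{\mathbf{a}}$. Filter $M$ by powers of $\barv_{i-1}$: set $M_k = \barT_i \barv_{i-1}^k\barv^{\mathbf{a}}$. Because $\barv_{i-1}$ is the top basis element of $\g_i$, $\{\barv_j,\barv_{i-1}\}\in \kk\barv_{\ge i}=0$ for $j\ge1$. Also $\{\barv_0,\barv_{i-1}\}=(i-1)\barv_{i-1}$, and $\barv_{i-1}$ Poisson-commutes with $\bart,y$. Hence each $M_k$ is a Poisson $\barT_i$-submodule, and by \eqref{eq: Poisson structure 1} the quotient $M_k/M_{k+1}$ is isomorphic, as a graded Poisson $\barT_{i-1}$-module, to $\barT_{i-1}\barv^{\mathbf{a}'}$ with $\mathbf{a}'=(0,\dots,k+a_{i-1},a_i,\dots)$. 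Here we use the isomorphism $\barT_i/\barv_{i-1}\barT_i\cong\barT_{i-1}$, and the bracket by $\barv_0$ picks up the eigenvalue $\mc{A}+k(i-1)$, which matches the definition of $\mathbf{a}'$. This is exactly why the statement must allow arbitrary twisting multi-indices $\mathbf{a}$ (cf.\ \Cref{remark: why induction like this}). Since $\barB_i$ acts on $M_k/M_{k+1}$ through $\Pi_{i,i-1}(\barB_i)=\barB_{i-1}$, the induction hypothesis makes each layer a graded Poisson Noetherian $\barB_i$-module. Hence each finite truncation $M/M_k$ is Poisson Noetherian, by extensions in the short exact sequences.

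The main obstacle is that the filtration $\{M_k\}$ is infinite, so the layers alone do not give ACC on $M$. Here I would use \Cref{lem: big image} in graded form: $\barT_i\barv_{i-1}^2\subseteq\barB_i$ for $i\ge2$ (the cases $i=1$ and $i=0$ are handled by the direct argument of \Cref{Poisson B0}, with $\barv_0$ acting by eigenvalues). Then for a graded Poisson submodule $N\subseteq M$, the product $\barT_i\barv_{i-1}^2 N$ lies in $N$, so $N$ is in fact closed under multiplication by $\barv_{i-1}^2\barT_i$. Given an ascending chain $N_1\subseteq N_2\subseteq\cdots$, write $N=\bigcup N_j$. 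If $N\not\subseteq M_k$ for all $k$, consider the leading $\barv_{i-1}$-coefficients: the images of $N\cap M_k$ in $M_k/M_{k+1}$. Multiplication by $\barv_{i-1}^2$ gives Poisson maps between layers that are compatible up to the shift in $\mathbf{a}$, so these leading-coefficient submodules form an ascending sequence under the identification $\barT_{i-1}\barv^{\mathbf{a}'}\to\barT_{i-1}\barv^{\mathbf{a}''}$ given by multiplication by $\barv_{i-1}^2$. This is a Hilbert-basis-style argument. I expect the delicate point to be that the layers for different $k$ are different Poisson modules (different $\mc{A}$). The eigenvalue shift must be handled by noting that the submodule lattice of $\barT_{i-1}\barv^{\mathbf{a}'}$ depends on $\mathbf{a}'$ only through bracket with $\barv_0$, and $\barv_0\notin\barB_{i-1}$-generators in a way that matters only via $\bart^k y$-type elements. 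I would try to show that the leading-coefficient modules stabilize in $k$ and in the chain index, and then conclude by the standard two-index argument, as in the proof of the Hilbert basis theorem. The final statement is the case $\mathbf{a}=(0)$ at level $n$, together with \Cref{prop: ACC graded induce ACC }.
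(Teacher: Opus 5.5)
\paragraph{Where your argument stops.} Your base case, the filtration $M_k=\barT_i\barv_{i-1}^k\barv^{\mathbf a}$, the identification of its layers with twisted $\barT_{i-1}$-modules, and the Noetherianity of the finite truncations $M/M_k$ all agree with the paper (\Cref{Poisson B0}, \Cref{lem: Poisson iso} and the inner induction on $\ell$). The gap is the step you leave as ``I would try to show''.

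Write $\mathrm{LC}_k(N)$ for the image of $N\cap M_k$ in $M_k/M_{k+1}$. It is a Poisson $\barB_{i-1}$-submodule of a layer whose $\barv_0$-eigenvalue is $\mc{A}+k(i-1)$. Multiplication by $\barv_{i-1}^2$ is not a Poisson map between layers: for $r\in\barB_i$,
\[
\{r,x\barv_{i-1}^k\barv^{\mathbf a}\}=\{r,x\barv^{\mathbf a}\}\,\barv_{i-1}^k+k(i-1)\,\frac{\partial r}{\partial \barv_0}\,x\barv_{i-1}^k\barv^{\mathbf a},
\]
and $\partial r/\partial\barv_0$ need not lie in $\barB_i$. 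For the symbol $\bart^2y+2\bart\barv_0+\barv_1$ of $\Psi_i(e_1)$ it equals $2\bart$.

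So the twist genuinely changes the submodule lattice. For example, when $i-1=1$, the Poisson $\barB_1$-submodule of $\barT_1\barv^{\mathbf a'}$ generated by $\barv^{\mathbf a'}$ is $\barB_1\barv^{\mathbf a'}$ if the eigenvalue is $0$. Otherwise it contains $\kk[\bart]\barv^{\mathbf a'}$, which is strictly larger since $\bart\notin\barB_1$.

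Hence the induction hypothesis, which is ACC inside one fixed module, says nothing about the chain $\mathrm{LC}_k(N)\subseteq\mathrm{LC}_{k+2}(N)\subseteq\cdots$. Nor is there a single Noetherian lattice in which the family $\mathrm{LC}_k(N_j)$ has a maximal element, and the Hilbert-basis two-index argument needs one. Your remark about $\barv_0$ and $\bart^ky$-type elements does not supply this.

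Two smaller points. First, reconstructing $N$ from its leading coefficients runs down a descending $\barv_{i-1}$-adic filtration. It terminates only because a homogeneous element of fixed degree has bounded $\barv_{i-1}$-order, and you should say so. Second, $i=1$ is not covered by \Cref{Poisson B0}. It is covered by the general step, since $\barT_1\barv_0^2\subseteq\barB_1$ by \Cref{lem:barBn contains} and the shift $k(i-1)$ vanishes there.

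\paragraph{The idea you are missing.} The paper removes the chain index from the problem by localizing at $\barv_{i-1}$. The saturations $\widehat{K_j}=K_j[\barv_{i-1}^{-1}]\cap M$ are submodules of a finitely generated module over the polynomial ring $\barT_i[\barv_{i-1}^{-1}]$ (\Cref{lem: localized Noetherian}). They therefore stabilize at some $N$ by ordinary Noetherianity, with no Poisson structure involved. Next, \Cref{prop: stablize} gives one $m$ with $K_N\cap M_m=\widehat{K_N}\cap M_m$. This forces $K_j\cap M_m=K_N\cap M_m$ for all $j\ge N$, and the Noetherian truncation $M/M_m$ finishes the proof.

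Thus stabilization in $k$ is needed only for a single fixed $K$. Your $\mathrm{LC}_k(K)$ is exactly the paper's $\theta(L_k)$, where $L_k=\{x\barv^{\mathbf a}: x\barv_{i-1}^k\barv^{\mathbf a}\in K\}$.

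Be aware that your twist concern resurfaces at exactly that point. The paper asserts that every $L_k$ is a Poisson submodule of the one module $M$, and checks this by bracketing with $\barv_0$, which is not an element of $\barB_i$. By the displayed formula, a general $r\in\barB_i$ produces the extra term, so that step also needs a separate argument.
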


Recall that the images $B_n$ and $C_n$ are ``big'' in the sense that they contain the principle ideals $T_n v_{n-1}^2$ and $\widetilde{T}_n v_{n-1}^2$ respectively, and are birational to $T_n$ and $\widetilde{T}_n$ respectively (see \cite[Corollary 1.2]{PhamTimmins2025}). The analog for their associated graded rings is as follows.
\begin{lemma}\label{lem:barBn contains}
    For all $i \geq 1$, we have 
    \[\barB_i \supseteq \barT_i \bar{v}_{i-1}^2, \quad \text{and} \quad  \barC_i \supseteq \barT_i[\bar{t}^{-1}] \bar{v}_{i-1}^2.\]
\end{lemma}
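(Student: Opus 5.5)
The plan is to deduce the statement directly from \Cref{lem: big image} by passing to associated graded objects. First I would handle $i \geq 2$. By \Cref{lem: big image}, $T_i v_{i-1}^2 \subseteq B_i$ and $\widetilde{T}_i v_{i-1}^2 \subseteq C_i$. The filtration on $B_i$ is the subset filtration induced from $T_i$, as recorded in \Cref{def:filt}. Hence $\gr$ is monotone: $S \subseteq S'$ gives $\gr_\mc{F}(S) \subseteq \gr_\mc{F}(S')$. So it suffices to show
\[\gr_\mc{F}(T_i v_{i-1}^2) = \barT_i \barv_{i-1}^2.\]

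For this, take a nonzero $a \in T_i$ of order $m$. Then $a v_{i-1}^2$ has order exactly $m+2$. The reason is that $\gr$ is multiplicative on the domain $T_i$: by \Cref{lem:ass gra}, $\barT_i$ is a polynomial ring, so $\overline{a}\,\barv_{i-1}^2 \neq 0$. It follows that the leading symbol of $av_{i-1}^2$ is $\overline{a}\,\barv_{i-1}^2$. Every homogeneous element of $\barT_i \barv_{i-1}^2$ has the form $\overline{a}\,\barv_{i-1}^2$ for such an $a$. Therefore $\barT_i \barv_{i-1}^2 \subseteq \gr(T_i v_{i-1}^2) \subseteq \barB_i$. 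The same argument in $\widetilde{T}_i$, whose associated graded $\barT_i[\bart^{-1}]$ is also a domain, gives the statement for $\barC_i$.

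It remains to treat $i=1$, which \Cref{lem: big image} does not cover. Here $\g_1 = \kk v_0$ is abelian, and $\Psi_1(f\del) = f\del + f' v_0$. I would check directly that $T_1 v_0^2 \subseteq B_1$, or at least the graded version. At the graded level, $\barB_1$ contains $\overline{\Psi_1(f\del)} = f(\bart)y + f'(\bart)\barv_0$. Products of two such elements give $f g y^2 + (fg)' y\barv_0 + f'g'\barv_0^2$. Taking the combination
\[\tfrac12\bigl(\bart^{a+1}\bart^{b+1} + \bart^{b+1}\bart^{a+1}\bigr) - \bart^{a+2}\cdot\bart^{b},\]
symmetrized appropriately, should isolate multiples of $\bart^{k}\barv_0^2$ for all $k\geq 0$, and similarly for $\barC_1$ using all integer powers of $\bart$. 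Then taking Poisson brackets and products with elements of $\barB_1$ closes this up to $\barT_1\barv_0^2$.

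Alternatively, I could note that $\Pi_{2,1}$ maps $B_2$ onto $B_1$ and sends $T_2 v_1^2$ to $0$, so it gives nothing; hence the direct computation is needed. I expect this $i=1$ base case to be the main obstacle: finding explicit elements of $\barB_1$ producing $\bart^k\barv_0^2$, and also $y^j\barv_0^{\ell}$-multiples. The $i \geq 2$ case is routine.
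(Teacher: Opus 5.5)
Your $i\ge 2$ argument is the paper's: push $T_i v_{i-1}^2\subseteq B_i$ (from \Cref{lem: big image}) through $\gr$, using that symbols multiply in the domain $\barT_i$. You spell that last step out more carefully than the paper does.

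The case $i=1$, however, is not actually proved in your proposal. You assert that a combination ``should isolate'' $\bart^k\barv_0^2$ and that brackets and products ``close this up'', but you do neither, and you flag it yourself as the main obstacle. The combination is also garbled as written: $\bart^{a+1}\bart^{b+1}$ literally means $\bart^{a+b+2}$, and symmetrizing does nothing in the commutative ring $\barT_1$.

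The paper settles $i=1$ with the filtered containment $T_1(v_0-v_0^2)\subseteq B_1$, which it also uses in \Cref{lem: prime two cases}. The symbol of $v_0-v_0^2$ is $-\barv_0^2$, so $\gr\bigl(T_1(v_0-v_0^2)\bigr)=\barT_1\barv_0^2$, exactly as in your $i\ge 2$ argument.

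Your graded route does work, and it gives a self-contained proof that avoids that filtered fact, which the paper takes as known at this point. You just need to finish it. Write $X_f=f(\bart)y+f'(\bart)\barv_0\in\barB_1$ for the symbol of $\Psi_1(f\partial)$. Since $\barB_1$ is a graded subalgebra of the commutative domain $\barT_1$,
\[
X_{t^{a}}X_{t^{b}}-X_{t^{a+1}}X_{t^{b-1}}=(a-b+1)\,\bart^{\,a+b-2}\,\barv_0^2\in\barB_1 .
\]
Taking $a=k+1$, $b=1$ gives $\bart^k\barv_0^2\in\barB_1$ for every $k\ge 0$.

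No Poisson brackets are needed to finish. Both $y=X_1$ and $\bart y+\barv_0=X_t$ lie in $\barB_1$. Multiplying by powers of $X_1$ gives $\bart^k y^j\barv_0^2\in\barB_1$. Then
\[
\bart^k y^j\barv_0^{\ell+1}=X_t\cdot\bart^k y^j\barv_0^{\ell}-X_1\cdot\bart^{k+1}y^j\barv_0^{\ell}
\]
runs an induction on $\ell\ge 2$ and yields $\barT_1\barv_0^2\subseteq\barB_1$.

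For $\barC_1$ the same computation works with $a,b\in\mathbb{Z}$. Take $a=k+1$, $b=1$ when $k\neq -1$. For $k=-1$ that choice gives coefficient zero, so take $a=2$, $b=-1$ instead, which gives coefficient $4$.
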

\begin{proof}
    The proofs for $\barB_i$ and $\barC_i$ are the same, so we only provide for $\barB_i$ to ease the notation. If $i = 1$, then $B_1 \supseteq T_1 (v_0 - v_0^2)$, so $\gr_\mc{F}(T_1 (v_0 - v_0^2)) = \barT_1 \bar{v}^2_0 \subseteq \barB_1$. For $i \geq 2$, by \Cref{lem: big image}, ${B}_i \supseteq {T}_i v_{i-1}^2$, thus $\barB_i \supseteq \barT_i \bar{v}_{i-1}^2$.
\end{proof}

Localization is an immensely useful construction in (non)commutative algebra to understand ideals of a ring. We thus define the following multiplicative set of $\barB_n$, over which we will localize. 

\begin{definition}\label{def: mul set barB}
    For $i \geq 1$, let 
    \[\barX_i = \{1, \bar{v}^{k}_{i-1} \mid k \geq 2\}\]
    be a multiplicative subset of $\barT_i$ and $\barT_i[\bart^{-1}]$. By \Cref{lem:barBn contains}, it is a multiplicative subset of $\barB_i$ and $\barC_i$ as well. 

    If $K$ is a $\barB_i$-module, we denote the corresponding localization of $K$ by
\begin{equation}
    K[\bar{v}^{-1}_{i-1}] := K \barX_i^{-1} = K \otimes_{\barB_i} (\barB_i\barX_i^{-1}).
\end{equation}
\end{definition}
By \Cref{lem:barBn contains}, the localizations of $\barB_n$ and $\barT_n$ with respect to $\barX$ coincide
\begin{align}
    \barB_n \barX_n^{-1} = \barT_n \barX_n^{-1} = \barT_n[\bar{v}_{n-1}^{-1}].
\end{align}

The last ingredient for our proof of \Cref{theo: Bn Cn ACC} is the following observation that the \emph{contraction of extension} of graded Poisson $\barB_i$-submodules of $\barT_i v^{\mathbf{a}}$ ``stabilizes'' for all ${\mathbf{a}} \in \NN^{n-i}$, which we will make precise by the following definition and \Cref{prop: stablize}. 
\begin{definition}
    Let $i \leq  n-1$ be integers and let $\mathbf{a} = (a_i, \dots, a_{n-1}) \in \NN^{n-i}$. 
    If $K$ is a graded Poisson $\barB_i$-submodule of $\barT_i \barv^\mathbf{a} $, let \emph{the contraction of extension of $K$} be
    \begin{equation}\label{eq: extend contract}
        \widehat{K} = K\barX_i^{-1} \cap \barT_i \barv^\mathbf{a}.
    \end{equation}
\end{definition}

There is a canonical Poisson structure on $K[\bar{v}^{-1}_{i-1}]=K\barX_i^{-1}$ making it a Poisson $\barT_i[\bar{v}^{-1}_{i-1}]$-module, see, e.g., the proof of \cite[Lemma 1.3]{Kaledin09Poisson} for the formula for $\barT_i[\bar{v}^{-1}_{i-1}]$. However, we do not need the Poisson structure of $\barT_i[\bar{v}^{-1}_{i-1}]$ for this Noetherian result.

\begin{lemma}\label{lem: localized Noetherian}
    Let $1 \leq i \leq n-1$ and $\mathbf{a} = (a_i, \dots, a_{n-1}) \in \NN^{n-i}$. If $K$ is a $\barB_i$-submodule of $\barT_i \barv^\mathbf{a}$, then $K[\bar{v}^{-1}_{i-1}]$ is a $\barT_i[\bar{v}_{i-1}^{-1}]$-submodule of $\barT_i[\bar{v}^{-1}_{i-1}]\barv^\mathbf{a}$. Furthermore, 
    \begin{equation*}
        \barT_i[\bar{v}^{-1}_{i-1}]\barv^\mathbf{a} \cong \barT_i[\bar{v}^{-1}_{i-1}]
    \end{equation*}
    are Noetherian $\barT_i[\bar{v}^{-1}_{i-1}]$-modules. The same statements hold for $\barC_i$ and $\barT_i[\bart^{-1}]$.
\end{lemma}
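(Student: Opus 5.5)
The plan is to deduce everything from three facts: localization at $\barX_i$ is exact, $\barB_i$ and $\barT_i$ have the same localization, and the formal monomial $\barv^\mathbf{a}$ is a free generator over $\barT_i$.

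First, the submodule claim. Localization $-\otimes_{\barB_i}\barB_i\barX_i^{-1}$ is exact because $\barB_i\barX_i^{-1}$ is a flat $\barB_i$-module. So the inclusion $K\subseteq \barT_i\barv^\mathbf{a}$ gives an inclusion
\[K[\barv_{i-1}^{-1}]\subseteq (\barT_i\barv^\mathbf{a})[\barv_{i-1}^{-1}].\]
Since $\barv_{i-1}$ already lies in $\barT_i$, localizing $\barT_i\barv^\mathbf{a}$ over $\barB_i$ at $\barX_i$ gives $\barT_i[\barv_{i-1}^{-1}]\barv^\mathbf{a}$. To see that $K[\barv_{i-1}^{-1}]$ is a $\barT_i[\barv_{i-1}^{-1}]$-submodule, I use the identification $\barB_i\barX_i^{-1}=\barT_i[\barv_{i-1}^{-1}]$ from \Cref{lem:barBn contains}. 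Concretely, for $r\in\barT_i$ we have $r\barv_{i-1}^2\in\barB_i$, so
\[r\cdot (k/\barv_{i-1}^{m}) = (r\barv_{i-1}^2 k)/\barv_{i-1}^{m+2} \in K[\barv_{i-1}^{-1}].\]
Hence $K[\barv_{i-1}^{-1}]$ is stable under multiplication by $\barT_i$. It is also stable under the inverses $\barv_{i-1}^{-1}$, so it is a $\barT_i[\barv_{i-1}^{-1}]$-module. Here I view everything inside the ambient $\barT_i[\barv_{i-1}^{-1}]\barv^\mathbf{a}$; the point is that the fraction-rewriting above only needs the inclusion $\barT_i\barv_{i-1}^2\subseteq\barB_i$.

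Second, the isomorphism. The module $\barT_i\barv^\mathbf{a}$ is by definition the free rank-one $\barT_i$-module on the formal symbol $\barv^\mathbf{a}$. So the map $r\mapsto r\barv^\mathbf{a}$ is a module isomorphism $\barT_i\cong\barT_i\barv^\mathbf{a}$, and it remains one after localizing. Only the module structure is needed here, not the Poisson bracket. Then $\barT_i[\barv_{i-1}^{-1}]$ is a localization of the finitely generated commutative algebra $\barT_i\cong\kk[\bart,y]\otimes\Sa(\g_i)$ from \Cref{lem:ass gra}. That algebra is Noetherian by Hilbert's basis theorem, and localizations of Noetherian rings are Noetherian, so the free rank-one module $\barT_i[\barv_{i-1}^{-1}]$ is a Noetherian module over itself.

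For $\barC_i$ the same argument goes through with $\barT_i[\bart^{-1}]$ in place of $\barT_i$, again using \Cref{lem:barBn contains} and the fact that $\kk[\bart^{\pm1},y]\otimes\Sa(\g_i)$ is Noetherian.

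The only mildly delicate point is the first step: making precise that the localization over $\barB_i$ is computed inside $\barT_i[\barv_{i-1}^{-1}]\barv^\mathbf{a}$, and that $\barB_i$-stability upgrades to $\barT_i$-stability once $\barv_{i-1}$ is inverted. Both follow from the containment $\barT_i\barv_{i-1}^2\subseteq\barB_i$.
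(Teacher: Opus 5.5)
Your proposal is correct and follows the paper's proof: the paper calls the submodule claim obvious, then uses the module isomorphism $r\barv^{\mathbf{a}}\mapsto r$ (which is not a Poisson isomorphism) together with Noetherianity of the localized polynomial ring $\barT_i[\barv_{i-1}^{-1}]$, exactly as you do. Your justification of the submodule claim, via flatness of localization and the containment $\barT_i\barv_{i-1}^2\subseteq\barB_i$ from \Cref{lem:barBn contains}, spells out the reasoning the paper leaves implicit.
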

\begin{proof}
    Again, we only need to show the proof for $\barB_i$ and $\barT_i$. The fact that $K[\bar{v}^{-1}_{i-1}]$ is a $\barT_i[\bar{v}_{i-1}^{-1}]$-submodule of $\barT_i[\bar{v}^{-1}_{i-1}]\barv^\mathbf{a}$ is obvious. As $\barT_i[\bar{v}^{-1}_{i-1}] = \kk[\bar{t}, y, \bar{v}_0, \dots,\bar{v}_{i-1}][\bar{v}^{-1}_{i-1}]$ is a polynomial ring in finitely many variables, it is a Noetherian ring. Hence, the isomorphism of $\barT_i[\bar{v}^{-1}_{i-1}]$-modules
    \begin{equation}\label{eq: isomorphism}
        \barT_i[\bar{v}^{-1}_{i-1}]\barv^\mathbf{a} \to \barT_i[\bar{v}^{-1}_{i-1}] \quad r\barv^\mathbf{a} \mapsto r,
    \end{equation}
    makes $\barT_i[\bar{v}^{-1}_{i-1}]\barv^\mathbf{a}$ Noetherian. 
\end{proof}

\begin{remark}
    The isomorphism \eqref{eq: isomorphism} or the isomorphism $\barT_i\barv^\mathbf{a} \to \barT_i$ is \emph{not} a Poisson $\barT_i$ or $\barB_i$-modules homomorphism, as it does not respect the Poisson structure. For example 
    \begin{align*}
         \theta(\{v_0, \barv^\mathbf{a}\}) = \mc{A} \neq 0 =  \{v_0, \theta(\barv^\mathbf{a})\}.  
    \end{align*}
\end{remark}

However, we \emph{do} have a Poisson isomorphism between $\barT_i\barv^\mathbf{a}$ with the following Poisson $\barT_i$-modules. 

\begin{lemma}\label{lem: Poisson iso}
    Let $i+1 \leq n-1$ and let $\mathbf{b} = (b_{i+1}, \dots, b_{n-1}) \in \NN^{n-i-1}$ and $m \in \NN$. Then
    \begin{align*}
        \frac{\barT_{i+1} \barv_i^m \barv^\mathbf{b}}{\barT_{i+1} \barv_i^{m+1} \barv^\mathbf{b}} \cong \barT_i \barv_i^m \barv^\mathbf{b}
    \end{align*}
    as Poisson $\barT_i$-modules and $\barT_{i+1}$-modules. 
\end{lemma}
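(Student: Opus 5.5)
We construct the isomorphism explicitly. Note that $\barT_{i+1} = \barT_i[\barv_i]$ as a commutative ring, i.e. $\barT_{i+1} = \barT_i \otimes \kk[\barv_i]$, since $\Sa(\g_{i+1}) = \Sa(\g_i)[\barv_i]$ as commutative algebras. Hence $\barT_{i+1}\barv_i^m\barv^{\mathbf{b}}$ is a free $\kk[\barv_i]$-module over $\barT_i \barv_i^m\barv^{\mathbf b}$, and every element can be written uniquely as $\sum_{k\ge 0} r_k \barv_i^{m+k}\barv^{\mathbf b}$ with $r_k\in \barT_i$. We define
\[
\phi: \barT_{i+1} \barv_i^m \barv^\mathbf{b} \to \barT_i \barv_i^m \barv^\mathbf{b}, \qquad \sum_k r_k \barv_i^{m+k}\barv^{\mathbf b} \mapsto r_0\,\barv_i^m\barv^{\mathbf b}.
\]
This is the map induced by the projection $\tau_{i+1,i}\colon \barv_i\mapsto 0$ on coefficients. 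It is visibly surjective, $\barT_{i+1}$-linear when $\barT_i\barv_i^m\barv^{\mathbf b}$ is regarded as a $\barT_{i+1}$-module via $\barT_{i+1}\surj\barT_i$, and its kernel is exactly $\barT_{i+1}\barv_i^{m+1}\barv^{\mathbf b}$. It therefore induces an isomorphism of $\barT_{i+1}$-modules (and hence $\barT_i$-modules) on the quotient. The quotient inherits a Poisson $\barT_{i+1}$-module structure because $\barT_{i+1}\barv_i^{m+1}\barv^{\mathbf b}$ is a Poisson submodule. Indeed, $\{\barv_0, \barv_i^{m+1}\barv^{\mathbf b}\} = \mc{A}'\barv_i^{m+1}\barv^{\mathbf b}$, all other generators bracket it to zero, and the Leibniz rule \eqref{eq: new derivation axiom 1} keeps us inside the submodule. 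The same reasoning applies to the target: in $\barT_i\barv_i^m\barv^{\mathbf b}$ the element $\barv_i$ acts as $0$, and $\{\barv_j,\cdot\}$ makes sense via $\barT_{i+1}\surj \barT_i$.

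It remains to check that $\phi$ intertwines brackets, i.e. $\phi(\{x, w\}) = \{\tau(x), \phi(w)\}$ for $x\in\barT_{i+1}$. By \eqref{eq: derivation axiom 2} on both sides, and because $\phi$ is module-linear over $\tau$, it suffices to check this for $x$ a generator $\bart, y, \barv_0,\dots,\barv_i$. Similarly, by \eqref{eq: derivation axiom}, it suffices to check it for $w = r\barv_i^{m+k}\barv^{\mathbf b}$ with $r\in\barT_i$, and then for $r$ a monomial. The key weight bookkeeping is as follows. In the source, $\barv_i^m\barv^{\mathbf b}$ is a formal generator of $\barT_{i+1}$-weight $\mc{A}_{\rm src} = \sum_{k>i} k b_k$, multiplied by the honest element $\barv_i^m$. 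Its $\barv_0$-bracket is therefore $(\mc{A}_{\rm src} + im)\barv_i^m\barv^{\mathbf b}$, using $\{\barv_0,\barv_i\} = i\barv_i$. In the target, the formal generator is $\barv_i^m\barv^{\mathbf b}$ with weight $\mc{A} = im + \sum k b_k$. These agree, which is exactly why the lemma is stated with $\barv_i^m$ absorbed into the formal monomial on the right.

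For the other generators we argue as follows. The brackets $\{\bart,\cdot\}$, $\{y,\cdot\}$ and $\{\barv_j,\cdot\}$ with $j\ge1$ act on the formal part by zero in both modules, except that in the source $\{\barv_j, \barv_i^m\} = m(i-j)\barv_{i+j}\barv_i^{m-1}$. This term vanishes because $\barv_{i+j}=0$ in $\barT_{i+1}$ when $j\ge1$. For $x=\barv_i$ itself, $\{\barv_i, w\}$ lies in $\barv_i\cdot(\dots)$ plus brackets $\{\barv_i,\barv_l\}=(l-i)\barv_{i+l}$, which vanish for $l\ge1$. For $l=0$ the bracket gives $-i\barv_i$, again a multiple of $\barv_i$. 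So $\phi(\{\barv_i,w\})$ reduces to the contribution of the $k=0$ part, and one must confirm it equals $\{0, \phi(w)\}=0$; this is where care is needed. Expanding, $\{\barv_i, r\barv_i^m\barv^{\mathbf b}\} = \{\barv_i, r\}\barv_i^m\barv^{\mathbf b} + r\{\barv_i,\barv_i^m\barv^{\mathbf b}\}$. The second term is zero, and the first lies in $\barv_i\barT_{i+1}$, so it maps to $0$ under $\phi$.

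The main obstacle is precisely this last consistency check: verifying that $\barv_i$-brackets and $\barv_0$-weights match after reduction modulo $\barv_i^{m+1}$. The rest is routine generator-by-generator verification. Once it is done, $\phi$ descends to the claimed Poisson isomorphism.
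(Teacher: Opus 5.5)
Your map $\phi$ is exactly the paper's $\theta$, i.e.\ reduction of coefficients along $\overline{\Pi}_{i+1,i}$, and you identify the same kernel $\barT_{i+1}\barv_i^{m+1}\barv^{\mathbf b}$. Your key check is also the paper's computation: the $\barv_0$-weight $\mathcal{B}+im$ of $\barv_i^m\barv^{\mathbf b}$ in the source equals the weight $\mathcal{A}$ of the formal monomial in the target. The paper disposes of the remaining generators via \eqref{eq: Poisson structure not v0}, which is what your generator-by-generator discussion amounts to. For $i\geq 1$ your argument is correct and complete.

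The step that fails is the edge case $i=0$. The statement allows it, and it is used in the first induction step of the proof of \Cref{prop: barBn ACC graded stronger}. The paper's proof has the same blind spot: it lets $v_0$ act on the target with weight $\mathcal{B}+mi$, which for $i=0$ is not the structure pulled back along $\overline{\Pi}_{1,0}$.

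Concretely, when $i=0$ the element $\barv_i=\barv_0$ is Poisson-central in $\barT_1=\kk[\bart,y,\barv_0]$. So $\{\barv_0,r\barv_0^m\barv^{\mathbf b}\}=\mathcal{B}\,r\barv_0^m\barv^{\mathbf b}$ with $\mathcal{B}=\sum_{k\geq1}kb_k$. Hence your claim that $r\{\barv_i,\barv_i^m\barv^{\mathbf b}\}=0$ is false, and your $\barv_0$-check and $\barv_i$-check make contradictory demands on the target. Indeed, $\{\barv_0,\cdot\}$ is $\mathcal{B}\cdot\mathrm{id}$ on the quotient but $\{\tau(\barv_0),\cdot\}=0$ on the target. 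So for $\mathcal{B}\neq 0$ no isomorphism of Poisson $\barT_1$-modules with the $\tau$-restricted structure exists.

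What your map does give at $i=0$ is an isomorphism of Poisson $\barT_0$-modules, using that $\barT_0=\kk[\bart,y]$ is a Poisson subalgebra of $\barT_1$, and of ordinary $\barT_1$-modules. That is a legitimate reading of the statement. Alternatively, you can let $\barv_0$ act on the target by the weight $\mathcal{A}=\mathcal{B}$ rather than through $\tau$. Either way, $i=0$ needs to be treated separately.
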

\begin{proof}
    For $c \geq d$, recall the projection $\Pi_{c, d}: T_{c} \surj T_d$ defined by $v_{\geq c} \mapsto 0$. This induces a projection $\overline{\Pi}_{c, d}: \barT_{c} \surj \barT_d$ defined by $\barv_{\geq c} \mapsto 0$. Consider the map 
    \begin{equation}\label{eq: the map theta}
        \theta: \barT_{i+1} \barv_i^m \barv^\mathbf{b} \to \barT_i \barv_i^m \barv^\mathbf{b}, \quad r\barv_i^m \barv^\mathbf{b} \mapsto \overline{\Pi}_{i+1, i}(r)\barv_i^m\barv^\mathbf{b}. 
    \end{equation}
    The Poisson projection $\overline{\Pi}_{i+1, i}: \barT_{i+1} \to \barT_i$ naturally makes $\barT_i \barv_i^m \barv^\mathbf{b}$ a (restricted) Poisson $\barT_{i+1}$-module. Since $\ker \overline{\Pi}_{i+1, i} = \barT_{i+1} \barv_i$ and $\theta(\barT_{i+1} \barv_i \barv_i^m \barv^\mathbf{b}) = 0$, we have $\barT_{i+1} \barv_i^m \barv^\mathbf{b}$ is a well-defined Poisson $\barT_i$-module.
       
    We now need to check that $\theta$ is a Poisson $\barT_i$-modules homomorphism and $\barT_{i+1}$-modules homomorphism. By \eqref{eq: Poisson structure not v0}, it is enough to check the Poisson structure with respect to $v_0$. Let $\mc{B} = \sum_{k=i+1}^{n-1} k a_k$. For all $r \in \barT_i$, 
    \begin{align*}
        \{v_0, \theta(r\barv_i^m \barv^\mathbf{b})\} = \{v_0, r\barv_i^m \barv^\mathbf{b}\} = \big ((\mc{B} + mi) r + \{v_0, r\}_{\barT_i} \big ) \barv_i^m \barv^\mathbf{b} = \theta(\{v_0, r\barv_i^m \barv^\mathbf{b} \}).
    \end{align*}
    It is easy to see that $\ker \theta = \barT_{i+1} \barv_i^{m+1} \barv^\mathbf{b}$, thus the lemma follows.
\end{proof}

The following proposition is the key ingredient in our induction step, and can be viewed as a stability statement.

\begin{proposition}\label{prop: stablize}
    Let $1 \leq i+1 \leq n -1$. Suppose for all $\mathbf{a} = (a_{i}, \dots, a_{n-1}) \in \NN^{n-i}$ that $\barT_{i} \barv^{\mathbf{a}}$ is a graded Poisson Noetherian $\barB_{i}$-module. Let $K$ be a graded Poisson $\barB_{i+1}$-submodule of $\barT_{i+1} \barv^{\textbf{b}}$ for $\textbf{b} = (b_{i+1}, \dots, b_{n-1}) \in \NN^{n-i-1}$. Then there exists a large enough $M \in \NN$ such that 
    \begin{align*}
        K[\bar{v}_{i}^{-1}] \cap \barT_{i+1} \bar{v}_{i}^{M} \barv^{\textbf{b}}= K \cap \barT_{i+1} \bar{v}_{i}^{M} \barv^{\textbf{b}}.
    \end{align*}
    The same statement holds for $\barC_{n}$ and $\barT_n[\bar{t}^{-1}]$.
\end{proposition}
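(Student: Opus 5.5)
The plan is to cut $K$ and its localization into ``$\barv_i$-adic layers'' and identify each layer with a graded Poisson $\barB_i$-submodule of some $\barT_i\barv^{\mathbf{a}}$. The ACC hypothesis for those modules then forces the layers to stabilize, and a degree argument lifts the stabilization back to $K$.

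\emph{Defining the layers.} For $m\in\NN$ put
\[
K_m=\frac{(K\cap \barT_{i+1}\barv_i^m\barv^{\mathbf b})+\barT_{i+1}\barv_i^{m+1}\barv^{\mathbf b}}{\barT_{i+1}\barv_i^{m+1}\barv^{\mathbf b}},\qquad
N_m=\frac{(K[\barv_i^{-1}]\cap \barT_{i+1}\barv_i^m\barv^{\mathbf b})+\barT_{i+1}\barv_i^{m+1}\barv^{\mathbf b}}{\barT_{i+1}\barv_i^{m+1}\barv^{\mathbf b}} .
\]
By \Cref{lem: Poisson iso}, both are graded Poisson $\barB_i$-submodules of $\barT_i\barv_i^m\barv^{\mathbf b}$. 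Here we use that $K$ is a graded Poisson $\barB_{i+1}$-module, and that $\barB_{i+1}$ surjects onto $\barB_i$ via $\overline{\Pi}_{i+1,i}$. Forgetting the formal factor $\barv_i^m\barv^{\mathbf b}$, we regard $K_m\subseteq N_m$ as subspaces of $\barT_i$. Clearly $K_m\subseteq N_m$.

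\emph{Comparing layers for different $m$.} By \Cref{lem:barBn contains}, $\barT_{i+1}\barv_i^2\subseteq\barB_{i+1}$. Hence $\barv_i^2K\subseteq K$, which gives $K_m\subseteq K_{m+2}$ as subspaces of $\barT_i$. Moreover, if $x\in K[\barv_i^{-1}]\cap\barT_{i+1}\barv_i^m\barv^{\mathbf b}$, then $\barv_i^{2k}x\in K$ for some $k$. Comparing leading layers gives $N_m=\bigcup_k K_{m+2k}$.

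The main obstacle is that $K_m$ and $K_{m+2}$ are Poisson submodules of different Poisson modules. In $\barT_i\barv_i^m\barv^{\mathbf b}$ the bracket $\{\barv_0,-\}$ acts on $\barT_i$ as $\mathrm{ad}\,\barv_0+(\mathcal B+mi)$, while all other generators act the same way for every $m$, by \eqref{eq: Poisson structure not v0}. I would resolve this by noting that $\mathrm{ad}\,\barv_0$ is diagonalizable on $\barT_i$: monomials in $\bart,y,\barv_j$ are weight vectors, and each graded piece decomposes into finite-dimensional $\mathrm{ad}\,\barv_0$-stable pieces. Therefore a subspace stable under $\mathrm{ad}\,\barv_0+c$ for one scalar $c$ is a sum of weight spaces, so it is stable under $\mathrm{ad}\,\barv_0+c'$ for every $c'$. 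Consequently the property ``graded Poisson $\barB_i$-submodule'' of a subspace of $\barT_i$ is independent of $m$. In particular, all $K_m$ with $m\equiv 0$ (respectively $m\equiv1$) modulo $2$ are graded Poisson $\barB_i$-submodules of the single module $\barT_i\barv^{\mathbf a}$ with $\mathbf a=(0,b_{i+1},\dots,b_{n-1})$. By the hypothesis, the chains $K_0\subseteq K_2\subseteq\cdots$ and $K_1\subseteq K_3\subseteq\cdots$ stabilize. So there is $M$ with $K_m=K_{m+2k}=N_m$ for all $m\ge M$ and all $k\geq 0$.

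\emph{Lifting back to $K$.} Let $x\in K[\barv_i^{-1}]\cap\barT_{i+1}\barv_i^M\barv^{\mathbf b}$. We may assume $x$ is homogeneous, since everything is graded. Because $N_M=K_M$, there is $k_M\in K\cap\barT_{i+1}\barv_i^M\barv^{\mathbf b}$ with $x-k_M\in K[\barv_i^{-1}]\cap\barT_{i+1}\barv_i^{M+1}\barv^{\mathbf b}$. Iterating at layers $M+1,M+2,\dots$ is allowed, because $N_m=K_m$ for all $m\ge M$. Since $\barv_i$ has degree $1$ and $x$ has fixed degree $d$, the module $\barT_{i+1}\barv_i^{m}\barv^{\mathbf b}$ has no nonzero elements of degree $d$ once $m>d$. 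So the process terminates after finitely many steps, and $x\in K$. The reverse inclusion is trivial.

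The same argument works verbatim for $\barC_n$ and $\barT_n[\bart^{-1}]$, using the $\barC$-versions of \Cref{lem:barBn contains} and \Cref{lem: Poisson iso}. The crucial step is the $m$-independence of the Poisson submodule condition, via diagonalizability of $\mathrm{ad}\,\barv_0$.
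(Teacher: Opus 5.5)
Your architecture matches the paper's. Your layer $K_m$ is the paper's $\theta(L_m)$, stabilization is meant to come from the inductive hypothesis, and your degree-bounded lifting is the paper's minimal-degree argument. You also correctly identify the crux: $K_m$ is a Poisson submodule of the module twisted by $\mathcal{B}+mi$, not of one fixed module.

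\textbf{The gap.} Your way around this crux fails, and with it the claim that being a graded Poisson $\barB_i$-submodule of $\barT_i$ is independent of $m$.
\begin{itemize}
\item A Poisson $\barB_i$-submodule only has to be stable under $\{r,-\}$ for $r\in\barB_i$, and $\barv_0,\bart,y\notin\barB_i$. You cannot reduce to brackets with the generators of $\barT_i$, because the Leibniz rule would need stability under multiplication by $\barT_i$.
\item For $r\in\barB_i$ the twist enters as $x\mapsto\{r,x\}+(\mathcal{B}+mi)\frac{\partial r}{\partial\barv_0}\,x$. For the symbol $\bart^{k+1}y+(k+1)\bart^{k}\barv_0+\cdots$ of $\Psi_i(e_k)$ the factor $(k+1)\bart^k$ is not a scalar. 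So diagonalizability of $\mathrm{ad}\,\barv_0$ gives nothing.
\end{itemize}
The independence claim is in fact false. Take $i=1$, $\mathbf{b}=0$, and let $K$ be the graded Poisson $\barB_2$-submodule of $\barT_2$ generated by $y\barv_1^2$.
\begin{itemize}
\item For $r_f=fy+f'\barv_0+\tfrac12 f''\barv_1\in\barB_2$ one computes $\{r_f,y\barv_1^2\}=(f'y-f''\barv_0-\tfrac12 f'''\barv_1)\barv_1^2$.
\item Iterating shows that the degree-one part of $K_2\subseteq\barT_1$ is $\{gy-g'\barv_0: g\in\kk[\bart]\}$.
\item In the untwisted module $\barT_1=\barT_1\barv^{(0,\mathbf{b})}$, however, $\{\bart^2y+2\bart\barv_0,\,y\}=-2\bart y-2\barv_0$, which is not of that form.
\end{itemize}
So the chain $K_0\subseteq K_2\subseteq\cdots$ does not lie in a single module to which the hypothesis applies. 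The paper's own check that each $L_m$ is a Poisson submodule of the fixed module $\barT_{i+1}\barv^{\mathbf{b}}$ splits $r$ into a $\barv_0$-part and a $\barv_0$-free part in the same way. In the example above $L_2$ is not such a submodule, so the paper does not supply this step either.

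\textbf{The repair.} It stays inside your framework. Because $\barT_{i+1}\barv_i^2\subseteq\barB_{i+1}$, you get $\barT_iK_m\subseteq K_{m+2}$, not merely $K_m\subseteq K_{m+2}$. Hence the ideals $J_m=\barT_iK_m$ of the Noetherian ring $\barT_i$ (respectively $\barT_i[\bart^{-1}]$) satisfy $J_m\subseteq K_{m+2}\subseteq J_{m+2}$. By the Hilbert basis theorem each parity chain of the $J_m$ stabilizes, which forces $K_m=K_{m+2}=\cdots$ for $m\gg0$. Your identification $N_m=\bigcup_k K_{m+2k}$ and your degree-bounded lifting then finish the proof. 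Neither the Poisson structure nor the inductive hypothesis is needed for this proposition.
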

\begin{proof}
    The proof is the same for $\barB_{n-1}$ and $\barC_{n-1}$, so we only show for $\barB_{n-1}$. Note that 
    \begin{align*}
        \widehat{K} = K[\bar{v}_{i}^{-1}] \cap \barT_{i+1} \barv^{\textbf{b}}= \{x \barv^{\textbf{b}} \in \barT_{i+1}\barv^{\textbf{b}} \mid x\bar{v}_{i}^{\ell} \barv^{\textbf{b}} \in K \text{ for some } \ell \geq 0\}.
    \end{align*}
    For all $m$, denote 
    \begin{align*}
        L_m = \{a \barv^{\textbf{b}} \in \barT_{i+1} \barv^{\textbf{b}} \mid a\barv_i^m \barv^{\textbf{b}} \in K\}.
    \end{align*}
    We first check that $\widehat{K}$ and $L_m$ are Poisson $\barB_{i+1}$-submodules of $\barT_{i+1}\barv^{\textbf{b}}$ for all $m$. It suffices to check for $L_m$ since $\widehat{K} = \bigcup_{m \geq 0} L_m$. Let $x\barv^{\textbf{b}} \in L_m$, so $x\bar{v}_{i}^{m} \barv^{\textbf{b}} \in K$. If $r \in \barB_{i+1}$ does not involve $\barv_0$, we have
    \begin{align*}
        \{r, x\barv^{\textbf{b}}\}_{\barT_{i+1}\barv^{\textbf{b}}}\bar{v}_{i}^{m} = \{r, x\bar{v}_{i}^{m} \barv^{\textbf{b}}\}_{\barT_{i+1}\barv^{\textbf{b}}} \in K,
    \end{align*}
    since $K$ is a Poisson $\barB_{i+1}$-submodule of $\barT_{i+1}\barv^{\textbf{b}}$. On the other hand, by \eqref{eq: new derivation axiom 2}, we have
    \begin{align*}
        \{v_0, x\bar{v}_{i}^{m} \barv^{\textbf{b}}\}_{\barT_{i+1}\barv^{\textbf{b}}} = \{v_0, x\barv^{\textbf{b}} \}_{\barT_{i+1}\barv^{\textbf{b}}} \bar{v}_{i}^{m}+  mi x\bar{v}_{i}^{m}\barv^{\textbf{b}} \in K.
    \end{align*}
    Thus for all $r \in \barT_{i+1}$, we have $\{r, x\barv^{\textbf{b}}\}_{\barT_{i+1}\barv^{\textbf{b}}}\bar{v}_{i}^{m} \in K$ and hence $\{r, x\barv^{\textbf{b}}\}_{\barT_{i+1}} \in L_m$. 
    
    By \Cref{lem:barBn contains}, $\barT_{i+1} \barv_i^2 \subseteq \barB_{i+1}$, we have $K \cap \barT_{i+1} \barv_i^{m}\barv^{\textbf{b}}\subseteq K \cap \barT \barv_i^{m+2}\barv^{\textbf{b}}$ and $L_{m} \subseteq L_{m+2}$ for all $m$. Recall the projection of Poisson $\barT_i$-modules
    \begin{align*}
    \theta : \barT_{i+1} \barv_i^{m} \barv^{\textbf{b}} \surj \barT_{i}\barv_i^{m} \barv^{\textbf{b}}
    \end{align*}
    in \eqref{eq: the map theta}. Thus by hypothesis, the projection of the chain $\{L_m\}$ stabilizes and there exists $M \in \NN$ so that $\theta(L_{M}) = \theta(L_{M+2}) = \dots$. Since $\barT_{i+1} \barv_{i}^3 \subseteq \barT_{i+1} \barv_{i}^2 \subseteq \barB_{i+1}$, we have $L_{m+j-2} \subseteq  L_{m+j+1} \subseteq L_{m+j+4}$, thus by renumbering we have without loss of generality that
    \[\theta(L_{M}) = \theta(L_{M+1}) = \theta(L_{M+2}) = \dots .\]
    Since $\widehat{K} = \bigcup_{i \in \NN} L_i$, we have 
    \begin{equation}\label{eq: pi widehat K}
        \theta(\widehat{K}) = \bigcup_{i \in \NN} \theta(L_i)= \theta(L_M).
    \end{equation}
    We claim that for $k \geq M$, 
    \begin{equation}
        \widehat{K} \cap \barT_{i+1} \barv_{i}^k  \barv^{\mathbf{b}} = K \cap \barT_{i+1} \barv_{i}^k \barv^{\mathbf{b}}.
    \end{equation}
    Suppose the contrary, there exists $z = w\barv_{i}^k  \barv^{\mathbf{b}} \in \widehat{K} \cap \barT_{i+1} \barv_{i}^k  \barv^{\mathbf{b}}  \setminus K \cap \barT_{i+1} \barv_{i}^k  \barv^{\mathbf{b}}$, and we can assume that $w$ is homogeneous since $K, \widehat{K}$ are graded. WLOG, we can also assume $\deg w$ is minimal among elements with this property. This means $w \notin \barT_{i+1} \barv_{i}$, as otherwise we have $z = w' \barv_{i}^{k+1} \barv^{\mathbf{b}}$ with $\deg w' < \deg w$. Since $w\barv^{\mathbf{b}} \in \widehat{K}$, by \eqref{eq: pi widehat K}, $\theta(w\barv^{\mathbf{b}}) \in \theta( \widehat{K}) = \theta(L_M) = \theta(L_k)$. So there exists a homogeneous element $c\barv^{\mathbf{b}} \in \barT_{i+1}\barv^{\mathbf{b}}$ with $\theta(c) = \theta(w)$ and $c \barv_i^k \barv^{\mathbf{b}} \in K$. Thus there exists some $d \in \barT_{i+1}$ such that 
    \begin{align*}
        w-c = d \barv_i, \quad d \barv_i^{k+1} \barv^{\mathbf{b}} = a\barv_i^k \barv^{\mathbf{b}} - c\barv_i^k \barv^{\mathbf{b}} \in \widehat{K} \cap \barT \barv_i^k \barv^{\mathbf{b}}.
    \end{align*}
    Since $\deg d = \deg w - 1$, we have $d\barv_i^{k+1} \barv^{\mathbf{b}} \in K \cap \barT v^k$ by minimality of $w$. Thus $w\barv_i^k \barv^{\mathbf{b}} = d \barv_i^{k+1} \barv^{\mathbf{b}} + c\barv_i^k \barv^{\mathbf{b}} \in K \cap \barT \barv_i^k \barv^{\mathbf{b}}$, a contradiction.
\end{proof}

\begin{remark}\label{remark: the choice of filtration}
    The proof of \Cref{prop: stablize} is inspired by the argument in \cite[Proposition 8.7]{Rogalski2015Sklyanin}. The key ingredient in this argument is a $\ZZ_{\geq 1}$-grading. Thus, we cannot work directly with $B_n$ and $T_n$ or use the canonical grading induced from $\W{-1}$ with $\deg e_i = i$. 
\end{remark}

With that, we can show the induction step for \Cref{prop: barBn ACC graded stronger} as follows. 

\begin{proof}[Proof of \Cref{prop: barBn ACC graded stronger}]
    We will show the theorem by induction on $i$. For the base case $i = 0$, we need to show that $\barT_0 v^{\mathbf{a}}$ is a graded Poisson Noetherian $\barB_0$-module for all $\mathbf{a} \in \NN^n$. The $\barB_0$-module isomorphism $\barT_0 v^{\mathbf{a}} \cong \barT_0$ is in fact also a Poisson $\barB_0$-module isomorphism. Thus, by \Cref{Poisson B0}, the base case is shown.

    Assume the induction hypothesis (IH1) that $\barT_i v^{\mathbf{a}}$ is a graded Poisson Noetherian $\barB_i$-module for all $\mathbf{a} = (a_i, \dots, a_{n-1})\in \NN^{n-i}$. We need to show that for all $\mathbf{b} = (b_{i+1}, \dots, b_{n-1}) \in \NN^{n-i-1}$ that $\barT_{i+1} \barv^{\mathbf{b}}$ is a graded Poisson Noetherian $\barB_{i+1}$-module.

    Let $K_1 \subseteq K_2 \subseteq \dots$ be graded Poisson $\barB_{i+1}$-submodules of $\barT_{i+1} \barv^{\mathbf{b}}$. By \Cref{lem: localized Noetherian}, $\barT_{i+1}[v_{i}^{-1}] \barv^{\mathbf{b}}$ is a Noetherian $\barT_{i+1}[v_{i}^{-1}]$-module and each $K_i[\bar{v}_{n-1}^{-1}]$ is a $\barT_{i+1}[v_{i}^{-1}]$-submodule of $\barT_{i+1}[v_{i}^{-1}] \barv^{\mathbf{b}}$. Thus, there exists $N$ such that 
    \[\widehat{K_N} = \widehat{K_{N+1}} = \dots\]
    By \Cref{prop: stablize}, there exists $m$ large enough such that 
    \[\widehat{K_N} \cap \barT_{i+1} \bar{v}_{i}^m \barv^{\mathbf{b}} = {K_N} \cap \barT_{i+1} \bar{v}_{i}^m \barv^{\mathbf{b}}. \]
    
    We will now show by induction on $\ell$ that 
    $\frac{\barT_{i+1} \barv^{\mathbf{b}}}{\barT_{i+1}\bar{v}_{i}^\ell \barv^{\mathbf{b}}}$ is a graded Poisson Noetherian $\barB_{i+1}$-module for all $\ell \in \NN$. The base case $\ell=1$, we have $\frac{\barT_{i+1} \barv^{\mathbf{b}}}{\barT_{i+1}\bar{v}_{i} \barv^{\mathbf{b}}} \cong \barT_{i} \barv^{\mathbf{b}}$ as Poisson $\barB_i$-module by \Cref{lem: Poisson iso}. Thus, by (IH1), it is a graded Poisson Noetherian $\barB_{i}$-module. It is also a graded  Noetherian Poisson $\barB_{i+1}$-module since any graded Poisson $\barB_{i+1}$-submodule of it is a Poisson $\barB_{i}$-module.

    Assuming the induction hypothesis (IH2) that $\frac{\barT_{i+1} \barv^{\mathbf{b}}}{\barT_{i+1}\bar{v}_{i}^\ell \barv^{\mathbf{b}}}$ is a graded Poisson Noetherian $\barB_{i+1}$-module. We have a short exact sequence of Poisson $\barB_{i+1}$-modules
    \begin{equation}\label{eq: SES barBi}
        0 \to \frac{\barT_{i+1} \bar{v}_{i}^\ell  \barv^{\mathbf{b}}}{\barT_{i+1}\bar{v}_{i}^{\ell+1} \barv^{\mathbf{b}}} \to \frac{\barT_{i+1} \barv^{\mathbf{b}}}{\barT_{i+1}\bar{v}_{i}^{\ell+1} \barv^{\mathbf{b}}} \to \frac{\barT_{i+1} \barv^{\mathbf{b}}}{\barT_{i+1}\bar{v}_{i}^{\ell} \barv^{\mathbf{b}}} \to 0.
    \end{equation}
    By \Cref{lem: Poisson iso}, $\frac{\barT_{i+1} \bar{v}_{i}^\ell  \barv^{\mathbf{b}}}{\barT_{i+1}\bar{v}_{i}^{\ell+1} \barv^{\mathbf{b}}} \cong \barT_{i} \bar{v}_{i}^\ell  \barv^{\mathbf{b}}$ as a graded Poisson $\barB_i$-module. Thus it is Noetherian by (IH1), and by the same argument as before, is a graded Poisson Noetherian $\barB_{i+1}$-module. On the other hand, $\frac{\barT_{i+1} \barv^{\mathbf{b}}}{\barT_{i+1}\bar{v}_{i}^{m} \barv^{\mathbf{b}}}$ is a graded Noetherian $\barB_{i+1}$-module by (IH2). Thus the claim follows by induction. In particular, $\frac{\barT_{i+1} \barv^{\mathbf{b}}}{\barT_{i+1}\bar{v}_{i}^m \barv^{\mathbf{b}}}$ is a graded Poisson Noetherian $\barB_{i+1}$-module.

    Let $\sigma : \barT_{i+1} \barv^{\mathbf{b}} \to \barT_{i+1} \barv^{\mathbf{b}}/{\barT_{i+1}\bar{v}_{i}^m \barv^{\mathbf{b}}}$ be the canonical projection of Poisson $\barB_{i+1}$-modules. We have $\sigma(K_1) \subseteq \sigma(K_2) \subseteq \dots$. Thus, since $\barT_{i+1} \barv^{\mathbf{b}}/{\barT_{i+1}\bar{v}_{i}^m \barv^{\mathbf{b}}}$ is a graded Poisson Noetherian $\barB_{i+1}$-module,
    \[\sigma(K_M) = \sigma(K_{M+1}) = \dots \]
    for some $M$. By the short exact sequence 
    \[0 \rightarrow {K_\ell} \cap \barT_{i+1}\barv_i^m \barv^{\mathbf{b}} \rightarrow K_\ell  \rightarrow \sigma(K_\ell) \rightarrow 0,\]
    for $\ell \geq \max(M, N)$, we have $K_\ell = K_{\max(M, N)}$. Thus $\barT_{i+1}\barv^{\mathbf{b}}$ is a graded Poisson Noetherian $\barB_{i+1}$-module.
\end{proof}

\begin{remark}\label{remark: why induction like this}
    The isomorphism $\frac{\barT_{i+1} \bar{v}_{i}^m  \barv^{\mathbf{b}}}{\barT_{i+1}\bar{v}_{i}^{m+1} \barv^{\mathbf{b}}} \cong \barT_{i} \bar{v}_{i}^m  \barv^{\mathbf{b}}$ in the short exact sequence \eqref{eq: SES barBi} is the reason we need to prove need a stronger induction hypothesis and stronger statement in \Cref{prop: barBn ACC graded stronger}.
\end{remark}

Then \Cref{theo: Bn Cn ACC} follows naturally as follows. 
\begin{proof}[Proof of \Cref{theo: Bn Cn ACC}]
    From \Cref{prop: barBn ACC graded stronger}, $\barT_n$ is a graded Poisson Noetherian $\barB_n$-module, so $\barB_n \subseteq \barT_n$ satisfies ACC on graded Poisson ideals. Then by \Cref{prop: ACC graded induce ACC }, $T_n$ is a Noetherian $(B_n, B_n)$-bimodule and $B_n$ satisfies ACC on two-sided ideals. 
\end{proof}

\Cref{theo: Bn Cn ACC} makes great progress and gives significant evidence to \Cref{conj: UW ACC}. However, we still do not know whether $\Ua(W), \Ua(\W{-1})$ and $\Ua(\Vir)$ satisfy the ACC on two-sided ideals, and this is the topic of future research. \Cref{theo: Bn Cn ACC} suggests that a feasible to tackle that problem is the following conjecture.

\begin{conjecture}
    The Poisson algebras $\Sa(\W{-1})$, $\Sa(W)$ and $\Sa(\Vir)$ satisfy ACC on graded Poisson ideals.
\end{conjecture}

\section{The prime spectrum of \texorpdfstring{$B_n$}{Bn}}\label{sec: prime Bn}
In this section, we completely characterize the prime spectrum of $B_n$. Our analysis is a powerful combination of noncommutative localization and induction on homomorphic image by the surjections $\Pi_{n, m}|_{B_n}: B_n \surj B_{m}$ for $n \geq m$. Interestingly, the fact that $B_n$ satisfies ACC on two-sided ideals in \Cref{theo: Bn Cn ACC} shows the contraction of extension of a prime ideal of $B_n$ ``stabilizes'', which we prove in Subsection \ref{subsec: localization on prime}. Then in Subsection \ref{subsec: Dx Spec and Prim gn}, we use the Dixmier map $\Dx_{\g_n}$ for the finite-dimensional solvable Lie algebra $\g_n$ to describe the prime and primitive ideals of a localization of $T_n$. This demonstrates the application of the Dixmier map $\Dx_{\g_n}$ in noncommutative ring. Finally in Subsection \ref{subsec: restriction prime},we completely describe the prime ideals of $B_n$, all but one coming from restricting prime ideals of $T_n$. In fact, we show that restriction induces an ``almost homeomorphism'' between $\Spec T_n$ and $\Spec B_n$.

\subsection{Localization and operators on prime ideals of \texorpdfstring{$B_n$}{Bn}}\label{subsec: localization on prime}
In this subsection, we localize $B_n$ and $T_n$ at the set $X_n$ defined in \Cref{def: Xn} below. This sclassifies a large class of prime ideals of $B_n$, while the other class of prime ideals can be understood via the projections $\Pi_{n, n-1}|_{B_n}: B_n \surj B_{n-1}$. The main result in this subsection is \Cref{prop: extension contraction n} that the contraction of extension of a prime ideal $P$ of $B_n$ stabilizes, i.e., if $P \cap X_n = \emptyset$, then 
\begin{equation}
    PX_n^{-1} \cap B_n = P.
\end{equation}

Recall the multiplicative subset $\barX_i =\{1, \bar{v}_{i-1}^m \mid m \geq 2\}$ of $\barB_i$ in \Cref{def: mul set barB}. Localization over $\barX_i$ plays an important role in the proof of \Cref{theo: Bn Cn ACC}. We now consider the following multiplicative set of the filtered ring $T_n$, over which we will localize. 
\begin{definition}\label{def: Xn}
    Let 
    \[X_1 = \{1, v_0^a(v_0-1)^b \mid a, b \geq 1\}\]
    be a multiplicative set of $T_1$. For $n \geq 2$, let 
    \begin{equation}
        X_n = \{v_{n-1}^{2k} \mid k \in \NN\}
    \end{equation}
    be a multiplicative set of $T_n$.
\end{definition}

By \Cref{lem: big image}, $X_n$ is also a multiplicative set of regular elements of $B_n$ for all $n$. Although $v_{n-1}^k$ is not normal in $B_n$, we show that $X_n$ is still a left and right Ore set of $B_n$, which allows us to localize $B_n$ and $T_n$ at $X_n$. We furthermore prove that their localizations coincide, and are Noetherian rings. 
\begin{lemma}\label{localization of Bn}
    For all $n$, $v_{n-1}$ is normal in $T_n$ and $X_n$ is still a left and right Ore set of $B_n$. Moreover, 
    \begin{align}
    B_1X_1^{-1} &= T_1X_1^{-1} = A_1 \otimes \kk[v_0, v_0^{-1}, (v_0-1)^{-1}], \label{eq: localization of B1}\\
    B_nX_n^{-1} &= T_nX_n^{-1} = A_1 \otimes \Ua(\g_n)[v_{n-1}^{-1}], \label{eq: localization of Bn}
    \end{align}
    are (left and right) Noetherian rings. 
\end{lemma}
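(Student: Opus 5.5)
Normality of $v_{n-1}$ in $T_n$ comes first, and it is a direct computation: in $\g_n$ we have $[v_i, v_{n-1}] = (n-1-i)v_{n-1+i}$, which is zero for $i\geq 1$ because $v_{\geq n}=0$, while $[v_0,v_{n-1}]=(n-1)v_{n-1}$. The element $v_{n-1}$ also commutes with $A_1$. Hence $v_{n-1}$ is central in $\kk[t,\del]\otimes\Ua(\W{1}/\W{n})$ and $v_{n-1}v_0 = (v_0-(n-1))v_{n-1}$, so $v_{n-1}T_n = T_nv_{n-1}$. It follows that the powers of $v_{n-1}$, and in particular $X_n$, form a left and right Ore set of regular elements in the domain $T_n$. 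For $n=1$ the ring $T_1 = A_1\otimes\kk[v_0]$ has $v_0$ central, so $X_1$ is central and again Ore in $T_1$. The localization $T_nX_n^{-1}$ is $A_1\otimes\Ua(\g_n)[v_{n-1}^{-1}]$; localizing at $v_{n-1}^2$ is the same as localizing at $v_{n-1}$, and for $n=1$ inverting $v_0$ and $v_0-1$ gives $A_1\otimes\kk[v_0,v_0^{-1},(v_0-1)^{-1}]$.

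Next I pass the Ore property to $B_n$, using \Cref{lem: big image}, which gives $T_nv_{n-1}^2\subseteq B_n$ for $n\geq 2$. For $n=1$ I use the inclusion $T_1v_0(v_0-1)\subseteq B_1$ that was invoked in \Cref{lem:barBn contains}. Take $b\in B_n$ and $s=v_{n-1}^{2k}\in X_n$. Normality gives some $b'\in T_n$ with $b\,v_{n-1}^{2k}=v_{n-1}^{2k}b'$. Then
\[
b\,v_{n-1}^{2k+2}=v_{n-1}^{2k}\,(b'v_{n-1}^2),
\]
and $b'v_{n-1}^2\in B_n$. This is the right Ore condition with $s'=v_{n-1}^{2k+2}\in X_n$, and the left Ore condition follows symmetrically. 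Because $B_n$ sits inside the domain $T_n$, there is no torsion, so the Ore conditions are all that is needed. The case $n=1$ works the same way, and is simpler because $X_1$ is central.

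Then I identify the two localizations. We have $B_nX_n^{-1}\subseteq T_nX_n^{-1}$. Conversely, any $r\in T_n$ satisfies $r=(rv_{n-1}^2)v_{n-1}^{-2}$ with $rv_{n-1}^2\in B_n$, so $T_n\subseteq B_nX_n^{-1}$. The inverses of elements of $X_n$ already lie in $B_nX_n^{-1}$, so the two localizations are equal. For $n=1$ the same argument uses $v_0(v_0-1)\in X_1$.

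Finally, Noetherianity. $T_n$ is Noetherian, being the tensor product of $A_1$ with the enveloping algebra of a finite-dimensional Lie algebra; it is an iterated Ore extension. An Ore localization of a left and right Noetherian ring is again left and right Noetherian. The only delicate point is Ore-ness in $B_n$, and the big-image lemma is exactly what makes it work. I expect no real obstacle beyond checking the commutation relation $v_{n-1}v_0=(v_0-(n-1))v_{n-1}$ correctly.
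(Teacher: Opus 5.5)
Your proof is correct and follows essentially the same route as the paper. You prove normality of $v_{n-1}$ from the relation $v_0v_{n-1}=v_{n-1}(v_0+n-1)$, obtain the Ore conditions in $B_n$ by combining normality with $T_nv_{n-1}^2\subseteq B_n$ (and using centrality of $X_1$ when $n=1$), identify $B_nX_n^{-1}=T_nX_n^{-1}$ through that same inclusion, and inherit Noetherianity from $T_n$ under Ore localization; your bookkeeping $b\,v_{n-1}^{2k+2}=v_{n-1}^{2k}(b'v_{n-1}^2)$ is a slightly cleaner version of the paper's identity $r v_{n-1}^k = v_{n-1}^{k-2}\widetilde{r}\, v_{n-1}^2$.
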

\begin{proof}
    Since $v_{n-1}$ commutes with $t, \del, v_1, \dots, v_{n-1}$, it suffices to check it on $v_0$. We have
    \begin{align*}
        v_0v_{n-1} = v_{n-1} (v_0 + n-1),
    \end{align*}
    so $v_{n-1}$ is normal in $T_{n}$. We now show that $X_n$ is an Ore set of $B_n$. If $n=1$, this follows from $v_0 \in Z(T_1) \subseteq Z(B_1)$, so we assume $n \geq 2$. For all $r \in B_n$ and $k \geq 2$, as $v_{n-1}^{k-2}$ is normal in $T_n$, we have
    \begin{equation}\label{eq: Ore help}
        r v_{n-1}^k  =  r v_{n-1}^{k-2}v_{n-1}^{2}  = v_{n-1}^{k-2} \widetilde{r} v_{n-1}^2 \quad \text{ for some } \widetilde{r} \in T_n.
    \end{equation}
    By \Cref{lem: big image}, $T_n v_{n-1}^2 \subseteq B_n$, so $X_n$ is a left Ore set. Similarly, for $r \in B_n$, we have
    \begin{equation}\label{eq: Ore help 2}
        v_{n-1}^k r = v_{n-1}^2 r^{\sharp}v_{n-1}^{k-2} \text{ for some }r^{\sharp} \in T_n.
    \end{equation} 
    As $v_{n-1}^2 T_n = T_n v_{n-1}^2 \subseteq B_n$, \eqref{eq: Ore help 2} shows that $X_n$ is a right Ore set. Thus the left and right ring of fractions $B_nX_n^{-1}$ and $X_n^{-1}B_n$ exist and coincide. Since $T_n v_{n-1}^2 \subseteq B_n$, we have $B_nX_n^{-1} = T_nX_n^{-1}$. Thus \eqref{eq: localization of B1} and \eqref{eq: localization of Bn} follow. Since $T_n$ is Noetherian, by \cite[Corollary 10.16 a]{goodearl1989introduction}, $T_nX_n^{-1} = B_n X_n^{-1}$ is also Noetherian.
\end{proof}

The following lemma explains why we localize at $X_n$. 

\begin{lemma}\label{lem: prime two cases}
    For all $n \geq 1$ and $P \in \Spec B_n$, then either $P \supseteq \ker \Pi_{n, n-1}|_{B_n}$ or $P \cap X_n = \emptyset$.
\end{lemma}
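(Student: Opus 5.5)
The plan is to show that if $P \cap X_n \neq \emptyset$ then $P \supseteq \ker \Pi_{n,n-1}|_{B_n}$. Write $N = \ker \Pi_{n,n-1}|_{B_n} = B_n \cap T_n v_{n-1}$, using that the kernel of $\Pi_{n,n-1}: T_n \to T_{n-1}$ is the ideal $T_n v_{n-1}$ generated by the normal element $v_{n-1}$. For $n=1$ this kernel is $B_1 \cap T_1 v_0$, and $X_1$ contains elements involving both $v_0$ and $v_0-1$, so that case is handled separately at the end.

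\textbf{Case $n \geq 2$.} Suppose $v_{n-1}^{2k} \in P$ for some $k \geq 1$. The goal is to show that $N^{m} \subseteq P$ for some $m$; primeness of $P$ then gives $N \subseteq P$. We use two facts from \Cref{localization of Bn}: $v_{n-1}$ is normal in $T_n$, with $v_0 v_{n-1} = v_{n-1}(v_0+n-1)$ and $v_{n-1}$ commuting with the other generators; and $T_n v_{n-1}^2 = v_{n-1}^2 T_n \subseteq B_n$ by \Cref{lem: big image}. Take $x_1,\dots,x_{2k+2} \in N$ and write each as $x_j = r_j v_{n-1}$ with $r_j \in T_n$. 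By normality we can push all factors of $v_{n-1}$ to the left:
\[
x_1 \cdots x_{2k+2} = v_{n-1}^{2k} \, v_{n-1}^2 \, r',
\]
for some $r' \in T_n$. Since $v_{n-1}^2 r' \in B_n$, this product equals $v_{n-1}^{2k} b$ with $b \in B_n$, and therefore lies in $P$. Hence $N^{2k+2} \subseteq P$, and so $N \subseteq P$ because $P$ is prime. This is the cleaner route. An alternative would be to use primeness directly via $v_{n-1}^{2k} B_n x \subseteq P$, but the power argument avoids needing to know that $v_{n-1}^{2k} B_n$ is an ideal.

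\textbf{Case $n=1$.} Here $B_1 \supseteq T_1(v_0-v_0^2)$, which is the identity used in \Cref{lem:barBn contains}, and $v_0$ is central in $T_1$. Suppose $v_0^a(v_0-1)^b \in P$. The idea is to use that $v_0-1$ acts as a unit modulo $v_0$. First, for $x \in N \subseteq T_1 v_0$, the product $x \cdot v_0^{a}(v_0-1)^{b}$ lies in $P$. Second, $(v_0-1)^b \equiv (-1)^b \pmod{T_1 v_0}$, so $(v_0-1)^b$ is invertible modulo powers of $v_0$. Writing $1 = (v_0-1)^b f(v_0) + v_0^{c}g(v_0)$ for suitable polynomials $f,g$, we obtain
\[
x^{M} = x^{M}(v_0-1)^b f + x^{M} v_0^{c} g .
\]
Taking $M$ and $c$ large makes $x^M v_0^c g$ divisible by $v_0^a(v_0-1)^b$-type elements in $B_1$, or a high power of $v_0$ times an element of $B_1$. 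Combining this with centrality of $v_0$ and the fact that $T_1 v_0(v_0-1) \subseteq B_1$, we aim to show $N^{m} \subseteq P$ and conclude as before.

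\textbf{Main obstacle.} The hard step is the $n=1$ bookkeeping: making sure every factor lands in $B_1$ rather than just $T_1$. The difficulty is that $v_0 \notin B_1$ itself, only $v_0(v_0-1)T_1 \subseteq B_1$. The first thing to try is to multiply each element of $N$ by a suitable element of $B_1$ before using centrality of $v_0$.
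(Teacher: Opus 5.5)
Your case $n\ge 2$ is correct, and it is more self-contained than the paper's argument. The paper quotes two results of Pham--Timmins: that $v_{n-1}^{2k}\in P$ forces $T_nv_{n-1}^{2k}\subseteq P$, and that $\ker\Pi_{n,n-1}|_{B_n}=T_nv_{n-1}^2$. Your power argument instead works directly with $N=B_n\cap T_nv_{n-1}$ and needs only that $v_{n-1}$ is normal and that $T_nv_{n-1}^2\subseteq B_n$.

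The case $n=1$, however, has a genuine gap, and it is not a bookkeeping issue. Every fact you use there still holds if $B_1$ is replaced by $T_1$: centrality of $v_0$, a B\'ezout identity in $\kk[v_0]$, and $T_1v_0(v_0-1)\subseteq B_1$. Yet for $T_1$ the conclusion fails: $T_1(v_0-1)$ is a prime of $T_1$ that contains $v_0(v_0-1)$ but not $T_1v_0$.

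In fact the $n=1$ statement encodes a specific property of $B_1$. Apply it to the completely prime ideal $B_1\cap T_1(v_0-1)$, which contains $v_0(v_0-1)\in X_1$. It forces $B_1\cap T_1v_0\subseteq T_1(v_0-1)$: an element of $B_1$ that vanishes at $v_0=0$ must also vanish at $v_0=1$. No manipulation with central elements can produce this. In your decomposition, the term $x^Mv_0^cg$ carries no factor of $v_0-1$, and nothing places it in $P$.

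The missing input is the identification $\ker\Pi_{1,0}|_{B_1}=B_1\cap T_1v_0=T_1v_0(v_0-1)$, which the paper uses. One way to prove it:
\begin{itemize}
\item Specializing $v_0\mapsto 0$ in $\Psi_1$ gives $f\partial\mapsto f\partial$, the action of $W_{\geq -1}$ on functions $\kk[t]$.
\item Specializing $v_0\mapsto 1$ gives $f\partial\mapsto \partial f=f\partial+f'$, the action on $1$-forms $g\,dt$ with $g\in\kk[t]$.
\item $A_1$ acts faithfully on $\kk[t]$, so each specialization kills exactly the annihilator of the corresponding module.
\item The map $g\mapsto g'\,dt$ is a surjective module map from functions onto $1$-forms. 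Hence the annihilator of functions is contained in that of $1$-forms.
\end{itemize}
Therefore $B_1\cap T_1v_0\subseteq T_1v_0\cap T_1(v_0-1)=T_1v_0(v_0-1)$, and the reverse inclusion is $T_1v_0(v_0-1)\subseteq B_1$.

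With $J:=T_1v_0(v_0-1)$ in hand, you finish exactly as for $n\ge 2$. From $v_0^a(v_0-1)^b\in P$ and $J\subseteq B_1$ you get $T_1v_0^{a+1}(v_0-1)^{b+1}\subseteq P$. Hence $J^k=T_1v_0^k(v_0-1)^k\subseteq P$ for $k=\max(a,b)+1$, and primeness gives $N=J\subseteq P$.
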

\begin{proof}
    Let $P$ be a prime ideal such that $P \cap X_n \neq \emptyset$. We consider the following cases.
    
    If $n \geq 2$, then there exists $k \geq 1$ such that $v_{n-1}^{2k} \in P$. Then by \cite[Proposition 3.12]{PhamTimmins2025}, $T_n v_{n-1}^{2k} \subseteq P$. If $k \geq 2$, since
    \begin{align*}
    T_n v_{n-1}^{2k} = T_n v_{n-1}^2 T_{n-1} v_{n-1}^{2k-2} \subseteq P,
    \end{align*}
    we have $T_n v_{n-1}^2 \subseteq P$ or $T_{n-1} v_{n-1}^{2(k-1)} \subseteq P$. Iteratively, we have $T_n v_{n-1}^2 \subseteq P$ and by \cite[Corollary 4.21]{PhamTimmins2025}, $\ker \Pi_{n, n-1} = T_n v_{n-1}^2$.

    If $n=1$, then since $T_1v_0(v_0-1) \subseteq B_1$, there exists $k \geq 1$ such that $T_1 v_0^k (v_0-1)^{k} \subseteq P$. A similar argument as above shows that $T_1 v_0 (v_0-1) = \ker \Pi_{1, 0} \subseteq P$.
\end{proof}

For any ring homomorphism there are useful operations known as contraction and extension relating ideals of the domain and codomain. We record the central notions below in the setting of localization of noncommutative rings for completeness. For more details see, e.g., \cite[Chapter 10]{goodearl1989introduction}.

\begin{definition}
    Given an ideal of $K$ of $B_n$, then since $B_n X_n^{-1}$ is Noetherian by \Cref{localization of Bn}, $K X_n^{-1}$ is an ideal of $B_n X_n^{-1}$ by \cite[Theorem 10.18]{goodearl1989introduction}. We call $K X_n^{-1}$ the \emph{extension} of $K$. 
    
    On the other hand, if $H$ is an ideal of $B_n X_n^{-1}$, then $H \cap B_n$ is an ideal of $B_n$, which is known as the \emph{contraction} of $H$. 

    For an ideal $K$ of $B_n$, we thus let 
    \begin{equation}
        \widehat{K} = KX^{-1}_n \cap B_n,
    \end{equation}
    be the \emph{contraction of extension} of $K$ with respect to $X_n$. 
\end{definition}

We now show that the contraction of extension of a prime ideal $P$ of $B_n$ that are disjoint from $X_n$ equals $P$ itself. The case $n=1$ is easier and different from $n \geq 2$, so we show it first to explain the idea. Recall that for a commutative ring $R$ and a multiplicative set $X$, if $I$ is a prime ideal of $R$ such that $I \cap X = \emptyset$, then $IX^{-1}$ is a prime ideal of $RX^{-1}$ and $IX^{-1} \cap R = I$. The case $n=1$ mimics exactly the commutative case above since $v_0$ is central in $T_1$ and $B_1$.

\begin{proposition}\label{prop: extension contraction 1}
    For all prime ideals $P$ of $B_1$ disjoint from $X_1$, we have 
    \begin{align*}
        PX_1^{-1} \cap B_1 = P.
    \end{align*} 
\end{proposition}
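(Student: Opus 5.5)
The inclusion $P \subseteq PX_1^{-1} \cap B_1$ is automatic, so the work is the reverse inclusion. The plan is to reduce to the commutative argument: an element of the contraction becomes an element of $P$ after multiplying by an element of $X_1$, and primeness plus the fact that $X_1$ consists of central elements then lets us cancel that element.

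\textbf{Step 1 (centrality).} Since $[v_0, \cdot]$ vanishes on $A_1 \otimes 1$ and $\g_1 = \kk v_0$ is one-dimensional, $v_0$ is central in $T_1$. Hence every element $x = v_0^a(v_0-1)^b$ of $X_1$ is central in $T_1$. Moreover $x \in B_1$: for $a,b\geq 1$ we have $x \in T_1 v_0(v_0-1) \subseteq B_1$, using the containment noted in the proof of \Cref{lem: prime two cases}. So every element of $X_1$ is central in $B_1$.

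\textbf{Step 2 (describe the contraction).} Let $r \in PX_1^{-1} \cap B_1$. Because $X_1$ is central, the extension $PX_1^{-1}$ is just $\{p x^{-1} \mid p \in P,\ x \in X_1\}$; this also uses that $X_1$ is multiplicatively closed, so that sums can be brought over a common denominator. Therefore $r = px^{-1}$ for some $p\in P$ and $x \in X_1$, that is, $rx = p \in P$.

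\textbf{Step 3 (cancel the central element).} Since $x$ is central in $B_1$, we get $rB_1x = B_1 rx \subseteq P$. As $P$ is prime, either $r \in P$ or $x \in P$ (using $B_1 x B_1 = B_1 x$, again by centrality). The hypothesis $P \cap X_1 = \emptyset$ rules out $x\in P$. Hence $r \in P$, which gives $PX_1^{-1}\cap B_1 \subseteq P$ and completes the argument.

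\textbf{Main obstacle.} The only delicate point is the membership $X_1 \subseteq B_1$, which is needed to apply primeness inside $B_1$. For $a,b\ge1$ this follows from $T_1 v_0(v_0-1)\subseteq B_1$. That containment can also be checked directly: $\Psi_1(e_1) = t^2\del + 2t v_0$ and $\Psi_1(e_0)=t\del + v_0$, and suitable combinations of these produce $v_0^2 - v_0$ up to elements of $B_0$. I would cite it rather than recompute it.
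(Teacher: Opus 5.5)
Your proof is correct, and it is more direct than the paper's. The paper introduces the two-sided ideals $P_i=\{p\in B_1 \mid p(v_0^2-v_0)^i\in P\}$. It then uses the ACC on two-sided ideals of $B_1$ (\Cref{theo: ACC}) to find one exponent $m$ with $\widehat{P}(v_0^2-v_0)^m\subseteq P$, and applies primeness to the ideal $\widehat{P}$ as a whole.

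You apply primeness element by element instead. Each $r$ in the contraction satisfies $rx\in P$ for a single central $x\in X_1\subseteq B_1$, so $rB_1x=rxB_1\subseteq P$, and $x\notin P$ then gives $r\in P$. You wrote $B_1rx$; that set is not equal to $rB_1x$ in general, but it also lies in $P$, so nothing is lost.

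Since no uniform exponent is needed, your argument does not use \Cref{theo: ACC} at all. It makes precise the paper's own remark that the $n=1$ case is just the commutative localization fact. The paper's ACC-based version is written to preview \Cref{prop: extension contraction n}, where $v_{n-1}$ is only normal in $T_n$ and the analogous $P_i$ are merely left ideals. Your elementwise idea in fact survives there as well: from $rv_{n-1}^{2k}\in P$, normality of $v_{n-1}$ in $T_n$ together with $T_nv_{n-1}^2\subseteq B_n$ gives $rB_nv_{n-1}^{2k+2}\subseteq P$.

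You leave two things implicit. One is the injectivity of $B_1\to B_1X_1^{-1}$, which is clear since $B_1\subseteq T_1$ is a domain. The other is the containment $T_1v_0(v_0-1)\subseteq B_1$, which the paper likewise takes as known.
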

\begin{proof}
    To simplify notation, in this proof, let $v$ denote $v_0$. We first show that $ \widehat{P} (v^2-v)^m \subseteq P$ for some $m$. Note that the contraction of extension of $P$ is  
    \begin{equation}
      \widehat{P} = \{p \in B_1 \mid \text{there exists } a,b \text{ such that } p v^a(v-1)^b \in P\}. 
    \end{equation}
    As $B_1X_1^{-1} = T_1X_1^{-1}$ is Noetherian, by \cite[Theorem 10.18]{goodearl1989introduction}, $PX_1^{-1}$ is an ideal of $T_1X_1^{-1}$. Thus, $\widehat{P} = PX_1^{-1} \cap B_1$ is an ideal of $B_1$. For all $i \geq 2$, define  
    \begin{equation}\label{eq: power of v2-v}
        P_i = \{p \in B_1 \mid p (v^2-v)^i \in P\}. 
    \end{equation}
    It is easy to see that $P_i$ are two-sided ideals of $B_1$ as $v$ is central in $B_1$. We have the chain of inclusions $P_1 \subseteq P_2 \subseteq \dots$, which stabilizes as $B_1$ satisfies ACC on two-sided ideals by \Cref{theo: ACC}. Thus, there exists $m$ such that $ \widehat{P}(v^2-v)^m \subseteq P$, so $\widehat{P} \subseteq P$ by assumption. Hence, $P = \widehat{P}$.
\end{proof}

The case $n \geq 2$ is trickier since the ideal $P_i$ in \eqref{eq: power of v2-v} is now only a left ideal of $B_n$ and might not be a right ideal in general. However, we can still obtain the same result.
\begin{proposition}\label{prop: extension contraction n}
    Let $n \in \NN$ and $P$ be a prime ideal of $B_n$ such that $P \cap X_n = \emptyset$. Then
    \begin{equation*}
        PX_n^{-1} \cap B_n = P.
    \end{equation*} 
\end{proposition}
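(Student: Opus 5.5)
The case $n=1$ is \Cref{prop: extension contraction 1}, so assume $n \geq 2$ and write $v = v_{n-1}$. We first describe the contraction of extension as
\[
\widehat{P} = PX_n^{-1} \cap B_n = \{p \in B_n \mid p v^{2k} \in P \text{ for some } k\}.
\]
By \Cref{localization of Bn} and \cite[Theorem 10.18]{goodearl1989introduction}, $PX_n^{-1}$ is a two-sided ideal of the Noetherian ring $B_nX_n^{-1}$. Hence $\widehat{P}$ is a two-sided ideal of $B_n$ containing $P$. It then suffices to show that some power $v^{2m}$ satisfies $\widehat{P}\, v^{2m} \subseteq P$, and to convert this one-sided statement into a product of two-sided ideals so that primeness applies.

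The sets $P_k = \{p \in B_n \mid p v^{2k} \in P\}$ are only left ideals, so ACC cannot be applied to them directly. We replace them with two-sided ideals instead. Since $v$ is normal in $T_n$ with $v_0 v = v(v_0+n-1)$, conjugation by $v$ is the automorphism $\sigma$ of $T_n$ fixing $t,\del,v_1,\dots,v_{n-1}$ and sending $v_0 \mapsto v_0+n-1$. Define
\[
Q_k = \{p \in B_n \mid p\, T_n v^{2k} \subseteq P\}, \qquad \text{or more symmetrically} \qquad Q_k = \{p \in B_n \mid B_n p\, T_n v^{2k} \subseteq P\}.
\]
Using $T_nv^{2k} = v^{2k}T_n$, which follows from normality, one checks that $Q_k$ is a two-sided ideal of $B_n$. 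The inclusion $Q_k \subseteq Q_{k+1}$ holds, so by \Cref{theo: Bn Cn ACC} the chain stabilizes at some $Q_m$.

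Next we show $\widehat{P} = Q_m$, or at least $\widehat{P} \subseteq Q_m$. Take $p \in \widehat{P}$ and $r \in T_n$. Because $PX_n^{-1}$ is a two-sided ideal of $T_nX_n^{-1}$, we have $p\,r \in PX_n^{-1}$. Hence $p\, r v^{2\ell} \in P$ for some $\ell$ depending on $r$. The obstacle is uniformity in $r$. Here we use that the whole set $pT_n$ lies in $PX_n^{-1}$, that $T_n$ is finitely generated as a $(B_n,B_n)$-bimodule (which comes from the Noetherian bimodule statement in \Cref{theo: Bn Cn ACC}, since $T_n$ is a cyclic submodule chain limit), and that $T_nv^2 \subseteq B_n$. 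Together these bound $\ell$ uniformly, so $p \in Q_\ell \subseteq Q_m$. An alternative route to uniformity is the ascending union $\widehat{P} = \bigcup_k Q_k$ itself, which stabilizes by ACC and so gives $\widehat{P} = Q_m$ once each element of $\widehat{P}$ lies in some $Q_k$.

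Finally, $\widehat{P}\,(T_n v^{2m+2}) \subseteq \widehat{P}\, T_nv^{2m} \subseteq P$. Now $I := B_n \cap T_nv^{2m+2} \supseteq T_n v^{2m+2}$ is a two-sided ideal of $B_n$, because $T_n v^{2m+2}$ is already a two-sided ideal of $T_n$ contained in $B_n$. Thus $\widehat{P} I \subseteq P$, and primeness gives $\widehat{P} \subseteq P$ or $I \subseteq P$. The second alternative would put $v^{2m+2} \in P \cap X_n$, a contradiction. Hence $\widehat{P} = P$.

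The main obstacle is the uniformity step. The fallback is to run the ACC argument on the two-sided ideals $Q_k$, where the bimodule Noetherianity of $T_n$ over $B_n$ is what makes the finite generation work.
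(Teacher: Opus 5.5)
There is a gap at the step you yourself flag. You never prove that every $p\in\widehat{P}$ lies in some $Q_k=\{p\in B_n\mid pT_nv^{2k}\subseteq P\}$. Without that, the ACC stabilization of the $Q_k$ tells you nothing about $\widehat{P}$.

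Your first justification does not work. Finite generation of $T_n$ as a $(B_n,B_n)$-bimodule, $T_n=\sum_j B_nr_jB_n$, only reduces $pT_n$ to the sets $pB_nr_jB_n$. You would still need a single exponent $\ell$ with $pbr_jv^{2\ell}\in P$ for infinitely many left multipliers $b\in B_n$, and bimodule finite generation does not give this. The phrase ``cyclic submodule chain limit'' is not a meaningful reason. Your alternative route is circular, because ``each element of $\widehat{P}$ lies in some $Q_k$'' is exactly the missing claim.

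The missing step is in fact a one-line consequence of normality, with no uniformity issue at all. Suppose $pv^{2j}\in P$ and $r\in T_n$. Then $rv^{2j}=v^{2j}\sigma^{2j}(r)$, so
\[
prv^{2j+2}=(pv^{2j})\,(\sigma^{2j}(r)v^{2})\in PB_n\subseteq P,
\]
because $\sigma^{2j}(r)v^2\in T_nv^2\subseteq B_n$ by \Cref{lem: big image}. Hence $p\in Q_{j+1}$.

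This already finishes the proof. We get $(B_npB_n)(T_nv^{2j+2})\subseteq P$, where $T_nv^{2j+2}$ is a two-sided ideal of $B_n$ containing $v^{2j+2}\notin P$, so primeness gives $p\in P$ directly. In other words, $Q_k=P$ for every $k\geq 1$. Your ACC step is vacuous, and the repaired argument proves the proposition without \Cref{theo: Bn Cn ACC}.

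The paper takes a different route. It applies ACC to the two-sided ideals $P_kB_n$ generated by the left ideals $P_k=\{p\in B_n\mid pv^k\in P\}$, for which $\widehat{P}=\bigcup_k P_kB_n$ is immediate, and obtains $\widehat{P}=P_mB_n$. Using $B_nv^{m+2}\subseteq v^mB_n$, it converts this into the uniform bound $\widehat{P}v^{m+2}\subseteq P_mv^mB_n\subseteq P$. Only then does it apply primeness to the whole ideal $\widehat{P}$. The elementwise argument above shows that this uniform bound can be bypassed.
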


\begin{proof}
    The case $n =1$ is \Cref{prop: extension contraction 1}, thus we assume $n \geq 2$ from now on. To simplify notation, in this proof, let $v$ denote $v_{n-1}$. We first show that $ \widehat{P} v^m \subseteq P$ for some $m$. Note that the contraction of extension of $P$ is  
    \begin{equation}
      \widehat{P} = \{p \in B_n \mid \text{there exists } k \text{ such that } p v^k \in P\}. 
    \end{equation}
    As $B_nX_n^{-1} = T_nX_n^{-1}$ is Noetherian, by \cite[Theorem 10.18]{goodearl1989introduction}, $PX_n^{-1}$ is an ideal of $T_nX_n^{-1}$. Thus, $\widehat{P} = PX_n^{-1} \cap B_n$ is an ideal of $B_n$.
    For all $i \geq 2$, let  
    \begin{equation}
        P_i = \{p \in B_n \mid p v^i \in P\} 
        \quad \text{and} \quad Q_i = P_iB_n.
    \end{equation}
    It is easy to see that $P_i$ are left ideals of $B_n$ for all $i \in \NN$. Thus, $Q_i \subseteq B_n$ are two-sided ideals of $B_n$ for all $i$. Moreover, 
    \begin{equation}
        P_k \subseteq P_{k+2} \subseteq \dots \Rightarrow Q_k \subseteq Q_{k+2} \subseteq \dots. 
    \end{equation}
    By \Cref{theo: ACC}, $B_n$ satisfies ACC on two-sided ideals, so there exists $m$ such that 
    \[Q_m = Q_{m+2} = \dots \quad \text{ and by renumbering } Q_m = Q_{m+1} = \dots ,\]
    since $P_{k} \subseteq P_{k+3}$ and $Q_{k} \subseteq Q_{k+3}$ for all $k$. Then as  
    \begin{align}
        \widehat{P} = \widehat{P}B_n = \big (\bigcup P_k \big ) B_n = \bigcup Q_k,
    \end{align}
    we have $\widehat{P} = Q_m$. Equation \eqref{eq: Ore help} implies that $v^m B_n \supseteq B_n v^{m+2}$, so 
    \begin{equation}
        P = PB_n \supseteq P_m v^m B_n \supseteq P_m B_n v^{m+2} = Q_m v^{m+2} = \widehat{P} v^{m+2} \supseteq \widehat{P} T_n v^{m+4},
    \end{equation}
    where the last containment follows from \Cref{lem: big image}. Since $P$ is prime and $T_n v^{m+4} \nsubseteq P$ by assumption, $\widehat{P} \subseteq P$. So $\widehat{P} = P$.
\end{proof}

The following corollary follows immediately, which could be seen as the stronger version of \cite[Proposition 10.20]{goodearl1989introduction} adapted to our non-Noetherian ring $B_n$. We furthermore characterize the topology of the maps.
\begin{corollary}
    Contraction and extension give inverse homeomorphisms between prime ideals of $B_1$ not containing $v_0 (v_0-1)$ and prime ideals of $A_1 \otimes \kk[v_0, v_0^{-1}, (v_0 -1)^{-1}]$. In other words, 
    \begin{align}
        \Spec T_1[v_0^{-1}, (v_0 -1)^{-1}] \cong \Spec B_n \setminus \mathcal{V}(T_1 v_0(v_0 -1)),
    \end{align}
    which is an open set of $B_1$.

    For $n \geq 2$, contraction and extension give inverse homeomorphisms between prime ideals of $B_n$ not containing $v_{n-1}^2$ and prime ideals of $A_1 \otimes \Ua(\g_n)[v_{n-1}]$. In other words, 
    \begin{equation}\label{eq: homeomorphism Bn Tn localized}
        \Spec T_n[v_n^{-1}] \cong \Spec B_n \setminus \mathcal{V}(T_n v_{n-1}^2),
    \end{equation}
    which is an open set of $B_n$.
\end{corollary}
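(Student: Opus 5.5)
We deduce the corollary from \Cref{localization of Bn}, \Cref{lem: prime two cases} and \Cref{prop: extension contraction n}, together with standard facts on Ore localization of the Noetherian ring $T_nX_n^{-1} = B_nX_n^{-1}$. We treat $n \geq 2$; the case $n=1$ is identical with $X_1$ in place of $X_n$ and \Cref{prop: extension contraction 1} in place of \Cref{prop: extension contraction n}. In what follows, ``prime ideals of $B_n$ not containing $v_{n-1}^2$'' is read as prime ideals not containing $T_nv_{n-1}^2$. By \cite[Proposition 3.12]{PhamTimmins2025}, as used in the proof of \Cref{lem: prime two cases}, these are exactly the primes $P$ with $P \cap X_n = \emptyset$. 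Indeed, if some $v_{n-1}^{2k}\in P$, the argument there forces $T_nv_{n-1}^2 = \ker \Pi_{n,n-1}\subseteq P$. Hence this set is the complement of the closed set $\mathcal{V}(T_nv_{n-1}^2)$, and so it is open.

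\emph{Step 1: the two maps are well defined.} Let $Q$ be a prime of $T_nX_n^{-1}$. We check that $Q\cap B_n$ is prime. Suppose $I,J$ are ideals of $B_n$ with $IJ\subseteq Q\cap B_n$. Then $(IX_n^{-1})(JX_n^{-1}) \subseteq IJX_n^{-1}\subseteq Q$; here we use that $IX_n^{-1}$ is a two-sided ideal by \cite[Theorem 10.18]{goodearl1989introduction}, which applies because the localization is Noetherian. So $I\subseteq Q$ or $J\subseteq Q$. Moreover $Q\cap X_n=\emptyset$, since $Q$ is proper and the elements of $X_n$ are units.

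Conversely, let $P$ be a prime of $B_n$ with $P\cap X_n=\emptyset$. The ideal $PX_n^{-1}$ is proper: otherwise $1 \in PX_n^{-1}$, so $P$ would contain some element of $X_n$. To see that it is prime, suppose $H_1H_2\subseteq PX_n^{-1}$ for ideals $H_1,H_2$ of the localization. Then $(H_1\cap B_n)(H_2\cap B_n)\subseteq PX_n^{-1}\cap B_n = P$, using \Cref{prop: extension contraction n}. Since $P$ is prime, some $H_j\cap B_n\subseteq P$. As every ideal $H$ of the localization satisfies $H=(H\cap B_n)X_n^{-1}$, we get $H_j\subseteq PX_n^{-1}$.

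\emph{Step 2: the maps are mutually inverse.} For one composite, $PX_n^{-1}\cap B_n=P$ is exactly \Cref{prop: extension contraction n}. For the other, $(Q\cap B_n)X_n^{-1}=Q$ holds for every ideal $Q$ of the localization, by clearing denominators.

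\emph{Step 3: the maps are homeomorphisms.} Both maps preserve inclusions, and each sends intersections of primes to intersections of primes: contraction commutes with intersections, and extension is the inverse of contraction. Hence closed sets $\mathcal{V}(I)$ correspond to closed sets. Concretely, $\mathcal{V}(H)$ in the localization corresponds to $\mathcal{V}(H\cap B_n)$ intersected with the open set. In the other direction, $\mathcal{V}(I)$ intersected with the open set corresponds to $\mathcal{V}(IX_n^{-1})$; this uses that a prime $P$ in the open set contains $I$ if and only if $PX_n^{-1}$ contains $IX_n^{-1}$, which again follows from \Cref{prop: extension contraction n}.

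The only non-formal input, and hence the main obstacle, is the equality $PX_n^{-1}\cap B_n = P$. It is delicate because $B_n$ is non-Noetherian and $X_n$ does not consist of normal elements of $B_n$. This equality is supplied by \Cref{prop: extension contraction n}, so the rest of the argument is routine.
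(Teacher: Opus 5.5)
Your proof is correct and follows essentially the same route as the paper. \Cref{prop: extension contraction n} (respectively \Cref{prop: extension contraction 1} for $n=1$) gives $PX_n^{-1}\cap B_n = P$, and the same two inputs as in the paper — that equality and the fact that every ideal of the Noetherian localization is extended — yield the bijection and continuity in both directions; you write out the primality checks for extension and contraction that the paper delegates to \cite[Proposition 10.17]{goodearl1989introduction}.
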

\begin{proof}
    The proof in the case of $n=1$ is similar to $n \geq 2$, so we only provide for $n \geq 2$. Then \Cref{prop: extension contraction n} and \cite[Proposition 10.17 a]{goodearl1989introduction} imply that contraction and extension give inverse bijections between $\Spec T_n[v_n^{-1}]$ and $\Spec B_n \setminus \mathcal{V}(T_n v_{n-1}^2)$. For an ideal $I$ and a prime ideal $P$ of $B_n$ both disjoint from $X_n$, we have 
    \begin{align}
        I \subseteq P \subseteq B_{n-1} \Rightarrow IX^{-1} \subseteq PX^{-1} \subseteq T_n[v_n^{-1}],
    \end{align} 
    which implies that the extension map $\Spec B_n \setminus \mathcal{V}(T_n v_{n-1}^2) \to \Spec T_n[v_n^{-1}]$ is continuous. 
    
    On the other hand, by \cite[Proposition 10.17]{goodearl1989introduction} all ideals of $T_n[v_n^{-1}]$ are extensions of $B_n$. Let $IX^{-1}$ and $PX^{-1}$ be an ideal and a prime ideal of $T_n[v_n^{-1}]$ respectively such that $IX^{-1} \subseteq PX^{-1}$. Then by \Cref{prop: extension contraction n},
    \begin{align*}
        P = PX^{-1} \cap B_n \supseteq IX^{-1} \cap B_n \supseteq I,
    \end{align*} 
    which shows that contraction map is continuous as well. Thus, we obtain the homeomorphisms in \eqref{eq: homeomorphism Bn Tn localized}.
\end{proof}

\subsection{The spectra \texorpdfstring{$\Spec T_nX_n^{-1}$}{Spec localized gn} and \texorpdfstring{$\Prim T_nX_n^{-1}$}{Prim localized gn} from the Dixmier map for \texorpdfstring{$\g_n$}{gn}}\label{subsec: Dx Spec and Prim gn}
In this subsection, we study the prime and primitive spectrum of $T_nX_n^{-1}$. The main tool we will use is the Dixmier map $\Dx_{\g_n}$ for $\g_n$ and the results on coadjoint orbits of $\g_n^*$ characterized in \cite{pham2025orbit}. Recall the following useful result that reduces the study of prime and primitive ideals of $T_n[v_{n-1}^{-1}]$ to $\Ua(\g_n)[v_{n-1}^{-1}]$.

\begin{lemma}\label{lem:idealcor}
    For a ring $R$, let $\mathcal{I}(R)$ denote the ideal lattice of $R$. The map 
    \begin{equation}\label{eq:correspondence}
    \iota_*: \mathcal{I}(\Ua(\g_n)) \rightarrow \mathcal{I}(T_n), \quad 
    J \longmapsto {A}_1 \otimes J, 
    \end{equation}
    is a well-defined bijection between ideals of $\Ua(\g_n)$ and ideals of $T_n$. Moreover, it restricts to homeomorphisms
    \begin{equation*}
    \Spec \Ua(\g_n) \rightarrow \Spec T_n, \quad \Prim \Ua(\g_n) \rightarrow \Prim T_n,
    \end{equation*}
    which we denote also by $\iota_*$.
\end{lemma}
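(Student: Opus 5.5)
The plan rests on the simplicity of $A_1$ and on the fact that $A_1$ is a central simple $\kk$-algebra, i.e.\ $Z(A_1)=\kk$. Writing $U=\Ua(\g_n)$, the map $\iota_*$ is clearly well defined, since $A_1\otimes J$ is a two-sided ideal of $A_1\otimes U$. It is also order-preserving. It is injective because $J$ is recovered from $A_1\otimes J$ as $(A_1\otimes J)\cap(1\otimes U)$.

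The main step is surjectivity: every ideal $I$ of $T_n$ has the form $A_1\otimes J$ with $J=\{u\in U \mid 1\otimes u\in I\}$. The inclusion $A_1\otimes J\subseteq I$ is immediate. For the reverse inclusion, I take $x=\sum_{k=1}^r a_k\otimes u_k\in I$ with the $u_k$ linearly independent modulo $J$ and $r$ minimal among counterexamples. Using simplicity of $A_1$, I choose $\alpha_j,\beta_j\in A_1$ with $\sum_j\alpha_j a_1\beta_j=1$. Replacing $x$ by $\sum_j(\alpha_j\otimes1)x(\beta_j\otimes1)\in I$, I may assume $a_1=1$. Then for each $b\in A_1$ the commutator $[b\otimes1,x]=\sum_{k\ge2}[b,a_k]\otimes u_k$ lies in $I$ and has fewer terms. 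By minimality it must vanish modulo $A_1\otimes J$, and since the $u_k$ are independent modulo $J$ this forces $[b,a_k]=0$ for all $b$. Hence each $a_k\in Z(A_1)=\kk$, so $x=1\otimes u$ for some $u\in U$, giving $u\in J$, a contradiction. This is the standard argument that ideals of $A\otimes B$ with $A$ central simple correspond to ideals of $B$; I expect it to be the only step requiring real care. The main bookkeeping is the reduction modulo $A_1\otimes J$, which I handle by working in $T_n/(A_1\otimes J)\cong A_1\otimes(U/J)$, where the same argument shows the ideal $I/(A_1\otimes J)$ is zero.

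Thus $\iota_*$ is a lattice isomorphism, with inverse $I\mapsto I\cap(1\otimes U)$, and it preserves products: $(A_1\otimes J)(A_1\otimes J')=A_1\otimes JJ'$. Since primeness of an ideal is defined lattice-theoretically via products of ideals, $\iota_*$ restricts to a bijection $\Spec U\to\Spec T_n$. Because it is an inclusion-preserving and inclusion-reflecting bijection of ideals, it carries closed sets $\mathcal{V}(J)$ to $\mathcal{V}(A_1\otimes J)$ and conversely, so it is a homeomorphism in the Jacobson (Zariski) topology.

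For primitive ideals, I plan to use the Dixmier–Moeglin equivalence for $U$, since $\g_n$ is finite-dimensional solvable. A prime $J$ is primitive if and only if it is locally closed. Local closedness is also a purely lattice-theoretic property, namely that $J\subsetneq\bigcap\{Q\in\Spec U \mid Q\supsetneq J\}$, so it is transported by $\iota_*$. To conclude, I also need $T_n$, a Noetherian algebra of countable dimension over uncountable $\kk$, to satisfy ``primitive $\Leftrightarrow$ locally closed''. This can be avoided by a direct argument instead. If $M$ is a simple $U$-module, then $\kk[t]\otimes M$, the tensor product of the simple $A_1$-module $A_1/A_1\partial$ with $M$, is a simple $T_n$-module with annihilator $A_1\otimes\Ann M$, because $\End_U(M)=\kk$ by the Nullstellensatz. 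Conversely, if $I=A_1\otimes J$ is primitive, then $I$ is locally closed by the Nullstellensatz argument in \Cref{prop: classical DME direction} combined with the Dixmier–Moeglin equivalence for the Noetherian algebra $T_n$. Hence $J$ is locally closed and therefore primitive.
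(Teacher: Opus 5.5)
Your treatment of ideals and of $\Spec$ is correct. The minimal-length argument in $A_1\otimes(U/J)$, using simplicity of $A_1$ and $Z(A_1)=\kk$, together with preservation of inclusions and products, is the standard proof behind the McConnell--Robson lemma that the paper simply cites. The forward half of the primitive statement is also fine. The module $\kk[t]\otimes M$ is simple over $T_n$ by a density argument, using either $\End_U(M)=\kk$ or just $\End_{A_1}(\kk[t])=\kk$. Its annihilator is $A_1\otimes\Ann_U M$ by the ideal correspondence.

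The gap is in the converse: that $A_1\otimes J$ primitive in $T_n$ forces $J$ primitive in $\Ua(\g_n)$. You say the direct argument avoids needing ``primitive $\Leftrightarrow$ locally closed'' for $T_n$, but it does not. \Cref{prop: classical DME direction} only gives (P)$\Rightarrow$(R) for $T_n$. You then get from (R) to (LC) by invoking ``the Dixmier--Moeglin equivalence for the Noetherian algebra $T_n$''. That implication is the hard half of the DME. It does not follow from $T_n$ being Noetherian of countable dimension over an uncountable field, and it fails for some affine Noetherian algebras. So as written this step is unsupported; the paper itself takes the DME for $T_n$ from the thesis.

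Either of two short arguments closes the gap.
\begin{enumerate}
\item Observe that $T_n\cong\Ua(\mathfrak{h}\oplus\g_n)/(z-1)$, where $\mathfrak{h}$ is the Heisenberg algebra with central element $z$. The DME for enveloping algebras of finite-dimensional Lie algebras passes to factor rings, because (LC), (P) and (R) for $P/I$ in $R/I$ coincide with the same properties for $P$ in $R$.
\item Bypass (LC) for $T_n$ entirely. Once $I=A_1\otimes J$ is known to be rational by \Cref{prop: classical DME direction}, note that $U/J$ and $T_n/I\cong A_1\otimes(U/J)$ are domains, so $Q(U/J)\subseteq Q(T_n/I)$. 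Every $c=s^{-1}r\in Z(Q(U/J))$ commutes with $1\otimes(U/J)$. It also commutes with $A_1\otimes 1$, since $A_1\otimes 1$ commutes with $1\otimes r$ and $1\otimes s$. Hence $c$ lies in $Z(Q(T_n/I))=\kk$. Thus $J$ is rational, and therefore primitive by the DME for $\Ua(\g_n)$.
\end{enumerate}
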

\begin{proof}
    This follows from \cite[Lemma 9.6.9]{mcconnell2001noncommutative} since $A_1$ is a simple ring with trivial center $\kk$ and \cite[Proposition 3.6.11]{phamthesis}.
\end{proof}

The rings $\Ua(\g_n)[v_{n-1}^{-1}]$ and $T_n[v_{n-1}^{-1}]$ are well-behaved Noetherian noncommutative rings, since they satisfy the Nullstellensatz over $\kk$ (\cite[Corollary 9.1.8]{mcconnell2001noncommutative}) and are Jacobson rings.
\begin{lemma}\label{lem: Jacobson rings 1}
    The rings $\Ua(\g_n)[v_{n-1}^{-1}]$ and $T_n[v_{n-1}^{-1}]$ 
    satisfy the radical property and furthermore are Jacobson rings. 
\end{lemma}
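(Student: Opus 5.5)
We will show that $R = \Ua(\g_n)[v_{n-1}^{-1}]$ and $S = T_n[v_{n-1}^{-1}] = A_1 \otimes R$ are countably generated Noetherian $\kk$-algebras whose prime factors are all Goldie. We then apply the general machinery in \cite[Chapter 9]{mcconnell2001noncommutative}, together with \Cref{lem:idealcor}, to transfer the properties from $R$ to $S$.

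\textbf{Step 1: the Nullstellensatz and the radical property.} By \Cref{localization of Bn}, $S$ is left and right Noetherian. The same argument shows this for $R$: $v_{n-1}$ is normal in $\Ua(\g_n)$, so $\{v_{n-1}^k\}$ is an Ore set, and \cite[Corollary 10.16]{goodearl1989introduction} gives that $R$ is Noetherian. Both algebras are countably generated over the uncountable field $\kk$, since they are generated by $t,\del,v_0,\dots,v_{n-1},v_{n-1}^{-1}$. Hence \cite[Corollary 9.1.8]{mcconnell2001noncommutative} shows they satisfy the Nullstellensatz over $\kk$, and in particular have the radical property. By Goldie's theorem, every prime factor of a Noetherian ring is Goldie. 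So by \cite[Lemma 1.2]{mcconnell2001noncommutative}, as recalled before \Cref{lem: Nullstellensatz property}, the radical property is equivalent to being Jacobson. Therefore both rings are Jacobson.

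\textbf{Step 2: an independent check of the Jacobson property.} If one prefers a check that does not rely on the equivalence above, we argue directly.
\begin{itemize}
\item For $R$: $\Ua(\g_n)$ is Jacobson, since enveloping algebras of finite-dimensional Lie algebras are Jacobson \cite[Chapter 9]{mcconnell2001noncommutative}.
\item Localization at a normal element preserves the Jacobson property for Noetherian rings. The primes of $R$ correspond to the primes $P$ of $\Ua(\g_n)$ with $v_{n-1}\notin P$. Such a $P$ is the intersection of primitive ideals not containing $v_{n-1}$, because $\Ua(\g_n)/P$ has only finitely many minimal primes over $v_{n-1}$, and one can then use a generic-flatness or Hilbert-ring style argument.
\item For $S$: \Cref{lem:idealcor}, applied after localizing, identifies ideals of $S$ with ideals $A_1\otimes J$, where $J$ is an ideal of $R$. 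This holds because $A_1$ is central simple with center $\kk$ and $v_{n-1}$ lies in the $\Ua(\g_n)$ factor. Primes and primitives correspond under this identification, so the Jacobson property transfers.
\end{itemize}

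\textbf{Main obstacle.} The delicate point is the primitive-ideal half of the transfer. One must show that $A_1\otimes J$ is primitive whenever $J$ is, which follows from \cite[Lemma 9.6.9]{mcconnell2001noncommutative} as in \Cref{lem:idealcor}. One also needs the converse for localized rings. For this we rely on Step 1, which makes the Jacobson property for $S$ independent of the transfer, so that only the Noetherianity from \Cref{localization of Bn} and countable dimension are really needed.
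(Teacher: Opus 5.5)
Your Step 1 is correct and is essentially the paper's proof. The radical property comes from countable $\kk$-dimension over an uncountable field: you invoke this via \cite[Corollary 9.1.8]{mcconnell2001noncommutative}, while the paper applies \cite[Corollary 4]{Amitsur56} directly. Noetherianity then makes every prime factor Goldie, so \cite[Lemma 9.1.2(ii)]{mcconnell2001noncommutative} gives the Jacobson property.

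Step 2 is unnecessary, and its claim that localizing at a normal element preserves the Jacobson property is only sketched. Since you explicitly rest the argument on Step 1, nothing depends on it.
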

\begin{proof}
    Every prime ideal of $\Ua(\g_n)$ is completely prime, so prime ideals $P$ of $\Ua(\g_n)[v_{n-1}^{-1}]$ is also completely prime. Thus prime ideals of $T_n[v_{n-1}^{-1}]$ are completely prime as well. To simplify notation, $R$ denote $\Ua(\g_n)[v_{n-1}^{-1}]$ and $T_n[v_{n-1}^{-1}]$ in this proof. 
    
    Let $I$ be an ideal of $R$. Since $R/I$ is a finitely generated $\kk$-algebra, $\dim_\kk R/I$ is countable and thus by \cite[Corollary 4]{Amitsur56}, the Jacobson radical $J(R/I)$ is nil, and thus $R$ satisfies the radical property. 

    Since $R$ is Noetherian, all prime factor rings are right Goldie, $R$ is a Jacobson ring by \cite[Lemma 1.2 (ii)]{mcconnell2001noncommutative}.    
\end{proof}

Thus, it suffices to study the prime and primitive spectrum of $\Ua(\g_n)[v_{n-1}^{-1}]$. The case $n =1$ is easier and requires slightly different notation, which we will explain first below. 

\begin{proposition}\label{Spec B1}
    The prime spectrum of $B_1$ is
    \begin{equation*}
        \Spec B_1 = \{0,\  \Psi_1(\Ua(\W{-1})_+),\  T_1(v_0-v_0^2),\  T_1 (v_0 - \alpha) \cap B_1 \mid \alpha \neq 0, 1\},
    \end{equation*}
    which are all completely prime ideals. 
\end{proposition}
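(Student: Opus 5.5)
Following \Cref{lem: prime two cases}, I would sort a prime $P$ of $B_1$ according to whether it meets $X_1$ or is disjoint from it.

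\textbf{Case 1: $P \cap X_1 \neq \emptyset$.} Then $P \supseteq \ker \Pi_{1,0}|_{B_1} = T_1 v_0(v_0-1)$. So $P$ is the preimage of a prime ideal of $B_0$ under the surjection $\Pi_{1,0}|_{B_1}: B_1 \surj B_0$. By \Cref{prop:B0ideals}, the primes of $B_0$ are exactly $0$ and $A_1\del$, and both are completely prime. Pulling back gives two primes of $B_1$:
\begin{itemize}
\item $T_1(v_0 - v_0^2)$, the preimage of $0$;
\item the preimage of $A_1\del$, which is the kernel of $B_1 \to B_0 \to B_0/A_1\del = \kk$.
\end{itemize}
For the second, I would check that this kernel is the augmentation image $\Psi_1(\Ua(\W{-1})_+)$. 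It contains $\Psi_1(\W{-1})$, since $\Pi_{1,0}\Psi_1(f\del) = f\del \in A_1\del$. It is also proper and of codimension one, because $B_1$ is generated by $1$ and $\Psi_1(\W{-1})$. Complete primeness of both ideals is inherited from $B_0$.

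\textbf{Case 2: $P \cap X_1 = \emptyset$.} By \Cref{prop: extension contraction 1} and the corollary after it, $P = Q \cap B_1$ for a unique prime $Q$ of
\[
T_1X_1^{-1} = A_1 \otimes \kk[v_0, v_0^{-1}, (v_0-1)^{-1}].
\]
Using \Cref{lem:idealcor} (its proof via \cite[Lemma 9.6.9]{mcconnell2001noncommutative} applies equally to the localization, since $A_1$ is simple with centre $\kk$), $Q = A_1 \otimes \mathfrak{q}$ for a prime $\mathfrak{q}$ of the Laurent-type ring $\kk[v_0][v_0^{-1},(v_0-1)^{-1}]$. So $\mathfrak{q}$ is either $0$ or $(v_0-\alpha)$ with $\alpha \neq 0,1$. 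Contracting:
\begin{itemize}
\item $\mathfrak{q} = 0$ gives $P = 0$, because $B_1 \subseteq T_1$ and $T_1$ embeds in its localization;
\item $\mathfrak{q} = (v_0-\alpha)$ gives $P = T_1(v_0-\alpha)X_1^{-1} \cap B_1$. Since $v_0$ is central and $T_1/T_1(v_0-\alpha) \cong A_1$ has no $v_0$-torsion, this equals $T_1(v_0-\alpha) \cap B_1$.
\end{itemize}
Each of these is completely prime: it is the contraction of a completely prime ideal of $T_1X_1^{-1}$ (a quotient isomorphic to $A_1$ or $T_1X_1^{-1}$, both domains), and contractions of completely prime ideals are completely prime.

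\textbf{Converse and distinctness.} Every listed ideal is prime, either from the above or because it is a contraction of a completely prime ideal. They are pairwise distinct: $0$ is different from the rest, the two Case 1 ideals differ in codimension, and the Case 2 ideals for different $\alpha$ differ because contraction is injective on primes disjoint from $X_1$ (\Cref{prop: extension contraction 1}).

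\textbf{Main obstacle.} The step needing most care is the identification of the preimage of $A_1\del$ with $\Psi_1(\Ua(\W{-1})_+)$. It also has to be separated from $T_1(v_0-\alpha)\cap B_1$ at $\alpha=0,1$. This is harmless, since those values lie in Case 1 and are handled by \Cref{prop:B0ideals} rather than by localization.
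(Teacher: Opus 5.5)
Your proof is correct and follows the paper's argument: you split primes via \Cref{lem: prime two cases}, pull back the two primes of $B_0$ from \Cref{prop:B0ideals} when $P$ meets $X_1$, and otherwise use the localization $T_1X_1^{-1}$, \Cref{lem:idealcor} and \Cref{prop: extension contraction 1} to get $P=0$ or $P=T_1(v_0-\alpha)\cap B_1$. The differences are minor: you treat $P=0$ inside the localized case, and you never say why $T_1(v_0-\alpha)\cap B_1$ differs from $T_1(v_0-v_0^2)$; you should add that the former misses $X_1$ while the latter contains $v_0(v_0-1)\in X_1$, a one-line observation that replaces the paper's appeal to \cite{conley2007}.
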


\begin{proof}
    Let $P \neq 0$ be a prime ideal of $B_1$. By \Cref{lem: prime two cases}, $P \supseteq T_1 v_0 (v_0-1) = \ker \Pi_{1, 0}\cap B_1$ or $P \cap X_1 = \emptyset$. In the first case, by \Cref{prop:B0ideals},
\[P = \begin{cases}
        \ker \Pi_{1, 0} = T_1 v_0 (v_0-1) \\ 
        \Pi_{1, 0}^{-1}(A_1 \del) = \Psi_1(\Ua(\W{-1})_+).
    \end{cases}\]

    Now if $P \cap X_1 = \emptyset$, then by \cite[Theorem 10.18]{goodearl1989introduction}, $PX_1^{-1}$ is a proper prime ideal of $B_1X_1^{-1} = T_1X_1^{-1} = A_1 \otimes \kk[v_0, v_0^{-1}, (v_0-1)^{-1}]$ by \Cref{localization of Bn}. By \Cref{lem:idealcor}, 
    \begin{align*}
        PX_1^{-1} = T_1X_1^{-1} (v_0-\alpha) \quad \text{for some} \quad \alpha \in \kk, \alpha \neq 0, 1.
    \end{align*}
    By \Cref{prop: extension contraction 1}, 
    \begin{equation}
        P = PX_1^{-1} \cap B_1 =  T_1X_1^{-1} (v_0-\alpha) \cap B_1 = T_1 (v_0-\alpha) \cap B_1,
    \end{equation}
    which is easily seen to be a completely prime ideal of $B_1$. By \cite[Lemma 2.1]{conley2007}, $B_1/(T_1 (v_0 - \alpha) \cap B_1)$ and $B_1/T_1(v_0-v_0^2)$ are nonisomorphic, so $T_1 (v_0 - \alpha) \cap B_1 \neq T_1(v_0-v_0^2)$.
\end{proof}

\begin{remark}
    The Krull dimension of $B_0$ is 1 and the Krull dimension of $T_0$ is 0. Similarly, the Krull dimension of $B_1$ is 2, and the Krull dimension of $T_1$ is 1.
\end{remark}

From now on in this subsection, assume $n \geq 2$. By \Cref{lem:idealcor}, we reduced the question of studying prime and primitive ideals $T_nX_n^{-1}$ to those of $\Ua(\g_n)[v_{n-1}^{-1}]$. Recall that $\g_n$ is a finite-dimensional solvable Lie algebra with the algebraic adjoint group $G_n$ constructed in \cite[Definition 7.3]{pham2025orbit}. By \cite[Theorem 6.5.12]{dixmier1996enveloping}, we have a homeomorphism 
    \begin{equation}\label{eq: Dx gn}
        \Dx_{\g_n}: \g_n^*/G_n \cong \Prim \Ua(\g_n), \quad G_n \cdot \bchi \mapsto Q'_{\bchi}.
    \end{equation}
We now explain and recall the construction of $Q'_{\bchi}= \Ann_{\Ua(\g_n)} M_{\bchi}$ in \cite[Theorem 6.5.12]{dixmier1996enveloping} and \cite{pham2025orbit}. For any element $\bchi \in \g_n^*$, there exists a Vergne polarization $\p_{\bchi} = \W{\left\lfloor \frac{m}{2}\right\rfloor +1}/\W{n}$ for $\bchi$ (\cite[Proposition 6.5]{pham2025orbit}). A polarization for $\bchi$ is a maximal isotropic subspace of $\g$ that is also a subalgebra. Thus $\bchi$ is a character of $\p_{\bchi}$ and defines a one-dimensional representation $\kk_{\bchi}$ of $\p_{\bchi}$. We can induce it to a representation $M_{\bchi} = \Ua(\g_n) \otimes_{\Ua(\p_{\bchi})} \kk_{\bchi}$ of $\g_n$ and let $Q'_{\bchi}$ be its annihilator. Although the map $\Dx_{\g_n}$ and $Q'_{\bchi}$ are very interesting (we refer the reader to \cite{pham2025orbit} or \cite[Chapter 3]{phamthesis} for more details), for our purpose, the following lemma suffices. 

\begin{lemma}\label{vn annihilator}
    Let $\bchi \in \g^*_n$. Then 
    \begin{equation*}
        v_{n-1} \in Q'_{\bchi} \quad \text{if and only if} \quad \bchi(v_{n-1}) = 0.
    \end{equation*}
\end{lemma}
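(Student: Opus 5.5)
The plan is to use that $v_{n-1}$ is central in the ideal $\W{1}/\W{n}$ and acts diagonally under $v_0$, and to compute $v_{n-1}$ on the induced module $M_{\bchi}$ directly.

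First, note that $v_{n-1}$ lies in the Vergne polarization $\p_{\bchi} = \W{\lfloor m/2\rfloor+1}/\W{n}$. Its index $\lfloor m/2\rfloor+1$ is at most $n-1$ in all relevant cases; I will check this against the definition of $m$ in \cite[Proposition 6.5]{pham2025orbit}, and the case $n=2$ can be handled separately if needed. Hence $v_{n-1}$ acts on the generating vector $1\otimes 1$ of $M_{\bchi}$ by the scalar $\bchi(v_{n-1})$.

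Next, I describe $M_{\bchi}$ via PBW. We have $M_{\bchi} \cong \Ua(\g_n)\otimes_{\Ua(\p_{\bchi})}\kk_{\bchi}$, with basis $v_0^{c_0}v_1^{c_1}\cdots v_{k}^{c_k}\otimes 1$, where the $v_j$ run over a complement of $\p_{\bchi}$. The element $v_{n-1}$ commutes with $v_1,\dots,v_{n-1}$, and the relation $v_{n-1}v_0 = (v_0-(n-1))v_{n-1}$ follows from $v_0v_{n-1}=v_{n-1}(v_0+n-1)$. Therefore, applied to a basis vector,
\[
v_{n-1}\, v_0^{c_0}u\otimes 1 = (v_0-(n-1))^{c_0}u\, v_{n-1}\otimes 1 = \bchi(v_{n-1})\,(v_0-(n-1))^{c_0}u\otimes 1,
\]
where $u$ is a monomial in $v_1,\dots$.

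For the direction $\bchi(v_{n-1})=0 \Rightarrow v_{n-1}\in Q'_{\bchi}$, the displayed formula shows that $v_{n-1}$ kills every basis vector, so $v_{n-1}\in\Ann M_{\bchi}=Q'_{\bchi}$.

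For the converse, suppose $\bchi(v_{n-1})\neq 0$. Then $v_{n-1}(1\otimes1)=\bchi(v_{n-1})(1\otimes 1)\neq 0$, because $1\otimes 1\neq 0$ in the induced module (it is free over $\Ua(\g_n)$ modulo $\p_{\bchi}$). Hence $v_{n-1}\notin Q'_{\bchi}$.

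The only potential obstacle is confirming that $v_{n-1}\in\p_{\bchi}$ for every $\bchi$, i.e. that the polarization is always nonzero and contains the top element. If the indexing in \cite[Proposition 6.5]{pham2025orbit} allowed $\p_{\bchi}$ to miss $v_{n-1}$, I would instead use that $v_{n-1}$ is central modulo the action of $v_0$ and spans an ideal of $\g_n$, so every polarization must contain it: a polarization is maximal isotropic, and $\kk v_{n-1}$ is orthogonal under $\bchi([\cdot,\cdot])$ to all of $\W{1}/\W{n}$, whence $\p_{\bchi}+\kk v_{n-1}$ remains isotropic and a subalgebra.
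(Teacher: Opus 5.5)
Your argument is correct and is essentially the paper's proof. The paper likewise uses that $v_{n-1}$ lies in the Vergne polarization $\p_{\bchi}$, which is a nonzero subspace of the form $\W{j}/\W{n}$ and hence has $j\le n-1$. So $v_{n-1}$ acts on the generator by $\bchi(v_{n-1})$, and normality of $v_{n-1}$ (which you make explicit via $v_{n-1}v_0=(v_0-(n-1))v_{n-1}$) spreads the vanishing to all of $M_{\bchi}$.

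Drop the fallback paragraph, however, since not every polarization contains $v_{n-1}$. The pairing $\bchi([v_0,v_{n-1}])=(n-1)\bchi(v_{n-1})$ need not vanish; for example, in $\g_2$ with $\bchi(v_1)\neq 0$, the line $\kk v_0$ is a polarization that does not contain $v_1$.
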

\begin{proof}
    If $v_{n-1} \in Q'_{\bchi}$, then in particular $\bchi(v_{n-1}) = v_{n-1} \cdot 1_{\bchi} = 0$. Now suppose $\bchi(v_{n-1}) = 0$, since $v_{n-1}$ is normal in $\Ua(\g_n)$, for any $r \in \Ua(\g_n)$, there exists $r' \in \Ua(\g_n)$ such that
    \begin{align*}
    v_{n-1} r 1_\chi = r' v_{n-1} 1_\chi = 0.
    \end{align*}
    Thus $v_{n-1} \in Q'_{\bchi}$. 
\end{proof}

Consequently, we completely describe the primitive spectrum $\Prim T_n[v_{n-1}^{-1}]$ as follows.

\begin{proposition}\label{theorem: primitive ideals gn}
    For all $ n \geq 2$, we have a homemorphism
    \begin{equation}
        \Prim T_n[v_{n-1}^{-1}] \cong \Prim \Ua(\g_n)[v_{n-1}^{-1}] \cong \begin{cases}
       \{(0)\} &  \text{ if } n \text{ is even}, \\ 
       \mb{A}^1 \setminus \{\text{ pt }\} & \text{ if } n \text{ is odd}.
    \end{cases}
    \end{equation}
    Furthermore, when $n$ is odd, then all the primitive ideals of  $T_n[v_{n-1}^{-1}]$ and $\Ua(\g_n)[v_{n-1}^{-1}]$ are maximal two-sided ideals. 
\end{proposition}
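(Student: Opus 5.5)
The first homeomorphism is formal. Localizing \Cref{lem:idealcor} at the central-up-to-automorphism normal element $v_{n-1}$ of $\Ua(\g_n)$ shows that $J\mapsto A_1\otimes J$ is still a bijection between ideals of $\Ua(\g_n)[v_{n-1}^{-1}]$ and ideals of $T_n[v_{n-1}^{-1}]=A_1\otimes \Ua(\g_n)[v_{n-1}^{-1}]$. The reason is the same as before: $A_1$ is simple with centre $\kk$, so \cite[Lemma 9.6.9]{mcconnell2001noncommutative} applies verbatim. This bijection preserves inclusions, primeness and primitivity, hence it is a homeomorphism on $\Prim$. So everything reduces to $\Ua(\g_n)[v_{n-1}^{-1}]$.

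For that ring, I would use the standard localization correspondence for the Noetherian ring $\Ua(\g_n)$ and the Ore set of powers of the normal element $v_{n-1}$. Primitive ideals of $\Ua(\g_n)[v_{n-1}^{-1}]$ correspond homeomorphically to primitive ideals $Q$ of $\Ua(\g_n)$ with $v_{n-1}\notin Q$. Primitivity is preserved because both rings are Jacobson by \Cref{lem: Jacobson rings 1}, and for Noetherian Jacobson rings with the Nullstellensatz, primitive is equivalent to locally closed, which is compatible with localization at an open set. By \eqref{eq: Dx gn} and \Cref{vn annihilator}, this set is homeomorphic to the open subset $\{G_n\cdot\bchi \mid \bchi(v_{n-1})\neq 0\}$ of $\g_n^*/G_n$. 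The statement therefore reduces to describing the $G_n$-orbits on $U=\{\bchi\in\g_n^*\mid \bchi(v_{n-1})\ne0\}$, which is $G_n$-stable since $v_{n-1}$ spans a Lie ideal.

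This orbit computation is the main step, and I would import it from the coadjoint orbit description in \cite{pham2025orbit}. The coadjoint action on $U$ can be computed directly. The elements $v_j$ with $j\ge 1$ act unipotently and let us normalise coordinates, since $\mathrm{ad}^*v_j$ moves $\bchi(v_{i})$ by $(i-j)\bchi(v_{i+j})$, and $\bchi(v_{n-1})\ne 0$ makes this triangular system solvable. The torus generated by $v_0$ scales $\bchi(v_i)$ by $\lambda^{i}$. One then checks how many invariants survive. When $n$ is even, $\dim\g_n=n$ and the form $\bchi([\cdot,\cdot])$ pairs $v_i$ with $v_{n-1-i}$ for $i=0,\dots,n-1$. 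This pairing is nondegenerate (no middle index), so the orbit is open of full dimension. Since the coordinates can be normalised transitively on $U$, $U$ is a single orbit, and the only primitive ideal is $0$. When $n$ is odd, $v_{(n-1)/2}$ is paired with itself and the form has a $1$-dimensional radical. The orbits in $U$ are then the fibres of a single rational invariant, something like $\bchi(v_{(n-1)/2})^2/\bchi(v_{n-1})$ corrected by lower terms, or more precisely the invariant obtained after the unipotent normalisation. Its range has to be checked, and I expect it to be $\mb{A}^1$ minus a point: the point excluded is the value where the Vergne polarisation degenerates, or where the orbit fails to be closed in $U$. Getting this exact excluded value, and verifying that the induced bijection with $\mb{A}^1\setminus\{\mathrm{pt}\}$ is a homeomorphism, is the hardest part. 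For the homeomorphism, I would use that the closed sets are given by polynomial conditions on the invariant, so they are finite sets together with the whole space.

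Finally, for $n$ odd all orbits in $U$ have the same dimension and are closed in $U$, being fibres of an invariant. Hence there are no inclusions among the primitive ideals of the localization. Every prime ideal is an intersection of primitives by Jacobson-ness, so each primitive is maximal among primes and therefore a maximal two-sided ideal. Transporting this back through $\iota_*$ gives the same conclusion for $T_n[v_{n-1}^{-1}]$.
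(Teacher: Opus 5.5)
Your overall route is the paper's: transport along $\iota_*$ to $\Ua(\g_n)[v_{n-1}^{-1}]$, identify its primitive spectrum with $\{G_n\cdot\bchi \mid \bchi(v_{n-1})\neq 0\}$ via the Dixmier map and \Cref{vn annihilator}, and take the orbit description from \cite{pham2025orbit}.

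Where you go beyond the paper, your arguments are correct and more self-contained than its one-line deductions:
\begin{itemize}
\item In the even case, $\bchi([\cdot,\cdot])$ is nondegenerate on $U$. So every orbit in $U$ is open, and the irreducible set $U$ is a single orbit.
\item In the odd case, the anti-diagonal entries $(n-1-2i)\bchi(v_{n-1})$ vanish only at $i=(n-1)/2$. So every orbit in $U$ has dimension $n-1$ and is therefore closed in $U$. The Jacobson property then gives maximality of every primitive ideal.
\end{itemize}

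Two justifications need repair.
\begin{itemize}
\item The claim that primitive equals locally closed in every Noetherian Jacobson algebra with the Nullstellensatz is false in general. It is precisely the DME, which fails for some such algebras. What you need here is that $\Ua(\g_n)$ satisfies the DME. You also need that local closedness passes between $\Spec$ of the localization and the open set $\{P \mid v_{n-1}\notin P\}$. Finally, rationality is unchanged by localizing at $v_{n-1}$, since the Goldie quotient ring does not change.
\item In the even case, you must still argue that the unique primitive ideal is $0$. This holds because the localization is a prime Jacobson ring, so $0$ is the intersection of its primitive ideals. The paper instead invokes Behr's theorem.
\end{itemize}

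The speculative part of your odd case should be dropped, because there is no excluded value to find. A point cannot be lost ``where the orbit fails to be closed in $U$'': you show yourself that every orbit in $U$ has dimension $n-1$, and this contradicts that heuristic. The invariant is in fact onto. For $n=3$, $\bchi(v_1)^2/\bchi(v_2)$ is $G_3$-invariant, separates the orbits in $U$, and takes every value in $\kk$. The fibre over $0$ is the orbit of $\bchi$ with $\bchi(v_1)=0$, whose primitive ideal is $\Ua(\g_3)v_1$, which does not contain $v_2$. So, already for $n=3$, the natural parameter space is $\mathbb{A}^1$. The identification with $\mathbb{A}^1\setminus\{\mathrm{pt}\}$ holds only as a homeomorphism of abstract spaces, not via the invariant.

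Your own closing remark finishes the job. Proper closed subsets of $\Prim \Ua(\g_n)[v_{n-1}^{-1}]$ correspond to proper $G_n$-stable closed subsets of $U$. Each irreducible component of such a subset is $G_n$-stable of dimension at most $n-1$, hence is one of the closed $(n-1)$-dimensional orbits. So the topology is the cofinite topology on a set of cardinality $|\kk|$. Any bijection with $\mathbb{A}^1\setminus\{\mathrm{pt}\}$, which carries the same kind of topology, is then a homeomorphism. Alternatively, cite \cite{pham2025orbit} as the paper does. Note that only the maximality statement is used later in the paper, in the going-down argument and the DME proof.
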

\begin{proof}
    By \Cref{lem:idealcor}, we only need to show for $\Ua(\g_n)[v_{n-1}^{-1}]$. Localizing at the normal element $v_{n-1}$, we obtain the following homeomorphism 
    \begin{align*}
        \Spec \Ua(\g_n) \setminus \mathcal{V}(\Ua(\g_n) v_{n-1})  \cong \Spec \Ua(\g_n)[v_{n-1}].
    \end{align*}
    The Dixmier map in \eqref{eq: Dx gn} and \Cref{vn annihilator} induce a homemorphism
    \begin{equation*}
        \{G_n \cdot \bchi \mid \bchi(v_{n-1}) \neq 0 \} \cong \Prim \Ua(\g_n)[v_{n-1}^{-1}]. 
    \end{equation*}
    If $n$ is even, then by \cite[Proposition 7.11]{pham2025orbit}, we have 
    \begin{align*}
        \{G_n \cdot \bchi \mid \bchi(v_{n-1}) \neq 0 \} = \{\eta \in \g_n^* \mid \eta(v_{n-1}) \neq 0\},
    \end{align*}
    which is exactly one coadjoint orbit. By the Dixmier map $\Dx_{\g_n}$ in \eqref{eq: Dx gn}, $\Prim \Ua(\g_n)[v_{n-1}^{-1}]$ is a point. So it equals to the Jacobson radical of $\Ua(\g_n)[v_{n-1}^{-1}]$, which $0$ by \cite{behr1986}. Hence $\Prim \Ua(\g_n)[v_{n-1}^{-1}] = \{(0)\}$. 

    If $n$ is odd, then by \cite[Proposition 7.11]{pham2025orbit}, we have 
    \begin{align*}
        \{G_n \cdot \bchi \mid \bchi(v_{n-1}) \neq 0 \} \cong \mb{A}^1 \setminus \{\text{pt}\}.
    \end{align*}
    Thus, all primitive ideals in this case are maximal. 
\end{proof}

Using \Cref{lem: Jacobson rings 1}, we can characterize prime ideals of $\Ua(\g_n)[v_{n-1}^{-1}]$ as follows.
\begin{proposition}\label{theorem: prime ideals gn}
    The prime spectra $\Spec T_n[v_{n-1}^{-1}]$ and $\Spec \Ua(\g_n)[v_{n-1}^{-1}] $ are homeomorphic. Furthermore,
    \begin{equation}
        \Spec \Ua(\g_n)[v_{n-1}^{-1}] = \begin{cases}
            \Prim \Ua(\g_n) = \{(0)\} & \text{ if } n \text{ is even},\\
            \Prim \Ua(\g_n) \cup \{(0)\} & \text{ if } n \text{ is odd}.
        \end{cases}
    \end{equation}
\end{proposition}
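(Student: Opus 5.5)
The plan is to read off the prime spectrum of $\Ua(\g_n)[v_{n-1}^{-1}]$ from its primitive spectrum, which \Cref{theorem: primitive ideals gn} already computes, using that the ring is Jacobson. The homeomorphism $\Spec T_n[v_{n-1}^{-1}] \cong \Spec \Ua(\g_n)[v_{n-1}^{-1}]$ comes first. We localize the correspondence of \Cref{lem:idealcor}: since $v_{n-1}$ lies in $\Ua(\g_n)$, we have $T_n[v_{n-1}^{-1}] = A_1 \otimes \Ua(\g_n)[v_{n-1}^{-1}]$, and $A_1$ is simple with center $\kk$. So \cite[Lemma 9.6.9]{mcconnell2001noncommutative} applies verbatim and gives an inclusion-preserving bijection $J \mapsto A_1 \otimes J$ on ideals. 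It restricts to prime ideals and is a homeomorphism for the Jacobson topology, because it preserves intersections and inclusions.

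For the description of $\Spec \Ua(\g_n)[v_{n-1}^{-1}]$, write $R = \Ua(\g_n)[v_{n-1}^{-1}]$. By \Cref{lem: Jacobson rings 1}, $R$ is Jacobson, so every prime $P$ of $R$ is the intersection of the primitive ideals containing it.

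\emph{The case $n$ even.} By \Cref{theorem: primitive ideals gn}, the only primitive ideal of $R$ is $(0)$. Hence every prime, being an intersection of primitives, equals $(0)$. Moreover $(0)$ is prime, since $R$ is a domain as a localization of the domain $\Ua(\g_n)$. So $\Spec R = \{(0)\}$.

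\emph{The case $n$ odd.} Here the primitive ideals are parametrised by $\mb{A}^1\setminus\{\mathrm{pt}\}$ and are all maximal. Let $P$ be a prime. If $P$ is contained in exactly one primitive ideal, then $P$ equals that maximal ideal. Otherwise $P$ is contained in at least two distinct primitive ideals, and I would show that $P = (0)$.

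The key input for this is that the primitive spectrum is a one-dimensional irreducible family. I would make it concrete by locating a central (or normal) element parametrising the orbits. Via \cite[Proposition 7.11]{pham2025orbit}, the orbits with $\bchi(v_{n-1})\ne0$ are separated by one rational invariant, so $Z(R)$ should contain an element $z$ with $Z(R) \supseteq \kk[z]$. The primitive ideals would then be $R(z-\lambda)$ for $\lambda$ outside one point, each maximal.

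Now take a prime $P$. Then $P \cap \kk[z]$ is a prime ideal of $\kk[z]$, since $z$ is central and $P$ is prime. If $P \cap \kk[z] = (z-\lambda)$, then $P \supseteq R(z-\lambda)$, which is maximal, so $P$ is that primitive ideal. If instead $P \cap \kk[z] = 0$, then $P$ is contained in no $R(z-\lambda)$, and by the Jacobson property $P$ equals the intersection of the empty family of primitives over it. That intersection is all of $R$, which is impossible unless we have mistaken the family. To repair this I would instead argue that $R[(\kk[z]\setminus 0)^{-1}]$ is simple (its primitive ideals are all extended from $\lambda$'s), which forces $P\cap\kk[z]\neq0$ whenever $P\neq0$. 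It then follows that every nonzero prime is maximal, and $(0)$ is prime since $R$ is a domain. Hence $\Spec R = \Prim R \cup \{(0)\}$.

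The main obstacle is this last step. It requires extracting from \cite[Proposition 7.11]{pham2025orbit} an explicit central element of $R$ separating the orbits. It also requires showing that every nonzero ideal of $R$ meets $\kk[z]$; if $z$ is only normal rather than central, one would use a Dixmier/Moeglin rationality argument on $R/P$ in its place.
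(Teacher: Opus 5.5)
Your handling of the homeomorphism $\Spec T_n[v_{n-1}^{-1}]\cong\Spec \Ua(\g_n)[v_{n-1}^{-1}]$ (localizing the $A_1\otimes -$ correspondence) is correct and matches the paper, and so is your even case. The odd case, however, has a genuine gap. After the correct observation that a prime lying under exactly one primitive ideal equals it, your argument depends on three inputs you have not established:
\begin{enumerate}
\item a central $z$ with $\kk[z]\subseteq Z(R)$;
\item the description of $\Prim R$ as the ideals $R(z-\lambda)$;
\item simplicity of $R$ localized at $\kk[z]\setminus\{0\}$.
\end{enumerate}
The intermediate claim that $P\cap\kk[z]=0$ forces $P$ to lie in no $R(z-\lambda)$ is false, since $P=(0)$ lies in all of them. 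Your proposed repair is also circular. Every $R(z-\lambda)$ extends to the unit ideal in that localization, so saying ``its primitive ideals are extended from $\lambda$'s'' carries no information. Moreover, simplicity of the localization is equivalent to the assertion that every nonzero prime of $R$ meets $\kk[z]\setminus\{0\}$, which is exactly what has to be shown.

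The missing idea is that no central element is needed: the topology of $\Prim R$ from \Cref{theorem: primitive ideals gn} is enough, and this is the paper's route. Let $S$ be the set of primitive ideals containing a prime $P$. By the Jacobson property, $S\neq\emptyset$ and $P=\bigcap S$; also $S$ is closed in $\Prim R$. Suppose first that $2\le|S|<\infty$, say $S=\{M_1,\dots,M_k\}$. Then $M_1\cdots M_k\subseteq P$, so primeness gives $M_j\subseteq P$ for some $j$, hence $P=M_j$. But then $M_j\subseteq M_i$ for every $i$, which contradicts that the $M_i$ are distinct maximal ideals. Suppose instead that $S$ is infinite. It is then a closed infinite subset of a space homeomorphic to $\mathbb{A}^1$ minus a point, and the proper closed subsets of that space are finite, so $S=\Prim R$. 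Since $R$ is a Jacobson domain, $(0)=\bigcap\Prim R$, and therefore $P=(0)$. Thus every prime is either $(0)$ or primitive, without exhibiting $z$ or analysing central localizations.
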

\begin{proof}
    By \Cref{lem:idealcor}, we only need to consider $\Ua(\g_n)[v_{n-1}^{-1}]$. Since $\Ua(\g_n)[v_{n-1}^{-1}]$ is a Jacobson ring by \Cref{lem: Jacobson rings 1}, for a prime ideal $Q$ of $\Ua(\g_n)[v_{n-1}^{-1}]$, we have 
    \begin{align*}
        Q = \bigcap_{P \text{ primitive } \supseteq Q} P.
    \end{align*}
    Let 
    \[S= \{ P \text{ primitive ideal of } \Ua(\g_n)[v_{n-1}^{-1}] \mid P \supseteq Q \}.\] 
    If $|S| = \infty$, then $Q = 0$. If $|S| = 1$, then $Q \in \Prim T_n[v_{n-1}^{-1}]$. Lastly, if $1 < |X| < \infty$, $Q$ is semiprime and not prime, since each primitive ideal of $\Ua(\g_n)[v_{n-1}^{-1}]$ is maximal by \Cref{theorem: primitive ideals gn}, and thus they cannot be contained in one another.
\end{proof}

\subsection{Prime ideals of \texorpdfstring{$B_n$}{Bn} from restriction of prime ideals of \texorpdfstring{$T_n$}{Tn}}\label{subsec: restriction prime}

In this subsection, we completely describe the prime spectrum of $B_n$, in fact, all but one prime ideals of $B_n$ come from $T_n$. In particular, we show that restricting (contracting) gives an ``almost homeomorphism'' between the $\Spec T_n$ and $\Spec B_n$. To that end, we first identify some important ideals of $B_n$ and $T_n$.

\begin{notation}
    For all $n$, let $Q_n^+ = \Psi_n(\Ua(\W{-1})_+)$ be the image of the augmentation ideal of $\Ua(\W{-1})$ under the $n$th orbit homomorphism. Note that $Q_n^+$ is a maximal ideal of $B_n$. 
    
    For all $n \geq m$, recall the projection $\Pi_{n,m}: T_n \surj T_m$ induced by $v_{\geq m} \mapsto 0$. Let 
    \[P^{0, 0}_n  = A_1 \otimes \Ua(\g_n)_+ = \Pi_{n, 1}^{-1}(T_1 v_0) , \quad \text{and} \quad P^{0, 1}_n = \Pi_{n, 1}^{-1}(T_1 (v_0-1))\]
    be prime ideals of $T_n$. Let $Q_n^0 = P^{0, 0}_n \cap B_n$ be its restriction in $B_n$.

    In this subsection, since we will pass between prime ideals of $T_n$ and $B_n$, to make the notation more suggestive, we will often use $P$ for prime ideals of $T_n$ and $Q$ for $B_n$ whenever possible. 
\end{notation}

In \cite{PhamTimmins2025}, $B_n$ and $T_n$ are shown to be birational, in the sense that their rings of fractions coincide $Q(T_n) = Q(B_n)$. We will strengthen that result and prove that restriction of ideals gives an \emph{``almost homeomorphism''} between $\Spec T_n$ and $\Spec B_n$ as follows.

\begin{theorem}\label{theo: Spec almost homeo}
    For all $n$, the map
    \begin{equation}\label{eq: Spec homeo}
        \mathrm{Res}_n: \Spec T_n \to \Spec B_n, \quad P \mapsto P \cap B_n,
    \end{equation}
    satisfies the following properties:
    \begin{enumerate}
        \item $\mathrm{Res}_n$ is continuous;
        \item $\mathrm{Res}_n$ is open;
        \item $\mathrm{Res}_n(\Spec T_n) = \Spec B_n \setminus \{Q_n^+\}$;
        \item $\mathrm{Res}_n^{-1}(Q_n^0) = \{P^{0, 0}_n , \ P^{0, 1}_n \}$;
        \item $\mathrm{Res}_n$ is bijective between $\Spec T_n \setminus \{P^{0, 0}_n, P^{0, 1}_n\}$ and $\Spec B_n \setminus \{Q_n^+, Q_n^0\}$.
    \end{enumerate}
\end{theorem}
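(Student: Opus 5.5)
We argue by induction on $n$, using the dichotomy of \Cref{lem: prime two cases}: a prime $Q$ of $B_n$ either contains $\ker \Pi_{n,n-1}|_{B_n}$ or is disjoint from $X_n$. The same dichotomy applies on the $T_n$ side: a prime $P$ of $T_n$ either contains $T_n v_{n-1}^2$ (equivalently, by primeness and normality of $v_{n-1}$, contains $T_nv_{n-1}$, which is $\ker\Pi_{n,n-1}$ on $T_n$), or is disjoint from $X_n$.

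\textbf{Primes disjoint from $X_n$.} Here the corollary after \Cref{prop: extension contraction n} identifies $\Spec B_n\setminus\mathcal V(T_nv_{n-1}^2)$ with $\Spec T_nX_n^{-1}$. The analogous statement for $T_n$ is standard Noetherian localization at the normal element $v_{n-1}$. Composing the two, $\mathrm{Res}_n$ is a homeomorphism between the open sets of primes disjoint from $X_n$. The key point is that for $P$ prime in $T_n$ with $P\cap X_n=\emptyset$ we have $P\cap B_n = PX_n^{-1}\cap B_n$. This holds because $B_nX_n^{-1}=T_nX_n^{-1}$ and $P=PX_n^{-1}\cap T_n$. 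We also need $P\cap B_n$ to be prime. This follows because $P\cap B_n\supseteq (P\cap B_n)$, together with \Cref{prop: extension contraction n} applied in reverse: the extension $(P\cap B_n)X_n^{-1}=PX_n^{-1}$, since $P\supseteq P v_{n-1}^2T_n$ and $T_nv_{n-1}^2\subseteq B_n$ give enough elements. Hence the contraction is prime.

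\textbf{Primes containing the kernel.} These correspond to primes of $B_{n-1}$ via $\Pi_{n,n-1}$. For this we use that $\ker\Pi_{n,n-1}|_{B_n}=T_nv_{n-1}^2=\ker\Pi_{n,n-1}\cap B_n$ by \cite[Corollary 4.21]{PhamTimmins2025}. On the $T_n$ side they correspond to primes of $T_{n-1}$, and restriction commutes with $\Pi_{n,n-1}$. So by induction the statements (3)--(5) transfer, with $Q_n^+ = \Pi^{-1}(Q_{n-1}^+)$, $Q_n^0$, and $P_n^{0,0}, P_n^{0,1}$ all being pullbacks.

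\textbf{Base cases.} The case $n=0$ is \Cref{prop:B0ideals}: $\Spec T_0=\{0\}$ maps to $0$, and $Q_0^+=A_1\del$ is missed. For $n=1$, \Cref{Spec B1} together with \Cref{lem:idealcor} gives the result. The primes of $T_1$ are $0$ and $T_1(v_0-\alpha)$ for $\alpha\in\kk$. Both $\alpha=0$ and $\alpha=1$ restrict to $T_1v_0(v_0-1)\cap B_1=Q_1^0$ (one checks $T_1v_0\cap B_1=T_1v_0(v_0-1)$ using $B_1\cap T_1v_0\subseteq\ker\Pi_{1,0}$, since $\Pi_{1,0}$ sends it into $A_1\del\cap\ldots$, and similarly for $v_0-1$). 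Meanwhile $Q_1^+$ is not a restriction.

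\textbf{Topology.} Continuity follows from $I\subseteq P\Rightarrow I\cap B_n\subseteq P\cap B_n$, applied to closed sets $\mathcal V(J)$ with preimage $\mathcal V(T_nJT_n)$. Openness is the main obstacle, because the space is glued from an open stratum and a closed stratum handled by induction. I would show that the image of a basic open set $\Spec T_n\setminus\mathcal V(I)$ equals $\Spec B_n\setminus\mathcal V(I')$ with $I'=Iv_{n-1}^2T_n\cdots$, more precisely $I' = T_nv_{n-1}^2 I v_{n-1}^2\subseteq B_n$ combined with the inductive open set. This uses $T_nv_{n-1}^2\subseteq B_n$ to move ideals of $T_n$ into $B_n$, and uses the fact that $Q_n^+\supseteq Q_n^0$ while the fibre over $Q_n^0$ consists of two primes, which does not break openness.
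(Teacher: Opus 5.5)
\textbf{Parts (1) and (3)--(5).} These follow the paper's route. You use the dichotomy of \Cref{lem: prime two cases}, the localization homeomorphism of \Cref{prop: contraction Tn Bn 1} on $\mathcal{P}=\Spec T_n\setminus\mathcal{V}(T_nv_{n-1})$, and pullback along $\Pi_{n,n-1}$ on the closed stratum as in \Cref{lem: image Res}. Two small repairs are needed.
\begin{itemize}
\item Your primality sentence for $P\cap B_n$ is circular. Use $B_n/(P\cap B_n)\hookrightarrow T_n/P$, which is a domain.
\item At $n=1$ the case $v_0-1$ is not ``similar'', since $\Pi_{1,0}$ does not kill $v_0-1$. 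Instead: $T_1(v_0-1)\cap B_1$ is a prime containing $\ker\Pi_{1,0}|_{B_1}$, so it is $Q_1^0$ or $Q_1^+$. It is not $Q_1^+$, because its factor ring embeds in $A_1$ with image containing $1,\del,t\del+1$.
\end{itemize}

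\textbf{The gap is in (2).} Your decomposition does give constructibility. Indeed $\mathrm{Res}_n(D(I)\cap\mathcal{P})=\Spec B_n\setminus\mathcal{V}(I')$ with $I'=T_nv_{n-1}^2Iv_{n-1}^2T_n\subseteq B_n$. By induction, the image of $D(I)\cap\mathcal{V}(T_nv_{n-1})$ is relatively open in $\mathcal{V}(T_nv_{n-1}^2)\cong\Spec B_{n-1}$.

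But such a union is open exactly when no prime of the second piece contains some $P_0\cap B_n$ with $P_0\in\mathcal{P}\cap\mathcal{V}(I)$. So you need a cross-stratum going-down statement: if $P_0\in\mathcal{P}$, $P\supseteq T_nv_{n-1}$ and $P_0\cap B_n\subseteq P\cap B_n$, then $P_0\subseteq P$.

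This is unavoidable, not a technicality. If it failed, $D(P_0)$ would contain $P$. Yet its image would miss $P_0\cap B_n$, which is a generalization of $P\cap B_n$, because $P_0$ is the only prime lying over $P_0\cap B_n$. So the image would not be open. Your remark about $Q_n^+\supseteq Q_n^0$ and the two-point fibre does not touch this.

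\textbf{How the paper handles it, and a caveat.} The paper obtains (2) from \Cref{prop: constructible}, the going-down \Cref{prop: Res going down}, and \Cref{lem: generalization open}, with $\Spec B_n$ Noetherian by \Cref{theo: Bn Cn ACC}. However, its going-down proof disposes of exactly this case by claiming that nonzero primes of $B_n$ disjoint from $X_n$ are maximal. That claim is false for odd $n\geq 3$. For example, $T_3v_1\in\mathcal{P}$, and $T_3v_1\cap B_3\subsetneqq(T_3v_1+T_3v_2)\cap B_3$, since the latter contains $v_2^2$.

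A workable argument is as follows.
\begin{itemize}
\item For $n$ even, $\mathcal{P}=\{0\}$ by \Cref{theorem: prime ideals gn}, so there is nothing to prove.
\item For odd $n\geq 3$, $P_0\neq 0$ and $P=T_nv_{n-1}$: compare GK-dimensions. We have $\GKdim T_n/P_0=n+1=\GKdim B_{n-1}$. A proper factor of the domain $B_n/(P_0\cap B_n)$ has GK-dimension at most $n$. So $P_0\cap B_n\not\subseteq T_nv_{n-1}\cap B_n$.
\item For odd $n\geq 3$ and $P\supsetneqq T_nv_{n-1}$: $P$ also contains $v_{n-2}$, because $n-1$ is even. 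A dimension count on closures of the generic coadjoint orbits in $\g_n^*$ then gives $P_0\subseteq T_nv_{n-2}+T_nv_{n-1}\subseteq P$.
\end{itemize}
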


Firstly, we use the results in Section \ref{subsec: localization on prime} and Section \ref{subsec: Dx Spec and Prim gn} to show that contraction gives a homeomorphism between prime ideals of $T_n$ not containing $T_n v_{n-1}$ and prime ideal of $B_n$ not containing $T_n v^2_{n-1}$.

\begin{proposition}\label{prop: contraction Tn Bn 1}
    For $n \geq 2$, restriction give a homeomorphism between 
    \[\Spec T_n \backslash \mc{V}(T_n v_{n-1}) \rightarrow \Spec B_n \backslash \mc{V}(T_n v^2_{n-1}).\]
\end{proposition}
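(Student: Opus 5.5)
The plan is to factor the restriction map through the common localization $T_nX_n^{-1} = B_nX_n^{-1} = T_n[v_{n-1}^{-1}]$ from \Cref{localization of Bn}. First, I will identify $\Spec T_n \setminus \mc{V}(T_n v_{n-1})$ with $\Spec T_n[v_{n-1}^{-1}]$. Since $v_{n-1}$ is normal in $T_n$ and $T_n$ is Noetherian, the set $X_n$ (equivalently, the powers of $v_{n-1}$) is an Ore set. A prime $P$ of $T_n$ is disjoint from $X_n$ exactly when $v_{n-1} \notin P$, i.e. when $T_n v_{n-1} \not\subseteq P$, using normality and primeness. Standard Noetherian localization \cite[Proposition 10.20]{goodearl1989introduction} then shows that $P \mapsto PX_n^{-1}$ and $H \mapsto H\cap T_n$ are inverse inclusion-preserving bijections. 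This gives a homeomorphism $\Spec T_n\setminus\mc{V}(T_nv_{n-1}) \cong \Spec T_n[v_{n-1}^{-1}]$.

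Second, I will use the corollary following \Cref{prop: extension contraction n}, which gives the homeomorphism \eqref{eq: homeomorphism Bn Tn localized}: $\Spec T_n[v_{n-1}^{-1}] \cong \Spec B_n \setminus \mc{V}(T_n v_{n-1}^2)$ via $H\mapsto H\cap B_n$. Composing the two maps sends $P \mapsto (PX_n^{-1}\cap T_n)\cap B_n = P\cap B_n$, which is exactly $\mathrm{Res}_n$. Hence $\mathrm{Res}_n$ restricts to a homeomorphism between the stated sets.

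The remaining step is to check that the two subsets match under this description. A prime $Q$ of $B_n$ satisfies $Q\cap X_n=\emptyset$ if and only if $T_nv_{n-1}^2\not\subseteq Q$. One direction is immediate because $v_{n-1}^2\in T_nv_{n-1}^2$. For the converse, \Cref{lem: prime two cases} shows that if $Q \cap X_n \neq \emptyset$ then $Q\supseteq \ker\Pi_{n,n-1}|_{B_n} = T_nv_{n-1}^2$. I must also confirm that for $P\in\Spec T_n$ with $v_{n-1}\notin P$, the contraction $P\cap B_n$ does not contain $v_{n-1}^2$. This holds because $v_{n-1}^2 \in P$ would force $v_{n-1}\in P$ by normality and primeness of $P$.

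I expect the main obstacle to be bookkeeping rather than any new mathematics. One issue is making sure that "not in $\mc{V}(T_nv_{n-1}^2)$" for $B_n$ means disjointness from $X_n$, which is exactly what the corollary requires. The other is checking that the topologies line up: the composite of two homeomorphisms is a homeomorphism provided each piece carries the subspace topology of the relevant spectrum. That second point is already covered by the corollary and the classical Noetherian localization results.
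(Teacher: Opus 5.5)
Your proposal is correct and is essentially the paper's argument. The paper also factors $\mathrm{Res}_n$ through the common localization $T_nX_n^{-1}=B_nX_n^{-1}$ and writes $\mathrm{Res}_n=\mathrm{con}_1\circ\mathrm{ext}_2$, where $\mathrm{ext}_2$ is the standard Noetherian correspondence for $T_n$ and $\mathrm{con}_1$ is the homeomorphism for $B_n$ coming from \Cref{prop: extension contraction n}, with \Cref{lem: prime two cases} matching ``not containing $T_nv_{n-1}^2$'' to disjointness from $X_n$. Your identity $PX_n^{-1}\cap B_n=(PX_n^{-1}\cap T_n)\cap B_n=P\cap B_n$ is exactly the compatibility the paper checks via its commutative diagram.
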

\begin{proof}
    By \cite[Theorem 10.20]{goodearl1989introduction}, contraction and extension give homeomorphisms between prime ideals of $T_n[v^{-1}_{n-1}]$ and prime ideals of $T_n$ not containing $v_{n-1}$. We have the following commutative diagram
    \[\begin{tikzcd}
        \Spec B_n \backslash \mc{V}(T_n v^2_{n-1})  \arrow[d, shift right, swap, "\mathrm{ext}_1"]  & \Spec T_n \backslash \mc{V}(T_n v_{n-1})\arrow[ld, swap,"\mathrm{ext}_2"] \arrow[l, "\mathrm{Res}"]  \\ 
        \Spec B_nX_n^{-1} = \Spec T_nX_n^{-1} \arrow[u, swap, "\mathrm{con}_1"] \arrow[ru, shift right, swap, "\mathrm{con}_2"], & 
    \end{tikzcd}\]
    where $\mathrm{ext}_i, \mathrm{con}_i$ are the corresponding extension and contraction of ideals between a ring and its localization. Let $P$ be a prime ideal of $B_n$ not containing $T_n v_{n-1}^2$, which by \Cref{lem: prime two cases} implies that $P \cap X_n \neq 0$. The commutative diagram implies that $PX_n^{-1}$ is a prime ideal of $T_n[v^{-1}_{n-1}]$ and $PX_n^{-1} \cap T_n$ is a prime ideal of $T_n$. By \Cref{prop: extension contraction n}, $(PX_n^{-1} \cap T_n)\cap B_n = P$. Thus, $\mathrm{Res} \circ (\mathrm{con}_2 \circ \mathrm{ext}_1)= \id$. On the other hand, 
    \begin{align*}
        (\mathrm{con}_2 \circ \mathrm{ext}_1) \circ \mathrm{Res} = \mathrm{con}_2 \circ (\mathrm{ext}_1 \circ \mathrm{Res}) = \mathrm{con}_2 \circ \mathrm{ext}_2 = \id,
    \end{align*}
    by \cite[Theorem 10.20]{goodearl1989introduction}. 
    So $\mathrm{Res} = (\mathrm{con}_2 \circ \mathrm{ext}_1)^{-1} = \mathrm{con}_1 \circ \mathrm{ext}_2$, which is a homeomorphism.
\end{proof}

We next show that all prime ideals of $B_n$ are completely prime. This result demonstrates the powerful combination of considering homomorphic images and (noncommutative) localization, which is a common theme for the rest of the paper. 

\prcprime*
\begin{proof}
    We will prove the statement by induction. The case $B_0$ and $B_1$ are \Cref{prop:B0ideals} and \Cref{Spec B1}. Assuming induction hypothesis for $k \geq 1$. Since $T_{k+1} v_{k}^2 = \ker \Pi_{k+1, k} \cap B_{k+1}$, the vanishing locus $\mc{V}(T_{k+1} v_{k}^2)$ consists of only completely prime ideals. Thus, we only need to show that restriction of a prime ideal $P$ of $T_{k+1}$ not containing $v_{k}$ to $B_{k+1}$ is a completely prime ideal. Since $\g_{k+1}$ is a solvable Lie algebra, by \cite[Theorem 3.7.2]{dixmier1996enveloping}, every prime ideal of $\Ua(\g_{k+1})$ is completely prime. Thus, every prime ideal of $A_1 \otimes \Ua(\g_{k+1})$ is completely prime as well, since Ore extension of domains are domains. Thus $T_{k+1}/P$ is a domain, and so is the subring $B_{k+1}/(P \cap B_{k+1}) \cong (B_{k+1} + P)/P$.
\end{proof}

The restriction map $\mathrm{Res}_n: \Spec T_n \to \Spec B_n$ in \eqref{eq: Spec homeo} is ``almost surjective'', in the sense that its image misses exactly one-point. The proof again is a combination of induction on homomorphic image and localization.
\begin{lemma}\label{lem: image Res}
    The image $\mathrm{Res}_n(\Spec T_n)$ of the restriction map \eqref{eq: Spec homeo} equals $\Spec B_n \setminus \{Q_n^+\}$.
\end{lemma}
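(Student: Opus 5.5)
We argue by induction on $n$, combining the localization result \Cref{prop: contraction Tn Bn 1} with the surjections $\Pi_{n,n-1}$. There are two inclusions to prove. First, every $\mathrm{Res}_n(P)$ is prime and differs from $Q_n^+$. Second, every prime of $B_n$ other than $Q_n^+$ is of the form $P\cap B_n$.

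\textbf{Image avoids $Q_n^+$.} Suppose $P\in\Spec T_n$ and $P\cap B_n=Q_n^+$. Then $P\supseteq Q_n^+\ni\Psi_n(\del)=\del$. Since $P$ is a two-sided ideal, it also contains $[\del,t]=1$, so $P=T_n$, which is impossible.

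\textbf{Image consists of primes.} For $P$ with $v_{n-1}\notin P$ (when $n\ge 2$), $P\cap B_n$ is prime by \Cref{prop: contraction Tn Bn 1}. Now suppose $P\supseteq T_nv_{n-1}=\ker\Pi_{n,n-1}$. Then $P=\Pi_{n,n-1}^{-1}(P')$ with $P'\in\Spec T_{n-1}$. Since $\Pi_{n,n-1}(B_n)=B_{n-1}$, we get $P\cap B_n=(\Pi_{n,n-1}|_{B_n})^{-1}(P'\cap B_{n-1})$. By induction $P'\cap B_{n-1}$ is prime, so its preimage under a surjection is prime.

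\textbf{Every prime except $Q_n^+$ is hit.} Let $Q\in\Spec B_n$ with $Q\neq Q_n^+$. By \Cref{lem: prime two cases}, either $Q\cap X_n=\emptyset$ or $Q\supseteq\ker\Pi_{n,n-1}|_{B_n}=T_nv_{n-1}^2$.

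In the first case, \Cref{prop: contraction Tn Bn 1} produces $P\in\Spec T_n$ with $P\cap B_n=Q$. For $n=1$ we use \Cref{Spec B1} directly: $T_1(v_0-\alpha)\cap B_1$ is a restriction by definition. The zero ideal is $0\cap B_1$.

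In the second case, $Q=(\Pi_{n,n-1}|_{B_n})^{-1}(Q')$ for some $Q'\in\Spec B_{n-1}$. Since $Q_n^+=(\Pi_{n,n-1}|_{B_n})^{-1}(Q_{n-1}^+)$, we have $Q'\neq Q_{n-1}^+$. By induction $Q'=P'\cap B_{n-1}$ for some $P'\in\Spec T_{n-1}$. Setting $P=\Pi_{n,n-1}^{-1}(P')$, the computation above gives $P\cap B_n=Q$.

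\textbf{Base case $n=0$.} By \Cref{prop:B0ideals}, $\Spec B_0=\{0,A_1\del\}$ and $Q_0^+=A_1\del$, while $\Spec T_0=\{0\}$ since $A_1$ is simple. So the image is $\{0\}=\Spec B_0\setminus\{Q_0^+\}$.

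\textbf{Main obstacle.} The delicate point is the identity $\Pi_{n,n-1}^{-1}(P')\cap B_n=(\Pi_{n,n-1}|_{B_n})^{-1}(P'\cap B_{n-1})$. It depends on $\Pi_{n,n-1}(B_n)=B_{n-1}$, which holds because $\Pi_{n,n-1}\circ\Psi_n=\Psi_{n-1}$. It also depends on $\ker\Pi_{n,n-1}\cap B_n=T_nv_{n-1}^2$, from \cite[Corollary 4.21]{PhamTimmins2025}, for the case split to match up. The case $n=1$ needs the analogous kernel $T_1v_0(v_0-1)$, which is already treated in \Cref{Spec B1}.
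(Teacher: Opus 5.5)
Your proposal is correct and follows the paper's own argument: induction on $n$, the dichotomy of \Cref{lem: prime two cases}, \Cref{prop: contraction Tn Bn 1} for primes disjoint from $X_n$, and pullback along $\Pi_{n,n-1}$ for primes containing its kernel. It is more complete than the paper's proof, which never checks that $Q_n^+$ lies outside the image (your $\del\in Q_n^+$, $[\del,t]=1$ argument) or that $Q'\neq Q_{n-1}^+$ in the pullback case. Your one loose end is primality of $P\cap B_1$ when $v_0\notin P$, which you do not cover; it is immediate because primes of $T_n$ are completely prime, so $B_n/(P\cap B_n)\hookrightarrow T_n/P$ is a domain.
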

\begin{proof}
    For $n=0$ and $n=1$, the statement follows from \Cref{prop:B0ideals} and \Cref{Spec B1}. Assuming induction hypothesis, by \Cref{lem: prime two cases}, we need to consider if $Q$ contains $T_n v_{n-1}^2$ or $Q \cap X_n = \emptyset$. If $Q$ contains $T_n v_{n-1}^2$, then $Q$ is the preimage of a prime ideal $Q'$ of $B_{n-1}$. By induction hypothesis, there exists a prime ideal $P'$ of $T_{n-1}$ such that
    \begin{align*}
        Q = \Pi_{n,n-1}^{-1}(Q') = \Pi_{n,n-1}^{-1}(P' \cap B_{n-1}) = \Pi_{n,n-1}^{-1}(P') \cap B_n, 
    \end{align*}
    and $\Pi_{n,n-1}^{-1}(P')$ is a prime ideal of $T_n$ containing $T_n v_{n-1}$. On the other hand, if $Q \cap X_n = \emptyset$, the statement follows from \Cref{prop: contraction Tn Bn 1}. 
\end{proof}

We now prove a series of lemmata that characterizes the topological properties of the map $\mathrm{Res}_n$. 

\begin{lemma}\label{lem: Res1 open}
    The homomorphism $\mathrm{Res}_1: \Spec T_1 \to \Spec B_1$ is open.
\end{lemma}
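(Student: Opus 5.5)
Since both spectra are completely explicit for $n=1$, the plan is to prove openness by direct inspection. On the $T_1$ side, \Cref{lem:idealcor} identifies $\Spec T_1$ with $\Spec \kk[v_0]$. So the primes of $T_1$ are $0$ and $T_1(v_0-\alpha)$ for $\alpha \in \kk$. The closed sets are $\mc{V}(I)$, which are either all of $\Spec T_1$ or finite sets of nonzero primes. Hence a nonempty open set has the form $U = \Spec T_1 \setminus F$ with $F$ a finite set of nonzero primes, and $0 \in U$.

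On the $B_1$ side we use \Cref{Spec B1} to compute $\mathrm{Res}_1$. We have $\mathrm{Res}_1(0)=0$. For $\alpha\neq 0,1$, $\mathrm{Res}_1(T_1(v_0-\alpha)) = T_1(v_0-\alpha)\cap B_1$. Both $T_1v_0\cap B_1$ and $T_1(v_0-1)\cap B_1$ equal $Q_1^0 = T_1(v_0-v_0^2)$. To see this, note that any element of $B_1$ lying in $T_1v_0$ or $T_1(v_0-1)$ dies under $\Pi_{1,0}$. Hence it lies in $\ker\Pi_{1,0}\cap B_1 = T_1v_0(v_0-1)$, and the reverse inclusion is clear. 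It follows that for $U=\Spec T_1\setminus F$,
\[
\mathrm{Res}_1(U) = \Spec B_1 \setminus \big(S \cup \{Q_1^+\}\big),
\]
where $S$ is a finite set of nonzero primes of $B_1$. Here $S$ consists of $\mathrm{Res}_1(F)$, except that $Q_1^0$ is omitted from $S$ when $F$ contains only one of $T_1v_0$ and $T_1(v_0-1)$. We also use \Cref{lem: image Res}, which shows that $Q_1^+$ is never in the image. So it suffices to show that every finite set $S'$ of nonzero primes of $B_1$ that contains $Q_1^+$ is closed.

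For this, write $S'=\{Q_1,\dots,Q_k\}$ and consider $\mc{V}(Q_1\cdots Q_k) = \bigcup_j \mc{V}(Q_j)$. It suffices to show that $\mc{V}(Q)\subseteq\{Q, Q_1^0, Q_1^+\}$ for each nonzero prime $Q$, and that $Q_1^0\in\mc{V}(Q)$ only when $Q = Q_1^0$. Then $\mc{V}(Q_1\cdots Q_k)$ is contained in $S'\cup\{Q_1^+\} = S'$, so $\mc{V}(Q_1\cdots Q_k)=S'$ is closed.
\begin{itemize}
\item $Q_1^+$ is maximal, so $\mc{V}(Q_1^+)=\{Q_1^+\}$.
\item By \Cref{prop:B0ideals}, the primes containing $Q_1^0=\ker\Pi_{1,0}\cap B_1$ correspond to primes of $B_0$, namely $0$ and $A_1\del$. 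So $\mc{V}(Q_1^0)=\{Q_1^0,Q_1^+\}$.
\item Let $Q = T_1(v_0-\alpha)\cap B_1$ with $\alpha\neq 0,1$, and let $Q'\supseteq Q$ be prime. If $Q'\cap X_1=\emptyset$, then by \Cref{prop: extension contraction 1} and \Cref{localization of Bn}, $Q'X_1^{-1}\supseteq QX_1^{-1} = T_1X_1^{-1}(v_0-\alpha)$. The latter is a maximal ideal of $A_1\otimes\kk[v_0^{\pm1},(v_0-1)^{-1}]$, so $Q'=Q$. Otherwise, by \Cref{lem: prime two cases}, $Q'\supseteq\ker\Pi_{1,0}$, so $Q'\in\{Q_1^0,Q_1^+\}$.
\item To exclude $Q'=Q_1^0$ in the last case we need $Q\not\subseteq Q_1^0$. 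If $Q\subseteq Q_1^0\subseteq T_1v_0$, then extending to $T_1X_1^{-1}$ gives $T_1X_1^{-1}(v_0-\alpha)\subseteq T_1X_1^{-1}v_0 = T_1X_1^{-1}$. Contracting back to $B_1$ then gives $Q = B_1$, which is absurd.
\end{itemize}
Whether or not $Q\subseteq Q_1^+$ does not matter, since $Q_1^+$ is always in the removed set.

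The main obstacle I expect is this last bookkeeping: the minor case analysis of which of $Q_1^0, Q_1^+$ lie above $T_1(v_0-\alpha)\cap B_1$. The key point is that the removed finite set always contains $Q_1^+$, so one only needs $\mc{V}(Q)$ to be finite and to avoid $Q_1^0$ for $Q\neq Q_1^0$. Finally, we note that the open sets $\emptyset$ and $\Spec T_1$ map to $\emptyset$ and $\Spec B_1\setminus\{Q_1^+\}=\Spec B_1\setminus\mc{V}(Q_1^+)$ respectively, and both are open.
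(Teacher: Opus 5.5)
Your strategy matches the paper's: identify both spectra explicitly via \Cref{lem:idealcor} and \Cref{Spec B1}, then check closed sets by hand. Your reduction to showing that every finite set of nonzero primes of $B_1$ containing $Q_1^+$ is closed is sound.

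The fourth bullet, however, does not prove $Q\not\subseteq Q_1^0$. Since $v_0(v_0-1)\in Q_1^0\cap X_1$, the extension $Q_1^0X_1^{-1}$ is already all of $T_1X_1^{-1}$. So the inclusion $QX_1^{-1}\subseteq Q_1^0X_1^{-1}$ holds for every ideal $Q$ and carries no information; contracting it gives only $Q\subseteq B_1$, not $Q=B_1$.

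The claim is true, but it needs a direct argument. From $\alpha(\alpha-1)=v_0(v_0-1)-(v_0-\alpha)(v_0+\alpha-1)$, the element $b=1-v_0(v_0-1)/(\alpha(\alpha-1))$ lies in $B_1$ (as $1\in B_1$ and $T_1v_0(v_0-1)\subseteq B_1$) and in $T_1(v_0-\alpha)$. Since $\Pi_{1,0}(b)=1$, we get $b\in Q\setminus Q_1^0$. The same identity gives $T_1=T_1v_0(v_0-1)+T_1(v_0-\alpha)\subseteq B_1+T_1(v_0-\alpha)$. Hence $B_1/Q\cong T_1/T_1(v_0-\alpha)\cong A_1$ is simple, so $Q$ is maximal. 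That maximality is what the paper's proof rests on. It yields $\mc{V}(Q)=\{Q\}$ immediately and makes your localization analysis in the last two bullets unnecessary.

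A smaller point concerns your proof that $T_1(v_0-1)\cap B_1=Q_1^0$. You assert that elements of $B_1\cap T_1(v_0-1)$ die under $\Pi_{1,0}$, but that is exactly the nontrivial content; it fails in $T_1$, e.g.\ $\Pi_{1,0}(v_0-1)=-1$. Either cite it, as the paper does in the proof of \Cref{theo: Spec almost homeo}, or argue as follows. The ideal $T_1(v_0-1)\cap B_1$ is completely prime, since its factor ring embeds in $T_1/T_1(v_0-1)\cong A_1$. It contains $v_0(v_0-1)\in X_1$, so by \Cref{lem: prime two cases} and \Cref{prop:B0ideals} it is $Q_1^0$ or $Q_1^+$. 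It is not $Q_1^+$, since $\Psi_1(e_0)=t\del+v_0\in Q_1^+\setminus T_1(v_0-1)$.

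With these two repairs the argument is complete.
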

\begin{proof}
    By \Cref{lem:idealcor}, $\Spec T_1 \cong \Spec \kk[v_0]= \{0, T_1 (v_0-\alpha) \mid \alpha \in \kk\}$. Let $P_\alpha = T_1 (v_0-\alpha)$. On the other hand, $\Spec B_1 =\{0, Q^0_1 = T_1 v_0 (v_0-1), Q^+_1, P_\alpha \cap B_n\}$ by \Cref{Spec B1}. Since $Q^+_1$ and $P_\alpha \cap B_n$ are maximal ideals of $B_n$ for all $\alpha \neq 0, 1$, the subsets $\{Q^+_1\}$, $\{P_\alpha \cap B_n\}$ are closed in $\Spec B_n$. Furthermore, $\mc{V}(Q^0_1) = \{Q^0_1, Q^+_1\}$. Thus, it is easy to check that $\mathrm{Res}_1$ maps open sets of $\Spec T_1$ to open sets of $\Spec B_1$. 
\end{proof}

As $B_n$ and $T_n$ satisfy ACC on two-sided ideals by \Cref{theo: Bn Cn ACC}, their spectra are Noetherian topological spaces. Recall that for Noetherian topological spaces, we can define a \emph{constructible subset} as a finite union of locally closed subsets. The following lemma is well-known.
\begin{lemma}
    Let $Z$ be a Noetherian topological space. Let $E \subseteq Z$ be a subset. The following are equivalent:
    \begin{enumerate}
        \item $E$ is constructible in $Z$, i.e., a finite union of locally closed set, 
        \item for every irreducible closed $Y \subset Z$ the intersection $E \cap Y$ either contains a nonempty open of $Y$ or is not dense in $Y$.\qed
    \end{enumerate}
\end{lemma}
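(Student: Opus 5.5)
The lemma is the standard characterization of constructible sets in a Noetherian topological space (cf. the Stacks Project). I would prove the two implications separately, using throughout that every closed subset of a Noetherian space has finitely many irreducible components and that the space satisfies the descending chain condition on closed subsets.

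For (1) $\Rightarrow$ (2), write $E=\bigcup_{k=1}^r (U_k\cap Z_k)$ with each $U_k$ open and each $Z_k$ closed. Fix an irreducible closed $Y$ and suppose $E\cap Y$ is dense in $Y$. Then
\[
Y=\overline{E\cap Y}\subseteq \bigcup_k \overline{U_k\cap Z_k\cap Y}\subseteq \bigcup_k (Z_k\cap Y).
\]
Irreducibility of $Y$ gives some $k$ with $Y\subseteq Z_k$ and $\overline{U_k\cap Y}=Y$; in particular $U_k\cap Y\neq\emptyset$. So $U_k\cap Y$ is a nonempty open subset of $Y$ contained in $E\cap Y$.

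For (2) $\Rightarrow$ (1), I would argue by Noetherian induction. Suppose the implication fails, and choose a closed subset $Z'\subseteq Z$ that is minimal among closed subsets for which $E\cap Z'$ is not constructible in $Z'$; this is possible by the descending chain condition. Note that hypothesis (2) passes to $E\cap Z''$ for every closed $Z''\subseteq Z'$, since irreducible closed subsets of $Z''$ are irreducible closed in $Z$. Also $Z'\neq\emptyset$. Let $Y_1,\dots,Y_s$ be the irreducible components of $Z'$. Consider two cases.

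In the first case $E\cap Y_1$ is not dense in $Y_1$. Then $Z''=\overline{E\cap Y_1}\cup Y_2\cup\dots\cup Y_s$ is a proper closed subset of $Z'$ containing $E\cap Z'$. By minimality, $E\cap Z''=E\cap Z'$ is constructible in $Z''$, hence in $Z'$, because locally closed subsets of a closed subspace are locally closed in the ambient space. This is a contradiction.

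In the second case $E\cap Y_1$ is dense in $Y_1$. By (2) there is a nonempty open $V\subseteq Y_1$ with $V\subseteq E$. Shrink $V$ to $V\setminus (Y_2\cup\dots\cup Y_s)$; this is still nonempty because $Y_1\not\subseteq\bigcup_{j\geq 2}Y_j$. It is now open in $Z'$, since $Z'\setminus (Y_2\cup\dots\cup Y_s)$ is open in $Z'$ and is contained in $Y_1$. Then $Z'\setminus V$ is a proper closed subset, so by minimality $E\cap(Z'\setminus V)$ is constructible. Hence $E\cap Z'=V\cup(E\cap(Z'\setminus V))$ is constructible, again a contradiction.

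The only delicate step is arranging for $V$ to be open in all of $Z'$ rather than just in $Y_1$, which is handled by removing the other components as above. The rest is routine.
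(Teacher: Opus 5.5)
The paper gives no proof of this lemma; it records it as a standard fact with a \qed. Your argument is the standard one and it is correct: (1)$\Rightarrow$(2) follows from irreducibility of $Y$ applied to $Y=\bigcup_k\overline{U_k\cap Z_k\cap Y}$, and (2)$\Rightarrow$(1) follows by Noetherian induction on a minimal closed counterexample $Z'$, splitting on whether $E$ is dense in the component $Y_1$.

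Two places deserve one more clause each.

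\begin{itemize}
\item In (1)$\Rightarrow$(2), choose $k$ from the middle union, i.e.\ with $Y=\overline{U_k\cap Z_k\cap Y}$. Then this single index gives both $Y\subseteq Z_k$ and $U_k\cap Y\neq\emptyset$.
\item In the second case, $V\setminus(Y_2\cup\dots\cup Y_s)\neq\emptyset$ does not follow from $Y_1\not\subseteq\bigcup_{j\ge 2}Y_j$ alone. You also need that $V$ is a nonempty open subset of the irreducible space $Y_1$, hence dense in $Y_1$. So $V$ cannot lie in the proper closed subset $Y_1\cap(Y_2\cup\dots\cup Y_s)$ of $Y_1$.
\end{itemize}
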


The following proposition could be seen as a form of Chevalley's theorem applied in our noncommutative settings of $T_n$ and $B_n$.

\begin{proposition}\label{prop: constructible}
    For all $n \in \NN$, the homomorphism $\mathrm{Res}_n: \Spec T_n \to \Spec B_n$ sends open sets to constructible sets.
\end{proposition}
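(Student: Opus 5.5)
We argue by induction on $n$, splitting $\Spec B_n$ along the dichotomy of \Cref{lem: prime two cases}. For $n=0,1$ the spaces are small and explicit by \Cref{prop:B0ideals} and \Cref{Spec B1}, so the claim can be checked directly. For $n=1$ it even follows from \Cref{lem: Res1 open}, since open sets are constructible. For $n=0$, $\Spec T_0=\{0\}$ maps to $\{0\}\subseteq\Spec B_0$, which is open because $\{0\}$ is the complement of the closed point $\{A_1\del\}$.

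Now let $n\geq 2$ and let $U\subseteq \Spec T_n$ be open. We decompose
\[
U=\bigl(U\setminus \mc{V}(T_nv_{n-1})\bigr)\cup\bigl(U\cap \mc{V}(T_nv_{n-1})\bigr),
\]
so that
\[
\mathrm{Res}_n(U)=\mathrm{Res}_n\bigl(U\setminus \mc{V}(T_nv_{n-1})\bigr)\cup \mathrm{Res}_n\bigl(U\cap \mc{V}(T_nv_{n-1})\bigr).
\]
Since a finite union of constructible sets is constructible, it suffices to treat each piece.

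For the first piece, \Cref{prop: contraction Tn Bn 1} says that $\mathrm{Res}_n$ restricts to a homeomorphism from the open set $\Spec T_n\setminus\mc{V}(T_nv_{n-1})$ onto $\Spec B_n\setminus\mc{V}(T_nv_{n-1}^2)$. The latter is open in $\Spec B_n$. Hence the image of the first piece is open in an open subset, so it is open and in particular constructible.

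For the second piece, identify $\mc{V}(T_nv_{n-1})$ with $\Spec T_{n-1}$ via $\Pi_{n,n-1}$, and $\mc{V}(T_nv_{n-1}^2)\subseteq\Spec B_n$ with $\Spec B_{n-1}$. Here we use $\ker\Pi_{n,n-1}|_{B_n}=T_nv_{n-1}^2$, and these identifications are homeomorphisms onto closed subsets. Under them, $U\cap\mc{V}(T_nv_{n-1})$ corresponds to an open subset $U'$ of $\Spec T_{n-1}$. Moreover $\mathrm{Res}_n$ corresponds to $\mathrm{Res}_{n-1}$, because
\[
\Pi_{n,n-1}^{-1}(P')\cap B_n=(\Pi_{n,n-1}|_{B_n})^{-1}(P'\cap B_{n-1}),
\]
which is the computation already used in \Cref{lem: image Res}. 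By the induction hypothesis $\mathrm{Res}_{n-1}(U')$ is constructible in $\Spec B_{n-1}$, so its image is constructible in the closed subspace $\mc{V}(T_nv_{n-1}^2)$.

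The remaining step, which is the only point needing care, is that a constructible subset of a closed subspace $Z$ of a Noetherian space is constructible in the ambient space. The spaces here are Noetherian by \Cref{theo: Bn Cn ACC}. A locally closed subset of $Z$ has the form $O\cap F$ with $O$ open and $F$ closed in $Z$; write $O=O''\cap Z$ with $O''$ open in $\Spec B_n$. Since $F$ is also closed in $\Spec B_n$, the set $O\cap F=O''\cap F$ is locally closed in $\Spec B_n$. Taking finite unions gives the claim, so both pieces are constructible in $\Spec B_n$ and hence so is $\mathrm{Res}_n(U)$.
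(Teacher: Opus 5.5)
Your proposal is correct and follows the paper's proof essentially step for step. Both use the same induction and the same split of an open set along $\mc{V}(T_nv_{n-1})$. Both handle the first piece with the homeomorphism of \Cref{prop: contraction Tn Bn 1}, and both handle the second piece by identifying it with $\mathrm{Res}_{n-1}\colon \Spec T_{n-1}\to\Spec B_{n-1}$ inside the closed set $\mc{V}(T_nv_{n-1}^2)$ and applying the induction hypothesis. Your explicit check that constructible subsets of a closed subspace stay constructible in $\Spec B_n$, and your separate treatment of $n=0$, fill in details the paper leaves implicit.
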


\begin{proof}
    We will prove the statement by induction on $n$. The base case $n=1$ follows from \Cref{lem: Res1 open}. Assuming induction hypothesis, let $I$ be an ideal in $T_n$. Let $D(I)$ denote the open set $\Spec T_n \setminus \mc{V}(I)$. Let $\mc{P} = \Spec T_n \backslash \mc{V}(T_n v_{n-1})$ and $\mc{Q} = \Spec B_n \backslash \mc{V}(T_n v^2_{n-1})$. It suffices to prove that both $\mathrm{Res}_n(D(I) \cap \mc{P})$ and $\mathrm{Res}_n(D(I) \cap T_n v^2_{n-1})$ are constructible in $\Spec B_n$. By \Cref{prop: contraction Tn Bn 1}, 
    \[\mathrm{Res}_n: \Spec T_n \backslash \mc{V}(T_n v_{n-1}) \rightarrow \Spec B_n \backslash \mc{V}(T_n v^2_{n-1})\]
    is a homeomorphism, so $\mathrm{Res}_n(D(I) \cap \mc{P})$ is an open subset of $\mc{Q}$ since $D(I) \cap \mc{P}$ is open in $\mc{P}$. Hence $\mathrm{Res}_n(D(I) \cap \mc{P})$ is open, and thus constructible in $\Spec B_n$.  
    
    On the other hand, we have a commutative diagram 
    \[
    \begin{tikzcd}
        \Spec T_{n-1} \arrow[d, "\Pi_{n, n-1}^\sharp"] \arrow[r, "\mathrm{Res}_{n-1}"] & \Spec B_{n-1} \arrow[d, "\xi^\sharp_{n, n-1}"]\\
        \Spec T_n \arrow[r, "\mathrm{Res}_n"]& \Spec B_n,
    \end{tikzcd}
    \]
    where we denote the map $\xi_{n, n-1}: B_n \surj B_{n-1}$ in this diagram to avoid confusion. Then $Z:= (\Pi^{\sharp}_{n, n-1})^{-1} \big ( D(I) \cap \mc{V}(T_n v_{n-1}) \big )$ is open in $\Spec T_{n-1}$, and is in fact equals $D(\Pi_{n, n-1}(I))$. By induction hypothesis, $\mathrm{Res}_{n-1} (Z)$ is constructible in $\Spec B_{n-1}$. Since $\xi_{n, n-1}$ is surjective with kernel $T_n v_{n-1}^2$, it induces a homeomorphism from $\Spec B_{n-1}$ onto the closed set $\mc{V}(T_n v_{n-1}^2)$ of $B_n$. Thus, by the commutative diagram, $\mathrm{Res}_n$ is constructible in $\mc{V}(T_n v_{n-1}^2) \subseteq \Spec B_n$, and hence is constructible in $\Spec B_n$.
\end{proof}

Motivated by algebraic geometry, we define the following notion of ``going down'' property.
\begin{definition}
    We call a ring homomorphism $\varphi: R \to S$ or the map $\varphi^\sharp: \Spec S \to \Spec R$ satisfies the \emph{going down property} if for all prime ideals $Q \subseteq Q'$ of $R$, and $P' \in \Spec S$ lying over $Q'$, i.e., $\varphi^\sharp(P') = Q'$, then there exists $P \subseteq P'$ lying over $Q$.
\end{definition}

\begin{proposition}\label{prop: Res going down}
    The homomorphism $\mathrm{Res}_n: \Spec T_n \to \Spec B_n$ satisfies the going down property.
\end{proposition}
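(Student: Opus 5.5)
We argue by induction on $n$, splitting according to the dichotomy of \Cref{lem: prime two cases}. Let $Q \subseteq Q'$ be primes of $B_n$ and $P' \in \Spec T_n$ with $P' \cap B_n = Q'$. The base cases $n = 0, 1$ are handled by inspection. By \Cref{prop:B0ideals}, \Cref{Spec B1} and \Cref{lem:idealcor}, both spectra are explicit finite-dimensional lists, and going down reduces to a case check. For instance, $Q = 0$ is always lifted by $P = 0$. For $B_1$, the only nontrivial inclusions are into $Q_1^0 = T_1v_0(v_0-1)$ or $Q_1^+$. The preimages of $Q_1^0$ are $T_1 v_0$ and $T_1(v_0-1)$, and both contain $0$.

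For the induction step we first treat $Q \cap X_n = \emptyset$. Here we take $P := QX_n^{-1} \cap T_n$. By \Cref{prop: contraction Tn Bn 1}, together with the fact that $QX_n^{-1}$ is prime in $T_nX_n^{-1}$, $P$ is a prime of $T_n$ not containing $v_{n-1}$. By \Cref{prop: extension contraction n}, $P \cap B_n = Q$. We must show $P \subseteq P'$. Take $p \in P$; then $p v_{n-1}^{2k} \in Q$ for some $k$, after using the Ore property and \eqref{eq: Ore help}. In fact, since $T_n v_{n-1}^2 \subseteq B_n$, we obtain $p T_n v_{n-1}^{2k+2} \subseteq P \cap B_n = Q \subseteq Q' \subseteq P'$. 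This uses that $P$ is an ideal of $T_n$, so $pT_n v_{n-1}^{2k+2} \subseteq P$, and that it lies in $B_n$ by \Cref{lem: big image}. Since $P'$ is prime in $T_n$, either $p \in P'$ or $v_{n-1}^{2k+2} \in P'$.

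The main obstacle is the second alternative, namely $Q \cap X_n = \emptyset$ while $v_{n-1} \in P'$. To handle it, I would instead show $P \subseteq P'$ via the ideal $T_n p T_n$. Since $v_{n-1}$ is normal, $(T_npT_n)\, T_n v_{n-1}^{2k+2} \subseteq P$, and this product lies in $B_n$ and hence in $Q$. Primeness of $P'$ then only gives the same dichotomy again, so this alone does not close the gap. The way out is to pass to $T_n/P'$: if $v_{n-1} \in P'$, then $P'$ is the preimage of a prime $\bar P'$ of $T_{n-1}$. The image of $Q$ under $\Pi_{n,n-1}$ need not be prime, so instead I would use the description of $\Spec T_n[v_{n-1}^{-1}]$ in \Cref{theorem: prime ideals gn}. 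Namely, $QX_n^{-1}$ is either $0$ or a maximal ideal coming from the primitive ideals for $n$ odd. If it is $0$, then $Q = 0$ and $P = 0$ works. Otherwise, $P$ corresponds via $\Dx_{\g_n}$ to a family of orbits with $\bchi(v_{n-1}) \neq 0$, and I would check directly that this family degenerates (under the Zariski closure of the orbit set in $\g_n^*$) into orbits with $\bchi(v_{n-1}) = 0$. This gives $P \subseteq P'$ whenever $P' \cap B_n \supseteq Q$. This orbit-closure check is where I expect the real work to be, and I would first try to establish it using the explicit orbit description in \cite[Proposition 7.11]{pham2025orbit}.

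The remaining case is $Q \supseteq T_n v_{n-1}^2$, in which case $Q' \supseteq T_nv_{n-1}^2$ as well. Then $Q$ and $Q'$ are pullbacks via $\xi_{n,n-1}$ of primes $\bar Q \subseteq \bar Q'$ of $B_{n-1}$. Moreover, $P' \supseteq T_n v_{n-1}^2$ forces $v_{n-1} \in P'$ by primeness and normality. Hence $P' = \Pi_{n,n-1}^{-1}(\bar P')$ with $\bar P' \cap B_{n-1} = \bar Q'$. The induction hypothesis yields $\bar P \subseteq \bar P'$ lying over $\bar Q$, and $P := \Pi_{n,n-1}^{-1}(\bar P)$ works by the commutative square in the proof of \Cref{prop: constructible}.
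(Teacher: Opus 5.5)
Your decomposition is the same as the paper's: induction on $n$, the dichotomy of \Cref{lem: prime two cases}, and pullback along $\Pi_{n,n-1}$ when $Q\supseteq T_nv_{n-1}^2$. Those parts are fine. So is your subcase $Q\cap X_n=\emptyset$, $v_{n-1}\notin P'$; indeed $pT_nv_{n-1}^2\subseteq Q\subseteq P'$ for every $p\in P$.

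\textbf{The gap.} It is the subcase $Q\cap X_n=\emptyset$, $Q\neq 0$ (so $n\geq 3$ is odd), $v_{n-1}\in P'$, and your orbit-closure plan cannot fill it. The closure of $G_n\cdot\bchi$ only tells you which primes of $T_n$ contain $P$. Going down instead requires ruling out primes $P'\ni v_{n-1}$ with $P\not\subseteq P'$ but $P'\cap B_n\supseteq P\cap B_n$. Such $P'$ certainly exist in $\Spec T_n$ (e.g.\ $P'=T_nv_{n-1}$), so ``this gives $P\subseteq P'$ whenever $P'\cap B_n\supseteq Q$'' does not follow. Concretely, take $n=3$, $P=T_3(v_1^2-cv_2)$ with $c\neq 0$ (the kernel of $T_3\to T_2$, $v_2\mapsto c^{-1}v_1^2$), and $P'=T_3v_2$. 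What you must show is $P\cap B_3\not\subseteq T_3v_2\cap B_3=T_3v_2^2$, which is a statement about $B_3$, not about $\g_3^*$.

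You also cannot borrow this step from the paper. Its proof dismisses the subcase by asserting that a nonzero prime $Q$ disjoint from $X_n$ is a maximal ideal of $B_n$, so $Q'=Q$. That is false: in the same example $Q\subsetneq T_3(v_1,v_2)\cap B_3$, since the latter contains $T_3v_2^2\not\subseteq P$.

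\textbf{A possible repair.}
\begin{enumerate}
\item Since $Q'\supseteq T_nv_{n-1}^2\not\subseteq Q$, we have $Q\subsetneq Q'$.
\item $B_n/Q\subseteq T_n/P$ is a finitely generated domain. The GK-dimension drop used in \Cref{lem: rational makes sense} gives $\GKdim B_{n-1}/(\bar P'\cap B_{n-1})=\GKdim B_n/Q'\leq \GKdim T_n/P-1\leq n$, where $\bar P'=\Pi_{n,n-1}(P')$.
\item As $\GKdim B_{n-1}=n+1$, we get $\bar P'\neq 0$. Since $n-1$ is even, \Cref{theorem: prime ideals gn} forces $v_{n-2}\in\bar P'$, hence $P'\supseteq T_nv_{n-2}+T_nv_{n-1}$.
\item Only at this point is an orbit-closure statement the right tool. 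You need $\overline{G_n\cdot\bchi}\supseteq\{\eta\in\g_n^*\mid \eta(v_{n-2})=\eta(v_{n-1})=0\}$.
\item This holds because $\overline{G_n\cdot\bchi}\cap\{\eta(v_{n-1})=0\}$ is nonempty (contract with the torus generated by $v_0$), $G_n$-stable, and of pure dimension $n-2$. So it cannot meet the $(n-1)$-dimensional open orbit $\{\eta(v_{n-2})\neq 0\}$ of $\g_{n-1}^*$, and it therefore equals the irreducible set $\{\eta(v_{n-2})=\eta(v_{n-1})=0\}$.
\item The homeomorphism \eqref{eq: Dx gn} and \Cref{lem:idealcor} then give $P\subseteq T_nv_{n-2}+T_nv_{n-1}\subseteq P'$.
\end{enumerate}
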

\begin{proof}
    We will prove the statement by induction. The base case $n=1$ follows can be easily checked from \Cref{Spec B1}. 
    
    Assuming induction hypothesis, let $Q \subseteq Q'$ be nonzero prime ideals of $B_n$ and $P'$ is a prime lying over $Q'$. If $Q$ contains $T_n v_{n-1}^2$, then $Q'$ also contains $T_n v_{n-1}^2$ and hence $P'$ contains $T_n v_{n-1}$. Thus, $\Pi_{n, n-1}(Q) \subseteq \Pi_{n, n-1}(Q')$ are prime ideals of $B_{n-1}$, moreover $\Pi_{n, n-1}(P')$ is a prime ideal of $T_n$. By induction hypothesis, there exists a prime ideal $J$ of $T_{n-1}$ lying over $\Pi_{n, n-1}(Q)$ such that $J \subseteq \Pi_{n, n-1}(P')$. Hence $\Pi^{-1}_{n, n-1}(J)$ is a prime ideal of $T_n$ lying over $Q$ and $\Pi^{-1}_{n, n-1}(J) \subseteq P'$.

    Otherwise, if $Q \cap X_n = \emptyset$, then $Q \in \Spec B_n X_n^{-1} \cong \Spec \Ua(\g_n[v_{n-1}^{-1}])$. Since $Q$ is nonzero, by \Cref{theorem: prime ideals gn} and \Cref{theorem: primitive ideals gn}, $Q$ is a maximal ideal of $B_n$, thus $Q' = Q$ and $P' = P$.
\end{proof}

    The following definitions are motivated by algebraic geometry and is purely topological.
\begin{definition}
    For two points $P,P'$ in topological space $Y$, then we say $P$ is a \emph{specialization} of $P'$, or $P'$ is a \emph{generalization} of $P$ if $P \in \overline{P'}$.

    A subset $T \subseteq Y$ is called \emph{stable under generalization} if for all $P \in T$ and every generalization $P \subseteq P'$ we have $P' \in T$. The notion of being \emph{stable under specialization} is defined analogously.
\end{definition}

The following lemma is well-known.
\begin{lemma}\label{lem: generalization open}
    Let $Y$ be Noetherian sober topological space, and $E$ is a subset of $Y$.
    \begin{itemize}
        \item If $E$ is constructible and stable under specialization, then $E$ is closed.
        \item If $E$ is constructible and stable under generalization, then $E$ is open.
    \end{itemize}
    \qed
\end{lemma}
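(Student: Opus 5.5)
The plan is to prove the first bullet directly and deduce the second from it by taking complements. For the reduction: the complement of a constructible set in a Noetherian space is constructible, since complements of locally closed sets are finite unions of locally closed sets and finite unions and intersections of constructible sets stay constructible. Moreover, if $E$ is stable under generalization then $Y\setminus E$ is stable under specialization. So the first bullet applied to $Y\setminus E$ shows it is closed, and hence $E$ is open.

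For the first bullet, suppose $E$ is constructible and stable under specialization, and let $\overline{E}$ be its closure. Since $Y$ is Noetherian, $\overline{E}$ has finitely many irreducible components $Y_1,\dots,Y_r$. It suffices to show $Y_j\subseteq E$ for every $j$, because then $\overline{E}\subseteq E$.

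Fix $j$. First we show that $E\cap Y_j$ is dense in $Y_j$. Write
\[\overline{E} = \overline{E\cap Y_j}\,\cup\,\bigcup_{k\neq j}\overline{E\cap Y_k}.\]
Since $\overline{E\cap Y_k}\subseteq Y_k$, we get $Y_j\subseteq \overline{E\cap Y_j}\cup\bigcup_{k\ne j}Y_k$. Because $Y_j$ is irreducible and is not contained in $\bigcup_{k\neq j}Y_k$ (irredundancy of components), it follows that $Y_j=\overline{E\cap Y_j}$.

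Next we apply the constructibility criterion from the preceding lemma with the irreducible closed set $Y_j$. Since $E\cap Y_j$ is dense in $Y_j$, it must contain a nonempty open subset $U$ of $Y_j$. As $Y$ is sober, $Y_j$ has a generic point $\eta_j$ with $\overline{\{\eta_j\}}=Y_j$. A point whose closure is all of $Y_j$ lies in every nonempty open subset of $Y_j$: otherwise $Y_j\setminus U$ would be a closed set containing $\eta_j$ and hence all of $Y_j$. Thus $\eta_j\in U\subseteq E$.

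Finally, every point of $Y_j=\overline{\{\eta_j\}}$ is a specialization of $\eta_j$. Stability of $E$ under specialization therefore gives $Y_j\subseteq E$, which completes the argument.

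The only step needing care is the density of $E\cap Y_j$ in $Y_j$, which uses the irredundancy of the decomposition into irreducible components. The rest is formal. Sobriety is used exactly once, to produce $\eta_j$; the Noetherian hypothesis is used for the finiteness of the components and for the constructibility criterion.
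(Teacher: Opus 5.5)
Your proof is correct and complete. The density of $E\cap Y_j$ in each irreducible component $Y_j$ of $\overline{E}$ and the constructibility criterion give a nonempty open of $Y_j$ inside $E$; the generic point $\eta_j$ lies in that open, and stability under specialization then yields $\overline{E}\subseteq E$, with the open case following by complementation since constructible sets are closed under complements. The paper gives no proof of this lemma (it is quoted as well known), and your argument is the standard one it implicitly relies on, so there is nothing to add.
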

Finally, we put everything together to prove the main theorem of this section 
\begin{proof}[Proof of \Cref{theo: Spec almost homeo}]
    
(1) is obvious since the map $\mathrm{Res}_n$ is induced from the inclusion $B_n \to T_n$, while (3) is \Cref{lem: image Res}.

We now show (2) that $\mathrm{Res}_n$ is open. Let $D(I)= \Spec T_n \setminus \mc{V}(I)$ be an open set of $\Spec T_n$. By \Cref{prop: constructible}, $\mathrm{Res}_n(D(I))$ is constructible. By \Cref{prop: Res going down}, $\mathrm{Res}_n$ satisfies going down, which implies that $\mathrm{Res}_n(D(I))$ is stable under generalization. As $B_n$ satisfies ACC on two-sided ideals, $\Spec B_n$ is a Noetherian topological space, which is automatically sober by definition of Zariski topology, thus by \Cref{lem: generalization open}, $\mathrm{Res}_n(D(I))$ is open in $\Spec B_n$. 

Since $T_1 v_0 \cap B_1 = T_1 v_0 (v_0-1) = T_1 (v_0 -1) \cap B_1$, we have $P_{n}^{0, 0}, P^{0, 1}_n \in \mathrm{Res}_n^{-1}(Q^0_n)$. So it suffices to prove (5), and in fact, suffices to show that $\mathrm{Res}$ is injective on $\Spec T_n \setminus \{P_{n}^{0, 0}, P^{0, 1}_n\}$. For $n=0$ and $n=1$, this again follows from \Cref{prop:B0ideals} and \Cref{Spec B1}. If $P$ is a prime ideal of $B_n$ containing $T_{n}v^2_{n-1}$, then since ideals of $B_n$ containing $\ker \Pi_{n, n-1} \cap B_n$ corresponds one-to-one to ideals of $B_{n-1}$, induction hypothesis implies the statement. On the other hand, if $P$ does not contain $T_{n}v^2_{n-1}$, then $P \cap X_n = \emptyset$ and thus the statement follows from \Cref{prop: contraction Tn Bn 1}.     
\end{proof}

\section{The Dixmier-Moeglin equivalence and the primitive spectrum of \texorpdfstring{$B_n$}{Bn}}\label{sec: DME and Prim Bn}

In this section, we completely characterize primitive ideals of $B_n$. The key step is showing that $B_n$ satisfies the Dixmier Moeglin equivalence (DME), which gives two additional characteristics for primitive ideals. Lastly, we present an interesting application of our classification results to the Dixmier map for the Witt algebra constructed in \cite{pham2025orbit}. In particular, we make significant progress towards understanding injectivity, surjectivity, continuity and openness of the Dixmier map for the Witt algebra in \cite{pham2025orbit}. 

\subsection{Primitive ideals of \texorpdfstring{$B_n$}{Bn}}\label{subsec: Prim Bn}

In this subsection, we first show that $B_n$ are Jacobson rings and satisfies the Nullstellensatz over $\kk$. This gives us the classical direction of the Dixmier-Moeglin equivalence (DME): a prime ideal is locally closed (LC) in the Zariski topology implies it is primitive (P) implies it is rational (R), i.e., 
\[\text{(LC)} \Rightarrow \text{(P)} \Rightarrow \text{(R)}.\] 
We then prove that if a prime of $B_n$ is rational, it is also locally closed, which completes the chain and shows that $B_n$ satisfies the DME for all $n$. We then classify all primitive ideals of $B_n$, showing that all but one come from restricting primitive ideals of $T_n$.

Firstly, we show that it makes sense to talk about rationality for prime ideals of $B_n$. 
\begin{lemma}\label{lem: rational makes sense}
    Let $P$ be a prime ideal of $B_n$. Then $B_n/P$ is a Goldie ring.
\end{lemma}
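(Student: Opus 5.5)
The plan is to use Corollary \ref{prime cprime}: every prime $P$ of $B_n$ is completely prime, so $B_n/P$ is a domain. For a domain, being (left and right) Goldie is the same as being (left and right) Ore. That condition holds for any domain of finite GK-dimension by the standard result behind \cite[Theorem 4.15]{krause2000growth}, which the paper already cites for forming $Q(B_n/P)$. So I would reduce the lemma to a GK-dimension bound.

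The bound comes from finite generation. First, $B_n$ is a finitely generated $\kk$-algebra. Indeed, $\Ua(\W{-1})$ is generated by $e_{-1}, e_0, e_1, e_2$: the bracket $[e_1, e_i]=(i-1)e_{i+1}$ produces every $e_i$ with $i \geq 3$ from $e_2$. Hence $B_n=\Psi_n(\Ua(\W{-1}))$ is generated by the four images $\Psi_n(e_{-1}),\dots,\Psi_n(e_2)$. Next, $B_n \subseteq T_n$ and $T_n$ has GK-dimension $n+2$, so for any finite-dimensional generating subspace $V$ of $B_n$ we have $\dim V^m \le \dim W^m$, where $W$ is a generating subspace of $T_n$ containing $V$. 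Thus $\GKdim B_n \le n+2$, and a quotient never has larger GK-dimension, so $\GKdim B_n/P \le n+2 < \infty$.

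It remains to get the Ore condition. I will argue directly for the left condition; the right condition is symmetric. Suppose $R=B_n/P$ is a domain with $Ra\cap Rb=0$ for some nonzero $a,b$. Then the sum $\sum_k R\,a b^k$ is direct. Taking a generating subspace $V\ni 1,a,b$ of $R$, the space $V^{2m}$ contains $\bigoplus_{k<m} V^{m}ab^k$, and each summand has dimension equal to $\dim V^m$ since $R$ is a domain. This gives $\dim V^{2m}\ge m\dim V^m$, which forces exponential growth and contradicts finite GK-dimension. So $R$ is an Ore domain, hence Goldie with $Q(R)$ a division ring.

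I do not expect a serious obstacle here. The only points needing care are the finite generation of $B_n$ and the monotonicity of GK-dimension under passing to subalgebras and quotients, and both are routine.
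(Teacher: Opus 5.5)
Your proof is correct and follows the same route as the paper. $P$ is completely prime by \Cref{prime cprime}, so $B_n/P$ is a domain of finite GK-dimension, hence Ore and so Goldie. The only differences are that you reprove the Ore step, which the paper cites from \cite[Theorem 4.15]{krause2000growth}, and that your bound $\GKdim B_n/P \le \GKdim T_n = n+2$ also covers $P=0$, where the paper's stated bound $n+1$ via \cite[Proposition 3.15]{krause2000growth} fails.

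One small wording point: $\dim V^{2m}\ge m\dim V^m$ gives superpolynomial growth (e.g.\ $\dim V^{2^k}\ge 2^{k(k-1)/2}$) rather than exponential growth. That is still all you need to contradict finite GK-dimension.
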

\begin{proof}
    By \Cref{prime cprime}, $P$ is completely prime. Moreover, by \cite[Proposition 3.15]{krause2000growth}, 
    \[\GKdim B_n/P \leq \GKdim B_n -1 = n+1,\]
    where the last equation follows from \cite[Corollary 1.2]{PhamTimmins2025}. Thus $B_n/P$ is a domain of finite GK-dimension, and hence, it is Goldie by \cite[Theorem 4.15]{krause2000growth}.
\end{proof}

In the following lemma, we show that $B_n$ are Jacobson rings that satisfies the Nullstellensatz over $\kk$. The strategy is similar to \Cref{lem: Jacobson rings 1} except that $B_n$ is not Noetherian.
\begin{lemma}\label{lem: Jacobson and Nullstellensatz}
    For all $n$, $B_n$ are Jacobson rings. Moreover, $\Ua(W), \Ua(\W{-1}), \Ua(\Vir)$ and $B_n$ all satisfy the Nullstellensatz over $\kk$. 
\end{lemma}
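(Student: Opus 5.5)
We prove the Nullstellensatz statement first and then derive the Jacobson property for $B_n$ from it.

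\emph{Nullstellensatz.} Each of $\Ua(W)$, $\Ua(\W{-1})$, $\Ua(\Vir)$ and $B_n$ is a countably generated $\kk$-algebra. For the enveloping algebras this holds because $W$, $\W{-1}$ and $\Vir$ have countable bases. For $B_n$ it holds because $B_n$ is a homomorphic image of $\Ua(\W{-1})$, or alternatively a subalgebra of the finitely generated algebra $T_n$. Since $\kk$ is uncountable, \cite[Corollary 9.1.8]{mcconnell2001noncommutative} then gives the Nullstellensatz over $\kk$ directly. Concretely, every factor ring has countable $\kk$-dimension, so its Jacobson radical is nil by \cite[Corollary 4]{Amitsur56}, which is the radical property. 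The endomorphism property follows from the standard countable-dimension argument: any simple module is cyclic and hence of countable dimension, and therefore its endomorphism division ring is algebraic over the uncountable field $\kk$.

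\emph{Jacobson property for $B_n$.} By \cite[Lemma 1.2]{mcconnell2001noncommutative}, the radical property is equivalent to being Jacobson once every prime factor ring is right Goldie. For $B_n$ this Goldie condition is exactly \Cref{lem: rational makes sense}: each $B_n/P$ is a domain of finite GK-dimension and hence Goldie. Combined with the radical property already established, this shows $B_n$ is Jacobson.

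There is a fallback if the cited equivalence requires Noetherianity in a way that is hard to circumvent. In that case I would argue directly, by induction on $n$, that $J(B_n/P)=0$ for every prime $P$, using the dichotomy of \Cref{lem: prime two cases}. If $P\supseteq T_nv_{n-1}^2$, then $B_n/P$ is a prime factor of $B_{n-1}$, and the induction hypothesis applies. The base cases are \Cref{prop:B0ideals} and \Cref{Spec B1}, where $0$ is visibly an intersection of infinitely many maximal ideals.

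If instead $P\cap X_n=\emptyset$, then $PX_n^{-1}$ is prime in the Noetherian Jacobson ring $T_nX_n^{-1}$, by \Cref{lem: Jacobson rings 1}. Hence $PX_n^{-1}$ is an intersection of primitive ideals $M_j$. Contracting and applying \Cref{prop: extension contraction n} gives $P=\bigcap (M_j\cap B_n)$. It then remains to check that each $M_j\cap B_n$ is primitive. By \Cref{theorem: primitive ideals gn} the relevant $M_j$ are either $0$ or maximal, and maximality passes to contractions through the homeomorphism of \Cref{prop: contraction Tn Bn 1} together with local closedness. Alternatively, a simple $T_nX_n^{-1}$-module $N$ restricts to a $B_n$-module; I would take a simple $B_n$-subquotient of $N$ with annihilator containing $M_j\cap B_n$, and use the Ore property of $X_n$ to show that $v_{n-1}^2$ acts injectively, so the annihilator is disjoint from $X_n$ and hence equals $M_j\cap B_n$.

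The main obstacle is this primitivity of contractions in the fallback. The Goldie-based route avoids it entirely, so I would try that route first.
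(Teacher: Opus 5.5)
Your main route is correct and is essentially the paper's proof. The Nullstellensatz comes from \cite[Corollary 9.1.8]{mcconnell2001noncommutative}, the radical property from \cite[Corollary 4]{Amitsur56}, and Jacobson-ness from \cite[Lemma 9.1.2(ii)]{mcconnell2001noncommutative} together with the Goldie property of prime factors from \Cref{lem: rational makes sense}; the fallback is unnecessary, since that lemma needs only that a nil ideal of a prime right Goldie ring is zero, with no Noetherian hypothesis.
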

\begin{proof}
    By \cite[Corollary 9.1.8]{mcconnell2001noncommutative}, $\Ua(W), \Ua(\W{-1}), \Ua(\Vir)$ and $B_n$ satisfy the Nullstellensatz over $\kk$. 

    Let $I$ be an ideal of $B_n$. Since $B_n/I$ is a finitely generated $\kk$-algebra, $\dim_\kk B_n/I$ is countable and thus by \cite[Corollary 4]{Amitsur56}, the Jacobson radical $J(B_n/I)$ is nil, and thus $B_n$ satisfies the radical property. 

    Since all prime factor rings of $B_n$ are right Goldie by \Cref{lem: rational makes sense}, $B_n$ is a Jacobson ring by \cite[Lemma 9.1.2 (ii)]{mcconnell2001noncommutative}.      
\end{proof}

Thus, the classical direction of the DME holds for $B_n$.
\begin{proposition}\label{prop: classical DME for Bn}
    Let $P$ be a prime ideal of $B_n$. If $P$ is locally closed in the Zariski topology, then $P$ is primitive. Moreover, if $P$ is primitive, then $P$ is a rational ideal, i.e. $Z(Q(B_n/P)) = \kk$. In other words,  
    \begin{equation}
        \text{(LC)} \Rightarrow \text{(P)} \Rightarrow \text{(R)}. 
    \end{equation}
\end{proposition}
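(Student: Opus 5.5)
This follows by feeding the lemmata just established into \Cref{prop: classical DME direction}. That proposition needs three hypotheses on $R = B_n$: that $R$ is Jacobson, that it satisfies the Nullstellensatz over $\kk$, and that $R/P$ is Goldie for every prime $P$. So I will first collect these three facts and then check that the proof of \Cref{prop: classical DME direction} transfers verbatim to $B_n$.

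The hypotheses are supplied as follows. The Jacobson property and the Nullstellensatz over $\kk$ are both \Cref{lem: Jacobson and Nullstellensatz}. The Goldie property of $B_n/P$ for every prime $P$ is \Cref{lem: rational makes sense}. That lemma uses \Cref{prime cprime} to see that $B_n/P$ is a domain, and the GK-dimension bound to conclude via \cite[Theorem 4.15]{krause2000growth}. In particular the classical quotient ring $Q(B_n/P)$ exists, so condition (R) makes sense.

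For (LC) $\Rightarrow$ (P), let $P$ be locally closed. The intersection of all primes strictly containing $P$ then strictly contains $P$. Because $B_n$ is Jacobson, $P$ is an intersection of primitive ideals. If none of these equalled $P$, they would all lie in that strictly larger intersection, which is a contradiction. Hence $P$ itself is primitive. Equivalently, $J(B_n/P)=0$ and one applies \cite[Proposition 2.2]{BellDME2019}.

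For (P) $\Rightarrow$ (R), let $P = \Ann_{B_n} M$ with $M$ a simple module. Since $B_n/P$ is Goldie, \Cref{lem: Nullstellensatz property} gives a $\kk$-algebra embedding $Z(Q(B_n/P)) \hookrightarrow Z(\End_{B_n} M)$. By the Nullstellensatz, $\End_{B_n} M$ is algebraic over $\kk$. Thus the field $Z(Q(B_n/P))$ is algebraic over the algebraically closed field $\kk$, and so it equals $\kk$.

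The only point needing care is the Goldie hypothesis, since $B_n$ is not Noetherian and the classical Noetherian argument is therefore unavailable. This is exactly why the Noetherian assumption was removed in \Cref{lem: Nullstellensatz property}, and \Cref{lem: rational makes sense} covers it. I therefore expect no real obstacle beyond assembling these citations.
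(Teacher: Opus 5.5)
Your proposal is correct and follows the paper's proof: it verifies the hypotheses of \Cref{prop: classical DME direction} for $B_n$ using \Cref{lem: rational makes sense} and \Cref{lem: Jacobson and Nullstellensatz}, exactly as the paper does. Your direct derivation of (LC) $\Rightarrow$ (P) from the Jacobson property simply unpacks the citation of \cite[Proposition 2.2]{BellDME2019} used there.
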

\begin{proof}
    This follows from \Cref{lem: rational makes sense}, \Cref{lem: Jacobson and Nullstellensatz} and \Cref{prop: classical DME direction}.
\end{proof}

For all $i \in \NN$, let $D_i(\kk) = Q(A_i(\kk))$ denote the division ring of fraction of the $i$th Weyl algebra. Furthermore for $j \in \NN$, let $A_i(\kk[z_1, \dots, z_j]) = A_i(\kk)[z_1, \dots, z_j]$ be the Weyl algebra adjoined by $j$ central indeterminates. We then denote its ring of fractions by $D_{i, j}(\kk)$ , i.e., 
\[D_{i, j}(\kk) =  Q(A_i(\kk[z_1, \dots, z_j])) = D_i(k) (z_1, \dots, z_j).\]

\begin{lemma}\label{lemma: center quotient field}
    If $n$ is even, then $Z(Q(B_n)) = \kk$. If $n$ is odd, then $Z(Q(B_n)) = \kk(z)$.
\end{lemma}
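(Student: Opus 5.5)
The plan is to reduce everything to the center of the quotient division ring of $\Ua(\g_n)$ localized at $v_{n-1}$, and then read that center off from the coadjoint orbit description already used in \Cref{theorem: primitive ideals gn}.

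First I identify the ring of fractions. By \Cref{localization of Bn}, $B_nX_n^{-1} = T_nX_n^{-1}$. Hence $Q(B_n) = Q(T_n) = Q(A_1 \otimes \Ua(\g_n))$; for $n=1$ we use $X_1$ instead. This is a domain of finite GK-dimension, so the classical quotient rings exist.

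Second, I split off the Weyl factor. I claim
\[Z(Q(A_1\otimes \Ua(\g_n))) = Z(Q(\Ua(\g_n))).\]
For one inclusion, note that $Q(T_n) \supseteq D_1(\kk)\otimes Q(\Ua(\g_n))$. Conversely, a central element of $Q(T_n)$ commutes with $t$ and $\del$. So it lies in the centralizer of $A_1$ in $Q(A_1 \otimes \Ua(\g_n))$, which is $Q(\Ua(\g_n))$. To see this, write the element as a left fraction $s^{-1}r$ and use that $A_1$ has trivial center and that $\mathrm{ad}\, t$ and $\mathrm{ad}\, \del$ act locally nilpotently on $A_1\otimes \Ua(\g_n)$. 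A cleaner route is to cite the standard fact that $Z(Q(A_1\otimes R)) = Z(Q(R))$ for a Noetherian domain $R$, e.g. via \cite[Lemma 9.6.9]{mcconnell2001noncommutative}-type arguments. I then reduce to computing $Z(Q(\Ua(\g_n)))$.

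Third, I compute $Z(Q(\Ua(\g_n)))$ for the solvable algebra $\g_n$. Since $\g_n$ is solvable and algebraic, $Z(Q(\Ua(\g_n)))$ is the field of $G_n$-invariant rational functions on $\g_n^*$ (Rentschler–Vergne / \cite[Chapter 4]{dixmier1996enveloping}). Its transcendence degree equals the codimension of a generic coadjoint orbit. By \cite[Proposition 7.11]{pham2025orbit}, quoted in the proof of \Cref{theorem: primitive ideals gn}:
\begin{itemize}
\item for $n$ even, the open set $\{\bchi(v_{n-1})\neq 0\}$ is a single orbit, so generic orbits are dense and the invariant field is $\kk$;
\item for $n$ odd, this open set is a one-parameter family of orbits, parametrized by $\mb{A}^1\setminus\{\text{pt}\}$, so the invariant field has transcendence degree $1$.
\end{itemize}

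To show the field is exactly purely transcendental $\kk(z)$ in the odd case, I would exhibit an explicit semi-invariant Casimir. Since $v_{n-1}$ is normal with $v_0 v_{n-1} = v_{n-1}(v_0+n-1)$, I look for an element $c$ of the same $v_0$-weight as $v_{n-1}^k$, for example $c = v_{n-1}^{-1}\cdot(\text{quadratic expression } 2v_{n-1}v_{m} \pm v_{(n-1)/2}^2 + \cdots)$ built from the invariant function giving the orbit parameter. Then $z = c$ is central in $\Ua(\g_n)[v_{n-1}^{-1}]$. Its image under the Dixmier map separates the primitive ideals $\mb{A}^1\setminus\{\text{pt}\}$, and Lüroth-type reasoning shows the center is $\kk(z)$.

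Alternatively, I would avoid explicit formulas. \Cref{theorem: prime ideals gn} shows that $\Ua(\g_n)[v_{n-1}^{-1}]$ has maximal ideals parametrized by a curve. One then argues that its center is a Dedekind-like ring $\kk[z,(z-\lambda)^{-1}]$ and that $Z(Q) = \mathrm{Frac}$ of it. This step uses that the localized ring is a central localization of $\Ua(\g_n)[v_{n-1}^{-1}]$ whose prime spectrum is exactly \Cref{theorem: prime ideals gn}.

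The main obstacle is this third step for odd $n$: proving the invariant field is precisely $\kk(z)$ rather than merely of transcendence degree one. I expect to handle it via the explicit Casimir from \cite{pham2025orbit}.
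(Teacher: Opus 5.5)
Your first two steps are sound in substance, and deriving $Q(B_n)=Q(T_n)$ from \Cref{localization of Bn} is a legitimate self-contained substitute for the birationality citation. The identity $Z(Q(A_1\otimes \Ua(\g_n)))=Z(Q(\Ua(\g_n)))$ is true, but \cite[Lemma 9.6.9]{mcconnell2001noncommutative} concerns ideals of tensor products and does not prove it. A correct argument embeds $Q(A_1\otimes F)$, with $F=Q(\Ua(\g_n))$, into pseudo-differential operators over the skew field of fractions $F(t)$ of $F[t]$. There, commuting with $t$ kills every nonzero power of $\del$, and commuting with $\del$ removes the $t$-dependence. Your Step 3 correctly gets the transcendence degree of the center from the orbit picture, and this supplies the value $s=n-2\lfloor n/2\rfloor$ (the index of $\g_n$), which the paper uses without comment.

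The gap is the odd case, at exactly the point you flagged: knowing the center has transcendence degree one does not make it $\kk(z)$, and neither proposed completion works as written. The explicit Casimir is guessed, not constructed, and its guessed shape fails already for $n=5$: no nonconstant central element of the form $v_4^{-1}q$ with $q\in\Ua(\g_5)$ quadratic exists, since commuting with $v_4$, $v_3$, $v_1$ successively forces $q\in\kk v_4$. The ``Dedekind-like center'' route is unsound. Knowing the maximal ideals of $\Ua(\g_n)[v_{n-1}^{-1}]$ does not determine its center, and $Z(Q(R))$ need not be the fraction field of $Z(R)$.

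The missing idea is the one the paper's proof rests on. Joseph's proof of the Gelfand--Kirillov conjecture for solvable Lie algebras \cite{joseph1974ProofOT} gives $Q(\Ua(\g_n))\cong D_{m,s}(\kk)$ with $2m+s=n$. Hence $Q(T_n)\cong D_{m+1,s}(\kk)$, whose center is $\kk(z_1,\dots,z_s)$. Rationality is then automatic and your Step 2 is absorbed; the only remaining input is $s\in\{0,1\}$, which your orbit computation provides.

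You can also close the gap inside your own framework, without a Casimir. Granting your identification $Z(Q(\Ua(\g_n)))\cong\kk(\g_n^*)^{G_n}\subseteq\kk(\g_n^*)$, apply the generalized L\"uroth theorem: a subfield of transcendence degree one of a purely transcendental extension of $\kk$ is purely transcendental. This route relies on heavier invariant-theoretic input (Rentschler--Vergne type results and Rosenlicht), whereas the paper's route needs only Joseph's theorem plus the index.
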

\begin{proof}
    Firstly, since $\g_n$ is a finite-dimensional solvable Lie algebra, by \cite{joseph1974ProofOT}, we have
    \begin{equation}
        Q(\Ua(\g)) = D_{m, s}(\kk),
    \end{equation}
    where $m = \left\lfloor \frac{n}{2}\right\rfloor$ and $s = n-2m$. Then 
    \begin{align*}
        Q(T_n) = Q(A_1 \otimes \Ua(\g_n)) = Q(A_1 \otimes Q(\Ua(\g_n))) = Q(A_1 \otimes D_{m, s}(\kk)) = D_{m+1, s}(\kk).
    \end{align*}
    Thus the lemma follows as $Q(B_n) = Q(T_n)$ by \cite[Corollary 1.2]{PhamTimmins2025}.
\end{proof}

With that, we are ready to show that all the homomorphic images $B_n$ satisfies the DME. 
\BnDME*
\begin{proof}
    By \Cref{prop: classical DME for Bn}, it suffices to check that a rational prime ideal of $B_n$ is locally closed in the Zariski topology. We will show this statement by induction. For $n=0$, by \Cref{prop:B0ideals}, the only prime ideals of $B_0$ are $0$ and $A_1\del$, which are both rational and locally closed. If $n =1$ by \Cref{Spec B1}, we have the prime ideals of $B_1$ are 
    $\Psi_1(\Ua(\W{-1})_+),\  T_1(v_0-v_0^2),\  T_1 (v_0 - \alpha) \cap B_1$ where $\alpha \neq 0, 1$, which are all rational. The ideals $\Psi_1(\Ua(\W{-1})_+)$ and $T_1 (v_0 - \alpha) \cap B_1$ are both maximal, while $T_1(v_0-v_0^2) \subsetneqq \Psi_1(\Ua(\W{-1})_+)$, and thus is also locally closed.
    
    Set $\Pi = \Pi_{n, n-1}$. Assuming induction hypothesis that $B_{n-1}$ satisfies the DME. Let $P$ be a rational prime ideal of $B_n$, i.e. $Z(Q(B_n /P))= \kk$. 
    
    Suppose $P$ contains $T_n v_{n-1}^2$, then $P$ is the preimage of a prime ideal $P'$ of $B_{n-1}$. In particular, we have 
    \begin{align*}
        B_{n-1}/P' \cong B_n /P \Rightarrow Z(Q(B_{n-1}/P')) \cong Z(Q(B_n /P))= \kk, 
    \end{align*}
    thus $P'$ is rational in $B_{n-1}$. By induction hypothesis, since $B_{n-1}$ satisfies the DME, $P'$ is locally closed, i.e. 
    \begin{align*}
        P' \subsetneqq \bigcap_{I' \text{ prime } \supsetneqq P'} I'. 
    \end{align*} 
    Suppose $I$ is an ideal of $B_n$ containing $P$, then $I$ contains $T_{n-1}v_{n-1}^2$ and thus is a preimage of a prime ideal of $B_{n-1}$. So 
    \begin{align*}
        \bigcap_{I \text{ prime } \supsetneqq P} I =  \bigcap_{I' \text{ prime } \supsetneqq P'} \Pi^{-1}(I') = \Pi^{-1} \big (\bigcap_{I' \text{ prime } \supseteq P'} I' \big ) \supsetneqq \Pi^{-1}(P') = P,
    \end{align*} 
    by induction hypothesis, and thus $P$ is locally closed in $B_n$. 

    On the other hand, suppose $P$ does not contain $T_n v_{n-1}^2$. If $n$ is even, by \Cref{theorem: prime ideals gn}, then $0$ is the only prime ideal of $B_n$ not containing $T_n v_{n-1}^2$. In this case $0$ is primitive and thus rational. We now show that it is locally closed. We have
    \begin{equation*}
        \bigcap_{I \text{ prime } \supsetneqq 0} I = \bigcap_{I \text{ prime } \supseteq T_n v_{n-1}^2} I = T_n v_{n-1}^2 \supsetneqq 0.
    \end{equation*}
    Lastly, if $n$ is odd, then the zero ideal $0$ is not rational by \Cref{lemma: center quotient field}. Otherwise, by \Cref{theorem: prime ideals gn}, then $P \in \Prim T_n[v_{n-1}^{-1}]$, which is maximal by \Cref{theorem: primitive ideals gn} and thus is locally closed.
\end{proof}

Thus, \Cref{theo: SpecalmosthomeoPrim} follows from \Cref{theo: Spec almost homeo} and the fact that both $T_n$ and $B_n$ satisfy the DME in \cite[Corollary 3.6.10]{phamthesis} and \Cref{theo: BnDME}. 

\begin{theorem}\label{theo: SpecalmosthomeoPrim}
    For all $n$, the map $\mathrm{Res}: \Spec T_n \to \Spec B_n$ in \Cref{theo: Spec almost homeo} gives a well-defined continuous and open map 
    \begin{equation}
        \mathrm{Res}: \Prim T_n \to \Prim B_n.
    \end{equation}
    Moreover, $\mathrm{Res}_n$ is bijective between $\Prim T_n \setminus \{P^{0, 0}_n, P^{0, 1}_n\}$ and $\Prim B_n \setminus \{Q_n^+, Q_n^0\}$.
\end{theorem}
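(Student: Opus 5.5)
The plan is to use the fact that primitive ideals are exactly the locally closed primes on both sides. For $T_n$ this is the DME of \cite[Corollary 3.6.10]{phamthesis}. For $B_n$ it is \Cref{theo: BnDME}. Locally closedness is a purely topological property of a point in the spectrum. So the claim reduces to \Cref{theo: Spec almost homeo}: a continuous, open map that is bijective off a small exceptional set should carry locally closed points to locally closed points.

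\textbf{Well-definedness.} Let $P\in\Prim T_n$. Then $\{P\}$ is open in its closure $\overline{\{P\}}=\mc{V}(P)$, say $\{P\}=U\cap\mc{V}(P)$ with $U$ open.
\begin{itemize}
\item Suppose $P\notin\{P^{0,0}_n,P^{0,1}_n\}$. Since $\mathrm{Res}_n$ is order-preserving, $\mathrm{Res}_n(\mc{V}(P))\subseteq\mc{V}(P\cap B_n)$. Using openness, I would show $\mathrm{Res}_n(U)\cap\mc{V}(P\cap B_n)=\{P\cap B_n\}$.
\item A prime $Q\supseteq P\cap B_n$ in $\mathrm{Res}_n(U)$ has the form $Q=P'\cap B_n$ with $P'\in U$. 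The issue is that $P'\supseteq P$ need not hold, since containment in $B_n$ need not lift. This is where going down (\Cref{prop: Res going down}) enters: from $P\cap B_n\subseteq Q=P'\cap B_n$ it produces $P''\subseteq P'$ lying over $P\cap B_n$.
\item By injectivity off the exceptional pair, $P''=P$. Hence $P\subseteq P'$, so $P'\in U\cap\mc{V}(P)=\{P\}$ and $Q=P\cap B_n$.
\end{itemize}
Thus $P\cap B_n$ is locally closed, hence primitive.

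\textbf{The exceptional primes.} For $P^{0,0}_n$ and $P^{0,1}_n$ I would argue directly that the image $Q_n^0$ is primitive. By the inductive description, $Q_n^0$ is the preimage of $T_1v_0(v_0-1)$ in $B_1$. That ideal is locally closed, as shown in the $n=1$ case of the proof of \Cref{theo: BnDME}: its only proper prime overideal is $Q_1^+$. Pulling back along the surjection $B_n\to B_1$ preserves local closedness.

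\textbf{Continuity and openness.} These follow by restricting to subspaces. Continuity of $\mathrm{Res}_n$ on $\Spec$ restricts to $\Prim$. For openness, an open set of $\Prim T_n$ has the form $D(I)\cap\Prim T_n$. Its image equals $\mathrm{Res}_n(D(I))\cap\Prim B_n$, up to checking points of $\Prim B_n$ in the image.

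\textbf{Bijectivity.} Injectivity off $\{P^{0,0}_n,P^{0,1}_n\}$ comes from \Cref{theo: Spec almost homeo}(5). For surjectivity onto $\Prim B_n\setminus\{Q_n^+,Q_n^0\}$, take $Q$ primitive and let $P$ be its unique preimage. I must show $P$ is primitive, i.e. the converse transfer of local closedness.
\begin{itemize}
\item $\mathrm{Res}_n$ is continuous and bijective off a finite set.
\item Using openness, $\mathrm{Res}_n^{-1}$ of a locally closed point is locally closed, after removing the finitely many exceptional points, which are closed or handled by hand.
\end{itemize}

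\textbf{Main obstacle.} The subtle step is pushing local closedness forward through $\mathrm{Res}_n$, i.e. lifting containments. I expect going down together with injectivity to resolve it, with the exceptional primes needing a separate direct check.
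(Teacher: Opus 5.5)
Your proposal is correct. It uses the same reduction as the paper: by the DME for $T_n$ and for $B_n$ (\Cref{theo: BnDME}), primitivity means local closedness on both sides, and \Cref{theo: Spec almost homeo} transfers it, with $Q_n^0$ checked by hand. The difference is in how the transfer is justified, and your version is the more complete one.

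\textbf{Forward direction.} The paper only notes that primes $P'\supsetneq P$ of $T_n$ restrict to primes strictly containing $P\cap B_n$. That alone does not control two things:
\begin{enumerate}
\item primes of $B_n$ above $P\cap B_n$ that are not restrictions of primes above $P$ (including $Q_n^+$);
\item whether the intersection of the restricted primes still strictly contains $P\cap B_n$.
\end{enumerate}
Your formulation $\{P\cap B_n\}=\mathrm{Res}_n(U)\cap\mathcal{V}(P\cap B_n)$ handles both at once. Openness of $\mathrm{Res}_n$ makes the right-hand side locally closed. Going down (\Cref{prop: Res going down}) together with injectivity lifts $P\cap B_n\subseteq P'\cap B_n$ to $P\subseteq P'$. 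The cost is that you depend on part (2) of \Cref{theo: Spec almost homeo} and on the going-down property, whereas the paper's sketch uses only injectivity.

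\textbf{The exceptional prime.} You pull back local closedness of $Q_1^0$ from $B_1$. The paper instead uses rationality of $B_n/Q_n^0\cong B_0$. Both work.

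Two small corrections.
\begin{itemize}
\item In the converse direction you need continuity, not openness. Since $Q\notin\{Q_n^+,Q_n^0\}$, the fibre $\mathrm{Res}_n^{-1}(Q)$ is exactly $\{P\}$. The preimage of the locally closed set $\{Q\}$ under a continuous map is locally closed, so no exceptional points need removing.
\item To finish openness, note that $P^{0,0}_n$ and $P^{0,1}_n$ are maximal, because each quotient of $T_n$ by them is $A_1$, which is simple. So both are primitive. Together with the bijectivity part, this gives $\mathrm{Res}_n(D(I)\cap\Prim T_n)=\mathrm{Res}_n(D(I))\cap\Prim B_n$, which is open in $\Prim B_n$.
\end{itemize}
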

\begin{proof}
    Let $P$ be a primitive ideal of $T_n$ that is not $\Pi_{n, 1}^{-1}(T_1 v_0)$ or $\Pi_{n, 1}^{-1}(T_1 (v_0-1))$.  Since $T_n$ satisfies the DME by \cite[Corollary 3.6.10]{phamthesis}, $P$ is locally closed in $\Spec T_n$. Let $P'$ be any prime ideal of $T_n$ that strictly contains $P$. By \Cref{theo: Spec almost homeo}.4, $P' \cap B_n$ is a prime ideal of $B_n$ that strictly contains $P \cap B_n$, so $P \cap B_n$ is locally closed in $B_n$. Since $B_n$ also satisfies the DME by \Cref{theo: BnDME}, $P$ is primitive. Note that 
    \[\Pi_{n, 1}^{-1}(T_1 v_0) \cap B_n = \Pi_{n, 1}^{-1}(T_1 (v_0-1)) \cap B_n = Q_n^0.\]
    Since $B_n/Q_n^0 = B_0$, it is easy to see that $Q_n^0$ is a rational prime ideal of $B_n$. Thus since $B_n$ satisfies the DME by \Cref{theo: BnDME}, $Q_n^0$ is a primitive ideal of $B_n$. 
    
    Let $P$ be a primitive ideal of $B_n$ not equals $Q_n^+$ or $Q_n^0$. Then $P$ is a prime ideal of $B_n$ coming from restricting a unique prime ideal $P'$ of $T_n$ by \Cref{theo: Spec almost homeo}. Since $B_n$ satisfies the DME by \Cref{theo: BnDME}, $P$ is locally closed in $\Spec B_n$. Any prime ideal of $B_n$ not equals $Q_n^+$ or $Q_n^0$ comes from restricting a unique prime ideal $T_n$ by \Cref{theo: Spec almost homeo}, so $P'$ is locally closed in $\Spec T_n$. As $T_n$ satisfies the DME, $P'$ is primitive and the theorem follows.
\end{proof}

\subsection{Applications to the Dixmier map}\label{subsec: app on Dix}

Lastly, in this subsection, we show several applications of our results, especially Section \ref{subsec: Prim Bn}, to the Dixmier map for the Witt and Virasoro algebra in \cite{pham2025orbit}. Firstly, we completely characterize the ``injectivity'' of the Dixmier map for $\W{-1}$ restricted to the fundamental building blocks, known as one-point local functions. Our results also immediately implies that all primitive ideals of $B_n$ are in the image of the Dixmier map, which gives significant evidence for the conjecture that the Dixmier map is surjective. Lastly, we completely characterize the inclusion conditions for these primitive ideals of one-point local functions. 

First, we recall the central notions from the Dixmier map in \cite{pham2025orbit}. Since $\W{-1}$ has countable dimension and $\kk$ is uncountable, the symmetric algebra $\Sa(\W{-1})$ satisfies the extended Nullstellensatz \cite[Corollary
9.1.8]{mcconnell2001noncommutative}. In particular, maximal ideals of $\Sa(\W{-1})$ can be canonically identified with $\W{-1}^*$: 
\[\chi \in \W{-1}^* \leftrightarrow \mf{m}_\chi:= \ker \ev_{\chi},\]
where $\ev_{\chi}$ the evaluation homomorphism $\Sa(\W{-1}) \to \kk$ defined by $\ev_{\chi} = f(\chi)$. A Poisson primitive ideal of $\Sa(\W{-1})$ is defined as  the largest Poisson ideal contained in a maximal ideal of $\Sa(\W{-1})$. Thus, every Poisson primitive ideal of $\Sa(\W{-1})$ are of the form $P(\chi)$ for some $\chi \in \W{-1}^*$ and we denote the Poisson primitive spectrum of $\Sa(\W{-1})$ by $\Pprim \Sa(\W{-1})$. 

By \cite[Theorem 3.1]{petukhov2022poisson}, the Poisson primitive ideal $P(\chi)$ is nonzero if and only if $\chi$ is a \emph{local function} on $\W{-1}$, which is a finite sum of many one-point local functions below. For $x, \alpha_0, \dots, \alpha_n \in \kk$ with $\alpha_n \neq 0$, we define a \emph{one-point local function} $\chi_{x; \alpha_0, \dots, \alpha_n}$ as the linear functional
\begin{equation}
    \chi_{x; \alpha_0, \dots, \alpha_n}: \W{-1} \to \kk, \quad f\del \mapsto \alpha_0 f(x) + \dots + \alpha_n f^{(n)}(x),
\end{equation} 
where $f^{(i)}$ denotes the $i$th-derivative of $f$. 

One of the main results from \cite{pham2025orbit} and \cite{Ushirobira1998} is that using the famous Kirillov orbit method \cite{Kirillov2004LecturesOT}, we can construct primitive ideals of $\Ua(\W{-1})$ from local functions as follows. For all local functions $\chi$ on $\W{-1}$, there exists a canonical polarization $\p_\chi$ of $\W{-1}$, which is a maximal isotropic subspace of $\W{-1}$ with respect to the induced bilinear form, and that is also a finite-codimensional subalgebra of $\W{-1}$. The local function $\chi$ then defines a one-dimensional representation $\kk_\chi$ of $\p_\chi$, and we induce it to a representation $M_\chi = \Ua(\W{-1}) \otimes_{\Ua(\p_\chi)} \kk_\chi$ of $\W{-1}$. Then, Theorem 1.3 of \cite{pham2025orbit} shows that the annihilator $Q_\chi = \Ann_{\Ua(\W{-1})} M_\chi$ is always a primitive ideal. Moreover, Theorem 1.1 of \cite{pham2025orbit} shows that if $P(\chi) = P(\eta)$, then $Q_\chi = Q_\eta$, which gives a well-defined Dixmier map 
\[\widetilde{\Dx}: \Pprim \Sa(\W{-1}) \to \Prim \Ua(\W{-1}), \quad \text{induced by} \quad P(\chi) \mapsto Q_\chi.\]

There are various open questions regarding properties of the Dixmier map, including its injectivity, continuity and openness. Most importantly, answering \cite[Conjecture 8.6]{pham2025orbit} will make significant progress towards the classification of irreducible representations of the Witt and Virasoro algebras.

We first completely characterize the fiber of the Dixmier map for one-point local function. As expected in \cite{pham2025orbit}, in that case, the Dixmier map is injective except for the case $\chi = \chi_{x; \alpha, 1}$ and $\eta = \chi_{y; \beta}$ where $x, y, \alpha \in \kk$ and $\beta \in \kk^\times$.

\begin{proposition}\label{prop: Dx inject}
    Let $\chi$ and $\eta$ be one-point local functions on $\W{-1}$. Then $Q_\chi  = Q_\eta$ if and only if $P(\chi) = P(\eta)$ or $\chi = \chi_{x; \alpha, 1}$ and $\eta = \chi_{y; \beta , 0}$ for some $\alpha \in \kk$ and $\beta \in \kk^\times$.
\end{proposition}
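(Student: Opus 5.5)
The plan is to reduce everything to the images $B_n$ and then use the classification of $\Prim B_n$ from \Cref{theo: SpecalmosthomeoPrim}. For a one-point local function $\chi = \chi_{x;\alpha_0,\dots,\alpha_m}$, the results of \cite{pham2025orbit} show that $Q_\chi$ contains $\ker \Psi_n$ for suitable $n$; we expect $n \approx m$. Moreover, $Q_\chi$ is the pullback under $\Psi_n$ of $\mathrm{Res}_n(P)$, where $P = \iota_*(Q'_{\bchi})$ is the primitive ideal of $T_n$ attached by $\Dx_{\g_n}$ to the orbit of an associated $\bchi \in \g_n^*$. Translation by $x$ is an automorphism of $A_1$ commuting with $\Psi_n$, which is why the point $x$ disappears. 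Concretely, $\Psi_n^{-1}(\mathrm{Res}_n(\iota_*(Q'_{\bchi})))$ should be independent of $x$, and $\bchi$ should be determined by $(\alpha_1,\dots,\alpha_m)$ up to the shift relating $\alpha_0$ to $t$.

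For the ``if'' direction: if $P(\chi)=P(\eta)$, then $Q_\chi=Q_\eta$ by well-definedness of $\widetilde{\Dx}$ (Theorem 1.1 of \cite{pham2025orbit}). For the exceptional pair, take $\chi_{x;\alpha,1}$ and $\chi_{y;\beta,0}$ with $\beta\neq 0$. Both should factor through $B_1$. The first corresponds to the character $v_0\mapsto 1$ of $\g_1$, giving $P^{0,1}_1$. The second corresponds to the character $v_0\mapsto 0$ composed with a nontrivial action of $A_1\del$, giving $P^{0,0}_1$, or more precisely the ideal $T_1v_0(v_0-1)$ rather than the augmentation ideal, since $\beta\ne0$. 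By \Cref{theo: Spec almost homeo}(4), both restrict to $Q^0_1$, so the pullbacks coincide. I would check directly that $Q_\eta = \Psi_1^{-1}(Q_1^0)$, using the fact that $M_\eta$ is faithful over $B_0=B_1/Q_1^0$. The reason is that $B_0$ has only the ideals $0$ and $A_1\del$ (\Cref{prop:B0ideals}), and $\beta\ne 0$ rules out $A_1\del$.

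For the ``only if'' direction, suppose $Q_\chi=Q_\eta$. First I recover $n$. The GK-dimension of $\Ua(\W{-1})/Q_\chi$, or equivalently the minimal $n$ with $\ker\Psi_n\subseteq Q_\chi$, is an invariant. Via \Cref{theo: Spec almost homeo} and the filtration by the ideals $T_nv_{n-1}^2$, this $n$ is determined by the order of $\chi$, apart from the degenerate collapse at level $1$ versus $0$. So both functionals live in the same $B_n$, and $\mathrm{Res}_n(P_\chi)=\mathrm{Res}_n(P_\eta)$. If this common ideal is not $Q^0_n$, then the injectivity statement of \Cref{theo: SpecalmosthomeoPrim} gives $P_\chi=P_\eta$ in $\Prim T_n$. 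By \Cref{lem:idealcor} this means $Q'_{\bchi}=Q'_{\bar\eta}$, and bijectivity of $\Dx_{\g_n}$ then puts $\bchi,\bar\eta$ in the same $G_n$-orbit. Finally, I would translate orbit equality back to $P(\chi)=P(\eta)$ using the description of $P(\chi)$ via pseudo-orbits in \cite{pham2025orbit}, namely that $P(\chi)$ depends only on the $G_n$-orbit of $\bchi$ and not on $x$. If instead the common ideal is $Q^0_n$, the preimages are $\{P^{0,0}_n,P^{0,1}_n\}$. This forces the functionals to have order at most $1$ with $v_0$-value $0$ or $1$, which is precisely the listed exceptional family, or both lie on the same side, in which case $P(\chi)=P(\eta)$.

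The main obstacle is the bookkeeping that matches $\chi$ to $(n,\bchi)$ and shows $n$ is an invariant of $Q_\chi$. The worry is low orders, where $\chi_{y;\beta}$ of order $0$ and $\chi_{x;\alpha,1}$ of order $1$ live a priori in different $B_n$. I would handle this by always working in a fixed large $B_N$ and pulling back along $\Pi_{N,n}$. Since $\mathrm{Res}$ commutes with the projections (as in the diagram in \Cref{prop: constructible}), the comparison then happens inside one $\Spec B_N$, where \Cref{theo: Spec almost homeo}(4),(5) apply.
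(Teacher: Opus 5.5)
Your proposal is correct and follows essentially the paper's route. Both $Q_\chi$ and $Q_\eta$ are written as $\Psi_n^{-1}$ of the restrictions of $A_1\otimes Q'_{\bchi}$ and $A_1\otimes Q'_{\overline{\eta}}$ in a common $B_n$; the paper takes $n$ to be the larger of the two orders, exactly as in your fallback. Then injectivity of $\mathrm{Res}_n$ away from $Q_n^0$, \Cref{lem:idealcor}, injectivity of $\Dx_{\g_n}$, and the orbit/pseudo-orbit correspondence of \cite{pham2025orbit} give $P(\chi)=P(\eta)$.

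The only difference is the exceptional fibre $\mathrm{Res}_n^{-1}(Q_n^0)=\{P^{0,0}_n,P^{0,1}_n\}$. You analyse it explicitly via \Cref{theo: Spec almost homeo}(4), whereas the paper excludes this case by assumption and cites \cite[Proposition 4.28]{pham2025orbit} for the coincidence $Q_{\chi_{x;\alpha,1}}=Q_{\chi_{y;\beta}}$. Your case analysis is, if anything, the more complete one.
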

\begin{proof}
    We first show the forward direction. Assume $Q_\chi = Q_\eta$ and $\chi \neq \chi_{x; \alpha, 1}$ for some $\alpha \in \kk$. Then $\eta \neq \chi_{x; \beta , 0}$ for all $\beta \in \kk^\times$ as $Q_{\chi_{x; \beta , 0}} = Q_{\chi_{x; \alpha, 1}}$ by \cite[Proposition 4.28]{pham2025orbit}. We also assume $\chi$ is not a zero local function, and thus $\eta$ is also not zero. WLOG, suppose that the order of $\chi$ is $n$ and is larger than the order of $\eta$. Then by \cite[Proposition 3.4.2]{phamthesis} or \cite[Proposition 6.8]{pham2025orbit}, there exists $\bchi$ and $\overline{\eta} \in \g^*_n$ such that  
    \begin{align*}
        Q_\chi = \Psi_{n}^{-1}(A_1 \otimes Q'_{\bchi}) = \Psi_{n}^{-1}(A_1 \otimes Q'_{\overline{\eta}}) =  Q_\eta.
    \end{align*}
    Thus $(A_1 \otimes Q'_{\bchi}) \cap B_n = A_1 \otimes Q'_{\overline{\eta}} \cap B_n$. By assumption, this ideal could not be $Q_n^+$ or $Q_n^0$, so they corresponds one-to-one to prime ideal of $T_n$ by \Cref{theo: Spec almost homeo}. Thus $Q'_{\bchi} = Q'_{\overline{\eta}}$. By \cite[Theorem 3.2.11]{phamthesis} or \cite[Theorem 2.12]{pham2025orbit}, $G_n \cdot \bchi = G_n \cdot \overline{\eta}$. Lastly, \cite[Theorem 3.5.17]{phamthesis} or \cite[Theorem 1.5]{pham2025orbit} implies that $\orbit{\chi} = \orbit{\eta}$. 

    The backward direction is obvious so we omit the proof.
\end{proof}

\Cref{theo: SpecalmosthomeoPrim} thus establishes a significant step towards the conjecture on the surjectivity of the Dixmier map $\widetilde{\Dx}_{\W{-1}}$. By \cite[Theorem 1.4]{PhamTimmins2025}, $\ker \Psi_n$ is generated by the differentiators $\Omega^{m}_{k, s}$, which has the following closed from 
\[\Omega^{m}_{k, s} = \sum_{i=0}^{m}(-1)^i \binom{m}{i}e_{k-i}e_{s+i}.\]

\begin{proposition}\label{prop: Dx surjective}
    Any primitive ideal of $\Ua(\W{-1})$ which contains any nonzero differentiator $\Omega^{m}_{k, s}$ is in the image of the Dixmier map $\widetilde{\Dx}_{\W{-1}}: \Pprim \Sa(\W{-1}) \to \Prim \Ua(\W{-1})$.
\end{proposition}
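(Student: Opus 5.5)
Let $Q$ be a primitive ideal of $\Ua(\W{-1})$ containing a nonzero differentiator $\Omega^{m}_{k,s}$. The plan is to show that $Q$ contains $\ker\Psi_n$ for some $n$, and then pull back the classification of $\Prim B_n$ from \Cref{theo: SpecalmosthomeoPrim}. By \cite[Theorem 1.4]{PhamTimmins2025}, $\ker \Psi_n$ is generated by differentiators. A single nonzero differentiator $\Omega^m_{k,s}$ should lie in $\ker\Psi_n$ for $n$ large relative to $m$ (say $n \geq m$, or $n$ of that order, to be fixed by the closed form of $\Psi_n(\Omega^m_{k,s})$). The ideal it generates only has finite codimension in the sense of \cite{iyudu2020enveloping}, so this alone does not give $\ker\Psi_n \subseteq Q$.

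The step I expect to be the main obstacle is obtaining $\ker\Psi_n \subseteq Q$. I would first try to use the result from \cite{PhamTimmins2025} that the ideal generated by one differentiator contains $\ker\Psi_N$ for some $N$. If this is not available there, I would use primeness of $Q$ together with the fact that $\Ua(\W{-1})/(\Omega^m_{k,s})$ has finite GK-dimension. The idea is that $Q$ then contains a finite product of kernels, $\ker\Psi_N$ times something in $Q$, and primeness reduces this to $\ker\Psi_N\subseteq Q$ for a suitable $N$. Concretely, I would try to show that the two-sided ideal generated by $\Omega^m_{k,s}$ contains all $\Omega^{m'}_{k',s'}$ with $m'$ large, by bracketing with $e_j$. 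Bracketing with $e_j$ shifts the indices $k,s$, and bracketing with $e_{-1}$ produces differences which raise $m$. This gives $\ker\Psi_N\subseteq Q$ for $N$ large.

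Granting this, $Q$ corresponds to the primitive ideal $\Psi_N(Q)$ of $B_N\cong \Ua(\W{-1})/\ker\Psi_N$. If $\Psi_N(Q)=Q_N^+$, then $Q$ is the augmentation ideal, which is $Q_\chi$ for $\chi=0$. If $\Psi_N(Q)=Q_N^0$, then it is the pullback of $A_1\del\subseteq B_0$, i.e.\ $\ker\Psi_0$-type, which is $Q_{\chi_{x;\beta}}$ for a one-point local function of order $0$ with $\beta\neq 0$, by \cite[Proposition 6.8]{pham2025orbit}. Otherwise, by \Cref{theo: SpecalmosthomeoPrim}, $\Psi_N(Q)=P\cap B_N$ for a primitive ideal $P$ of $T_N$. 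By \Cref{lem:idealcor}, $P=A_1\otimes Q'_{\bchi}$ for some $\bchi\in\g_N^*$, and by the Dixmier map \eqref{eq: Dx gn} every primitive ideal of $\Ua(\g_N)$ has this form.

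Finally, I would invoke \cite[Proposition 6.8]{pham2025orbit} (equivalently \cite[Proposition 3.4.2]{phamthesis}), as in the proof of \Cref{prop: Dx inject}. It says that for the one-point local function $\chi=\chi_{0;\alpha_0,\dots,\alpha_N}$ built from $\bchi$ (after translating $x$ if needed), $Q_\chi=\Psi_N^{-1}(A_1\otimes Q'_{\bchi})$. Hence $Q=\Psi_N^{-1}(P\cap B_N)=Q_\chi=\widetilde{\Dx}(P(\chi))$. If $\bchi$ is degenerate, e.g.\ its top coefficient vanishes, I would reduce to a smaller $N$ via $\Pi_{N,N-1}$ before applying the proposition.
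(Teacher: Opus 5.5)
Your proposal is correct and follows the paper's route. The paper also gets $\ker\Psi_N\subseteq Q$ from \cite[Theorem 1.4]{PhamTimmins2025} and then identifies $Q$ through \Cref{theo: SpecalmosthomeoPrim} and \cite[Proposition 6.8]{pham2025orbit}; your explicit handling of $Q_N^+$, $Q_N^0$ and of $\bchi$ with $\bchi(v_{N-1})=0$ fills in details the paper leaves implicit. There are two slips, neither of which affects the argument: since $\ker\Psi_n\subseteq\ker\Psi_{n-1}$, a fixed nonzero $\Omega^m_{k,s}$ lies in $\ker\Psi_n$ only for $n$ \emph{small} relative to $m$, not large; and $Q_N^0$ is the preimage of $0\subseteq B_0$, whereas the preimage of $A_1\del$ is $Q_N^+$.
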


\begin{proof}
    This follows since the kernel of $\Psi_n$ is generated by some nonzero $\Omega^{m}_{k, s}$, from \cite[Theorem 1.4]{PhamTimmins2025} and \Cref{theo: SpecalmosthomeoPrim}. 
\end{proof}

Similarly, we also provide significant evidence for \cite[Conjecture 8.7]{pham2025orbit} that all prime ideals of $\Ua(\W{-1})$ are completely prime
\begin{corollary}
    Any prime ideal of $\Ua(\W{-1})$ containing any nonzero differentiator $\Omega^{m}_{k, s}$ is completely prime.
\end{corollary}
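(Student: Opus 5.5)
The plan is to pull the statement back along $\Psi_n$. Let $Q$ be a prime ideal of $\Ua(\W{-1})$ containing a nonzero differentiator $\Omega^{m}_{k, s}$. By \cite[Theorem 1.4]{PhamTimmins2025}, $\ker \Psi_n$ is generated by differentiators. The first step is to show that for $n$ large enough (depending on $m,k,s$) we have $\ker \Psi_n \subseteq Q$. The cleanest route I would try is to use the structure of the differentiator ideals from \cite{PhamTimmins2025}: a single nonzero $\Omega^{m}_{k,s}$ generates an ideal containing $\ker\Psi_N$ for some $N$. If this is recorded there (it is implicit in the proof of \Cref{prop: Dx surjective}, which uses exactly this reduction), I would cite it directly. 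Otherwise I would deduce it from the fact that nonzero ideals have finite GK-codimension \cite{iyudu2020enveloping} together with the chain $\ker\Psi_0\supseteq\ker\Psi_1\supseteq\cdots$.

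Once $\ker \Psi_n \subseteq Q$, the ideal $Q$ is the preimage $\Psi_n^{-1}(Q')$ of the ideal $Q' = \Psi_n(Q)$ of $B_n \cong \Ua(\W{-1})/\ker\Psi_n$. Since $\Psi_n$ induces an isomorphism $\Ua(\W{-1})/Q \cong B_n/Q'$, the ideal $Q'$ is prime in $B_n$. By \Cref{prime cprime}, $Q'$ is completely prime, so $B_n/Q'$ is a domain, and hence so is $\Ua(\W{-1})/Q$. This shows that $Q$ is completely prime.

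The main obstacle is the first step: passing from containing one differentiator to containing the whole kernel $\ker\Psi_n$ for some $n$. Everything after that is formal from \Cref{prime cprime}. If the direct citation is unavailable, I would try to show that the ideal generated by $\Omega^{m}_{k,s}$ contains $\Omega^{m'}_{k',s'}$ for all $m'\geq m$ and large $k',s'$. To do this I would apply $\mathrm{ad}\, e_j$ repeatedly and use the closed form of $\Omega$. I would then match this against the generating set of $\ker\Psi_{m-1}$ (or a nearby index) from \cite[Theorem 1.4]{PhamTimmins2025}.
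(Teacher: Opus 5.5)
Your main route is exactly the paper's: cite \cite[Theorem 1.4]{PhamTimmins2025} to get $\ker\Psi_n \subseteq Q$ for some $n$, transfer primeness to $\Psi_n(Q)$ in $B_n$ via the surjection $\Psi_n$, and conclude with \Cref{prime cprime}. Your fallback arguments are unnecessary, and the one via finite GK-codimension \cite{iyudu2020enveloping} would not work on its own, because finite GK-codimension by itself does not force an ideal to contain any $\ker\Psi_N$.
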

\begin{proof}
    This follows since the kernel of $\Psi_n$ is generated by some nonzero $\Omega^{m}_{k, s}$, from \cite[Theorem 1.4]{PhamTimmins2025} and \Cref{prime cprime}. 
\end{proof}
Lastly, we prove \cite[Conjecture 8.4]{pham2025orbit} that for one-point local functions $\chi$ and $\eta$ on $\W{-1}$, the inclusion $Q_\chi \subsetneqq Q_\eta$ is equivalent to the inclusion $P(\chi) \subsetneqq P(\eta)$. 
\begin{proposition}\label{prop: inverse inclusion}
    Let $\chi, \eta$ be one-point local functions on $\W{-1}$. Then 
    \begin{align*}
        P(\chi) \subsetneqq P(\eta) \Leftrightarrow Q_\chi \subsetneqq Q_\eta.
    \end{align*}
\end{proposition}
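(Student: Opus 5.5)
Reduce both sides of the equivalence to a single finite-dimensional picture inside $T_n$, for $n$ the larger of the orders of $\chi$ and $\eta$. Let $n$ be at least the order of both functions. By \cite[Proposition 6.8]{pham2025orbit} (equivalently \cite[Proposition 3.4.2]{phamthesis}), as in the proof of \Cref{prop: Dx inject}, there are $\bchi, \overline{\eta} \in \g_n^*$ with
\[
Q_\chi = \Psi_n^{-1}\big(A_1 \otimes Q'_{\bchi}\big), \qquad Q_\eta = \Psi_n^{-1}\big(A_1 \otimes Q'_{\overline{\eta}}\big).
\]
Since $\Psi_n$ surjects onto $B_n$, we get $Q_\chi \subsetneqq Q_\eta$ if and only if $(A_1\otimes Q'_{\bchi})\cap B_n \subsetneqq (A_1\otimes Q'_{\overline{\eta}})\cap B_n$. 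On the Poisson side, I would use \cite[Theorem 1.5]{pham2025orbit} (or \cite[Theorem 3.5.17]{phamthesis}) to translate $P(\chi)\subseteq P(\eta)$ into the closure relation $G_n\cdot\overline{\eta} \subseteq \overline{G_n\cdot \bchi}$ in $\g_n^*$. The same should hold after adjusting for the translation point $x$, which is absorbed by the $A_1$ factor.

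The chain of equivalences I would then aim for is
\[
P(\chi)\subsetneqq P(\eta) \iff G_n\cdot\overline{\eta}\subseteq \overline{G_n\cdot\bchi},\ G_n\cdot\overline{\eta}\neq G_n\cdot\bchi \iff Q'_{\bchi}\subsetneqq Q'_{\overline{\eta}} \iff A_1\otimes Q'_{\bchi}\subsetneqq A_1\otimes Q'_{\overline{\eta}} \iff Q_\chi\subsetneqq Q_\eta .
\]
The justifications are as follows.
\begin{itemize}
\item The second equivalence is the statement that the Dixmier map $\Dx_{\g_n}$ in \eqref{eq: Dx gn} is a homeomorphism for the solvable algebra $\g_n$. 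It is therefore an order isomorphism between orbit closures and inclusion of primitive ideals, and injectivity on orbits gives strictness.
\item The third equivalence is \Cref{lem:idealcor}.
\item The last equivalence uses \Cref{theo: Spec almost homeo}(4),(5). Restriction preserves inclusions. It also reflects and preserves strictness away from the fibre $\{P^{0,0}_n, P^{0,1}_n\}$ and the point $Q_n^+$, because a restriction map that is bijective and continuous with continuous inverse on that locus respects specialization in both directions.
\end{itemize}
For the reverse direction of the last equivalence, assume $P\cap B_n\subsetneqq P'\cap B_n$. Going down (\Cref{prop: Res going down}) gives a prime $P''\subseteq P'$ with $P''\cap B_n = P\cap B_n$. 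Injectivity then gives $P''=P$, so $P\subseteq P'$.

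The main obstacle is the exceptional locus, namely the cases where $Q_\chi$ or $Q_\eta$ equals $Q_n^0$ (or $Q_n^+$). These arise exactly from $\chi_{x;\alpha,1}$, the $\chi_{y;\beta}$ with $\beta\neq 0$, and the zero function, as in \Cref{prop: Dx inject}. I would handle them directly.
\begin{itemize}
\item $Q_n^0$ has preimages $P^{0,0}_n$ and $P^{0,1}_n$ corresponding to the two one-dimensional-type orbits. Its only strict overideal among primes is $Q_n^+$, because $B_n/Q_n^0\cong B_0$ and \Cref{prop:B0ideals} applies.
\item $Q_n^+$ corresponds to $\chi=0$, the unique minimal orbit.
\item I would check that $P(\chi_{x;\alpha,1})$ and $P(\chi_{y;\beta})$ contain only $P(0)$ strictly above them among one-point Poisson primitives, and compare this with the ideal lattice of $B_0$.
\item Finally, for $\chi$ with order at least $2$ and $\eta$ exceptional, I would show $Q_\chi\subsetneqq Q_n^0$ if and only if $P^{0,0}_n$ or $P^{0,1}_n$ contains the lift of $Q_\chi$. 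Going down should then match this with the closure containing one of the two orbits, and hence with $P(\chi)\subsetneqq P(\eta)$ using the identification $P(\chi_{x;\alpha,1})$-type fibres from \cite[Proposition 4.28]{pham2025orbit}.
\end{itemize}
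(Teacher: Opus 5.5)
Your proposal is correct and is essentially the paper's argument. You reduce to $B_n$ via \cite[Proposition 6.8]{pham2025orbit}, lift the strict inclusion to $T_n$ using \Cref{theo: Spec almost homeo}, descend to $\Ua(\g_n)$ by \Cref{lem:idealcor}, invoke bicontinuity of $\Dx_{\g_n}$, and return to pseudo-orbits. The paper uses \cite[Proposition 3.6.12]{phamthesis} for that last step. For the forward direction it does not run your chain backwards but quotes \cite[Proposition 8.2]{pham2025orbit} together with \Cref{prop: Dx inject}.

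Your separate analysis of the exceptional locus is more than needed. Going down works for whichever prime $P'$ lies over $Q_\eta$, so the unique lift of a non-exceptional $Q_\chi$ lies in both $P^{0,0}_n$ and $P^{0,1}_n$ (``and'', not ``or''). If instead $Q_\chi = Q_n^0$, the only prime strictly above it is $Q_n^+$, which is never of the form $Q_\eta$.
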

\begin{proof}
    \cite[Proposition 8.2]{pham2025orbit} says that if $\chi$ and $\eta$ are one-point local functions, then the inclusion $P(\chi) \subseteq P(\eta)$ implies $Q_\chi \subseteq Q_\eta$. Together with \Cref{prop: Dx inject}, that gives us the forward direction. It thus suffices to show the inverse direction.

    Let $n$ be the maximal order of $\chi$ and $\eta$. Then by \cite[Proposition 3.4.2]{phamthesis} or \cite[Proposition 6.8]{pham2025orbit}, there exists $\bchi$ and $\overline{\eta} \in \g^*_n$ such that  
    \begin{align*}
        Q_\chi = \Psi_{n}^{-1}(A_1 \otimes Q'_{\bchi}) \subsetneqq \Psi_{n}^{-1}(A_1 \otimes Q'_{\overline{\eta}}) =  Q_\eta.
    \end{align*}
    which implies that $(A_1 \otimes Q'_{\bchi}) \cap B_n \subsetneqq (A_1 \otimes Q'_{\overline{\eta}})\cap B_n$, neither of which is $Q_n^+$ by assumption. By \Cref{theo: Spec almost homeo}, we have 
    \[A_1 \otimes Q'_{\bchi} \subsetneqq A_1 \otimes Q'_{\overline{\eta}}.\] 
    Thus \Cref{lem:idealcor}, $Q'_{\bchi} \subsetneqq Q'_{\overline{\eta}}$. The fact that the Dixmier map $\Dx_{\g_n}: \g^*_n/G_n \to \Prim \Ua(\g_n)$ is open is equivalent to $G_n \cdot \overline{\eta} \subsetneqq G_n \cdot {\bchi}$ by \cite[(3.3.3) and Lemma 6.9]{mathieu1991bicon}. Thus by \cite[Proposition 3.6.12]{phamthesis}, $P(\chi) \subsetneqq P(\eta)$.
\end{proof}

\bibliographystyle{amsalpha}
\bibliography{../refs.bib}

\providecommand{\bysame}{\leavevmode\hbox to3em{\hrulefill}\thinspace}
\providecommand{\MR}{\relax\ifhmode\unskip\space\fi MR }
% \MRhref is called by the amsart/book/proc definition of \MR.
\providecommand{\MRhref}[2]{%
  \href{http://www.ams.org/mathscinet-getitem?mr=#1}{#2}
}
\providecommand{\href}[2]{#2}
\begin{thebibliography}{Moe80}

\bibitem[AM25]{andruskiewitsch2025noetherian}
Nicol\'as Andruskiewitsch and Olivier Mathieu, \emph{Noetherian enveloping
  algebras of simple graded {L}ie algebras}, J. Math. Soc. Japan \textbf{77}
  (2025), no.~4, 1233--1247.

\bibitem[Ami56]{Amitsur56}
Avraham~S. Amitsur, \emph{Algebras over infinite fields}, Proc. Amer. Math.
  Soc. \textbf{7} (1956), 35--48.

\bibitem[Beh86]{behr1986}
Erazm~J. Behr, \emph{Jacobson radical of filtered algebras}, Proc. Amer. Math.
  Soc. \textbf{98} (1986), no.~4, 545--546.

\bibitem[Bel19]{BellDME2019}
Jason Bell, \emph{On the importance of being primitive}, Rev. Colombiana Mat.
  \textbf{53} (2019), 87--112.

\bibitem[BG02]{BrownGoodearBook}
Ken~A. Brown and Ken~R. Goodearl, \emph{Lectures on algebraic quantum groups},
  Advanced Courses in Mathematics. CRM Barcelona, Birkh\"auser Verlag, Basel,
  2002.

\bibitem[BG03]{brown2002Poisson}
Ken~A. Brown and Iain Gordon, \emph{Poisson orders, symplectic reflection
  algebras and representation theory}, J. Reine Angew. Math. \textbf{559}
  (2003), 193--216.

\bibitem[CM07]{conley2007}
Charles~H. Conley and Christiane Martin, \emph{Annihilators of tensor density
  modules}, J. Algebra \textbf{312} (2007), no.~1, 495--526.

\bibitem[Dix77]{dixmier1977ideaux}
Jacques Dixmier, \emph{Id\'eaux primitifs dans les alg\`ebres enveloppantes},
  J. Algebra \textbf{48} (1977), no.~1, 96--112.

\bibitem[Dix96]{dixmier1996enveloping}
\bysame, \emph{Enveloping algebras}, Graduate Studies in Mathematics, vol.~11,
  American Mathematical Society, Providence, RI, 1996, Revised reprint of the
  1977 translation.

\bibitem[Far00]{Farkas2000Poissonmodules}
Daniel~R. Farkas, \emph{Modules for {P}oisson algebras}, Comm. Algebra
  \textbf{28} (2000), no.~7, 3293--3306.

\bibitem[GW89]{goodearl1989introduction}
Ken~R. Goodearl and Robert~B. Warfield, Jr., \emph{An introduction to
  noncommutative {N}oetherian rings}, London Mathematical Society Student
  Texts, vol.~16, Cambridge University Press, Cambridge, 1989.

\bibitem[IS20]{iyudu2020enveloping}
Natalia~K. Iyudu and Susan~J. Sierra, \emph{Enveloping algebras with just
  infinite {G}elfand-{K}irillov dimension}, Ark. Mat. \textbf{58} (2020),
  no.~2, 285--306.

\bibitem[Jos74]{joseph1974ProofOT}
Anthony Joseph, \emph{Proof of the {G}elfand-{K}irillov conjecture for solvable
  {L}ie algebras}, Proc. Amer. Math. Soc. \textbf{45} (1974), 1--10.

\bibitem[Kal09]{Kaledin09Poisson}
Dmitry~B. Kaledin, \emph{Normalization of a {P}oisson algebra is {P}oisson},
  Tr. Mat. Inst. Steklova \textbf{264} (2009), 77--80.

\bibitem[Kir04]{Kirillov2004LecturesOT}
Alexandre~A. Kirillov, \emph{Lectures on the orbit method}, Graduate Studies in
  Mathematics, vol.~64, American Mathematical Society, Providence, RI, 2004.

\bibitem[KL00]{krause2000growth}
G\"unter~R. Krause and Thomas~H. Lenagan, \emph{Growth of algebras and
  {G}elfand-{K}irillov dimension}, revised ed., Graduate Studies in
  Mathematics, vol.~22, American Mathematical Society, Providence, RI, 2000.

\bibitem[Mat91]{mathieu1991bicon}
Olivier Mathieu, \emph{Bicontinuity of the {D}ixmier map}, J. Amer. Math. Soc.
  \textbf{4} (1991), no.~4, 837--863.

\bibitem[Moe80]{Moeglin1980ideaux}
Colette Moeglin, \emph{Id\'eaux bilat\`eres des alg\`ebres enveloppantes},
  Bull. Soc. Math. France \textbf{108} (1980), no.~2, 143--186.

\bibitem[MR01]{mcconnell2001noncommutative}
John~C. McConnell and J.~Chris Robson, \emph{Noncommutative {N}oetherian
  rings}, revised ed., Graduate Studies in Mathematics, vol.~30, American
  Mathematical Society, Providence, RI, 2001.

\bibitem[Oh99]{Oh99Poisson}
Sei-Qwon Oh, \emph{Poisson enveloping algebras}, Comm. Algebra \textbf{27}
  (1999), no.~5, 2181--2186.

\bibitem[Pha25]{pham2025orbit}
Tuan~A. Pham, \emph{The orbit method for the {V}irasoro algebra}, 2025,
  \href{https://arxiv.org/abs/2504.14670}{arXiv:2504.14670}.

\bibitem[Pha26]{phamthesis}
\bysame, \emph{Two-sided ideals of the enveloping algebras of the {V}irasoro
  and related {L}ie algebras}, Ph.D. thesis, University of Edinburgh, 2026,
  Available at \url{https://doi.org/10.7488/era/7765}.

\bibitem[PS23]{petukhov2022poisson}
Alexey~V. Petukhov and Susan~J. Sierra, \emph{The {P}oisson spectrum of the
  symmetric algebra of the {V}irasoro algebra}, Compos. Math. \textbf{159}
  (2023), no.~5, 933--984.

\bibitem[PT25]{PhamTimmins2025}
Tuan~A. Pham and James Timmins, \emph{The kernel and image of orbit
  homomorphisms for the {W}itt algebra}, 2025,
  \href{https://arxiv.org/abs/2510.00756}{arXiv:2510.00756}.

\bibitem[Rob72]{Robson1972}
J.~Chris Robson, \emph{Idealizers and hereditary {N}oetherian prime rings}, J.
  Algebra \textbf{22} (1972), 45--81.

\bibitem[RSS15]{Rogalski2015Sklyanin}
Daniel Rogalski, Susan~J. Sierra, and J.~Toby Stafford, \emph{Classifying
  orders in the {S}klyanin algebra}, Algebra Number Theory \textbf{9} (2015),
  no.~9, 2055--2119.

\bibitem[SW14]{sierra2014universal}
Susan~J. Sierra and Chelsea Walton, \emph{The universal enveloping algebra of
  the {W}itt algebra is not noetherian}, Adv. Math. \textbf{262} (2014),
  239--260.

\bibitem[SW16]{sierra2016maps}
\bysame, \emph{Maps from the enveloping algebra of the positive {W}itt algebra
  to regular algebras}, Pacific J. Math. \textbf{284} (2016), no.~2, 475--509.

\bibitem[Ush98]{Ushirobira1998}
Rosane Ushirobira, \emph{On the orbit method for the {L}ie algebra of vector
  fields on a curve}, J. Algebra \textbf{203} (1998), no.~2, 596--620.

\end{thebibliography}
\end{document}